\documentclass[oneside,11pt]{report}


\usepackage{amssymb}
\usepackage{graphicx}
\usepackage{fancybox}
\usepackage{fullpage}
\usepackage{amsthm,amsmath, mathtools, mathrsfs}
\usepackage{tikz, array, multirow, rotating}
\usepackage[T1]{fontenc}
\usepackage[hang,flushmargin]{footmisc}
\usepackage{hyperref}
\allowdisplaybreaks
\definecolor{darkblue}{rgb}{0, 0, .6}
\hypersetup{
	colorlinks=true,
	linkcolor=darkblue,
	anchorcolor=darkblue,
	citecolor=darkblue,
	urlcolor=darkblue,
	linktocpage=true,
}

\vfuzz=7pt


\usetikzlibrary[positioning,arrows, patterns]

\tikzstyle{smver}=[rectangle, draw, thick, minimum width=.5cm, minimum
height=1cm]

\tikzstyle{smhor}=[rectangle, draw, thick, minimum width=1cm, minimum
height=.5cm]

\tikzstyle{smdver}=[rectangle, draw, dashed, thick, minimum width=.5cm, minimum
height=1cm, color=shade]

\tikzstyle{smdhor}=[rectangle, draw, dashed, thick, minimum width=1cm, minimum
height=.5cm, color=shade]

\definecolor{shade}{rgb}{0.8,0.8,0.8}

\tikzstyle{shver}=[rectangle, draw, thick, minimum width=.5cm, minimum
height=1cm, fill=shade]

\tikzstyle{shhor}=[rectangle, draw, thick, minimum width=1cm, minimum
height=.5cm, fill=shade]

\tikzstyle{bver}=[rectangle, draw, thick, minimum width=1cm, minimum
height=2cm]

\tikzstyle{bhor}=[rectangle, draw, thick, minimum width=2cm, minimum
height=1cm]

\tikzstyle{fix}=[rectangle, fill=shade, minimum width=.5cm, minimum
height=.5cm]

\tikzstyle{bfix}=[rectangle, fill=shade, minimum width=1cm, minimum
height=1cm]

\tikzstyle{r} =[rectangle, draw, dashed, minimum width=.5cm, minimum
height=.5cm]

\tikzstyle{redr} =[rectangle, draw, dashed, ,thick, color=red, minimum width=.5cm, minimum
height=.5cm]

\tikzstyle{bredr} =[rectangle, draw, dashed, ,thick, color=red, minimum width=1cm, minimum
height=1cm]



\newtheorem{theorem}{Theorem}[section]
\newtheorem*{theorem*}{Theorem}
\newtheorem{lemma}[theorem]{Lemma}

\newtheorem{corollary}[theorem]{Corollary}
\newtheorem{proposition}[theorem]{Proposition}


\theoremstyle{definition}
\newtheorem{definition}[theorem]{Definition}
\newtheorem{example}[theorem]{Example}
\newtheorem{remark}[theorem]{Remark}




\numberwithin{equation}{section}


\DeclareMathOperator{\shape}{Shape}
\DeclareMathOperator{\sign}{sign}
\DeclareMathOperator{\supp}{supp}


\newcommand{\expl}[2]{{}^{#2}{#1}} 

\begin{document}

\setlength{\parindent}{0pt}

\pagenumbering{Alph}
\begin{titlepage}
\ 
\vspace{5cm}

{\huge \textbf{Leading Coefficients of Kazhdan--Lusztig\\
Polynomials in Type $D$\let\thefootnote\relax\footnote{This is a revised version of the author's Ph.D. thesis, which was directed by Richard M. Green at the University of Colorado Boulder.  See \hyperref[comments]{Comments} page for a complete list of revisions.}
}}

\bigskip

Ph.D. Thesis, University of Colorado Boulder, 2013

\bigskip

\bigskip

\href{http://www.tysongern.com}{\Large Tyson C. Gern}\\
\href{mailto:tyson.gern@colorado.edu}{{tyson.gern@colorado.edu}}

\end{titlepage}

\pagenumbering{roman}
\pagestyle{plain}


\chapter*{Abstract}\normalsize
\addcontentsline{toc}{chapter}{Abstract}
  Kazhdan--Lusztig polynomials arise in the context of Hecke algebras
  associated to Coxeter groups. The computation of these polynomials
  is very difficult for examples of even moderate rank. In type $A$ it
  is known that the leading coefficient, $\mu(x,w)$ of a
  Kazhdan--Lusztig polynomial $P_{x,w}$ is either 0 or 1 when $x$ is
  fully commutative and $w$ is arbitrary. In type $D$ Coxeter groups
  there are certain ``bad'' elements that make $\mu$-value computation
  difficult.

  The Robinson--Schensted correspondence between the symmetric group
  and pairs of standard Young tableaux gives rise to a way to compute
  cells of Coxeter groups of type $A$. A lesser known correspondence
  exists for signed permutations and pairs of so-called domino
  tableaux, which allows us to compute cells in Coxeter groups of
  types $B$ and $D$.  I will use this correspondence in type $D$ to
  compute $\mu$-values involving bad elements. I will conclude by
  showing that $\mu(x,w)$ is 0 or 1 when $x$ is fully commutative in
  type $D$.

\chapter*{Comments}\label{comments}\normalsize
\addcontentsline{toc}{chapter}{Comments}

This is a revised version of the author's Ph.D. thesis, which was directed by Richard M. Green at the University of Colorado Boulder.  The numbering of definitions, theorems, remarks, and examples is identical in both versions. This version was typeset using the \texttt{report} document class instead of the University of Colorado \texttt{thesis} document class.  As a result, there were some modifications in formatting, all of which were cosmetic.

\chapter*{Acknowledgements}\normalsize
\addcontentsline{toc}{chapter}{Acknowledgements}

I would like to extend my thanks to my advisor Richard M. Green for
his help and support in producing this thesis. His patience,
diligence, and mathematical experience have been invaluable to me. I
am grateful to the members of my thesis defense committee members for
their helpful feedback and comments. I would also like to express my
gratitude to my family and friends for their support, and
encouragement.

\tableofcontents

\chapter{Coxeter groups}

\pagenumbering{arabic}

In their seminal paper~\cite{kazhdan1979representations}, Kazhdan and
Lusztig defined remarkable polynomials, $P_{x,w}$ indexed by elements
$x$ and $w$ of an arbitrary Coxeter group $W$. These polynomials are
called Kazhdan--Lusztig polynomials, and are important in algebra and
geometry. For example, they give rise to representations of both the
Coxeter group and its corresponding Hecke algebra. Unfortunately,
these polynomials are particularly difficult to compute, even for
relatively small Coxeter groups. A bound on the degree of $P_{x,w}$
is known, but it unknown when this bound is achieved, in general.  Of
particular importance are the coefficients $\mu(x,w)$ of the highest
possible degree term.  The polynomials $P_{x,w}$ and the $\mu$-values
are defined by recurrence relations, but there is no known algorithm
that allows for their efficient computation, even in groups of
relatively small rank.

For many years computational evidence suggested that the values
$\mu(x,w)$ were always 0 or 1 in Coxeter groups of type $A$. This
conjecture, known as the \textit{0-1 Conjecture}, was shown to be
false by McLarnan and Warrington in
\cite{mclarnan2003counterexamples}. However, empirical evidence
suggests that $\mu(x,w)\in \{0,1\}$ in many cases. For example, in
type $A_n$ it is known that $\mu(x,w)\in \{0,1\}$ if one of the
following holds:
\begin{enumerate}
\item $n \leq 8$ \cite{mclarnan2003counterexamples};
\item $a(x) < a(w)$ \cite{xi2005leading}, where $a$ is Lusztig's
  $a$-function, to be discussed in Section~\ref{afunction};
\item $x$ is fully commutative \cite{green2009leading}.
\end{enumerate}

In this thesis we work in type $D$. Coxeter groups of type $D$ have
so-called \textit{bad} elements whose descent sets have undesirable
properties.  These properties make computing $\mu(x,w)$ difficult when
$w$ is bad and $x$ is fully commutative in the sense of Stembridge
\cite{stembridge1996fully}. We compute $\mu$-values involving
these bad elements and use these calculations to prove our main result
in Theorem~\ref{mainresult}:
\begin{theorem*}
  Let $x,w\in W(D_n)$ be such that $x$ is fully commutative. Then
  $\mu(x,w)\in\{0,1\}$.
\end{theorem*}
We will only rely on computer calculations for two computations
of $\mu$-values in Coxeter groups of small rank.

\section{Basic properties}

We begin with a short overview of the basic properties of Coxeter
groups.  The following definitions are from
\cite{bjorner2005combinatorics} and \cite{humphreys1992reflection}.

\begin{definition}
  A \textbf{Coxeter system} is an ordered pair $(W,S)$ consisting of a
  \textbf{Coxeter group} $W$ generated by a set $S$
  with presentation
  \[
  \langle S\mid (st)^{m_{(s,t)}}=1, s,t \in S,
    m(s,t)\in\mathbb{N}\cup\{\infty\}\rangle,
  \]
  where $m(s,t)=1$ if $s=t$ and $m(s,t)=m(t,s)\geq 2$ if $s\neq t$. If
  there is no relation between a pair $s,t\in S$ we say
  $m(s,t)=\infty$. If $m(s,t)\leq 3$ for all $s,t\in S$ we say that
  $(W,S)$ is \textbf{simply laced}.
  
  If $S$ is finite then we say that $(W,S)$ is a
  Coxeter system of \textbf{rank} $|S|$.
\end{definition}

\begin{example}
  The dihedral group of order 8 is a Coxeter group with presentation
  \[
  \langle \{s_1,s_2\}\mid s_1^2 = s_2^2 = (s_1s_2)^4 = 1 \rangle.
  \]
\end{example}

\begin{definition}
  Let $s,t\in S$ be such that $s\neq t$. We call each relation
  $(st)^{m(s,t)} = 1$ a \textbf{braid relation}.  Note that each braid
  relation may be rewritten as
  \[
  \underbrace{sts\cdots}_{m(s,t)\text{ factors}} =
  \underbrace{tst\cdots}_{m(s,t)\text{ factors}}.
  \]
  In particular, if $m(s,t)=2$ then $st=ts$, so $s$ and $t$ commute.
  If $m(s,t)=2$ we call the relation a \textbf{short braid relation}.
  If $m(s,t)\geq 3$ we call the relation a \textbf{long braid
    relation}.
\end{definition}

We encode the information contained in the presentation of a Coxeter
system into a picture called a Coxeter graph. We will use $\mathbf{n}$
to denote the set
\[
\{1,2,3,4,\dots,n\}.
\]

\begin{definition}
  Let $(W,S)$ be a Coxeter system.  A \textbf{Coxeter graph} is a
  graph $\Gamma$ with vertex set $S$.  We join $s,t\in S$ by an edge
  labeled $m(s,t)$ whenever $m(s,t)\geq 3$.  As a convention we omit
  the label when $m(s,t)=3$.
\end{definition}

\begin{example}\label{typeaintro}
  Let $n\in\mathbb{N}$ and let $(W,S)$ be a Coxeter system with
  $S=\{s_1,s_2,\dots,s_n\}$, and Coxeter diagram shown below.
  
  \begin{center} 
    \begin{picture}(176,26)
      \put(000,16){\circle*{6}}
      \put(040,16){\circle*{6}}
      \put(080,16){\circle*{6}}
      \put(136,16){\circle*{6}}
      \put(176,16){\circle*{6}}
      
      \put(000,16){\line(1,0){40}}
      \put(040,16){\line(1,0){40}}
      \put(080,16){\line(1,0){12}}
      \put(124,16){\line(1,0){12}}
      \put(136,16){\line(1,0){40}}
      
      \put(100,16){\circle*{2}}
      \put(108,16){\circle*{2}}
      \put(116,16){\circle*{2}}  
      
      \put(-3,00){1}
      \put(37,00){2}
      \put(77,00){3}
      \put(124,0){$n-1$}
      \put(173,0){$n$}    
    \end{picture}
  \end{center} 
  
  Such a Coxeter group is said to be of type $A_n$, and we write
  $W=W(A_n)$.  The symmetric group on $n+1$ elements, $S_{n+1}$ is a
  Coxeter group of type $A_n$ since if we let $W=S_{n+1}$,
  $s_i=(i,i+1)$ and $S=\{s_i\}_{i=1}^n$, then $(W,S)$ is a Coxeter
  system of type $A_n$ \cite[Proposition
  1.5.4]{bjorner2005combinatorics}.
\end{example}

\begin{example}\label{typedintro}
  Let $(W,S)$ be a Coxeter system with $S=\{s_1,s_2,\dots,s_n\}$, and
  with Coxeter diagram given below.
  
  \begin{center} 
    \begin{picture}(164,56)
      \put(000,56){\circle*{6}}
      \put(000,00){\circle*{6}}
      \put(028,28){\circle*{6}}
      \put(068,28){\circle*{6}}
      \put(124,28){\circle*{6}}
      \put(164,28){\circle*{6}}
      
      \put(000,56){\line(1,-1){28}}
      \put(000,00){\line(1,01){28}}
      \put(028,28){\line(1,00){40}}
      \put(068,28){\line(1,00){12}}
      \put(112,28){\line(1,00){12}}
      \put(124,28){\line(1,00){40}}
      
      \put(088,28){\circle*{2}}
      \put(096,28){\circle*{2}}
      \put(104,28){\circle*{2}}  
      
      \put(-12, 53){1}
      \put(-12, -3){2}
      \put(25,  12){3}
      \put(65,  12){4}
      \put(112, 12){$n-1$}
      \put(161, 12){$n$}    
    \end{picture}
  \end{center} 
  
  Such a Coxeter group is said to be of type $D_n$, and we write
  $W=W(D_n)$. The wreath product $\mathbb{Z}_2\wr S_n$ consists of all
  bijections $\sigma$ of the set $\{i\mid\pm i\in{\bf n}\}$ such that
  $\sigma(-a)=-\sigma(a)$.  This is called the signed permutation
  group, and is isomorphic to the Coxeter group of type $B_n$
  \cite[Proposition 8.1.3]{bjorner2005combinatorics}. The group
  $W(D_n)$ is an index 2 subgroup of the signed permutation group,
  consisting of all elements with an even number of sign changes,
  under the embedding
  \[
  s_i\mapsto \begin{cases}
    (1,\,-2)(-1,\,2)&\mbox{if }i = 1;\\
    (i-1,\,i)(-(i-1),\,-i)&\mbox{if }i\geq 2,\end{cases}
  \]
  \cite[Proposition 8.2.3]{bjorner2005combinatorics}.
\end{example}

\begin{remark}\label{embedding}
  From Example~\ref{typeaintro} we see that $W(A_{n-1})$ consists of
  all permutations of $\bf{n}$.  Then we have a canonical inclusion
  \[
  \iota:W(A_{n-1})\to W(D_n)
  \]
  that sends a permutation in $W(A_{n-1})$ to the same permutation in
  $W(D_n)$. From Examples~\ref{typeaintro} and~\ref{typedintro} we see
  that $\iota(s_i)=s_{i+1}$.
\end{remark}

\begin{definition}
  Let $(W,S)$ be a Coxeter system.  Any element $w\in W$ can be
  written as a product of generators $w=s_1s_2\cdots s_r$, $s_i\in
  S$. 
  \begin{enumerate}
  \item If $r$ is minimal for all expressions of $w$, we call $r$ the
    \textbf{length} of $w$, denoted $\ell(w)$.
  \item Any expression of $w$ as a product of $\ell(w)$ generators is
    called a \textbf{reduced expression} for $w$.
  \item The set of all $s\in S$ that appear in a reduced expression of
    $w$ is called the \textbf{support} of $w$, denoted
    $\supp(w)$. Note that each element in a Coxeter group can have
    many different reduced expressions.  However, if $s\in S$ appears
    in a particular reduced expression for $w$, it must appear in each
    reduced expression for $w$ as a consequence of~\cite[Theorem
    3.3.1]{bjorner2005combinatorics}.  Thus, to determine $\supp(w)$
    we only need to consider a particular reduced expression for $w$,
    so $\supp(w)$ is well-defined.
  \item Let $v_i \in W$ for $1 \leq i \leq k$. We say that the
    product $v = v_1 v_2 \cdots v_k$ is \textbf{reduced} if $\ell(v) =
    \sum_{i=1}^k \ell(v_i)$.
  \end{enumerate}
\end{definition}

\begin{example}
  Let $W=W(A_4)$, let $w=s_1s_2s_3$, and let $x=s_1s_3s_1$.  Then the
  expression given for $w$ is reduced, and $\ell(w) = 3$.  However, we
  see that $x = s_1s_3s_1 = s_1s_1s_3 = s_3$, so the above expression
  for $x$ is not reduced.
\end{example}

\begin{proposition}
  Let $W$ be a finite Coxeter group.  Then there is a unique element,
  $w_0$, of maximal length in $W$.
\end{proposition}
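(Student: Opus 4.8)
The plan is to separate the statement into an easy existence part and a uniqueness part, the latter carrying all the content. For existence, since $W$ is finite the length function $\ell \colon W \to \mathbb{Z}_{\ge 0}$ takes only finitely many values, so its image has a largest element; I would simply pick any $w_0$ realizing that maximum. Everything then comes down to showing such a maximal-length element is unique.

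As useful framing I would first record that maximal length forces a full descent set. For any $w \in W$ and any $s \in S$ one has $\ell(ws) = \ell(w) \pm 1$, a basic consequence of the reduced-expression theory in \cite{bjorner2005combinatorics}. Hence if $w$ has maximal length, then $\ell(ws) \le \ell(w)$ forces $\ell(ws) = \ell(w) - 1$ for every $s \in S$; that is, every simple generator is a right descent of $w$ (and symmetrically a left descent). This does not by itself give uniqueness, but it isolates the property that any candidate for $w_0$ must satisfy.

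For the uniqueness itself, which is the real work, the cleanest route is through the action on the root system. The fact I would invoke from \cite{bjorner2005combinatorics} or \cite{humphreys1992reflection} is that $\ell(w)$ equals the number of positive roots sent to negative roots by $w$, so the maximal length is exactly $N := |\Phi^+|$, the total number of positive roots, and an element attains it precisely when it sends every positive root to a negative one, i.e. $w(\Phi^+) = \Phi^-$. Given two maximal-length elements $w_1$ and $w_2$, the composite $w_1^{-1} w_2$ then sends $\Phi^+$ back to $\Phi^+$: indeed $w_2$ carries $\Phi^+$ into $\Phi^-$ and $w_1^{-1}$ carries $\Phi^-$ into $\Phi^+$. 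Thus $w_1^{-1} w_2$ inverts no positive root, so $\ell(w_1^{-1} w_2) = 0$, forcing $w_1^{-1} w_2 = e$ and hence $w_1 = w_2$.

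The main obstacle is exactly this uniqueness step, and it rests on two standard facts — the inversion-counting formula $\ell(w) = |\{\alpha \in \Phi^+ : w\alpha \in \Phi^-\}|$ and the faithfulness of the action on the root system — which lie just outside the definitions developed so far in the excerpt, so I would cite them rather than reprove them. If one insists on staying purely combinatorial, an alternative is to argue by induction using the Exchange and Deletion Conditions: one shows that an element with full right descent set lies above every $v \in W$ in the Bruhat order, so any two such maximal elements lie above each other and must coincide. This avoids the root system but trades it for the word-combinatorics of the Exchange Condition, which is why I would prefer the root-theoretic argument as the primary one.
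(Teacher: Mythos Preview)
Your argument is correct: the root-theoretic uniqueness step is valid for any finite Coxeter group via the geometric representation, and the existence part is immediate from finiteness. The paper, however, does not give a proof at all---it simply cites \cite[Proposition~2.3.1]{bjorner2005combinatorics} and moves on. So your proposal is genuinely more than what the paper supplies; you have outlined the standard proof rather than deferring it, which is perfectly appropriate here since the result is foundational background rather than a contribution of the paper.
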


\begin{proof}
  This is~\cite[Proposition 2.3.1]{bjorner2005combinatorics}.
\end{proof}

\begin{example}
  Let $W=W(A_2)$.  Then $w_0 = s_1s_2s_1 = s_2s_1s_2$.
\end{example}

We now distinguish between elements of a Coxeter groups to which long
braid relations may be applied. The following definition is due to
Stembridge \cite{stembridge1996fully}.

\begin{definition}
  Let $(W,S)$ be a simply laced Coxeter system.  We call an element
  $w\in W$ \textbf{complex} if there exist $w_1,w_2\in W$ and $s,t\in
  S$ with $m(s,t)=3$ such that $w = w_1 \cdot sts\cdot w_2= w_1 \cdot
  tst\cdot w_2$ reduced.  An element $w\in W$ that is not complex is
  called \textbf{fully commutative}. We denote the set of fully
  commutative elements by $W_c$.
\end{definition}

\begin{example}
  Let $W=W(D_6)$, let $x=s_1s_2s_4s_3s_4$ and let
  $y=s_1s_2s_6s_3s_5s_4$. Then $x$ is not fully commutative since we
  can apply a long braid relation to the product $s_4s_3s_4$.
  However, $y$ is fully commutative: we cannot apply any long
  braid relations to $y$ because there are no repeated generators.
\end{example}

\begin{definition}
  Let $(W,S)$ be a Coxeter system and let $w\in W$.  We define the
  \textbf{left descent set}, $\mathcal{L}(w)$, and the \textbf{right
    descent set}, $\mathcal{R}(w)$, as follows:
  \begin{align*}
    \mathcal{L}(w) &= \{s\in S\mid \ell(sw) < \ell(w)\};\\
    \mathcal{R}(w) &= \{s\in S\mid \ell(ws) < \ell(w)\}.
  \end{align*}
  A left or right descent set is \textbf{commutative} if it consists
  of mutually commuting generators.
\end{definition}

\begin{example}
  Let $W=W(D_4)$ and let $w=s_1s_4s_3s_2s_3$.  Then
  $\mathcal{L}(w)=\{s_1,s_2, s_4\}$ is commutative and
  $\mathcal{R}(w)=\{s_2, s_3\}$ is not commutative.
\end{example}

It is known that $s\in \mathcal{L}(w)$ if and only if $w$ has a
reduced expression beginning in $s$ \cite[Corollary
1.4.6]{bjorner2005combinatorics}. Similarly, $s\in \mathcal{R}(w)$ if
and only if $w$ has a reduced expression ending in $s$.

\begin{proposition}\label{descentlong}
  Let $(W,S)$ be a finite Coxeter system with $w\in W$. Then
  $\mathcal{L}(w) = S$ if and only if $w = w_0$. Similarly,
  $\mathcal{R}(w) = S$ if and only if $w = w_0$.
\end{proposition}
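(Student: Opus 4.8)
The plan is to treat the two equivalences symmetrically and to reduce the right-descent statement to the left-descent one. Since $\ell(x)=\ell(x^{-1})$ and $\mathcal{R}(w)=\mathcal{L}(w^{-1})$ (because $\ell(sw^{-1})=\ell(ws)$), and since $w_0^{-1}$ also has maximal length and hence equals $w_0$ by the uniqueness established in the preceding proposition, the equivalence $\mathcal{R}(w)=S\iff w=w_0$ follows from $\mathcal{L}(w)=S\iff w=w_0$ applied to $w^{-1}$. So I would prove only the statement for $\mathcal{L}$.

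For the forward direction, suppose $w=w_0$. For each $s\in S$ one has $\ell(sw_0)=\ell(w_0)\pm1$; since $w_0$ has maximal length the value $\ell(w_0)+1$ is impossible, so $\ell(sw_0)=\ell(w_0)-1<\ell(w_0)$ and $s\in\mathcal{L}(w_0)$. As $s$ was arbitrary, $\mathcal{L}(w_0)=S$.

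For the converse I would use the length-reversing property of the longest element, namely $\ell(ww_0)=\ell(w_0)-\ell(w)$ for all $w\in W$ (standard; e.g.\ \cite[Prop.~2.3.2]{bjorner2005combinatorics}). Applying this both to $w$ and to $sw$ gives, for each $s\in S$, the chain of equivalences $s\in\mathcal{L}(ww_0)\iff\ell(sww_0)<\ell(ww_0)\iff\ell(sw)>\ell(w)\iff s\notin\mathcal{L}(w)$, so that $\mathcal{L}(ww_0)=S\setminus\mathcal{L}(w)$. Hence $\mathcal{L}(w)=S$ forces $\mathcal{L}(ww_0)=\emptyset$. But the only element with empty left descent set is the identity: if $v\neq e$ then $\ell(v)\geq1$, so any reduced expression for $v$ begins with some $s\in S$, whence $s\in\mathcal{L}(v)$ by the characterization of descents via reduced expressions recalled above. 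Therefore $ww_0=e$, i.e.\ $w=w_0^{-1}=w_0$.

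The one nonelementary ingredient, and the step I expect to be the main obstacle, is the identity $\ell(ww_0)=\ell(w_0)-\ell(w)$: the inequality $\ell(ww_0)\geq\ell(w_0)-\ell(w)$ is immediate from subadditivity applied to $w_0=(ww_0)w^{-1}$, but the matching upper bound is not, and genuinely requires the exchange condition (equivalently, that $w\leq w_0$ in Bruhat order for every $w$). If I wanted a self-contained argument I would instead prove directly that every $w\neq w_0$ admits a left ascent, i.e.\ some $s\in S$ with $\ell(sw)>\ell(w)$; this is the same content repackaged, and it is where the real work lies.
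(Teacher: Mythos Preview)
Your argument is correct. The paper itself does not give a proof here; it simply cites \cite[Proposition 2.3.1(ii)]{bjorner2005combinatorics}, so you have supplied more than the paper does.

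Your caution about the identity $\ell(ww_0)=\ell(w_0)-\ell(w)$ is appropriate: in Bj\"orner--Brenti this identity is developed in the same passage as the statement you are proving, so one should verify that invoking it is not circular in that particular exposition. The mathematics is not circular, though, and the most direct independent route is the one you allude to at the end, phrased via roots: $\mathcal{L}(w)=S$ means $w^{-1}(\alpha_s)<0$ for every simple root $\alpha_s$, hence (since every positive root is a nonnegative combination of simple roots) $w^{-1}$ sends every positive root to a negative root, giving $\ell(w)=\ell(w^{-1})=|\Phi^+|=\ell(w_0)$; uniqueness of the longest element then yields $w=w_0$. This bypasses the length-reversing identity entirely and makes the argument self-contained.
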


\begin{proof}
  This is \cite[Proposition 2.3.1 (ii)]{bjorner2005combinatorics}.
\end{proof}

\begin{definition}
  Let $(W,S)$ be a Coxeter system and let $I\subset S$.  Define
  $W_I$ to be the subgroup of $W$ generated by $I$ and define the set
  \[
  W^I=\{w\in W\mid \ell(ws)>\ell(w)\text{ for all }s\in I\}.
  \]
\end{definition}

\begin{proposition}
  Let $(W,S)$ be a Coxeter system and let $I\subset S$.  Then
  $(W_I,I)$ is a Coxeter system.
\end{proposition}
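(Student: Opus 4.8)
The plan is to show that $(W_I,I)$ inherits enough combinatorial structure from $(W,S)$ to satisfy one of the standard characterizations of Coxeter systems. Since each $s\in I\subseteq S$ is an involution and $I$ generates $W_I$ by definition, a group generated by involutions is a Coxeter system with respect to that generating set if and only if it satisfies the Deletion Condition; so it suffices to verify the Deletion Condition for $(W_I,I)$ (see \cite{bjorner2005combinatorics, humphreys1992reflection}). The whole argument reduces to controlling how reduced expressions of elements of $W_I$ interact with the generators lying outside $I$, and this control is the step I expect to be the main obstacle.

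Concretely, I would first prove that $\supp(w)\subseteq I$ for every $w\in W_I$. An arbitrary element of $W_I$ is, by definition of the subgroup generated by $I$, a product $w = t_1 t_2\cdots t_m$ with each $t_j\in I$ (using that every generator is its own inverse, so this set of products is already closed under inversion). Regarding this as a word over $S$ all of whose letters lie in $I$, I would repeatedly apply the Deletion Condition in $(W,S)$ to cancel pairs of letters until a reduced expression is reached. Because deletion only ever removes letters, the resulting reduced expression still uses only generators in $I$, so $\supp(w)\subseteq I$. Since $\supp(w)$ is well-defined \cite[Theorem 3.3.1]{bjorner2005combinatorics}, \emph{every} reduced expression of $w$ then uses only generators of $I$.

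From this the two natural length functions agree. Writing $\ell_I$ for the length relative to the generating set $I$ inside $W_I$, a reduced $S$-expression of $w\in W_I$ uses only letters of $I$ and hence is an $I$-word, giving $\ell_I(w)\le\ell(w)$, while the reverse inequality is immediate from $I\subseteq S$; thus $\ell_I(w)=\ell(w)$ for all $w\in W_I$. With $\supp$ and length under control, the Deletion Condition for $(W_I,I)$ follows quickly: given a non-reduced word $s_1 s_2\cdots s_k$ with each $s_i\in I$, the equality $\ell_I=\ell$ shows it is also non-reduced in $(W,S)$, so the Deletion Condition there produces indices $i<j$ for which omitting $s_i$ and $s_j$ yields the same element, and the shortened word still lies entirely in $I$. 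This is exactly the Deletion Condition for $(W_I,I)$, so $(W_I,I)$ is a Coxeter system. Finally I would observe that its Coxeter matrix is the expected restriction of $m$, since for $s,t\in I$ the order of $st$ in $W_I$ equals its order in the ambient group $W$, namely $m(s,t)$.

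An alternative route, bypassing the characterization theorem, is to use the geometric (reflection) representation of $W$: the subspace spanned by the simple roots $\{\alpha_s : s\in I\}$ is $W_I$-stable, and the restricted action is precisely the geometric representation of the abstract Coxeter group on the generators $I$. Faithfulness of that representation forces the canonical surjection from the abstract Coxeter group onto $W_I$ to be injective, hence an isomorphism, which again identifies $(W_I,I)$ as a Coxeter system. Either way, the genuine content is the support/length statement of the middle paragraphs; everything afterward is bookkeeping.
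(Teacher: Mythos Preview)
Your argument is correct. The paper does not actually prove this statement; it simply cites \cite[Theorem 5.12(a)]{humphreys1992reflection} and moves on. Your proposal therefore supplies substantially more content than the paper does: you give a self-contained combinatorial proof via the Deletion Condition characterization, whereas the paper defers entirely to Humphreys. The route you sketch (support is contained in $I$, hence $\ell_I=\ell$, hence Deletion transfers) is the standard elementary argument and is exactly what one would write to avoid the citation. Your alternative via the geometric representation is in fact closer to how Humphreys proves Theorem 5.12, so in a sense you have covered both the cited approach and a more combinatorial one. Either is fine here; the Deletion-Condition version has the advantage of staying within the purely combinatorial framework the paper is already using for $\supp$ and reduced expressions.
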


\begin{proof}
  This is \cite[Theorem 5.12(a)]{humphreys1992reflection}.
\end{proof}

The Coxeter group $W_I$ is called a \textbf{parabolic subgroup} of
$W$, with presentation
\[
\langle I\mid (st)^{m(s,t)} = 1, s,t\in I\rangle.
\]
The set $W^I$ is called the set of \textbf{distinguished coset
  representatives}, and consists of minimal representatives of left
cosets $wW_I$.

\begin{proposition}\label{factor}
  Let $(W,S)$ be a Coxeter system and let $I\subset S$.  For each
  $w\in W$ there exist unique elements $w^I\in W^I$ and $w_I\in W_I$
  such that $w=w^Iw_I$ reduced.
\end{proposition}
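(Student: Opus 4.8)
The plan is to build the factorization from the left coset $wW_I$ and its minimal-length representative, and to extract both existence and uniqueness from a single key fact: that multiplying such a representative on the right by an element of $W_I$ is length-additive. First I would observe that $\{\ell(x)\mid x\in wW_I\}$ is a nonempty set of nonnegative integers and hence attains a minimum; let $w^I$ be an element realizing it. An immediate check shows $w^I\in W^I$: for any $s\in I$ the element $w^Is$ lies in the same coset, so $\ell(w^Is)\ge\ell(w^I)$, and since the lengths of $w^I$ and $w^Is$ differ by exactly $1$ we get $\ell(w^Is)=\ell(w^I)+1>\ell(w^I)$. Setting $w_I=(w^I)^{-1}w\in W_I$ then yields $w=w^Iw_I$, and it remains to see that this product is reduced and that the two factors are forced.

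The crux is the following length-additivity claim: for every $u\in W_I$ one has $\ell(w^Iu)=\ell(w^I)+\ell(u)$. I would prove this by induction on $\ell(u)$, the base case $u=e$ being trivial. For the inductive step, choose $s\in I$ with $\ell(us)<\ell(u)$, so that $\ell(us)=\ell(u)-1$ and, by the inductive hypothesis, $w^I(us)$ has a reduced expression formed by concatenating reduced expressions for $w^I$ and for $us$. The two possible values of $\ell(w^Iu)=\ell\bigl((w^Ius)s\bigr)$ are $\ell(w^I)+\ell(u)$ and $\ell(w^I)+\ell(u)-2$; I must rule out the latter, i.e.\ that $s$ is a right descent of $w^I(us)$. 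Applying the Exchange Condition (a standard property of Coxeter systems from the cited texts) to the concatenated reduced expression, the letter deleted upon right-multiplication by $s$ lies either in the $us$-block or in the $w^I$-block. A deletion inside the $us$-block would, after cancelling $w^I$ on the left, express $u$ as an element of $W_I$ of length $\ell(u)-2$, a contradiction; a deletion inside the $w^I$-block would produce an element of $wW_I$ of length $\ell(w^I)-1$, contradicting the minimality of $w^I$. This deletion dichotomy is where the real work lies and is the main obstacle.

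With additivity in hand the proposition follows quickly. Existence is immediate: $w=w^Iw_I$ with $w^I\in W^I$, $w_I\in W_I$, and $\ell(w)=\ell(w^I)+\ell(w_I)$ by the claim applied to $u=w_I$, so the product is reduced. For uniqueness I would show $W^I\cap wW_I=\{w^I\}$. If $a\in W^I$ lies in $wW_I$, write $a=w^Iu_0$ with $u_0\in W_I$; should $u_0\neq e$, pick $s\in I$ with $\ell(u_0s)<\ell(u_0)$, and then additivity gives $\ell(as)=\ell(w^I)+\ell(u_0)-1<\ell(a)$, contradicting $a\in W^I$. Hence $u_0=e$ and $a=w^I$. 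Finally, if $w=w^Iw_I=a'b'$ are two reduced factorizations of the required form, then $a'\in W^I\cap wW_I=\{w^I\}$ forces $a'=w^I$, and then $b'=(w^I)^{-1}w=w_I$, establishing uniqueness.
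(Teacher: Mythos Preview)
Your argument is correct. The paper does not actually prove this proposition; it simply cites \cite[Proposition 2.4.4]{bjorner2005combinatorics}. What you have written is essentially the standard textbook proof (and is in fact the argument given in Bj\"orner--Brenti): pick a minimal-length coset representative, verify it lies in $W^I$, and use the Exchange Condition to establish length-additivity $\ell(w^Iu)=\ell(w^I)+\ell(u)$ for all $u\in W_I$, from which both reducedness and uniqueness drop out. Your handling of the two deletion cases in the Exchange Condition step is sound: a deletion in the $us$-block yields an expression for $u$ of length $\ell(u)-2$, and a deletion in the $w^I$-block produces an element of $wW_I$ shorter than $w^I$, each a contradiction. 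The only implicit fact you lean on is that length in $W_I$ agrees with length in $W$, but this is harmless here since the contradiction in Case~1 already works at the level of word length in $S$.
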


\begin{proof}
  This is \cite[Proposition 2.4.4]{bjorner2005combinatorics}.
\end{proof}

\begin{definition}
Let $(W,S)$ be a Coxeter system and let $I\subset S$. Write $w = w^Iw_I$ as in Proposition~\ref{factor}. We call this the \textbf{reduced
    decomposition} of $w$ with respect to $I$.
\end{definition}

\begin{lemma}\label{noncommutative}
  Let $(W,S)$ be a simply laced Coxeter system. Let $w\in W$ suppose
  that $s,t\in S$ are such that $s$ and $t$ do not commute.
  \begin{enumerate}
  \item If $s,t\in\mathcal{R}(w)$ then $w$ can be written $w = w'\cdot sts$
    reduced for some $w'\in W$.
  \item If $s,t\in\mathcal{L}(w)$ then $w$ can be written $w = sts\cdot w'$
    reduced for some $w'\in W$.
  \end{enumerate}
\end{lemma}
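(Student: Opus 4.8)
The plan is to reduce the whole statement to the rank-two parabolic subgroup generated by $s$ and $t$, where the claim becomes a statement about the longest element of a small finite group. First I would pin down $m(s,t)$. Since $(W,S)$ is simply laced we have $m(s,t)\le 3$, and since $s\ne t$ we have $m(s,t)\ge 2$; the hypothesis that $s$ and $t$ do not commute rules out $m(s,t)=2$. Hence $m(s,t)=3$, the braid relation reads $sts=tst$, and the parabolic subgroup $W_I$ with $I=\{s,t\}$ is the Coxeter group of type $A_2$. In particular $W_I$ is finite, and its unique longest element is $w_0^I=sts=tst$, of length $3$.

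For part (1), I would apply Proposition~\ref{factor} to write $w=w^Iw_I$ reduced with $w^I\in W^I$ and $w_I\in W_I$. The goal is to show $w_I=w_0^I$, for then $w=w^I\cdot sts$ with $\ell(w)=\ell(w^I)+3$, and taking $w'=w^I$ gives the desired reduced expression. The key step is to transfer the descent hypothesis from $w$ down to $w_I$, i.e.\ to prove $\mathcal{R}(w)\cap I=\mathcal{R}(w_I)$. For $r\in I$ the element $w_Ir$ again lies in $W_I$, and because $W^I$ consists of the minimal-length representatives of the left cosets modulo $W_I$, the element $w^I$ is the minimal representative of the entire coset $w^IW_I$; hence $w^I(w_Ir)$ is itself reduced, so $\ell(w^Iw_Ir)=\ell(w^I)+\ell(w_Ir)$. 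Comparing this with $\ell(w)=\ell(w^I)+\ell(w_I)$ shows $\ell(wr)<\ell(w)$ if and only if $\ell(w_Ir)<\ell(w_I)$, which is the claim. Since $s,t\in\mathcal{R}(w)$ and $s,t\in I$, we get $\{s,t\}\subseteq\mathcal{R}(w_I)$; as descents always lie in the generating set, in fact $\mathcal{R}(w_I)=I$. (Here the length function of the system $(W_I,I)$ agrees with the restriction of $\ell$, since $\supp(w_I)\subseteq I$, so right descents computed inside $W_I$ coincide with those in $W$.) Applying Proposition~\ref{descentlong} to the finite system $(W_I,I)$ then forces $w_I=w_0^I=sts$, completing part (1).

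For part (2) I would pass to the inverse. If $s,t\in\mathcal{L}(w)$, then $s,t\in\mathcal{R}(w^{-1})$, so by part (1) there is $v\in W$ with $w^{-1}=v\cdot sts$ reduced. Inverting, and using that $s,t$ are involutions so that $(sts)^{-1}=sts$, gives $w=sts\cdot v^{-1}$. This product is reduced: since inversion preserves length, $\ell(w)=\ell(w^{-1})=\ell(v)+3=\ell(v^{-1})+\ell(sts)$, and subadditivity of length forces equality $\ell(sts\cdot v^{-1})=\ell(sts)+\ell(v^{-1})$. Setting $w'=v^{-1}$ finishes part (2).

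I expect the only genuine subtlety to be the key step of part (1), namely the length-additivity $\ell(w^Iw_Ir)=\ell(w^I)+\ell(w_Ir)$ for the \emph{translate} $w_Ir$, as opposed to the single factorization guaranteed directly by Proposition~\ref{factor}. This is exactly the statement that $w^I$ is the minimal-length representative of its whole coset, and it is what allows the global descent condition on $w$ to be detected purely inside the rank-two group $W_I$, where Proposition~\ref{descentlong} applies. Everything else is bookkeeping with lengths and the involution $sts=tst$.
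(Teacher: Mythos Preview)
Your proof is correct and follows essentially the same route as the paper: both reduce to the parabolic subgroup $W_I$ with $I=\{s,t\}$, transfer the right-descent hypothesis from $w$ to $w_I$, and then invoke Proposition~\ref{descentlong} on the finite system $(W_I,I)$ to force $w_I=sts$. The only cosmetic difference is in justifying the descent transfer: you appeal directly to length-additivity $\ell(w^I v)=\ell(w^I)+\ell(v)$ for all $v\in W_I$, whereas the paper factors $ws$ as $x^I x_I$ and uses uniqueness of the reduced decomposition to identify $w_I=x_I s$; both arguments express the same minimal-coset-representative fact.
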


\begin{proof}
  Let $I = \{s,t\}$ and suppose that $s,t\in\mathcal{R}(w)$. Then by
  Proposition~\ref{factor} we can find a reduced decomposition
  $ws=x^Ix_I$ reduced such that $x^I\in W^I$ and $x_I\in W_I$.  Then
  $w = x^Ix_Is$ reduced, and since $x_I s \in W_I$ this is the unique
  reduced decomposition of $w$ according to Proposition~\ref{factor}.

  Write a reduced decomposition $w = w^I w_I$. Since
  $s,t\in\mathcal{R}(w)$ we can use the above argument to show $s,t\in
  R(w_I)$. Then by Proposition~\ref{descentlong} we have $w_I = sts$,
  so $w = w^I\cdot sts$ reduced.

  Now suppose $s,t\in\mathcal{L}(w)$. Then we can repeat the above
  argument with $w^{-1}$ and the result follows.
\end{proof}

\begin{corollary}\label{fccomm}
  Let $(W,S)$ be a simply laced Coxeter system. If $w\in W_c$ then $\mathcal{R}(w)$ and $\mathcal{L}(w)$ are both
  commutative.
\end{corollary}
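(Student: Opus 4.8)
The plan is to prove both claims by contraposition, reducing each to the definition of a complex element and invoking Lemma~\ref{noncommutative} as the main engine. The guiding idea is that a non-commutative descent set must supply two non-commuting generators sitting at one end of a reduced expression, and Lemma~\ref{noncommutative} upgrades this into a factorization on which a long braid relation acts, thereby witnessing complexity and contradicting $w \in W_c$.

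First I would handle the right descent set. Suppose, for contradiction, that $\mathcal{R}(w)$ is not commutative; then there exist $s,t \in \mathcal{R}(w)$ that do not commute. Because $(W,S)$ is simply laced we have $m(s,t) \leq 3$, while non-commutativity forces $m(s,t) \geq 3$; together these give $m(s,t) = 3$. Now Lemma~\ref{noncommutative}(1) applies verbatim: since $s,t \in \mathcal{R}(w)$ do not commute, we may write $w = w' \cdot sts$ reduced for some $w' \in W$. Taking $w_1 = w'$ and $w_2 = e$, and using the braid relation $sts = tst$ available precisely because $m(s,t) = 3$, I obtain
\[
w = w_1 \cdot sts \cdot w_2 = w_1 \cdot tst \cdot w_2 \qquad \text{reduced},
\]
which exhibits $w$ as complex and contradicts $w \in W_c$. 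Hence $\mathcal{R}(w)$ is commutative.

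The argument for $\mathcal{L}(w)$ is entirely symmetric: I would instead invoke Lemma~\ref{noncommutative}(2) to write $w = sts \cdot w'$ reduced and then take $w_1 = e$, $w_2 = w'$, obtaining the same contradiction. I do not expect a genuine obstacle here, since Lemma~\ref{noncommutative} has already done the essential work of converting ``two non-commuting generators in the descent set'' into a braid-ready factorization. The only point that warrants a moment's care is verifying that the definition of complex is met when one of the outer factors is trivial ($w_2 = e$ on the right, or $w_1 = e$ on the left): one must confirm that the product $w' \cdot sts \cdot e$ remains reduced, which is immediate from $\ell(e) = 0$ together with the reducedness of $w' \cdot sts$ supplied by the lemma.
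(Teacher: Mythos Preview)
Your argument is correct and follows essentially the same route as the paper: both proceed by contraposition, take two noncommuting generators in the descent set, invoke Lemma~\ref{noncommutative} to obtain a reduced factorization $w = w' \cdot sts$ (or $sts \cdot w'$), and conclude that $w$ is complex. You supply a bit more detail than the paper does (explicitly noting $m(s,t)=3$ and checking the trivial outer factor), but the underlying proof is the same.
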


\begin{proof}
  Let $w\in W$ be such that $\mathcal{R}(w)$ is not commutative. Then
  by Lemma~\ref{noncommutative} we can write $w = w'\cdot sts$ reduced
  for some $w'\in W$ and noncommuting generators $s,t\in S$, so
  $w\not\in W_c$.
\end{proof}

We now examine a partial ordering on a Coxeter group called the
\textbf{Bruhat order}.

\begin{definition}\label{bruhatdef}
  Let $(W,S)$ be a Coxeter system and let $w\in W$.  Pick a reduced
  expression $w=s_1s_2\cdots s_r$.  Then $x\leq w$ if and only if $x$
  is a subword of $w$; that is, $x=s_{i_1}s_{i_2}\cdots s_{i_k}$ where
  $1\leq i_1 < i_2 < \cdots < i_k \leq r$.
\end{definition}

Since the definition allows for the choice of any reduced expression for $w$, it
is not immediately clear that the Bruhat order is well defined.
However, the Bruhat order is indeed well defined; see~\cite[Theorem
2.2.2]{bjorner2005combinatorics} for a proof.

\begin{example}
  Let $W=W(D_4)$, let $w = s_1s_2s_3s_2$, let $x = s_1s_2$ and let $y
  = s_3s_4$. It is easy to see that the given expressions for $w$,
  $x$, and $y$ are reduced.  Then we have $x\leq w$ since $x$ is a subword of
  $w$, but $y\not\leq w$ since $s_4\in\supp(y)\setminus\supp(w)$, so
  no reduced expression for $y$ is a subword of $w$.
\end{example}

We now introduce star operations, which were developed by Kazhdan and
Lusztig in \cite{kazhdan1979representations}.

\begin{definition}\label{stardef}
  Let $(W,S)$ be a simply laced Coxeter system.  Let $s,t\in S$ such
  that $m(s,t) = 3$.  Define
  \begin{align*}
    D_\mathcal{L}(s,t) &= \left\{w\in W \;\big|\; |\mathcal{L}(w)\cap\{s,t\}| = 1\right\};\\
    D_\mathcal{R}(s,t) &= \left\{w\in W \;\big|\;
      |\mathcal{R}(w)\cap\{s,t\}| = 1\right\}.
  \end{align*}
  If $w\in D_\mathcal{L}(s,t)$, then exactly one of $sw$, $tw$, is in
  $D_\mathcal{L}(s,t)$.  We call the resulting element $\expl{w}{*}$,
  and define the \textbf{left star operation} with respect to
  $\{s,t\}$ to be the map ${}^*:W\rightarrow W:w\mapsto\expl{w}{*}$.
  We can define $w^*$ in a similar way using $D_\mathcal{R}(s,t)$,
  resulting in a \textbf{right star operation} with respect to
  $\{s,t\}$.  Note that each of these maps is an involution and
  partially defined.
\end{definition}

\begin{example}
  Let $x=s_3s_4s_5s_6s_5$ and let $w = s_4$. Note that the given
  expression for $x$ is reduced. Then $x,w\in D_\mathcal{L}(s_3,s_4)$
  so we can apply the operations $\expl{x}{*} = s_4s_5s_6s_5$ and
  $\expl{w}{*} = s_3s_4$. However, $x,w\not\in
  D_\mathcal{R}(s_5,s_6)$, so the right star operation with respect to
  $\{s_5,s_6\}$ is not defined on either $x$ or $w$.
\end{example}

\begin{definition}\label{starreddef}
If $w,y\in D_\mathcal{L}(s,t)$ are such that $\expl{w}{*} = y$ and
$\ell(w) - 1 = \ell(y)$ then we say that $w$ is \textbf{left star
  reducible} to $y$. If $w,y\in D_\mathcal{R}(s,t)$ are such that $w^* =
y$ and $\ell(w) - 1 = \ell(y)$ then we say that $w$ is \textbf{right
  star reducible} to $y$. If there is a sequence
\[
w = w_{(0)}, w_{(1)}, \dots, w_{(k-1)}, w_{(k)} = y
\]
such that $w_{(i)}$ is left or right star reducible to $w_{(i+1)}$
then we say that $w$ is \textbf{star reducible} to $y$.
\end{definition}

\begin{proposition}\label{starredcomm}
  Let $W$ be a Coxeter group of type $A$ or $D$ and let $w\in
  W_c$. Then $w$ is star reducible to a product of commuting
  generators.
\end{proposition}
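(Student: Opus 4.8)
The plan is to induct on $\ell(w)$, the heart of the matter being a single-step reduction lemma: if $w\in W_c$ is not already a product of commuting generators, then $w$ is left or right star reducible to some $w'$ with $\ell(w')=\ell(w)-1$. Granting this, the proposition follows quickly. A star reduction deletes a descent, so $w'=ws$ or $w'=sw$; since $\ell(w')=\ell(w)-1$, any reduced word for $w'$ extends to a reduced word for $w$ by appending (respectively prepending) the deleted generator, and a long-braid subword in $w'$ would survive into $w$. Hence $w'\in W_c$, and by the inductive hypothesis $w'$ is star reducible to a commuting product; composing the two reductions (Definition~\ref{starreddef}) finishes the argument. The cases $\ell(w)\le 1$ are immediate, an empty or single-generator product being trivially commuting.

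For the reduction lemma I would first use Corollary~\ref{fccomm}, which tells us that $J:=\mathcal{R}(w)$ is commutative. The generators of $J$ therefore pairwise commute, so \(c=\prod_{s\in J}s\) is well defined and $w=w'c$ is reduced with $\ell(w')=\ell(w)-|J|$. If $w'=e$ then $w=c$ is already a commuting product; otherwise pick $t\in\mathcal{R}(w')$. Two short length counts then show that $t\notin J$ (else $w=w'c$ could not be reduced) and that $t$ fails to commute with some $s\in J$ (else $t$ would slide past $c$, forcing $t\in\mathcal{R}(w)=J$). So $m(s,t)=3$ for some $s\in J$, and since exactly one of $s,t$ lies in $\mathcal{R}(w)$, the right star operation with respect to $\{s,t\}$ is defined on $w$ (Definition~\ref{stardef}).

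The crux is to verify that this star operation is a genuine \emph{reduction}, i.e.\ that $w^*=ws$ satisfies $\ell(w^*)=\ell(w)-1$; by Definition~\ref{starreddef} this reduces to checking $t\in\mathcal{R}(ws)$. When $s$ is the \emph{only} element of $J$ not commuting with $t$, this is a direct computation: $ws=w'\prod_{s'\in J\setminus\{s\}}s'$, and as $t$ commutes with each surviving factor it can be slid across to meet $w'$, where $t\in\mathcal{R}(w')$ forces the length to drop. I expect the main obstacle to be the remaining case, in which $t$ fails to commute with two or more elements of $J$; there the naive right star operation need not shorten $w$ at all. To handle it I would use that the Coxeter diagrams of types $A$ and $D$ are trees of bounded valence — a path, respectively a path with a single trivalent node — so that full commutativity pins the offending generators into a rigid local configuration around the one occurrence of $t$, and a careful reading of that configuration yields a suitable star reduction on the left or the right with respect to an appropriately chosen pair. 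Carrying out this local analysis, and checking that the trivalent node of type $D$ produces no configurations genuinely new beyond the type $A$ picture, is where the real work lies, and it is precisely the point at which the hypothesis that $W$ has type $A$ or $D$ is used.
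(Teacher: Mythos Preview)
The paper does not actually prove this proposition; it simply cites \cite[Theorem~6.3]{green2006star}. So there is no in-paper argument to compare your approach against.

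Your outline has the right skeleton, and you have correctly located the crux, but the crux is left unresolved. When $t$ is adjacent to more than one element of $J$, the right star operation you set up can \emph{lengthen} $w$ rather than shorten it. Concretely, take $W=W(A_3)$ and $w=s_2s_1s_3\in W_c$: here $J=\mathcal{R}(w)=\{s_1,s_3\}$, $w'=s_2$, and $t=s_2$ is adjacent to both elements of $J$. One checks that $ws_1=s_2s_3$ has right descent set $\{s_3\}$, so $ws_1\notin D_{\mathcal{R}}(s_1,s_2)$ and the right star image of $w$ with respect to $\{s_1,s_2\}$ is therefore $ws_2$, of length $4$; the pair $\{s_2,s_3\}$ behaves symmetrically. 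The rescue in this example is on the left: $\mathcal{L}(w)=\{s_2\}$, and the left star image with respect to $\{s_1,s_2\}$ is $s_2w=s_1s_3$, a genuine reduction. Your proposal asserts that such a switch of sides, or some other local manoeuvre, is always available in types $A$ and $D$, but offers no argument beyond the phrase ``careful reading of that configuration''. That is precisely the content of the cited theorem, and the proof in \cite{green2006star} is a nontrivial case analysis using the heap structure of fully commutative elements, not a formality. Until you carry out that analysis, the proof is incomplete.
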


\begin{proof}
  This is \cite[Theorem 6.3]{green2006star}.
\end{proof}

\begin{proposition}\label{starfc}
  Let $(W,S)$ be a simply laced Coxeter system. If $x\in W_c$ and
  $s,t\in S$ are such that $x\in D_\mathcal{L}(s,t)$, then
  $\expl{w}{*}\in W_c$.
\end{proposition}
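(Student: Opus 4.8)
The plan is to reduce the statement to a single ``prepend a generator'' claim and then settle that claim by a heap argument. Throughout write $I=\{s,t\}$, so $m(s,t)=3$ and $W_I=\langle s,t\rangle$ is dihedral of order $6$. Applying Proposition~\ref{factor} to $x^{-1}$ produces a reduced factorization $x=uz$ with $u\in W_I$ and $\mathcal L(z)\cap I=\emptyset$; since $x\in D_{\mathcal L}(s,t)$, exactly one of $s,t$ lies in $\mathcal L(x)$, which forces $u\in\{s,st,t,ts\}$. A direct check of which of $sx,tx$ returns to $D_{\mathcal L}(s,t)$ then shows that the left star operation merely swaps the dihedral prefix: it interchanges $sz\leftrightarrow tsz$ and $tz\leftrightarrow stz$, leaving $z$ fixed. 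Swapping the roles of $s$ and $t$ identifies the second edge with the first, so it suffices to prove $sz\in W_c\iff tsz\in W_c$ whenever $\mathcal L(z)\cap I=\emptyset$.

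First I would dispose of the easy direction. Since $\mathcal L(z)\cap I=\emptyset$, a length count gives $\ell(tsz)=\ell(sz)+1=\ell(z)+2$, so $tsz=t\cdot(sz)$ is a reduced product and $sz$ is a factor of it. A factor of a fully commutative element is again fully commutative: any reduced word for the factor extends to one for the whole element, and a forbidden contiguous $aba$ (with $m(a,b)=3$) in the factor would survive in the longer word. Hence $tsz\in W_c\Rightarrow sz\in W_c$, and this also handles the two edges on which the star operation shortens $x$.

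The substance is the converse: assuming $sz\in W_c$ and $\mathcal L(z)\cap I=\emptyset$, show $tsz\in W_c$. Here I would argue by contradiction using Stembridge's heap model of fully commutative elements \cite{stembridge1996fully}: $sz$ has a well-defined labelled heap $P$, its reduced words are the linear extensions of $P$, and complexity corresponds to a \emph{convex} chain with labels $a,b,a$ for non-commuting $a,b$. Passing from $sz$ to $tsz$ adjoins one new element $\tau$ labelled $t$, and because $\tau$ comes first it is a \emph{minimal} element of the enlarged heap; thus any new bad chain must have $\tau$ at the bottom and hence the shape $t,b,t$ (if $sz$ contains no $t$-letter there is no such chain and we are done). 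Let $\sigma$ be the leading $s$, a minimal element of $P$, and let $\gamma$ be the top $t$ of such a chain. Every $t$-labelled element lies above $\sigma$ (the first letter is $s$ and $s,t$ do not commute), so $\tau<\sigma<\gamma$. If the middle element is not $\sigma$, or if $b\neq s$, then $\sigma$ — which does not commute with $t$ — is trapped strictly between the two $t$-endpoints and destroys convexity, so no braid exists. The only surviving possibility is the chain $\tau<\sigma<\gamma$ with labels $t,s,t$; convexity then lets one realize $tst$ at the front of a reduced word for $tsz$, so $sz$ has a reduced word beginning $s\,t$. Deleting the initial $s$ yields a reduced word for $z$ beginning with $t$, forcing $t\in\mathcal L(z)$ and contradicting $\mathcal L(z)\cap I=\emptyset$. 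Therefore $tsz\in W_c$.

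The main obstacle is this last step, specifically the convexity bookkeeping: one must check carefully that no new bad chain can arise except as a front $tst$, and that the surviving configuration genuinely slides to the front (using the minimality of $\tau$ and $\sigma$ and the fact that any interfering lower cover of $\gamma$ must commute with both $s$ and $t$, hence can be moved past them). This is exactly where the hypothesis $\mathcal L(z)\cap I=\emptyset$ is indispensable; without it — for instance $z=t$, giving $sz=st$ but $tsz=tst$ — the statement fails. If the heap framework is not yet available at this point, the identical argument can be run on reduced words directly, using that all reduced words of a fully commutative element are connected by commutations, which is the purely word-theoretic shadow of the heap computation above.
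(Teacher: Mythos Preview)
The paper does not prove this statement at all: it simply cites \cite[Proposition 2.10]{shi2005fully}. Your proposal, by contrast, supplies a complete direct argument via the left $W_I$-factorization and Stembridge's heap model. The reduction to the single equivalence $sz\in W_c\iff tsz\in W_c$ (with $\mathcal L(z)\cap\{s,t\}=\emptyset$) is clean, the ``factor'' direction is immediate, and the heap analysis of the hard direction is correct in outline and in conclusion. So your route is genuinely different from---and more informative than---the paper's one-line citation.

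One justification is stated backwards, however. In the last step you need $\{\tau,\sigma,\gamma\}$ to be an order ideal (so that $tst$ can be slid to the very front), and you write that ``any interfering lower cover of $\gamma$ must commute with both $s$ and $t$, hence can be moved past them.'' That cannot be right: in a heap, a covering relation $q\lessdot\gamma$ forces the label of $q$ \emph{not} to commute with the label $t$ of $\gamma$. The correct (and shorter) argument is the opposite one: any $q\lessdot\gamma$ has label not commuting with $t$, hence $q$ is comparable to the $t$-labelled minimal element $\tau$; minimality gives $\tau<q<\gamma$, and convexity of $\{\tau,\sigma,\gamma\}$ then forces $q=\sigma$. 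Thus $\sigma$ is the \emph{unique} lower cover of $\gamma$, so $\{p:p<\gamma\}=\{\tau,\sigma\}$ and $\{\tau,\sigma,\gamma\}$ is an order ideal. Now a linear extension beginning $\tau,\sigma,\gamma$ exists, giving a reduced word for $tsz$ starting $tst$, whence $t\in\mathcal L(z)$---the desired contradiction. With this fix, your proof stands.
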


\begin{proof}
  This is \cite[Proposition 2.10]{shi2005fully}.
\end{proof}

\section{Kazhdan--Lusztig theory}

We will now use a Coxeter system $(W,S)$ to construct an algebra
$\mathcal{H} = \mathcal{H}(W,S)$ called the \textbf{Hecke algebra}.
The following definitions can be found in
\cite{kazhdan1979representations}.

\begin{definition}\label{heckedef}
  Let $\mathcal{A}=\mathbb{Z}[q^{\frac{1}{2}},q^{-\frac{1}{2}}]$ be
  the ring of Laurent polynomials over $\mathbb{Z}$. Let
  $\mathcal{A}^{+} = \mathbb{Z}[q^{\frac{1}{2}}]$.  Then
  $\mathcal{H}(W,S)$ is the algebra over $\mathcal{A}$ with linear
  basis $\{T_w\mid w\in W\}$ with multiplication determined by the
  following relations:
  \begin{enumerate}
    \item $T_sT_w = T_{sw}$ if $\ell(sw) > \ell(w)$;
    \item $T_s^2 = (q-1)T_s + qT_1$.
  \end{enumerate}
\end{definition}

Using the above relations, we can compute that
\[
T_s^{-1} = q^{-1}T_s - (1-q^{-1})T_1.
\]
Let $w\in W$ have reduced expression $w=s_1s_2\cdots s_r$.  Then using
the first multiplication rule above we see that $T_w =
T_{s_1}T_{s_2}\cdots T_{s_r}$, so $T_w^{-1} = T_{s_r}^{-1}\cdots
T_{s_1}^{-1}$, thus each $T_w$ is invertible in $\mathcal{H}$.

We now define a ring homomorphism $\iota:\mathcal{H}\rightarrow
\mathcal{H}$ by $\iota\left(q^\frac{1}{2}\right)=q^{-\frac{1}{2}}$ and
$\iota(T_w)=T^{-1}_{w^{-1}}$. Note that $\iota$ is an involution.
This involution gives rise to an interesting basis for $\mathcal{H}$.

\begin{proposition}\label{cbasis}
  For each $w\in W$ we have a unique element $C_w\in\mathcal{H}$ with
  the following properties:
  \begin{enumerate}
  \item $\iota(C_w) = C_w$,
  \item $C_w = \displaystyle\sum_{x\leq w} (-1)^{\ell(w) + \ell(x)}
    q^{\frac{1}{2}(\ell(w)-\ell(x))} \iota\left(P_{x,w}\right) T_x$,
    where $P_{w,w} = 1$ and $P_{x,w}(q)\in \mathbb{Z}[q]$ has degree
    $\leq\frac{1}{2}(\ell(w)-\ell(x)-1)$ if $x < w$.
  \end{enumerate}
\end{proposition}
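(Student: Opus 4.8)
The plan is to prove the two assertions separately: first the \emph{uniqueness} of an element satisfying (1) and (2), which is essentially formal, and then \emph{existence}, which I would establish by induction on $\ell(w)$. Throughout I would use two structural facts about $\iota$: it is a ring involution, and, expanding $T_{x^{-1}}^{-1}$ from the relations in Definition~\ref{heckedef}, one has $\iota(T_x)=q^{-\ell(x)}T_x+(\text{terms }T_z\text{ with }\ell(z)<\ell(x))$.

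\textbf{Uniqueness.} Suppose $C_w$ and $C_w'$ both satisfy (1) and (2), and put $D=C_w-C_w'$. Since $P_{w,w}=1$ in each expansion, the $T_w$-coefficients agree and cancel, so $D=\sum_{x<w}d_xT_x$. For $x<w$ the coefficient of $T_x$ in $C_w$ is $(-1)^{\ell(w)+\ell(x)}q^{\frac12(\ell(w)-\ell(x))}\iota(P_{x,w})$, and since $\deg P_{x,w}\le\frac12(\ell(w)-\ell(x)-1)$, applying $\iota$ and multiplying by $q^{\frac12(\ell(w)-\ell(x))}$ produces only powers of $q^{1/2}$ lying between $q^{1/2}$ and $q^{\frac12(\ell(w)-\ell(x))}$. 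Hence each $d_x\in q^{1/2}\mathbb{Z}[q^{1/2}]$, while $D$ is $\iota$-invariant by (1). I would then invoke the following lemma: a nonzero $\iota$-invariant element cannot have all of its $T$-coefficients in $q^{1/2}\mathbb{Z}[q^{1/2}]$. Indeed, choose $x$ of maximal length in the support of $D$; comparing the coefficient of $T_x$ on the two sides of $\iota(D)=D$, only $y=x$ contributes on the left by maximality, giving $d_x=q^{-\ell(x)}\iota(d_x)$. The left-hand side involves only positive powers of $q^{1/2}$ and the right-hand side only negative powers, forcing $d_x=0$, a contradiction. Therefore $D=0$.

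\textbf{Existence.} I would induct on $\ell(w)$. For $w=1$ take $C_1=T_1$, which is $\iota$-invariant and matches (2) with $P_{1,1}=1$. For $\ell(w)\ge1$, choose $s\in\mathcal{L}(w)$ and set $w'=sw$, so that $\ell(w')=\ell(w)-1$; by induction $C_{w'}$ exists, and in particular so does $C_s$ (the length-one case). The product $C_sC_{w'}$ is $\iota$-invariant because $\iota$ is a ring homomorphism fixing each factor. Expanding $C_sC_{w'}$ in the $T$-basis, its top term is a multiple of $T_w$ and its remaining coefficients lie in $\mathbb{Z}[q^{1/2}]$; the only obstruction to (2) is the presence of coefficients of some $T_z$ with $z<w$ whose lowest-degree part violates the degree bound. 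I would remove these by subtracting a $\mathbb{Z}$-combination $\sum_{z<w}a_zC_z$ of already-constructed lower basis elements: each such $C_z$ is $\iota$-invariant with leading term $T_z$ and with all lower coefficients in $q^{1/2}\mathbb{Z}[q^{1/2}]$, so a suitable choice of integers $a_z$ cancels the offending terms without disturbing $\iota$-invariance or the leading term. The result is $\iota$-invariant, has the correct leading term, and satisfies the degree bound, so reading off its coefficients yields $C_w$ together with the polynomials $P_{x,w}$.

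The hard part will be the correction step in the existence argument. One must show that the coefficients of $C_sC_{w'}$ which exceed the allowed degree are \emph{constants}, i.e. lie in the image of $\mathcal{A}^+$ modulo $q^{1/2}\mathcal{A}^+$, and are indexed by elements $z<w$, so that they can be annihilated by an integer combination of the lower $C_z$. This is precisely the content of the Kazhdan--Lusztig multiplication rule $C_sC_{w'}=C_w+\sum_{z<w}\mu_z\,C_z$ and of the recursion it induces on the $P_{x,w}$; verifying that this recursion simultaneously preserves integrality and the bound $\deg P_{x,w}\le\frac12(\ell(w)-\ell(x)-1)$ is the crux of the whole proof. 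Once that is established, the uniqueness already proved guarantees that the construction is independent of the chosen $s\in\mathcal{L}(w)$.
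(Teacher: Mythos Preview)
The paper does not prove this statement at all: its entire proof is the one-line citation ``This is \cite[Theorem 1.1]{kazhdan1979representations}.'' So there is nothing to compare against in the paper itself. What you have written is a faithful outline of the original Kazhdan--Lusztig argument from that reference: the uniqueness via the triangularity of $\iota$ on the $T$-basis together with the degree bound, and existence by induction using the product $C_sC_{w'}$ followed by subtraction of lower $C_z$'s. Your identification of the ``hard part'' (showing that the excess terms in $C_sC_{w'}$ are integer multiples of lower $C_z$'s, equivalently that the recursion preserves integrality and the degree bound) is exactly right, and is the substance of Kazhdan and Lusztig's Sections~2.2--2.3. In short, your sketch is correct and is precisely the argument the paper is citing; the paper just chose not to reproduce it.
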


\begin{proof}
  This is \cite[Theorem 1.1]{kazhdan1979representations}.
\end{proof}

These $C_w$ form a basis for $\mathcal{H}$
\cite[6.1]{bjorner2005combinatorics} called the \textbf{Kazhdan--Lusztig basis}.

The polynomials $P_{x,w}$ in Proposition~\ref{cbasis} are called
\textbf{Kazhdan--Lusztig polynomials}, and are particularly difficult
to calculate. For example, the degree of a particular polynomial
$P_{x,w}$ is not even known in general.  We do, however, have an upper
bound for the degree of Kazhdan--Lusztig polynomials when $x < w$:
\[
\deg(P_{x,w})\leq \frac{1}{2}(\ell(w)-\ell(x)-1)
\]
\cite[(1.1.c)]{kazhdan1979representations}.

\begin{definition}\label{muvaldef}
  We denote by $\mu(x,w)$ the coefficient of the term of degree
  $\frac{1}{2}(\ell(w)-\ell(x)-1)$ in $P_{x,w}$. Note that if
  $\ell(w)\equiv \ell(x)\bmod 2$ then $\mu(x,w) = 0$ since
  $\frac{1}{2}(\ell(w)-\ell(x)-1)$ is not an integer; in particular,
  $\mu(w,w) = 0$ since $\ell(w)-\ell(w) = 0$ is even. If $\mu(x,w)\neq
  0$ we write $x\prec w$.
\end{definition}

As we will soon see, calculating $\mu$-values for Kazhdan--Lusztig
polynomials is very difficult, even for Coxeter groups of small rank.
In \cite{kazhdan1979representations} Kazhdan and Lusztig proved the
following helpful elementary properties of $\mu$-values.

\begin{proposition}\label{muproperties}
  \mbox{}
  \begin{enumerate}
  \item If $x,w\in D_\mathcal{L}(s,t)$ then $\mu(x,w) =
    \mu(\expl{x}{*},\expl{w}{*})$;
  \item If $x,w\in D_\mathcal{R}(s,t)$ then $\mu(x,w) = \mu(x^*,w^*)$;
  \item If there exists $s\in\mathcal{L}(w)\setminus\mathcal{L}(x)$ then either
    \begin{enumerate}
    \item $\mu(x,w) = 0$, or
    \item $x=sw$ and $\mu(x,w) = 1$;
    \end{enumerate}
  \item If there exists $s\in\mathcal{R}(w)\setminus\mathcal{R}(x)$ then either
    \begin{enumerate}
    \item $\mu(x,w) = 0$, or
    \item $x=ws$ and $\mu(x,w) = 1$.
    \end{enumerate}
  \end{enumerate}
\end{proposition}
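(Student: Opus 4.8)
The engine for the whole proposition is the standard Kazhdan--Lusztig recursion for $P_{x,w}$, which one derives from the multiplication of $C_s$ against $C_w$ in $\mathcal{H}$ (itself a consequence of Proposition~\ref{cbasis}): if $s\in\mathcal{L}(w)$, then for all $x\le w$,
\[
P_{x,w}=q^{1-c}P_{sx,sw}+q^{c}P_{x,sw}-\sum_{\substack{x\le z<sw\\ sz<z}}\mu(z,sw)\,q^{(\ell(w)-\ell(z))/2}P_{x,z},
\]
where $c=1$ if $sx<x$ and $c=0$ otherwise. A first consequence I would record is the symmetry $P_{x,w}=P_{sx,w}$ whenever $s\in\mathcal{L}(w)$; together with the lifting property of the Bruhat order (if $sw<w$, $sx>x$ and $x\le w$, then $sx\le w$) this is all that (3) and (4) require. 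Throughout I would track the parity of $\ell(w)-\ell(x)$, since $\mu(x,w)=0$ automatically when this is even (Definition~\ref{muvaldef}).

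For (3), assume $s\in\mathcal{L}(w)\setminus\mathcal{L}(x)$, i.e. $sw<w$ and $sx>x$; by lifting $sx\le w$. If $x=sw$ then $sx=w$, and the symmetry gives $P_{x,w}=P_{sw,w}=P_{w,w}=1$, while $\tfrac12(\ell(w)-\ell(x)-1)=0$, so $\mu(x,w)=1$: this is case (b). Otherwise $sx<w$ strictly, so the degree bound of Proposition~\ref{cbasis} applies to $P_{sx,w}$ and yields $\deg P_{x,w}=\deg P_{sx,w}\le\tfrac12(\ell(w)-\ell(sx)-1)=\tfrac12(\ell(w)-\ell(x)-1)-\tfrac12$, hence strictly below $\tfrac12(\ell(w)-\ell(x)-1)$; therefore the top coefficient $\mu(x,w)$ vanishes, which is case (a). Property (4) is the mirror image: applying the anti-automorphism $v\mapsto v^{-1}$, which satisfies $P_{x,w}=P_{x^{-1},w^{-1}}$ (so $\mu(x,w)=\mu(x^{-1},w^{-1})$) and exchanges $\mathcal{L}$ with $\mathcal{R}$, turns (4) into (3).

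For (1) and (2), first reduce (2) to (1) by that same inverse anti-automorphism, which interchanges the left and right star operations while preserving $\mu$. To prove (1), fix $s,t$ with $m(s,t)=3$ and recall that the left star operation moves each element of $D_\mathcal{L}(s,t)$ within its own coset $W_{\{s,t\}}v$, where $W_{\{s,t\}}\cong S_3$: such a coset has six elements, of which the minimal and maximal lie outside $D_\mathcal{L}(s,t)$, while the four interior ones lie in $D_\mathcal{L}(s,t)$ and are interchanged in two star-pairs, each pair differing in length by one. The plan is to run the recursion above (with respect to $s$, and with respect to $t$) on each of $P_{x,w}$ and $P_{\expl{x}{*},\expl{w}{*}}$ and to match the coefficients of the common top-degree term, using (3) and (4) to discard the contributions of descents that separate $x$ from $w$.

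I expect (1) to be the main obstacle. The bookkeeping is delicate: $x$ and $w$ lie in different $W_{\{s,t\}}$-cosets, the singleton $\mathcal{L}(x)\cap\{s,t\}$ may or may not equal $\mathcal{L}(w)\cap\{s,t\}$, and the star operation shifts each length by $\pm1$, so one must first check that the two shifts are compatible (otherwise a parity obstruction forces both $\mu$-values to vanish simultaneously) and then show that the extremal coefficient is transported \emph{exactly}, not merely bounded. The crux is controlling the correction sum $\sum_z\mu(z,sw)\,q^{(\ell(w)-\ell(z))/2}P_{x,z}$: one must show that only those $z$ already linked by the star correspondence contribute at the top degree, which amounts to an induction on length intertwined with (3) and (4). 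This is precisely the content of the Kazhdan--Lusztig star-operation theorem, and in a self-contained treatment I would isolate it as a separate lemma and prove it by induction on $\ell(w)-\ell(x)$.
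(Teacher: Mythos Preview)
The paper does not actually prove this proposition: its entire proof is a citation to Kazhdan and Lusztig's original paper, with parts (1)--(2) attributed to \cite[Theorem 4.2]{kazhdan1979representations} and parts (3)--(4) to \cite[(2.3.e), (2.3.f)]{kazhdan1979representations}. So there is no argument in the paper to compare against; you are supplying substantially more than the paper does.

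Your treatment of (3) and (4) is correct and is precisely the standard derivation: the identity $P_{x,w}=P_{sx,w}$ when $s\in\mathcal{L}(w)\setminus\mathcal{L}(x)$ (which the paper records separately as Proposition~\ref{klpolreduce}(1)) together with the degree bound from Proposition~\ref{cbasis} forces either $x=sw$ and $\mu=1$, or $\mu=0$; and (4) follows from (3) via $P_{x,w}=P_{x^{-1},w^{-1}}$.

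For (1) and (2) your reduction of (2) to (1) is fine, and your description of the coset structure and the intended induction is accurate. You are right to flag that (1) is the real work: the case analysis over the four interior elements of each $W_{\{s,t\}}$-coset, and the control of the correction sum at top degree, is exactly what occupies the proof of Theorem~4.2 in Kazhdan--Lusztig. What you have written is an honest outline rather than a proof, and you say so; since the paper itself simply cites that theorem, your proposal is already more detailed than what appears here. If you want a genuinely self-contained argument for (1), you will indeed need to carry out the full induction you describe, which is several pages of careful bookkeeping in the original source.
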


\begin{proof}
  Parts (1) and (2) are \cite[Theorem
  4.2]{kazhdan1979representations}. Parts (3) and (4) are
  \cite[(2.3.e)]{kazhdan1979representations} and
  \cite[(2.3.f)]{kazhdan1979representations}, respectively.
\end{proof}

\begin{corollary}\label{mucomm}
  Let $x\in W_c$ and let $w$ be a product of mutually commuting
  generators. Then $\mu(x,w)\in\{0,1\}$.
\end{corollary}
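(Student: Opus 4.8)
The plan is to reduce everything to a comparison of descent sets and then split into two cases according to whether $\mathcal{L}(w)\subseteq\mathcal{L}(x)$. First I would record two elementary observations about $w$. Since $w$ is a reduced product of mutually commuting generators, each generator occurring in $w$ may be slid to the front, so that $\mathcal{L}(w)=\supp(w)$; moreover, letting $I=\supp(w)$, the parabolic subgroup $W_I$ is an elementary abelian $2$-group (all its generators commute and square to the identity), and $w$ is precisely its longest element, namely the product of the generators of $I$ taken once each.

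In the first case, suppose $\mathcal{L}(w)\not\subseteq\mathcal{L}(x)$ and choose $s\in\mathcal{L}(w)\setminus\mathcal{L}(x)$. Then Proposition~\ref{muproperties}(3) applies verbatim and gives $\mu(x,w)\in\{0,1\}$, with no further work. In the remaining case $\mathcal{L}(w)\subseteq\mathcal{L}(x)$, that is $I\subseteq\mathcal{L}(x)$, I would argue that $\mu(x,w)=0$ purely on length grounds. Applying Proposition~\ref{factor} to $x^{-1}$ and inverting the resulting factorization yields a reduced decomposition $x=uv$ with $u\in W_I$; a short computation using that $W^I$ consists of minimal coset representatives shows $\mathcal{L}(u)=\mathcal{L}(x)\cap I$. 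Since $I\subseteq\mathcal{L}(x)$ we get $\mathcal{L}(u)=I$, so by Proposition~\ref{descentlong} applied inside $W_I$ the element $u$ is the longest element of $W_I$, which is exactly $w$. Hence $x=wv$ is reduced and $\ell(x)\geq\ell(w)$. But by Definition~\ref{muvaldef} a nonzero $\mu(x,w)$ forces $x<w$, in particular $\ell(x)<\ell(w)$; this contradiction shows $\mu(x,w)=0$, completing the second case.

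The only real obstacle is this second case: one must upgrade the descent containment $\supp(w)\subseteq\mathcal{L}(x)$ into the honest Bruhat/length statement $\ell(x)\geq\ell(w)$, and the cleanest route is the parabolic decomposition of Proposition~\ref{factor} together with the identification of $w$ as the longest element of the abelian parabolic $W_{\supp(w)}$. I would note in passing that this argument never invokes full commutativity of $x$: the hypothesis $x\in W_c$ and Corollary~\ref{fccomm} are not actually needed for this statement, although they fix the context in which the corollary will be used.
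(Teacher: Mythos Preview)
Your argument is correct and is essentially the contrapositive organization of the paper's proof. The paper splits instead on whether $x<w$: if $x\not<w$ then $\mu(x,w)=0$; if $x<w$, then since every element Bruhat-below a reduced product of mutually commuting generators is itself a proper subproduct of those generators, one gets $\mathcal{L}(x)\subsetneq\mathcal{L}(w)$ in one line and applies Proposition~\ref{muproperties}(3). Your second case reaches the same conclusion but via more machinery (the parabolic factorization of Proposition~\ref{factor} together with Proposition~\ref{descentlong} to identify $w$ as the longest element of the abelian parabolic $W_I$); this is valid, though heavier than the direct subword observation. Your closing remark that the hypothesis $x\in W_c$ is never used is correct, and in fact applies equally to the paper's argument.
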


\begin{proof}
  If $x\not < w$ then $\mu(x,w) = 0$ so we are done. If $x < w$ then
  $\mathcal{L}(x) \subsetneq \mathcal{L}(w)$, so there exists some
  $s\in\mathcal{L}(w) \setminus\mathcal{L}(x)$ and we are done by
  Proposition~\ref{muproperties} part (3).
\end{proof}

Understanding $\mu$-values helps us to calculate Kazhdan--Lusztig
polynomials in general due to the following recurrence relation.

\begin{proposition}\label{klpolrecursive}
  Let $(W,S)$ be a Coxeter system.  Let $x,w\in W$ and let $s\in S$
  be such that $s\in\mathcal{L}(w)$.  Then
  \[
  P_{x,w}=q^{1-c}P_{sx,sw}+q^cP_{x,sw}-\sum_{\substack{sz < z\\
      z\prec sw }}\mu(z,sw)q^{\frac{1}{2}(\ell(w)-\ell(z))}P_{x,z},
  \]
  where
  \[
  c =
    \begin{cases}
      1, &\text{ if }s\in\mathcal{L}(x); \\
      0, &\text{else}.
    \end{cases}
  \]
\end{proposition}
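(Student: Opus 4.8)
The plan is to read the recurrence off from the Hecke-algebra product $C_sC_{sw}$. Since $s\in\mathcal{L}(w)$ we have $sw<w$, so $w=s\cdot(sw)$ with $s\notin\mathcal{L}(sw)$. A direct computation from Proposition~\ref{cbasis} produces the single-generator element $C_s$ (a combination of $T_s$ and $T_1$), and because $\iota$ is a ring homomorphism fixing every $C_u$, the product $C_sC_{sw}$ is again $\iota$-invariant. The strategy is: (i) expand $C_sC_{sw}$ in the Kazhdan--Lusztig basis; (ii) extract the coefficient of $T_x$ from both sides; and (iii) apply $\iota$ to turn the resulting identity among the $\iota(P_{\cdot,\cdot})$ into the stated identity among the $P_{\cdot,\cdot}$.

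The heart of the argument is step (i), the multiplication formula
\[
C_sC_{sw}=C_w+\sum_{\substack{sz<z\\ z\prec sw}}\mu(z,sw)\,C_z.
\]
To prove it I would write $C_sC_{sw}=\sum_z a_zC_z$; since both sides are $\iota$-invariant and the $C_z$ are the \emph{unique} self-dual basis of Proposition~\ref{cbasis}, each $a_z$ is itself $\iota$-invariant, and triangularity of the basis forces the top coefficient $a_w=1$. That only the $z$ with $sz<z$ contribute follows from the quadratic relation satisfied by left multiplication by $C_s$, which makes $C_sC_{sw}$ an eigenvector whose eigenspace is spanned by those $C_z$ with $s\in\mathcal{L}(z)$. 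Finally the degree bound of Proposition~\ref{cbasis} forces each surviving $\iota$-invariant $a_z$ to be a constant, and comparing top-degree coefficients identifies this constant as the leading coefficient $\mu(z,sw)$. I expect this identification to be the main obstacle, as it is exactly the technical core of Kazhdan--Lusztig theory.

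For step (ii), expand the left-hand side using $T_sT_y=T_{sy}$ when $sy>y$ and $T_sT_y=(q-1)T_y+qT_{sy}$ when $sy<y$. Tracking which rule applies to the two relevant terms $y=x$ and $y=sx$ splits the computation into the cases $s\in\mathcal{L}(x)$ and $s\notin\mathcal{L}(x)$, and produces exactly the prefactors $q^{c}$ and $q^{1-c}$ on $P_{x,sw}$ and $P_{sx,sw}$. On the right-hand side the coefficient of $T_x$ in each $C_u$ is, up to a sign $(-1)^{\ell(u)+\ell(x)}$ and a power of $q$, equal to $\iota(P_{x,u})$ by Proposition~\ref{cbasis}; dividing the matched coefficients by the appropriate common monomial, applying $\iota$, and using $\ell(w)=\ell(sw)+1$ turns every $\iota(P)$ back into $P$ and produces the power $q^{\frac12(\ell(w)-\ell(z))}$ attached to each term of the sum.

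One simplification deserves care: the coefficient of $T_x$ coming from the sum over $z$ carries a residual sign $(-1)^{\ell(z)+\ell(sw)}$, whereas the stated recurrence subtracts the sum with a uniform minus sign. These reconcile because $\mu(z,sw)\neq 0$ forces $\ell(sw)-\ell(z)$ to be odd (Definition~\ref{muvaldef}), so that $(-1)^{\ell(z)+\ell(sw)}=-1$ on every term that actually contributes. Apart from this sign point and the multiplication formula of step (i), all of step (ii) is routine bookkeeping of signs and powers of $q$.
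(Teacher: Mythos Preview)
The paper does not prove this proposition at all; it simply cites \cite[(2.2.c)]{kazhdan1979representations}. Your sketch is essentially the standard argument used in that original source (and reproduced in most textbook treatments): derive the multiplication formula $C_sC_{sw}=C_w+\sum_{sz<z,\,z\prec sw}\mu(z,sw)C_z$ via $\iota$-invariance and degree bounds, then read off the $T_x$-coefficient on each side. So your approach is correct and is exactly the route the cited reference takes; you have supplied what the paper chose to outsource. The one place where your outline is a touch glib is the claim that the $\iota$-invariant coefficients $a_z$ must be constants: you need an \emph{a priori} bound on the positive powers of $q^{1/2}$ appearing in $a_z$, which comes from comparing the highest $T$-term on each side and invoking the degree bound in Proposition~\ref{cbasis}; once that is in hand, $\iota$-invariance does force $a_z\in\mathbb{Z}$ as you say.
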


\begin{proof}
  This is \cite[(2.2.c)]{kazhdan1979representations}.
\end{proof}

Unfortunately, this is the only obvious way to compute
Kazhdan--Lusztig polynomials. If $W=W(D_n)$ we have $|W| = 2^{n-1}
n!$, which is very large even in groups of moderate rank. In order to
compute $P_{x,w}$ using the above recurrence relation, we must compute
intervals of the Bruhat order in $W$. In large groups such a
computation is very expensive in terms of either processor time or
memory, depending on the algorithm used.  As a result, using the above
recurrence relation to compute Kazhdan--Lusztig polynomials is
computationally infeasible in all but groups of small rank.

We can use Proposition~\ref{klpolrecursive} to deduce several simple
facts about Kazhdan--Lusztig polynomials.

\begin{proposition}\label{klpolreduce}\label{klpolsxsw}
  Let $x,w\in W$ be such that $x<w$ and let $s\in S$.
  \begin{enumerate}
  \item If $sw<w$ and $sx>x$ then $P_{x,w} = P_{sx,w}$.
  \item If $w < ws$ and $xs\not\leq w$ (thus $xs > x$) then
    $P_{x,w}=P_{xs,ws}$.
  \end{enumerate}
\end{proposition}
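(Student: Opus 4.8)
The plan is to derive both statements directly from the Kazhdan--Lusztig recurrence in Proposition~\ref{klpolrecursive}, choosing the auxiliary generator $s$ cleverly in each case.

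For part (1), suppose $sw < w$ and $sx > x$. Then $s \in \mathcal{L}(w) \setminus \mathcal{L}(x)$, and my first step is to apply Proposition~\ref{klpolrecursive} to both $P_{x,w}$ and $P_{sx,w}$ using this same $s$. Since $s \in \mathcal{L}(w)$, the recurrence applies to both. The key observation is that the constant $c$ in the recurrence is $0$ when computing $P_{x,w}$ (because $s \notin \mathcal{L}(x)$, as $sx > x$), whereas $c = 1$ when computing $P_{sx,w}$ (because $s \in \mathcal{L}(sx)$, since $s(sx) = x < sx$). I would then write out both expansions. The first two terms of each expansion involve $P_{s \cdot x, sw}$, $P_{x,sw}$ for one and $P_{s \cdot sx, sw} = P_{x,sw}$, $P_{sx,sw}$ for the other; after substituting $c = 0$ versus $c = 1$, I expect the leading $q$-power terms to match. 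The subtracted sum $\sum_{sz<z,\, z \prec sw} \mu(z,sw)\, q^{\frac{1}{2}(\ell(w)-\ell(z))} P_{\bullet, z}$ differs only in its final index ($x$ versus $sx$), so I would need to argue these sums agree term-by-term. Here I would invoke Proposition~\ref{klpolsxsw} recursively (or Proposition~\ref{muproperties}) to identify $P_{x,z} = P_{sx,z}$ for each $z$ appearing in the sum, using that every such $z$ satisfies $sz < z$; since $sx > x$, part~(1) applied at the smaller element $z$ gives the needed equality, making the whole proof an induction on $\ell(w)$.

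For part (2), the hypotheses $w < ws$ and $xs \not\leq w$ are the right-handed mirror; my approach is to reduce to part (1) by passing to inverses, using that $\ell(v) = \ell(v^{-1})$, that the Bruhat order is inversion-invariant, and that $P_{x,w} = P_{x^{-1}, w^{-1}}$ (a standard symmetry I would cite or note follows from the defining involution $\iota$). Under inversion, left descents become right descents, so the condition $ws > w$ translates to a left-descent condition on $(ws)^{-1}$, and the hypothesis $xs \not\leq w$ should translate into exactly the configuration handled in part~(1). The main bookkeeping is checking that $xs \not\leq w$ forces $xs > x$ and supplies the subword condition needed after inversion.

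The hard part, I expect, will be part~(1)'s term-by-term matching of the subtracted sums: one must confirm that the induction hypothesis applies to each $P_{x,z}$ versus $P_{sx,z}$, which requires verifying both that $z < w$ strictly (so the induction is well-founded) and that the pair $(x, z)$ satisfies the hypotheses of part~(1), i.e. $sz < z$ is given by the summation constraint but $sx > x$ must be carried along. Making the induction on $\ell(w)$ airtight, and handling the degenerate cases where $sx \not\leq z$ or $x \not\leq z$ (so that one side's Kazhdan--Lusztig polynomial vanishes), is where the real care is needed; the algebraic cancellation of the explicit $q$-power terms should be routine once $c$ is correctly tracked.
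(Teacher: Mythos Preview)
The paper does not actually prove this proposition: it cites \cite[(2.3.g)]{kazhdan1979representations} for part~(1) and \cite[Lemma 1.4.5(v)]{shi1986cells} for part~(2). So the comparison is between your proposal and the standard arguments behind those citations.

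Your plan for part~(1) is correct and is the standard proof: expand both $P_{x,w}$ and $P_{sx,w}$ via Proposition~\ref{klpolrecursive}, observe that the two explicit terms coincide once you track $c=0$ versus $c=1$, and match the sums term by term by induction on $\ell(w)$ (each $z$ in the sum has $sz<z$ and $\ell(z)<\ell(w)$, so the inductive hypothesis gives $P_{x,z}=P_{sx,z}$; the lifting property handles the degenerate cases where $x\not\leq z$).

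Your plan for part~(2), however, has a genuine gap. Passing to inverses converts right multiplication to left multiplication, but it does \emph{not} convert part~(2) into an instance of part~(1). After inversion the hypotheses read $sw^{-1}>w^{-1}$ and $sx^{-1}\not\leq w^{-1}$, whereas part~(1) needs $s$ to be a \emph{descent} of the larger element. If you instead apply part~(1) to the pair where $s$ really is a left descent, namely $\bigl(x^{-1},(ws)^{-1}\bigr)$, you obtain only $P_{x,ws}=P_{xs,ws}$, not the desired $P_{x,w}=P_{xs,ws}$. The hypothesis $xs\not\leq w$ has no analogue in part~(1), and that is the signal that part~(2) is not a mere reflection of part~(1).

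The argument that works still uses the recurrence, but directly. Apply the right-handed version of Proposition~\ref{klpolrecursive} to $P_{xs,ws}$ (here $s\in\mathcal{R}(ws)$ and $s\in\mathcal{R}(xs)$, so $c=1$):
\[
P_{xs,ws}=P_{x,w}+q\,P_{xs,w}-\sum_{\substack{zs<z\\ z\prec w}}\mu(z,w)\,q^{\frac{1}{2}(\ell(ws)-\ell(z))}P_{xs,z}.
\]
Now the hypothesis $xs\not\leq w$ does all the work: it forces $P_{xs,w}=0$, and since every $z$ in the sum satisfies $z\leq w$, it also forces $xs\not\leq z$ and hence $P_{xs,z}=0$. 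What remains is exactly $P_{xs,ws}=P_{x,w}$.
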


\begin{proof}
  Part (1) is \cite[(2.3.g)]{kazhdan1979representations}. Part (2) is
  \cite[Lemma 1.4.5(v)]{shi1986cells}.
\end{proof}

\begin{lemma}\label{wnu}
  Let $x,w\in W$ and let $u=s_1\cdots s_r$ be a product of mutually
  commuting generators such that $s_i\not\in\supp(x)\,\cup\,\supp(w)$ for
  each $1\leq i\leq r$.  Then $P_{x,w}=P_{xu, wu}$, and
  $\mu(x,w)=\mu(xu,wu)$.
\end{lemma}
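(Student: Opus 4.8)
The plan is to induct on $r$, the number of commuting generators in $u$, reducing everything to the case $r=1$ where $u$ is a single generator $s$ with $s\notin\supp(x)\cup\supp(w)$ (the case $r=0$ being trivial). The engine of the base case is Proposition~\ref{klpolsxsw}(2), which yields $P_{x,w}=P_{xs,ws}$ precisely under the hypotheses $w<ws$ and $xs\not\leq w$, both of which are easy to check here. Indeed, since $s\notin\supp(w)$ we have $\ell(ws)=\ell(w)+1$, so $w<ws$; and since $s\notin\supp(x)$ we have $\ell(xs)=\ell(x)+1$, so $s\in\supp(xs)$, whence $xs\not\leq w$ because any $y\leq w$ satisfies $\supp(y)\subseteq\supp(w)$ (a direct consequence of the subword characterization in Definition~\ref{bruhatdef}).

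First I would dispose of the degenerate possibilities in the base step, since Proposition~\ref{klpolsxsw} assumes $x<w$. The case $x=w$ is immediate, as both polynomials equal $1$ and both $\mu$-values vanish. For the case $x\not\leq w$ I must show $P_{x,w}=0=P_{xs,ws}$, i.e. that $xs\not\leq ws$; this follows from the Bruhat equivalence $x\leq w \iff xs\leq ws$, valid whenever $xs>x$ and $ws>w$. The direction I actually need, $xs\leq ws\Rightarrow x\leq w$, I would prove by examining a subword of the reduced expression $(\text{reduced }w)\,s$ for $ws$ realizing $xs$: if that subword uses the final letter $s$ it exhibits $x$ as a subword of a reduced expression for $w$, while if it does not it gives $xs\leq w$, and then $x<xs\leq w$ by transitivity. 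This settles the non-comparable case.

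With comparability resolved, the case $x<w$ follows directly from Proposition~\ref{klpolsxsw}(2) as above, giving $P_{x,w}=P_{xs,ws}$. The $\mu$-value statement is then immediate from a degree count: since $\ell(ws)-\ell(xs)=\ell(w)-\ell(x)$, the coefficient extracted by $\mu$ lives in the same degree $\tfrac{1}{2}(\ell(w)-\ell(x)-1)$ on both sides, so $\mu(x,w)=\mu(xs,ws)$ as coefficients of equal polynomials (and both vanish when this quantity fails to be an integer).

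For the inductive step I would write $u=u's_r$ with $u'=s_1\cdots s_{r-1}$ and apply the base case to the pair $xu',wu'$ with the single generator $s_r$. The point requiring care — and the main, if modest, obstacle — is verifying the hypothesis $s_r\notin\supp(xu')\cup\supp(wu')$: because $s_r$ is distinct from and commutes with each of $s_1,\dots,s_{r-1}$ and lies outside $\supp(w)$, the product $wu'$ is reduced with $\supp(wu')=\supp(w)\cup\{s_1,\dots,s_{r-1}\}$, so $s_r\notin\supp(wu')$, and likewise $s_r\notin\supp(xu')$. The base case then gives $P_{xu',wu'}=P_{xu,wu}$ and $\mu(xu',wu')=\mu(xu,wu)$, while the inductive hypothesis gives $P_{x,w}=P_{xu',wu'}$ and $\mu(x,w)=\mu(xu',wu')$; chaining these equalities completes the proof.
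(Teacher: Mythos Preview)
Your proof is correct and follows essentially the same route as the paper: induction on $r$, with the base case handled by Proposition~\ref{klpolsxsw}(2) and the inductive step peeling off one generator $s_r$ after checking $s_r\notin\supp(xu')\cup\supp(wu')$. You are in fact more careful than the paper, explicitly treating the degenerate cases $x=w$ and $x\not\leq w$ needed to invoke Proposition~\ref{klpolsxsw}(2) (which assumes $x<w$), where the paper simply cites the proposition without comment.
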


\begin{proof}
  We will show $P_{x,w}=P_{xu, wu}$ by induction on $r=\ell(u)$.  If
  $r=1$ then we are done by Proposition~\ref{klpolsxsw} (2).  Suppose
  that the statement holds for all values less than $r$. Then by
  induction, we have $P_{x,w} = P_{xus_r,wus_r}$.  Since $u$ is a product of
  mutually commuting generators,
  $s_r\not\in\supp(xus_r)\cup\supp(wus_r)$, so by
  Proposition~\ref{klpolsxsw} (1) we have $P_{x,w} = P_{xus_r,wus_r} =
  P_{xu,wu}$.

  Then since
  \[
  \frac{1}{2}\left(\ell(wu)-\ell(xu)-1\right) =
  \frac{1}{2}\left(\ell(w) + r -\ell(x) - r-1\right) =
  \frac{1}{2}\left(\ell(w)-\ell(x)-1\right)
  \]
  we have $\mu(x,w)=\mu(xu,wu)$.
\end{proof}

Surprisingly little is known about $\mu$-values, even for finite
Coxeter groups. Previously, computer computations suggested that
$\mu(x,w)\in \{0,1\}$ in Coxeter systems of type $A$.  This was shown
to be egregiously false by McLarnan and Warrington
in~\cite{mclarnan2003counterexamples} using computer calculations.
Billey and Warrington have developed a more efficient recursive way to
compute Kazhdan--Lusztig polynomials in certain cases in type $A$
\cite[Lemma 39]{billey2003maximal}.

The group $W(\widetilde{A}_n)$ is an infinite Coxeter group which,
like $W(D_n)$, contains the symmetric group as a parabolic
subgroup. Since the 0-1 conjecture fails in type $A$, it must therefore
also fail in types $\widetilde{A}$ and $D$. However,
in~\cite{green2009leading} Green showed that $\mu(x,w)\in\{0,1\}$ for
$x,w\in W(\widetilde A_n)$ as long as $x$ is fully
commutative. This proof relies on the fact that Coxeter groups of type
$\widetilde A$ do not contain certain elements called ``bad
elements,'' which will be discussed in Chapter~\ref{badchapter}. Green
remarks that there may be many other types of Coxeter groups for which
$\mu(x,w)\in\{0,1\}$ if $x$ is fully
commutative~\cite[Introduction]{green2009leading}.  We will prove this
result in Theorem~\ref{mainresult} for Coxeter groups of type $D$.

We can partition a Coxeter group into sets called Kazhdan--Lusztig
cells, first defined in \cite{kazhdan1979representations}, which behave
nicely with regard to calculations involving $\mu$-values.

\begin{definition}
  Recall from Definition~\ref{muvaldef} that we write $x\prec w$ if
  $\mu(x,w)\neq 0$. Define $x\leq_L w$ if there is a (possibly trivial) chain
  \[
  x=x_0,x_1,\dots,x_r=w
  \]
  such that either $x_i\prec x_{i+1}$ or $x_{i+1}\prec x_i$ and
  $\mathcal{L}(x_i) \not\subset \mathcal{L}(x_{i+1})$. Define $x\sim_L
  w$ if and only if $x\leq_L w$ and $w\leq_L x$.  Then $\sim_L$ is an
  equivalence relation that partitions $W$ into \textbf{left
    Kazhdan--Lusztig cells}, or left cells.  There is an analogous
  definition for \textbf{right Kazhdan--Lusztig cells}.
\end{definition}

\begin{definition}
  We define $x\leq_{LR} w$ if there is a (possibly trivial) chain
  \[
  x=x_0,x_1,\dots,x_r=w
  \]
  such that either $x_i\leq_L x_{i+1}$ or $x_i\leq_R x_{i+1}$ for each
  $i<r$.  Define $x\sim_{LR} w$ if and only if $x\leq_{LR} w$ and
  $w\leq_{LR} x$. As above, $\sim_{LR}$ is an equivalence
  relation that partitions $W$ into \textbf{two-sided
    Kazhdan--Lusztig cells}, or two-sided cells.
\end{definition}

\section{Lusztig's \texorpdfstring{$a$}{a}-function}\label{afunction}

In \cite{lusztig1985cells}, Lusztig defined a function that behaves
nicely with respect to Kazhdan--Lusztig cells.  As we will see, this
function will help us to bound the degree of certain Kazhdan--Lusztig
polynomials.  We begin with a series of definitions and lemmas from
\cite{lusztig1985cells} leading to the definition of Lusztig's
$a$-function.  Although the $a$-function may be defined for affine
Coxeter groups, we will simplify our calculations by assuming that $W$
is a finite Coxeter group.

\begin{lemma}\label{qpol}
  We may define polynomials $Q_{x,w}$ for each $x\leq w$ using the
  following identity:
  \[
  \sum_{x\leq z\leq w} (-1)^{\ell(z)-\ell(x)} Q_{x,z}(q) P_{z,w}(q) =
  \begin{cases}
    1 &\text{ if } x = w;\\
    0 &\text{ if } x < w.
  \end{cases}
  \]
  Then $Q_{x,w}$ is a polynomial of degree $\leq
  \frac{1}{2}\left(\ell(x) - \ell(x) - 1\right)$ if $y < w$ and
  $Q_{w,w}=1$.
\end{lemma}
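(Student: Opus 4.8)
The statement contains two evident typographical slips: the degree bound should read $\tfrac{1}{2}(\ell(w)-\ell(x)-1)$, and the hypothesis under which it holds should be $x<w$ (not $y<w$). The plan is to prove this corrected version by reading the displayed identity as the assertion that the matrix $\bigl((-1)^{\ell(z)-\ell(x)}Q_{x,z}\bigr)$ is a left inverse of the matrix $(P_{z,w})$, both indexed by the Bruhat interval $[x,w]$ and triangular with respect to it. Since $(P_{z,w})$ is unitriangular (it vanishes unless $z\le w$, and $P_{w,w}=1$ by Proposition~\ref{cbasis}), such an inverse exists, is unique, and can be built up by induction on $\ell(w)-\ell(x)$.

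First I would establish existence, uniqueness, and the normalization $Q_{w,w}=1$ simultaneously. For $x=w$ the defining identity has the single term $z=w$, giving $Q_{w,w}P_{w,w}=1$, hence $Q_{w,w}=1$; this is the base case. For $x<w$ I separate off the top term $z=w$ and use $P_{w,w}=1$ to obtain the recursion
\[
(-1)^{\ell(w)-\ell(x)}Q_{x,w}=-\sum_{x\le z<w}(-1)^{\ell(z)-\ell(x)}Q_{x,z}P_{z,w}.
\]
Every $z$ occurring on the right satisfies $\ell(z)-\ell(x)<\ell(w)-\ell(x)$, so each $Q_{x,z}$ is already determined by the inductive hypothesis. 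This both forces $Q_{x,w}$ to be the unique solution of the defining identity and exhibits it as an element of $\mathbb{Z}[q]$, so the $Q_{x,w}$ are well defined.

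It remains to bound the degree, and this is the only step that needs care. Proceeding by induction on $\ell(w)-\ell(x)$ for $x<w$, I would read off the degree of each summand on the right-hand side of the recursion. The term $z=x$ contributes $Q_{x,x}P_{x,w}=P_{x,w}$, whose degree is at most $\tfrac{1}{2}(\ell(w)-\ell(x)-1)$ by Proposition~\ref{cbasis}, so this boundary term meets the target bound exactly. For each $z$ with $x<z<w$ the inductive hypothesis gives $\deg Q_{x,z}\le\tfrac{1}{2}(\ell(z)-\ell(x)-1)$ and the Kazhdan--Lusztig degree bound gives $\deg P_{z,w}\le\tfrac{1}{2}(\ell(w)-\ell(z)-1)$, so their product has degree at most $\tfrac{1}{2}(\ell(w)-\ell(x))-1$, strictly below the target. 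Taking the maximum over all summands yields $\deg Q_{x,w}\le\tfrac{1}{2}(\ell(w)-\ell(x)-1)$, completing the induction. The subtlety to watch is precisely this split: only the $z=x$ term is permitted to attain the bound, and one must verify that the interior terms fall strictly below it, so that the sum cannot accumulate extra degree.
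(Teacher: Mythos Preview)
Your argument is correct, including the identification of the two typos in the statement. The induction on $\ell(w)-\ell(x)$ is the standard way to produce and bound the inverse Kazhdan--Lusztig polynomials, and your split of the recursion into the boundary term $z=x$ (which exactly attains the target degree) and the interior terms $x<z<w$ (which fall strictly below it) is precisely the point that needs checking.

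The paper, however, does not prove this lemma at all: its proof reads in full ``This is \cite[(1.3.1)]{lusztig1985cells}.'' So you have supplied a self-contained argument where the paper simply cites Lusztig's original. Your proof is essentially the one Lusztig gives, so there is no substantive divergence in method, only in whether the details are written out.
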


\begin{proof}
  This is \cite[(1.3.1)]{lusztig1985cells}. 
\end{proof}

The polynomials $Q_{x,w}$ are sometimes called \textbf{inverse
  Kazhdan--Lusztig polynomials}. Like Kazhdan--Lusztig polynomials,
they are difficult to compute.

\begin{definition}
  For $w\in W$ set $\widetilde{T}_w = q^{-\ell(w)/2} T_w$. We
  define
  \[
  D_x = \sum_{x\leq w} Q_{x,w}\left(q^{-1}\right)
  q^{\frac{1}{2}(\ell(w) - \ell(x))} \widetilde{T}_w.
  \]
\end{definition}

\begin{definition}
  Recall that
  $\mathcal{A}=\mathbb{Z}[q^{\frac{1}{2}},q^{-\frac{1}{2}}]$ is the
  ring of Laurent polynomials over $\mathbb{Z}$. Let
  $\tau:\mathcal{H}\rightarrow\mathcal{A}$ be the $\mathcal{A}$-linear
  map defined by $\tau\left(\sum_w \alpha_w \widetilde{T}_w\right) =
  \alpha_e$. (The map $\tau$ turns out to be a trace map.)
\end{definition}

\begin{definition}\label{afuntion}
  Let $w\in W$ and define the set
  \[
  \mathscr{S}_w = \left\{i\in\mathbb{N} \; \middle| \; q^{\frac{i}{2}}
    \tau\left(\widetilde{T}_x \widetilde{T}_y D_w\right)\in
    \mathcal{A}^{+} \text{ for all } x,y\in W \right\}.
  \]
  If $\mathscr{S}_w$ is nonempty we denote $a(w) =
  \min\left(\mathscr{S}_w\right)$, otherwise set $a(w)=\infty$.  Then
  we have a function
  \[
  a:W\rightarrow \mathbb{N}\cup\{\infty\}.
  \]
\end{definition}

We now observe some properties of $a$.

\begin{proposition}\label{abound}
  We have
  \[
  \deg(P_{e,w}) \leq \frac{1}{2}(\ell(w)-a(w)).
  \]
\end{proposition}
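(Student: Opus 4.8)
The plan is to bound $a(w)$ from above, since the claim $\deg(P_{e,w})\le\tfrac12(\ell(w)-a(w))$ is equivalent to $a(w)\le \ell(w)-2\deg(P_{e,w})$. Set $N:=\ell(w)-2\deg(P_{e,w})$; the a priori degree bound $\deg(P_{e,w})\le\tfrac12(\ell(w)-1)$ from Proposition~\ref{cbasis} guarantees $N\ge 0$ (with $N=0$ when $w=e$), so $N$ is an admissible candidate for membership in $\mathscr S_w$. The set $\mathscr S_w$ is upward closed, because if $i\in\mathscr S_w$ and $j\ge i$ then $q^{j/2}\tau(\widetilde T_x\widetilde T_y D_w)=q^{(j-i)/2}\bigl(q^{i/2}\tau(\widetilde T_x\widetilde T_y D_w)\bigr)\in\mathcal A^{+}$. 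Hence it suffices to prove $N\in\mathscr S_w$, i.e. that $q^{N/2}\tau(\widetilde T_x\widetilde T_y D_w)\in\mathcal A^{+}$ for all $x,y\in W$; equivalently, that the lowest power of $q$ occurring in each $\tau(\widetilde T_x\widetilde T_y D_w)$ is at least $-N/2=\deg(P_{e,w})-\tfrac12\ell(w)$, uniformly in $x$ and $y$.

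To analyze these traces I would first extract, directly from the Hecke relations of Definition~\ref{heckedef}, the pairing $\tau(\widetilde T_a\widetilde T_b)=\delta_{a,b^{-1}}$ (the coefficient of $T_e$ in $T_aT_b$ is $\delta_{a,b^{-1}}q^{\ell(a)}$, and the normalization $\widetilde T_w=q^{-\ell(w)/2}T_w$ cancels the power of $q$). Expanding $D_w$ by its definition and applying this termwise gives $\tau(\widetilde T_z D_w)=Q_{w,z^{-1}}(q^{-1})\,q^{\frac12(\ell(z)-\ell(w))}$, which vanishes unless $w\le z^{-1}$. Writing $\widetilde T_x\widetilde T_y=\sum_z p^{z}_{x,y}\,\widetilde T_z$ in the standard basis and reindexing by $v=z^{-1}$ then yields
\[
\tau(\widetilde T_x\widetilde T_y D_w)=\sum_{w\le v} p^{\,v^{-1}}_{x,y}\;Q_{w,v}(q^{-1})\,q^{\frac12(\ell(v)-\ell(w))},
\]
so everything reduces to estimating the lowest $q$-exponent of this finite sum.

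Each weight $Q_{w,v}(q^{-1})\,q^{\frac12(\ell(v)-\ell(w))}$ already lies in $\mathcal A^{+}$: it equals $1$ when $v=w$, and for $v>w$ the degree bound of Lemma~\ref{qpol} forces its lowest exponent to be at least $\tfrac12$. Thus every negative power of $q$ in the sum must be supplied by the Hecke structure constants $p^{\,v^{-1}}_{x,y}$, and this is exactly where $\deg(P_{e,w})$ has to enter. Although a single $p^{\,v^{-1}}_{x,y}$ can reach very negative exponents, the sum against the weights $Q_{w,v}(q^{-1})$ must cancel down to the claimed bound; I would make this precise using the inversion identity of Lemma~\ref{qpol} relating the $Q$-family to the $P$-family, whose triangular structure ties the extremal exponents of the sum to the degree of $P_{e,w}$, so that the surviving lowest-order contribution produces an exponent no smaller than $\deg(P_{e,w})-\tfrac12\ell(w)$. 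A useful consistency check is the identity $\tau(C_w)=(-1)^{\ell(w)}q^{\ell(w)/2}P_{e,w}(q^{-1})$, obtained by reading off the coefficient of $T_e$ in the basis element $C_w$ of Proposition~\ref{cbasis}: its lowest exponent is precisely $\tfrac12\ell(w)-\deg(P_{e,w})$, the mirror of the quantity that should emerge from the trace estimate.

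The main obstacle is this uniform exponent estimate. One must show that the cancellation in the $v$-sum is robust enough that, for \emph{every} pair $x,y$, no power of $q$ below $\deg(P_{e,w})-\tfrac12\ell(w)$ survives, even as $\ell(x)$ and $\ell(y)$ grow and the individual constants $p^{\,v^{-1}}_{x,y}$ become arbitrarily negative. Carrying this out requires good control of the standard-basis structure constants---their bar-symmetry and their degree bounds in terms of $\ell(x)+\ell(y)-\ell(z)$---together with the triangularity of the $Q$--$P$ inversion, and it is here that the bulk of the technical work lies.
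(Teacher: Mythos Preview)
The paper does not prove this statement at all: its entire proof is the single sentence ``This is \cite[1.3(a)]{lusztig1987cells},'' i.e.\ a citation to Lusztig's \emph{Cells in affine Weyl groups, II}. So there is no argument in the paper to compare against; the result is imported wholesale.

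Your proposal, by contrast, attempts a direct proof from the definitions given in the thesis. The reductions in the first two paragraphs are fine: the upward-closedness of $\mathscr S_w$, the pairing $\tau(\widetilde T_a\widetilde T_b)=\delta_{a,b^{-1}}$, and the resulting expansion
\[
\tau(\widetilde T_x\widetilde T_y D_w)=\sum_{w\le v} p^{\,v^{-1}}_{x,y}\,Q_{w,v}(q^{-1})\,q^{\frac12(\ell(v)-\ell(w))}
\]
are all correct. The gap is exactly where you say it is, and it is a real one. You need the lowest $q$-exponent of this sum to be bounded below by $\deg(P_{e,w})-\tfrac12\ell(w)$ \emph{uniformly} in $x,y$, and you offer no mechanism by which $P_{e,w}$ actually enters the estimate. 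The $Q$--$P$ inversion of Lemma~\ref{qpol} relates $Q_{w,v}$ to polynomials $P_{z,v}$ with $z\ge w$, not to $P_{e,w}$; invoking ``triangularity'' does not produce the specific degree $\deg(P_{e,w})$ anywhere in the sum. The consistency check with $\tau(C_w)$ is suggestive but not probative, since $C_w$ is a very special linear combination of the $\widetilde T_x\widetilde T_y$, and the issue is precisely whether arbitrary $x,y$ behave as well.

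In short: the paper defers to Lusztig, and what you have written is an outline whose missing step is essentially the content of Lusztig's proof. If you want a self-contained argument you will need to go through the $C$-basis structure constants (or an equivalent device) as Lusztig does; the estimate does not fall out of the $\widetilde T$-basis expansion and the $Q$--$P$ inversion alone.
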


\begin{proof}
  This is \cite[1.3(a)]{lusztig1987cells}.
\end{proof}

It will later be very useful to compute $a(w)$ to find a bound for
$\deg(P_{e,w})$. However, from the definition, we can see that
calculating $a(w)$ can be very difficult. To make calculations easier,
we can use the fact that the $a$-function is known to be constant on
Kazhdan--Lusztig cells when $W$ is a Weyl group. Note that Coxeter
groups of types $A$ and $D$ are Weyl groups.

\begin{lemma}\label{avaluecells}
  Let $W$ be a finite Weyl group and let $x,w\in W$ be such that $x
  \sim_{LR} w$.  Then $a(x)=a(w)$.
\end{lemma}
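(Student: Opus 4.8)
The plan is to deduce the lemma from the monotonicity of Lusztig's $a$-function along the two-sided preorder $\leq_{LR}$, which is the genuinely substantive input. Recall that $x \sim_{LR} w$ means precisely that $x \leq_{LR} w$ and $w \leq_{LR} x$. The key fact I would establish is that $a$ is weakly monotonic with respect to $\leq_{LR}$, in the sense that a single step in the chain defining $\leq_{LR}$ can only move the $a$-value in one fixed direction. Granting this, the relation $x \leq_{LR} w$ forces one inequality between $a(x)$ and $a(w)$, while $w \leq_{LR} x$ forces the reverse; together they give $a(x)=a(w)$. Thus the whole lemma collapses to this single monotonicity statement.

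To prove the monotonicity I would pass from the trace-form definition of $a(w)$ in Definition~\ref{afuntion} to its equivalent characterization in terms of the structure constants of the Kazhdan--Lusztig basis; this equivalence is itself one of Lusztig's foundational results. Writing the multiplication of the basis of Proposition~\ref{cbasis} as $C_x C_y = \sum_z h_{x,y,z}\,C_z$ with $h_{x,y,z}\in\mathcal{A}$, one characterizes $a(z)$ as the least nonnegative integer such that $q^{a(z)/2}h_{x,y,z}\in\mathcal{A}^{+}$ for all $x,y\in W$, this minimal degree being attained for some pair $x,y$. The preorders $\leq_L$, $\leq_R$, and hence $\leq_{LR}$ are generated exactly by the nonvanishing of these structure constants, so each elementary step of a chain realizing $x\leq_{LR}w$ can be rewritten as a product $C_uC_v$ in which the relevant cell elements appear with nonzero coefficient. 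Tracking the exponent of $q$ through such a product, and using that multiplication of Kazhdan--Lusztig basis elements cannot lower the minimal exponent controlled by $a$, yields the required inequality between $a(x)$ and $a(w)$ at each step.

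The step I expect to be the main obstacle is the degree bookkeeping for the $h_{x,y,z}$: one must rule out cancellation that would spuriously lower the $q$-exponent, and confirm that the leading terms are genuinely governed by the $a$-values. In full generality this is controlled by Lusztig's positivity properties, but here $W$ is a finite Weyl group, so the Kazhdan--Lusztig polynomials have nonnegative coefficients by the geometric interpretation of the basis available in this setting, and the needed positivity of the structure constants follows. With positivity in hand the degree estimates are routine and the monotonicity -- hence the equality $a(x)=a(w)$ -- follows. In a thesis of this kind it is equally legitimate simply to invoke Lusztig's foundational work on the $a$-function for Weyl groups, of which the constancy of $a$ on two-sided cells is a standard consequence.
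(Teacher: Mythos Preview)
Your sketch is correct and essentially unpacks the content of the result the paper is citing. The paper's proof is a one-line citation to \cite[Theorem 5.4]{lusztig1985cells}; it does not give any argument. What you outline---recasting $a$ in terms of the structure constants $h_{x,y,z}$ of the Kazhdan--Lusztig basis, using nonnegativity of those constants (available for finite Weyl groups via the geometric interpretation) to obtain monotonicity of $a$ along $\leq_{LR}$, and then deducing equality from $x\leq_{LR}w$ and $w\leq_{LR}x$---is exactly the substance of Lusztig's original argument. Your closing remark that one may simply invoke Lusztig is precisely what the paper does, so there is no real divergence: you supply the mathematical content behind the citation, while the paper leaves it as a black box.
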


\begin{proof}
  This is \cite[Theorem 5.4]{lusztig1985cells}.
\end{proof}

In \cite{lusztig2003hecke} Lusztig introduces the $a$-function for
Hecke algebras with unequal parameters. He develops a series of
conjectures about how the $a$-function relates to the Coxeter group
$W$, the structure of the Hecke algebra, and Kazhdan--Lusztig cells.
These conjectures are not known to hold for general Coxeter groups in
the unequal parameter case. However, these conjectures are known to
hold for finite Coxeter groups in the equal parameter case.

\begin{lemma}\label{alongest}
  Let $W$ be a finite Coxeter group with longest element $w_0$.  Then
  $a(w_0)=\ell(w_0)$.
\end{lemma}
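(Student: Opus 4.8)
The plan is to compute $a(w_0)$ directly from Definition~\ref{afuntion} on one side and to exploit Proposition~\ref{abound} on the other. The starting observation is that $w_0$ is the unique maximal element of $W$, so the only $w$ with $w_0\le w$ is $w_0$ itself; since $Q_{w_0,w_0}=1$, the sum defining $D_{w_0}$ collapses to $D_{w_0}=\widetilde{T}_{w_0}$. Using the standard evaluation $\tau(\widetilde{T}_a\widetilde{T}_b)=\delta_{a,b^{-1}}$ (which follows from the multiplication rules of Definition~\ref{heckedef} together with $\ell(a)=\ell(a^{-1})$) and the $\mathcal{A}$-linearity of $\tau$, I would rewrite, for all $x,y\in W$,
\[
\tau\!\left(\widetilde{T}_x\widetilde{T}_y D_{w_0}\right)=\tau\!\left(\widetilde{T}_x\widetilde{T}_y\widetilde{T}_{w_0}\right)=\bigl[\widetilde{T}_{x^{-1}}\bigr]\!\left(\widetilde{T}_y\widetilde{T}_{w_0}\right),
\]
the coefficient of $\widetilde{T}_{x^{-1}}$ in the product $\widetilde{T}_y\widetilde{T}_{w_0}$. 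Thus $a(w_0)$ is governed by the most negative power of $q^{1/2}$ occurring in these coefficients as $x,y$ vary, and the proof amounts to two matching estimates on that power.

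For the bound $a(w_0)\le\ell(w_0)$ I would simply invoke Proposition~\ref{abound}: since $e\le w_0$, the polynomial $P_{e,w_0}$ is nonzero (it has constant term $1$), so $\deg(P_{e,w_0})\ge 0$, and the inequality $\deg(P_{e,w_0})\le\tfrac{1}{2}(\ell(w_0)-a(w_0))$ forces $a(w_0)\le\ell(w_0)$. Alternatively, this also reads off the display above: expanding $\widetilde{T}_y=\widetilde{T}_{s_1}\cdots\widetilde{T}_{s_{\ell(y)}}$ along a reduced word and multiplying onto $\widetilde{T}_{w_0}$ one generator at a time, each factor introduces at most one power of $\xi:=q^{1/2}-q^{-1/2}$, so every coefficient of $\widetilde{T}_y\widetilde{T}_{w_0}$ lies in $\mathbb{Z}[\xi]$ of $\xi$-degree at most $\ell(y)\le\ell(w_0)$; hence its lowest power of $q^{1/2}$ is at least $-\ell(w_0)$, and $q^{\ell(w_0)/2}\tau(\widetilde{T}_x\widetilde{T}_y D_{w_0})\in\mathcal{A}^{+}$ for all $x,y$.

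For the reverse bound $a(w_0)\ge\ell(w_0)$ I would produce a single pair witnessing a power of $q^{1/2}$ as negative as $-\ell(w_0)$, namely $x=y=w_0$. Here $\tau(\widetilde{T}_{w_0}\widetilde{T}_{w_0}D_{w_0})=[\widetilde{T}_{w_0}](\widetilde{T}_{w_0}^2)=q^{-\ell(w_0)/2}\,d$, where $d:=[T_{w_0}](T_{w_0}^2)\in\mathbb{Z}[q]$ is the corresponding coefficient in the unnormalized basis. The crux is to show $d$ has nonzero constant term, which I would establish by specializing $q=0$: there the relation $T_s^2=(q-1)T_s+qT_1$ degenerates to $T_s^2=-T_s$, whence $T_{w_0}T_s=-T_{w_0}$ whenever $w_0 s<w_0$. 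Expanding the second factor of $T_{w_0}^2$ along a reduced word for $w_0$ and applying this rule $\ell(w_0)$ times gives $T_{w_0}^2=(-1)^{\ell(w_0)}T_{w_0}$ at $q=0$, so $d(0)=(-1)^{\ell(w_0)}\neq 0$. Consequently the lowest term of $\tau(\widetilde{T}_{w_0}\widetilde{T}_{w_0}D_{w_0})$ is $(-1)^{\ell(w_0)}q^{-\ell(w_0)/2}$, which $q^{i/2}$ fails to clear into $\mathcal{A}^{+}$ unless $i\ge\ell(w_0)$; thus no $i<\ell(w_0)$ lies in $\mathscr{S}_{w_0}$, giving $a(w_0)\ge\ell(w_0)$, and combining the two inequalities yields $a(w_0)=\ell(w_0)$. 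I expect this last paragraph to be the main obstacle: one must certify that the extremal $q^{-\ell(w_0)/2}$ term genuinely survives with no cancellation, and the $q=0$ (zero-Hecke) specialization is exactly the device that guarantees this.
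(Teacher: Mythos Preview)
Your argument is correct. The paper does not actually prove this lemma: it simply cites \cite[Proposition~13.8]{lusztig2003hecke} and moves on. You instead give a self-contained proof directly from Definition~\ref{afuntion}, which is a genuinely different route.

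A brief comparison: the paper's approach is just an appeal to Lusztig's general machinery, so there is nothing to contrast at the level of ideas. Your approach computes $D_{w_0}=\widetilde{T}_{w_0}$ from maximality of $w_0$, then bounds $a(w_0)$ from above either via Proposition~\ref{abound} or, more elementarily, via the observation that left multiplication by $\widetilde{T}_s$ in the descent case contributes at most one factor of $\xi=q^{1/2}-q^{-1/2}$; and bounds $a(w_0)$ from below by exhibiting the witness $x=y=w_0$ and using the $q=0$ specialization (the $0$-Hecke relation $T_sT_w=-T_w$ when $sw<w$) to show the coefficient $[T_{w_0}](T_{w_0}^2)$ has constant term $(-1)^{\ell(w_0)}$. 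What your approach buys is that the reader need not leave the paper to believe the statement; what the citation buys is brevity and the reminder that this is part of a much more general package of results on the $a$-function. Two small points you rely on implicitly are standard and easily checked: $w_0^{-1}=w_0$ (needed when you pass from $[\widetilde{T}_{x^{-1}}]$ to $[\widetilde{T}_{w_0}]$), and the trace identity $\tau(\widetilde{T}_a\widetilde{T}_b)=\delta_{a,b^{-1}}$, which follows by an induction on $\ell(a)$ using the multiplication rules in Definition~\ref{heckedef}.
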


\begin{proof}
  This is \cite[Proposition 13.8]{lusztig2003hecke}.
\end{proof}

\begin{lemma}\label{aparabolic}
  Let $(W,S)$ be a Coxeter system of type $D_n$ and let $I\subset S$.
  If $w\in W_I$ then $a(w)$ calculated in terms of $W_I$ is equal to
  $a(w)$ calculated in terms of $W$.
\end{lemma}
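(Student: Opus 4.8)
The plan is to show that the two defining sets $\mathscr{S}_w$ (see Definition~\ref{afuntion}) coincide, by comparing the quantities $\tau(\widetilde T_x \widetilde T_y D_w)$ computed in $\mathcal{H}(W)$ with their analogues computed in $\mathcal{H}(W_I)$. Write $a_I(w)$ for the value computed inside $W_I$, using the trace $\tau_I$ on $\mathcal H(W_I)$ and the element $D_w^I = \sum_{u\in W_I,\,u\ge w} Q^{I}_{w,u}(q^{-1})\, q^{(\ell(u)-\ell(w))/2}\widetilde T_u$. Two structural facts will drive everything. First, if $w\in W_I$ and $z\le w$ in the Bruhat order of $W$, then $z$ is a subword of a reduced word for $w$, so $\supp(z)\subseteq\supp(w)\subseteq I$ and hence $z\in W_I$; thus the interval $[x,w]$ in $W$ coincides with the interval in $W_I$ whenever $x,w\in W_I$, and consequently $P_{x,w}$ and (via Lemma~\ref{qpol}) $Q_{x,w}$ agree whether computed in $W$ or in $W_I$. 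Second, a direct computation with the relations of Definition~\ref{heckedef} gives $\tau(\widetilde T_z \widetilde T_u)=\delta_{z,u^{-1}}$.

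For the inequality $a_I(w)\le a(w)$, fix $x,y\in W_I$. Since $\mathcal H(W_I)$ is a subalgebra, $\widetilde T_x\widetilde T_y=\sum_{z\in W_I} c_z \widetilde T_z$ with $c_z\in\mathcal A$. Pairing with $D_w$ and using $\tau(\widetilde T_z\widetilde T_u)=\delta_{z,u^{-1}}$, only the terms of $D_w$ indexed by $u=z^{-1}\in W_I$ survive, so
\[
\tau(\widetilde T_x\widetilde T_y D_w)=\sum_{\substack{z\in W_I\\ z^{-1}\ge w}} c_z\, Q_{w,z^{-1}}(q^{-1})\,q^{(\ell(z)-\ell(w))/2}.
\]
By the first structural fact the surviving $Q$-values coincide with those computed in $W_I$, so the right-hand side equals $\tau_I(\widetilde T_x\widetilde T_y D_w^I)$. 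Hence the conditions defining $\mathscr{S}_w$ in $W_I$ are exactly the subfamily (indexed by $x,y\in W_I$) of the conditions defining $\mathscr{S}_w$ in $W$; imposing the remaining conditions (those with $x$ or $y$ outside $W_I$) can only shrink the set, so $\mathscr S_w^{W}\subseteq \mathscr S_w^{W_I}$ and therefore $a_I(w)\le a(w)$.

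It remains to prove $a(w)\le a_I(w)$, which is the main obstacle. Writing $a(w)=\max_{x,y\in W}\delta(x,y,w)$, where $-\delta(x,y,w)$ is the lowest power of $q^{1/2}$ occurring in $\tau(\widetilde T_x\widetilde T_y D_w)$, the forward step already shows that the maximum over $x,y\in W_I$ equals $a_I(w)$; I must show that no pair $x,y\in W$ produces a strictly more negative exponent. This cannot be achieved by crude degree counting: the elementary bounds on $c_z$ and on $Q_{w,z^{-1}}$ (Lemma~\ref{qpol}) involve $\ell(x)+\ell(y)$ and so are far too weak. The finer input I would use is the identification of $\tau(\widetilde T_x\widetilde T_y D_w)$, up to a monomial, with the coefficient of $C_w$ in the Kazhdan--Lusztig expansion of $\widetilde T_x\widetilde T_y$, which expresses $\delta(x,y,w)$ through the structure constants of the Kazhdan--Lusztig basis. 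Because the basis $\{C_z : z\in W_I\}$ is compatible with the parabolic subalgebra $\mathcal H(W_I)$ and the $a$-function is governed by two-sided cells (Lemma~\ref{avaluecells}), a pair with $x$ or $y$ outside $W_I$ should contribute only to cells that do not lower the relevant degree below $a_I(w)$; making this precise, via Geck's induction of Kazhdan--Lusztig cells or equivalently Lusztig's monotonicity of $a$ along $\le_{LR}$, yields $\delta(x,y,w)\le a_I(w)$ for all $x,y\in W$ and completes the proof.
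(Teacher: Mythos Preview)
Your argument for the easy inequality $a_I(w)\le a(w)$ is correct and cleanly done: the trace identity $\tau(\widetilde T_z\widetilde T_u)=\delta_{z,u^{-1}}$ together with the fact that Bruhat intervals below an element of $W_I$ stay inside $W_I$ does give $\tau(\widetilde T_x\widetilde T_y D_w)=\tau_I(\widetilde T_x\widetilde T_y D_w^I)$ for $x,y\in W_I$, so $\mathscr{S}_w^W\subseteq\mathscr{S}_w^{W_I}$.

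The gap is entirely in the reverse inequality. What you have written there is not a proof but a hope: the sentence ``making this precise, via Geck's induction of Kazhdan--Lusztig cells or equivalently Lusztig's monotonicity of $a$ along $\le_{LR}$, yields $\delta(x,y,w)\le a_I(w)$'' does all the work without doing any of it. Two concrete problems. First, the ingredients you invoke (cell induction, monotonicity of $a$ with respect to $\le_{LR}$) are themselves among or closely tied to Lusztig's conjectures P1--P15; in particular, the statement you are trying to prove \emph{is} P12, and several of the other P's are proved as a package in the equal-parameter case using positivity of structure constants coming from geometry. Invoking them to prove P12 risks circularity, and at minimum you would have to identify exactly which P's you are assuming and check that their proofs do not already use P12. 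Second, even granting those tools, you have not explained the mechanism by which a pair $(x,y)$ with $x\notin W_I$ or $y\notin W_I$ is prevented from producing a more negative exponent in $\tau(\widetilde T_x\widetilde T_y D_w)$; the phrase ``contribute only to cells that do not lower the relevant degree'' is not an argument.

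The paper does not attempt a direct proof at all: it simply cites this as Lusztig's property P12 (Conjecture~14.2 in \cite{lusztig2003hecke}) and notes that Lusztig establishes P1--P15 in the equal-parameter case in \S15 of that reference. That is the honest route here; a self-contained proof of P12 is genuinely hard and not something one can sketch in a paragraph.
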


\begin{proof}
  This is \cite[Conjecture 14.2 P12]{lusztig2003hecke}. In
  \cite[15.1]{lusztig2003hecke} Lusztig proves that Conjecture
  14.2 P12 holds in our case.
\end{proof}

\begin{lemma}\label{aadditive}
  Let $(W,S)$ be a Coxeter system with Coxeter graph $\Gamma$ that can
  be decomposed into a disjoint union of connected components $\Gamma
  = \Gamma_1 \cup \Gamma_2$. Then every $w\in W$ has a unique
  expression $w = w_1w_2$ reduced where $w_1\in W(\Gamma_1)$ and
  $w_2\in W(\Gamma_2)$. Furthermore $a(w) = a(w_1) + a(w_2)$.
\end{lemma}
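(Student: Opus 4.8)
The plan is to establish the unique reduced factorization first, then prove additivity of $a$ by showing that every ingredient in the definition of the $a$-function respects the direct product decomposition $W=W(\Gamma_1)\times W(\Gamma_2)$. Let $S_1$ and $S_2$ be the vertex sets of $\Gamma_1$ and $\Gamma_2$, and write $W_j=W(\Gamma_j)$. First I would observe that since $\Gamma_1$ and $\Gamma_2$ are disjoint components, every $s\in S_1$ and $t\in S_2$ satisfy $m(s,t)=2$ and hence commute, so $W$ is the internal direct product of the parabolic subgroups $W_{S_1}=W_1$ and $W_{S_2}=W_2$. Applying Proposition~\ref{factor} with $I=S_2$ (and noting that $W^{S_2}=W_1$, since the generators commute across components, so the minimal representative of $wW_{S_2}$ lies in $W_1$) gives, for each $w$, a unique reduced factorization $w=w_1w_2$ with $w_j\in W_j$; in particular $\ell(w)=\ell(w_1)+\ell(w_2)$. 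The Bruhat order is the corresponding product order, $v\le w$ iff $v_1\le w_1$ and $v_2\le w_2$.

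Next I would pass to the Hecke algebra. Because the $T$-generators of $W_1$ and $W_2$ commute and each family satisfies its own quadratic relation, there is an $\mathcal{A}$-algebra isomorphism $\mathcal{H}(W)\cong\mathcal{H}(W_1)\otimes_{\mathcal{A}}\mathcal{H}(W_2)$ under which $\widetilde T_w\mapsto\widetilde T_{w_1}\otimes\widetilde T_{w_2}$; this uses precisely that $w=w_1w_2$ is reduced, so that $T_w=T_{w_1}T_{w_2}$. This isomorphism intertwines the involutions, $\iota=\iota_1\otimes\iota_2$, so the uniqueness in Proposition~\ref{cbasis} forces $C_w=C_{w_1}\otimes C_{w_2}$. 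Comparing coefficients yields $P_{x,w}=P_{x_1,w_1}P_{x_2,w_2}$, and feeding this (together with the product structure of the Bruhat interval) into the inversion identity of Lemma~\ref{qpol} gives $Q_{x,w}=Q_{x_1,w_1}Q_{x_2,w_2}$. Substituting into the definition of $D_w$ and using additivity of length, I would obtain $D_w=D_{w_1}\otimes D_{w_2}$.

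Finally I would evaluate the trace. The map $\tau$ extracts the coefficient of $\widetilde T_e=\widetilde T_e\otimes\widetilde T_e$, so $\tau=\tau_1\otimes\tau_2$, meaning $\tau(\alpha\otimes\beta)=\tau_1(\alpha)\tau_2(\beta)$. Since multiplication in the tensor product is componentwise, for $x=x_1x_2$ and $y=y_1y_2$ this gives
\[
\tau\!\left(\widetilde T_x\widetilde T_y D_w\right)=\tau_1\!\left(\widetilde T_{x_1}\widetilde T_{y_1}D_{w_1}\right)\cdot\tau_2\!\left(\widetilde T_{x_2}\widetilde T_{y_2}D_{w_2}\right).
\]
For a nonzero $f\in\mathcal{A}$ let $\nu(f)$ be the least exponent of $q^{1/2}$ occurring in $f$ (and $\nu(0)=\infty$); then $q^{i/2}f\in\mathcal{A}^{+}$ iff $i\ge-\nu(f)$, so for finite $W$ (where $\mathscr{S}_w\neq\varnothing$) one has $a(w)=-\min_{x,y}\nu\!\left(\tau(\widetilde T_x\widetilde T_y D_w)\right)$, a nonnegative integer. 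Because $\mathcal{A}$ is an integral domain, $\nu$ is additive on products, and because $(x_1,y_1)$ ranges over $W_1\times W_1$ independently of $(x_2,y_2)$ over $W_2\times W_2$, the minimum separates as a sum of two minima, yielding $a(w)=a(w_1)+a(w_2)$.

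The hard part will be justifying the factorizations $C_w=C_{w_1}\otimes C_{w_2}$ and especially $D_w=D_{w_1}\otimes D_{w_2}$ rigorously, that is, verifying that the involution $\iota$ and the defining inversion relation for the $Q$-polynomials genuinely respect the tensor decomposition. The final valuation bookkeeping is then routine once the product structure of $\tau(\widetilde T_x\widetilde T_y D_w)$ is in hand.
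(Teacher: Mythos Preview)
Your argument is correct and self-contained, but it differs substantially from what the paper does: the paper gives no proof at all and simply cites \cite[Lemma 1.8 (1)]{shi2002coxeter}. So there is no ``paper's own proof'' to compare against beyond the reference.

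What you have written is essentially the standard direct verification. The tensor decomposition $\mathcal{H}(W)\cong\mathcal{H}(W_1)\otimes_{\mathcal{A}}\mathcal{H}(W_2)$, the factorizations $C_w=C_{w_1}\otimes C_{w_2}$, $P_{x,w}=P_{x_1,w_1}P_{x_2,w_2}$, $Q_{x,w}=Q_{x_1,w_1}Q_{x_2,w_2}$, and $D_w=D_{w_1}\otimes D_{w_2}$ all hold for exactly the reasons you indicate, and the trace indeed factors as $\tau=\tau_1\otimes\tau_2$. Your valuation bookkeeping is also fine: writing $a(w)=-\min_{x,y}\nu\bigl(\tau(\widetilde T_x\widetilde T_y D_w)\bigr)$ and using additivity of $\nu$ on products in the integral domain $\mathcal{A}$, together with the independence of the ranges of $(x_1,y_1)$ and $(x_2,y_2)$, gives the sum of the two minima. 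One small point worth making explicit is that each minimum is attained at a finite value (equivalently, some $\tau_j(\widetilde T_{x_j}\widetilde T_{y_j}D_{w_j})$ is nonzero), so the separation of the minimum into a sum is legitimate; this is immediate since $a(w_j)<\infty$ for finite $W_j$. The ``hard part'' you flag, namely compatibility of $\iota$ and of the $Q$-inversion with the tensor structure, is routine once you note that $\iota$ acts on each tensor factor separately and that Bruhat intervals in $W$ are products of Bruhat intervals in $W_1$ and $W_2$.
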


\begin{proof}
  This is \cite[Lemma 1.8 (1)]{shi2002coxeter}.
\end{proof}

\chapter{Bad elements}\label{badchapter}

There are elements called bad elements whose reduced expressions have
certain unfavorable properties which complicate computing $\mu(x,w)$ where
$x$ is fully commutative and $w$ is bad.  As we will see in
Section~\ref{abadelements}, there are no bad elements in Coxeter
groups of type $A$.  This fact was used in \cite{green2009leading} by
Green to show that $\mu(x,w)\in\{0,1\}$ when $x$ is fully commutative.
However in Section~\ref{dbadelements} we will see that Coxeter groups
of type $D$ do contain bad elements.  We conclude by finding a general
form for reduced expressions of bad elements in Section~\ref{badre}.

\section{Type \texorpdfstring{$A$}{A}}\label{abadelements}

\begin{definition}
  Let $W$ be a simply laced Coxeter group and let $w\in W$.  We say
  that $w$ is \textbf{bad} if $w$ is not a product of commuting
  generators and if $w$ has no reduced expressions beginning or ending
  in two noncommuting generators.  We say $w$ is \textbf{weakly bad}
  if $w$ has no reduced expressions beginning or ending in two
  noncommuting generators.
\end{definition}

\begin{example}
  Let $W = W(D_4)$ and consider the elements $x = s_1s_2s_3$, $y =
  s_1s_2s_4$, and $w = s_1s_2s_4s_3s_1s_2s_4$. We see that $x$ is not
  bad since it has a reduced expression ending in $s_2s_3$. Since $y$
  is a product of mutually commuting generators we see that $y$ is
  weakly bad, but not bad. However, if we compute all reduced
  expressions for $w$ we see that none of them begin or end in two
  noncommuting generators. We can easily see that $w$ is not a product
  of commuting generators, so $w$ is bad.
\end{example}

\begin{lemma}\label{badinverse}
  If $w$ is bad then so is $w^{-1}$.
\end{lemma}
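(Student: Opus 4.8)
The plan is to exploit the elementary fact that inversion reverses reduced expressions: since every generator is an involution, $w = s_1 s_2 \cdots s_r$ is a reduced expression for $w$ if and only if $w^{-1} = s_r \cdots s_2 s_1$ is a reduced expression for $w^{-1}$. Reversal is thus a length-preserving bijection between the reduced expressions of $w$ and those of $w^{-1}$, and it interchanges the initial and final segments of an expression while leaving the underlying unordered pairs of adjacent generators untouched. Both defining conditions of badness will be shown to transfer across this bijection.

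First I would check that the property of being a product of mutually commuting generators is preserved under inversion. If $w^{-1} = t_1 \cdots t_k$ with the $t_i$ mutually commuting generators, then since the $t_i$ commute and satisfy $t_i^2 = 1$ we have $w = (w^{-1})^{-1} = t_k^{-1} \cdots t_1^{-1} = t_1 \cdots t_k$, again a product of the same commuting generators. By contraposition, if $w$ is not a product of commuting generators then neither is $w^{-1}$. This handles the first clause in the definition of bad.

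Next I would treat the second clause. A reduced expression of $w^{-1}$ begins in two noncommuting generators $s,t$ exactly when its reversal, a reduced expression of $w$, ends in $t,s$; since the relation ``$s$ and $t$ commute'' is symmetric, this says precisely that $w$ has a reduced expression ending in two noncommuting generators. Because $w$ is bad it has none, so $w^{-1}$ has no reduced expression beginning in two noncommuting generators. The symmetric argument, reversing the roles of ``begins'' and ``ends,'' shows that $w^{-1}$ has no reduced expression ending in two noncommuting generators either. Combining both clauses gives that $w^{-1}$ is bad.

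There is no genuine obstacle here: the entire content is the reversal bijection on reduced expressions, and the only point requiring care is the bookkeeping of the swap of ``begins''/``ends'' together with the symmetry of the commuting relation. I note that the identical argument shows, more generally, that $w$ is weakly bad if and only if $w^{-1}$ is weakly bad.
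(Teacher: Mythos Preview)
Your proof is correct and takes essentially the same approach as the paper, which simply states that the result is an immediate consequence of the symmetry of the definition; you have merely spelled out in detail the reversal bijection on reduced expressions and the bookkeeping that the paper leaves implicit.
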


\begin{proof}
  This is an immediate consequence of the symmetry of the definition.
\end{proof}

Recall from Example~\ref{typeaintro} that $W(A_n) \cong S_{n+1}$. For
each element $w\in W(A_n)$ we may use one-line notation to represent
$w$:
\[
w = \left(w(1), w(2), w(3), \dots ,w(n),w(n+1)\right).
\]
We can use this one-line notation to help find left and right descent sets.

\begin{proposition}\label{adescent}
  Let $w\in W(A_n)$.  Then
  \[
  \mathcal{R}(w) = \left\{s_i\in S\mid w(i)>w(i+1)\right\}
  \]
  is the right descent set of $w$ and
  \[
  \mathcal{L}(w) = \left\{s_i\in S\mid w^{-1}(i)>w^{-1}(i+1)\right\}
  \]
  is the left descent set of $w$.
\end{proposition}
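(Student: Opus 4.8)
The plan is to work in the standard realization $W(A_n)\cong S_{n+1}$ from Example~\ref{typeaintro}, where $s_i=(i,i+1)$, and to exploit the well-known identification of the length $\ell(w)$ with the number of \emph{inversions} of $w$, namely $\ell(w)=|\{(j,k) : j<k,\ w(j)>w(k)\}|$ (see \cite[Proposition 1.5.2]{bjorner2005combinatorics}). Granting this, the right descent statement reduces to a direct count of how the inversion number changes under right multiplication by a simple transposition.

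First I would record the effect of right multiplication on one-line notation: since $(ws_i)(j)=w(s_i(j))$, the permutation $ws_i$ agrees with $w$ except that the values in positions $i$ and $i+1$ are interchanged, so $ws_i=(\dots,w(i+1),w(i),\dots)$. The crucial observation is that because $s_i$ swaps two \emph{adjacent} positions, the only pair $(j,k)$ with $j<k$ whose inversion status can change is the pair $(i,i+1)$ itself: for any $j<i$ the two pairs $(j,i)$ and $(j,i+1)$ merely exchange their inversion contributions, and likewise for any $k>i+1$ the pairs $(i,k)$ and $(i+1,k)$ exchange theirs, so these contributions to the total are unchanged. Hence $\ell(ws_i)=\ell(w)-1$ precisely when $(i,i+1)$ is an inversion of $w$ — that is, when $w(i)>w(i+1)$ — and $\ell(ws_i)=\ell(w)+1$ otherwise. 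This gives $s_i\in\mathcal{R}(w)$ if and only if $\ell(ws_i)<\ell(w)$, if and only if $w(i)>w(i+1)$, which is the asserted description of $\mathcal{R}(w)$.

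For the left descent set I would reduce to the right descent set via inverses. Since length is invariant under taking inverses and $\ell(sw)=\ell((sw)^{-1})=\ell(w^{-1}s)$ for $s\in S$, we have $s\in\mathcal{L}(w)$ if and only if $\ell(w^{-1}s)<\ell(w^{-1})$, if and only if $s\in\mathcal{R}(w^{-1})$; that is, $\mathcal{L}(w)=\mathcal{R}(w^{-1})$. Applying the right descent description already established, now to $w^{-1}$, yields $\mathcal{L}(w)=\{s_i : w^{-1}(i)>w^{-1}(i+1)\}$, completing the proof.

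The only real obstacle is the inversion bookkeeping in the middle step — verifying that an adjacent transposition changes exactly one inversion — and the care required lies precisely in checking that the pairs straddling positions $i$ and $i+1$ swap their roles rather than altering the total count. The adjacency of $s_i$ is what makes this clean, since a non-adjacent transposition would disturb many pairs at once.
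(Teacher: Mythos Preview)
Your proof is correct and is essentially the standard argument: the paper itself does not give a proof but simply cites \cite[Proposition~1.5.3]{bjorner2005combinatorics}, and your argument via the inversion formula $\ell(w)=|\{(j,k):j<k,\ w(j)>w(k)\}|$ together with the adjacency bookkeeping is exactly the proof found there. The reduction of $\mathcal{L}(w)$ to $\mathcal{R}(w^{-1})$ via length-invariance under inversion is also standard and correct.
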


\begin{proof}
  This is proven in \cite[Proposition 1.5.3]{bjorner2005combinatorics}.
\end{proof}

We will now use this correspondence between descent sets and one-line
notation to find and classify bad elements in terms of pattern
avoidance.

\begin{definition}
  Let $w\in W(A_n)$ and let $a$, $b$, and $c$ be positive integers.
  We say that $w$ \textbf{has the consecutive pattern} $abc$ if there
  is some $i\in {\bf n-1}$ such that $(w(i),w(i+1),w(i+2))$ is in the
  same relative order as $(a,b,c)$.  If $w$ does not have the
  consecutive pattern $abc$ then we say that $w$ \textbf{avoids the
    consecutive pattern} $abc$.
\end{definition}

\begin{example}
  Let $w\in W(A_5)$ have the one-line notation
  \[
  w = (5, 3, 2, 1, 6, 4).
  \]
  Then $w$ has the consecutive pattern 321 since $(w(1), w(2), w(3)) =
  (5,3,2)$ are in the same relative order as $(3,2,1)$. However, $w$
  avoids the consecutive pattern 123 since there is no $i$ such that
  $(w(i),w(i+1),w(i+2))$ is in the same relative order as $(1,2,3)$.
\end{example}

\begin{lemma}\label{arightavoid}\label{aleftavoid}
  Let $(W,S)$ be a Coxeter system of type $A_n$ and let $w\in W$. Then
  \begin{enumerate}
  \item $w$ has a reduced expression ending in two noncommuting
    generators if and only if $w$ has at least one of the consecutive
    patterns $321$, $231$, or $312$, and
  \item $w\in W$ has a reduced expression beginning in two
    noncommuting generators if and only if $w^{-1}$ has at least one
    of the consecutive patterns $321$, $231$, or $312$.
  \end{enumerate}

\end{lemma}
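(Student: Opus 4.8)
The plan is to translate the condition ``$w$ has a reduced expression ending in two noncommuting generators'' into a purely combinatorial condition on the one-line notation of $w$ via Proposition~\ref{adescent}, and then to read off which consecutive patterns of length three correspond to that condition. In type $A_n$ two generators $s_j,s_k$ fail to commute exactly when $|j-k|=1$, so $w$ ends in two noncommuting generators precisely when it has a reduced expression ending in $s_is_{i+1}$ or in $s_{i+1}s_i$ for some $i\in\mathbf{n-1}$. First I would invoke the elementary descent criterion: $w$ has a reduced expression ending in $s_{i+1}s_i$ if and only if $s_i\in\mathcal{R}(w)$ and $s_{i+1}\in\mathcal{R}(ws_i)$, and symmetrically $w$ ends in $s_is_{i+1}$ if and only if $s_{i+1}\in\mathcal{R}(w)$ and $s_i\in\mathcal{R}(ws_{i+1})$. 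Each equivalence is immediate, since removing a right descent lowers the length by exactly one, so that $s_i\in\mathcal{R}(w)$ together with $s_{i+1}\in\mathcal{R}(ws_i)$ forces $w=(ws_is_{i+1})\cdot s_{i+1}s_i$ to be reduced, and conversely.

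Next I would compute these descent conditions in one-line notation. Right multiplication by $s_i$ interchanges the entries in positions $i$ and $i+1$, so combining Proposition~\ref{adescent} with the two criteria above yields, after a short check, that $w$ has a reduced expression ending in $s_{i+1}s_i$ if and only if $w(i)>w(i+1)$ and $w(i)>w(i+2)$, i.e.\ $w(i)$ is the largest of the three consecutive entries $w(i),w(i+1),w(i+2)$; and that $w$ ends in $s_is_{i+1}$ if and only if $w(i+1)>w(i+2)$ and $w(i)>w(i+2)$, i.e.\ $w(i+2)$ is the smallest. The first condition picks out exactly the consecutive patterns $312$ and $321$, the second picks out $231$ and $321$, and their union is precisely $\{321,231,312\}$; the complementary patterns $123$, $132$, $213$ are exactly those in which neither $w(i)$ is the maximum nor $w(i+2)$ the minimum. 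Ranging over all admissible $i$ then establishes part~(1).

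For part~(2), I would observe that reversing a reduced word $s_{j_1}\cdots s_{j_r}$ for $w$ produces the word $s_{j_r}\cdots s_{j_1}$, which is a reduced expression for $w^{-1}$ since each generator is an involution. Hence $w$ has a reduced expression beginning in two noncommuting generators if and only if $w^{-1}$ has one ending in two noncommuting generators, and part~(2) follows by applying part~(1) to $w^{-1}$.

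The main obstacle is essentially careful bookkeeping rather than any deep idea: one must handle both orderings $s_is_{i+1}$ and $s_{i+1}s_i$ of the final two generators, track the effect of right multiplication on one-line notation without error, and verify exhaustively that the union of the two resulting families of length-three patterns is exactly the claimed set of three. Each individual step is routine, but the pattern analysis must be carried out completely so that no case is overlooked and the complementary set $\{123,132,213\}$ is correctly identified.
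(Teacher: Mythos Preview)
Your proposal is correct and follows essentially the same approach as the paper: both translate the right-descent conditions via Proposition~\ref{adescent} into inequalities on $w(i),w(i+1),w(i+2)$ and then read off the resulting consecutive patterns, and both deduce part~(2) from part~(1) by passing to $w^{-1}$. The only cosmetic difference is that the paper organizes the case analysis by the three possible values of $w_I$ in the parabolic decomposition $w=w^Iw_I$ with $I=\{s_i,s_{i+1}\}$ (matching $s_is_{i+1}s_i$, $s_is_{i+1}$, $s_{i+1}s_i$ one-to-one with $321$, $231$, $312$), whereas you organize by the two orderings $s_{i+1}s_i$ and $s_is_{i+1}$ of the final pair, obtaining two overlapping families that share the pattern $321$.
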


\begin{proof}
  Let $I=\{s_i,s_{i+1}\}$ and write $w=w^Iw_I$ as in
  Proposition~\ref{factor}. We first observe that if $w$ has a reduced
  expression ending in two noncommuting generators, $s_i,s_{i+1}$, in
  some order, then we have $w_I\in \{s_is_{i+1}s_i, s_is_{i+1},
  s_{i+1}s_i\}$.

  Suppose that $w$ has the consecutive pattern $321$.  Then there is
  some $i$ such that $w(i)>w(i+1)>w(i+2)$, so by
  Proposition~\ref{adescent} we have $s_i,s_{i+1}\in\mathcal{R}(w)$,
  thus $w$ has a reduced expression ending in $s_is_{i+1}s_i$ by
  Lemma~\ref{noncommutative}. Conversely, suppose that
  $w_I=s_is_{i+1}s_i$.  Then $s_i,s_{i+1}\in\mathcal{R}(w)$, so $w(i)
  > w(i+1) > w(i+2)$ by Proposition~\ref{adescent}, thus $(w(i),
  w(i+1), w(i+2))$ has the consecutive pattern 321.

  Next suppose that $w$ has the consecutive pattern $231$.  Then there
  is some $i$ such that $w(i+1)>w(i)>w(i+2)$, so $s_{i+1}\in
  \mathcal{R}(w)$ by Proposition~\ref{adescent}.  If we multiply on
  the right by $s_{i+1}$ then we get
  $ws_{i+1}(i+1)=w(i+2)<w(i)=ws_{i+1}(i)$, so
  $s_{i}\in\mathcal{R}(ws_{i+1})$.  Then $w$ has a reduced expression
  ending in $s_is_{i+1}$. Conversely, if $w_I=s_is_{i+1}$ then
  $w(i+2)< w(i+1)$ and $w(i) < w(i+1)$.  Furthermore, since $s_{i}\in
  \mathcal{R}(ws_{i+1})$ we have $w(i+2) = ws_{i+1}(i+1)<
  ws_{i+1}(i)=w(i)$, so $(w(i), w(i+1), w(i+2))$ has the consecutive
  pattern 231.

  Suppose that $w$ has the consecutive pattern $312$.  Then there is
  some $i$ such that $w(i)>w(i+2)>w(i+1)$. Then $s_i\in
  \mathcal{R}(w)$.  If we multiply on the right by $s_i$ then we get
  $ws_i(i+1)=w(i)>w(i+2)=ws_i(i+2)$, so $s_{i+1}\in\mathcal{R}(ws_i)$.
  Then $w$ has a reduced expression ending in $s_{i+1}s_i$. Conversely, if
  $w_I=s_{i+1}s_i$ then $w(i)> w(i+1)$ and $w(i+2) > w(i+1)$. Since
  $s_{i+1}\in \mathcal{R}(ws_i)$, we have $w(i+2) = ws_{i}(i+2) <
  ws_{i}(i+1)=w(i)$, so $(w(i), w(i+1), w(i+2))$ has the consecutive
  pattern 312.

  Finally, we know that $w$ has no reduced expressions beginning in two
  noncommuting generators if and only if $w^{-1}$ has no reduced
  expressions ending in two noncommuting generators, which by the
  above discussion occurs if and only if $w^{-1}$ avoids the
  consecutive patterns $321$, $231$, and $312$.
\end{proof}

The following theorem was originally proven by Green in the more
general case of a Coxeter group of type $\tilde{A}$.  We will state
the result in type $A$.

\begin{proposition}\label{anobad}
  Let $W=W(A_n)$.  Then there are no bad elements in $W$.
\end{proposition}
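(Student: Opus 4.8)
The plan is to reduce the statement to a purely combinatorial claim about one-line notation and then prove that claim by induction on $n$. By definition an element is bad precisely when it is weakly bad and not a product of commuting generators, so it suffices to prove that every weakly bad element of $W(A_n)$ is a product of commuting generators. First I would use Lemma~\ref{arightavoid} to translate the weakly bad condition: $w$ has no reduced expression ending in two noncommuting generators if and only if $w$ avoids the consecutive patterns $321$, $231$, $312$, and $w$ has no reduced expression beginning in two noncommuting generators if and only if $w^{-1}$ avoids those same patterns. Hence $w$ is weakly bad if and only if both $w$ and $w^{-1}$ avoid $321$, $231$, and $312$ as consecutive patterns. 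The useful observation is that these three patterns are exactly the orderings of a window of three consecutive entries whose \emph{smallest} entry sits at the rightmost position.

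The core claim is then: if $w$ and $w^{-1}$ both avoid $321$, $231$, $312$, then $w$ is a product of commuting generators. I would prove this by induction on $n$, locating the value $1$. Let $p = w^{-1}(1)$ be the position of the entry $1$. If $p \geq 3$, then the window $(w(p-2), w(p-1), w(p))$ has its minimum $1$ at the right, realizing one of the forbidden patterns; so $p \in \{1,2\}$. Applying the same reasoning to $w^{-1}$, whose entry $1$ lies in position $w(1)$, forces $w(1) \in \{1,2\}$ as well.

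This leaves two cases. If $w(1) = 1$, then $w$ fixes $1$ and restricts to a permutation of $\{2,\dots,n+1\}$; since the global minimum sitting at the left boundary cannot create a forbidden window, this restriction and its inverse still avoid the three patterns, so by induction it is a product of commuting generators among $s_2,\dots,s_n$, and hence so is $w$. Otherwise $w(2) = 1$, and the constraint $w(1)\in\{1,2\}$ forces $w(1) = 2$; thus $w$ swaps positions $1,2$ and maps $\{3,\dots,n+1\}$ to itself. One checks that the boundary windows $(2,1,w(3))$ and $(1,w(3),w(4))$ have their minima at the middle or left, so no forbidden pattern crosses the cut; the restriction of $w$ to positions $\geq 3$ is therefore weakly bad on a smaller alphabet and, by induction, a product of commuting generators among $s_3,\dots,s_n$. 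Since $s_1$ commutes with every $s_j$ for $j\geq 3$, the element $w = s_1\cdot(\cdots)$ is again a product of commuting generators, completing the induction.

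The step I expect to require the most care is verifying that the restrictions genuinely inherit the pattern-avoidance hypotheses for both the element and its inverse, in particular checking that no forbidden three-term window straddles the boundary between the extracted block and the rest. This is exactly where the observation that the forbidden patterns are those with the minimum at the right end does the work, since placing the global minimum (or the swapped pair $2,1$) at the left end of the remaining word never produces a right-minimum window.
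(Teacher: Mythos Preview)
Your approach is sound and, unlike the paper's proof (which simply cites \cite[Propositions~2.3,~2.4]{green2009leading}), gives a self-contained argument. The reduction via Lemma~\ref{arightavoid} to the combinatorial claim, followed by induction on $n$ locating the value~$1$, is clean and essentially the content of Lemma~\ref{acommuting}, which the paper instead deduces \emph{from} Proposition~\ref{anobad}. So you have reversed the logical order, proving the combinatorial lemma directly and obtaining the proposition as a consequence.

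There is, however, one inaccuracy you should fix. You assert that the forbidden consecutive patterns $321$, $231$, $312$ are ``exactly the orderings \dots\ whose smallest entry sits at the rightmost position.'' This is false: $312$ has its minimum in the middle. The correct uniform description is that these are precisely the patterns whose first entry exceeds the third; equivalently, avoiding all three is the condition $w(i) < w(i+2)$ for every valid $i$. With this characterisation your key step is immediate: if $p = w^{-1}(1) \geq 3$ then $w(p-2) > 1 = w(p)$, a contradiction.

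Your argument survives the slip because the place where you actually invoke the claim only needs that $321$ and $231$ are forbidden (both genuinely have their minimum on the right), and the boundary checks you perform on the windows $(2,1,w(3))$ and $(1,w(3),w(4))$ are in fact unnecessary: once $w(1)=2$ and $w(2)=1$, the restriction of $w$ to positions $\geq 3$ has as its consecutive triples a subset of the consecutive triples of $w$, so it inherits the avoidance hypothesis automatically (and likewise for $w^{-1}$, which has the same block structure). You can safely delete those checks; what remains is a correct inductive proof.
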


\begin{proof}
This is a consequence of \cite[Propositions 2.3, 2.4]{green2009leading}
\end{proof}

\begin{corollary}\label{dpositivenotbad}
If $w\in W(D_n)$ is such that all entries in $w$ are positive then $w$ is not bad.
\end{corollary}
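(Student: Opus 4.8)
The plan is to reduce the claim to the type-$A$ result Proposition~\ref{anobad} by means of the canonical embedding $\iota\colon W(A_{n-1})\to W(D_n)$ of Remark~\ref{embedding}. First I would observe that an element $w\in W(D_n)$, viewed as a signed permutation, has all positive entries in its one-line notation precisely when it induces an honest permutation of $\mathbf{n}$ with no sign changes; such an element lies in the image of $\iota$, so that $w=\iota(v)$ for a unique $v\in W(A_{n-1})$.

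Next, since Proposition~\ref{anobad} guarantees that $W(A_{n-1})$ contains no bad elements, the element $v$ is not bad. Unwinding the definition, this means that either $v$ is a product of mutually commuting generators, or $v$ admits a reduced expression beginning or ending in two noncommuting generators. I would then transport each of these two alternatives across $\iota$ to conclude that $w=\iota(v)$ enjoys the same property and is therefore not bad.

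The transport relies on two structural facts about $\iota$. Recall from Remark~\ref{embedding} that $\iota(s_i)=s_{i+1}$, so $\iota$ identifies $W(A_{n-1})$ with the parabolic subgroup $W_{\{s_2,\dots,s_n\}}$ of $W(D_n)$. Reading off the Coxeter diagram in Example~\ref{typedintro}, the subgraph induced on $\{s_2,\dots,s_n\}$ is exactly the path defining type $A_{n-1}$; hence generators $s_a,s_b$ commute in $W(A_{n-1})$ if and only if their images $s_{a+1},s_{b+1}$ commute in $W(D_n)$. Moreover, by standard parabolic subgroup theory $\iota$ is length-preserving, so it carries reduced expressions to reduced expressions. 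Consequently, if $v=s_{i_1}\cdots s_{i_k}$ is a reduced product of commuting generators, then $\iota(v)=s_{i_1+1}\cdots s_{i_k+1}$ is again a reduced product of commuting generators; and if $v=s_a s_b\cdots$ (or $\cdots s_b s_a$) is reduced with $s_a,s_b$ noncommuting, then $\iota(v)=s_{a+1}s_{b+1}\cdots$ (respectively $\cdots s_{b+1}s_{a+1}$) is reduced with $s_{a+1},s_{b+1}$ noncommuting. In either case $w$ fails to be bad.

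The only point requiring genuine care---the main obstacle---is verifying that $\iota$ preserves the combinatorial data defining badness: that commutativity among $s_2,\dots,s_n$ in the $D_n$ diagram matches commutativity in $A_{n-1}$, and that $\iota$ preserves reducedness. Both follow from the fact that $W_{\{s_2,\dots,s_n\}}$ is a parabolic subgroup whose induced Coxeter graph is the type-$A_{n-1}$ path, but they should be stated explicitly so that the two alternatives above genuinely carry over and the dichotomy in the definition of bad is respected.
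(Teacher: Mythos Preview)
Your proposal is correct and follows essentially the same approach as the paper: pull $w$ back along the embedding $\iota$ to $v\in W(A_{n-1})$, invoke Proposition~\ref{anobad} to conclude $v$ is not bad, and then push the conclusion forward using that $\iota$ sends reduced expressions to reduced expressions and preserves (non)commutativity of generators. The paper's argument is terser---it simply notes that reduced expressions transfer via $s_{i_j}\mapsto s_{i_j+1}$ and then asserts ``thus $w$ is not bad''---whereas you explicitly verify both branches of the dichotomy in the definition of bad, which is a welcome clarification but not a different idea.
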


\begin{proof}
Let $w\in W(D_n)$ be such that all entries in $w$ are positive. Recall the embedding $\iota$ from Remark~\ref{embedding}. Since all entries in $w$ are positive, we can find an element $w'\in W(A_{n-1})$ such that $\iota(w') = w$.  If $w'$ has a reduced expression
  \[
  w' = s_{i_1} s_{i_2} \cdots s_{i_r}
  \]
  in $W(A_{n-1})$ then $w$ has the reduced expression
  \[
  w = s_{i_1 + 1} s_{i_2 + 1} \cdots s_{i_r + 1}
  \]
  in $W(D_n)$. By Proposition~\ref{anobad} we see that $w'$ is not bad, thus $w$ is not bad.
\end{proof}

\begin{lemma}\label{acommuting}
  Let $w\in W(A_n)$ be such that both $w$ and $w^{-1}$ avoid the
  consecutive patterns $321$, $231$, and $312$. Then $w$ is a product
  of commuting generators.
\end{lemma}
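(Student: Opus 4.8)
The plan is to recognize that the two pattern-avoidance hypotheses on $w$ and $w^{-1}$ together say precisely that $w$ is \emph{weakly bad}, and then to invoke Proposition~\ref{anobad} to rule out the possibility that $w$ is genuinely bad. First I would translate the hypothesis using Lemma~\ref{arightavoid}. Part (1) of that lemma gives that $w$ has a reduced expression ending in two noncommuting generators if and only if $w$ has one of the consecutive patterns $321$, $231$, or $312$; since $w$ avoids all three, $w$ has no reduced expression ending in two noncommuting generators. Part (2) gives the analogous statement for reduced expressions \emph{beginning} in two noncommuting generators, governed by the patterns of $w^{-1}$; since $w^{-1}$ avoids all three, $w$ has no reduced expression beginning in two noncommuting generators.

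Combining these two observations, $w$ has no reduced expression beginning or ending in two noncommuting generators, which is exactly the definition of $w$ being weakly bad. Now I would apply Proposition~\ref{anobad}, which asserts that there are no bad elements in $W(A_n)$. By definition, a bad element is one that is weakly bad and is \emph{not} a product of commuting generators. Since $w$ is weakly bad, if $w$ were not a product of commuting generators then $w$ would be bad, contradicting Proposition~\ref{anobad}. Hence $w$ must be a product of commuting generators, which is the desired conclusion.

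The argument is short because the substantive work has already been carried out in Lemma~\ref{arightavoid} and Proposition~\ref{anobad}. The one point requiring care is bookkeeping the direction of the correspondence: the ending condition is controlled by the patterns of $w$ itself, while the beginning condition is controlled by the patterns of $w^{-1}$, so both hypotheses of the lemma are genuinely needed and must be matched to the correct side. I do not anticipate any serious obstacle beyond correctly assembling these two halves into the weakly-bad condition and then reading off the conclusion from the absence of bad elements.
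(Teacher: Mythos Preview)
Your proposal is correct and follows essentially the same approach as the paper: use Lemma~\ref{arightavoid} to convert the pattern-avoidance hypotheses on $w$ and $w^{-1}$ into the statement that $w$ has no reduced expression beginning or ending in two noncommuting generators, and then invoke Proposition~\ref{anobad} to conclude that $w$ must be a product of commuting generators. The paper's proof is simply a terser version of exactly this argument.
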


\begin{proof}
  By Lemma~\ref{arightavoid} we know that $w$ has no reduced
  expressions beginning or ending in two noncommuting generators. By
  Proposition~\ref{anobad} we know that $w$ is not bad, so $w$ must be
  a product of commuting generators.
\end{proof}

\section{Type \texorpdfstring{$D$}{D}}\label{dbadelements}

Recall from Example~\ref{typedintro} that we can represent each
element $w\in W(D_n)$ as a member of the signed permutation group. We
write $w\in W$ using one-line notation
\[
w = \left(w(1), w(2), w(3), \dots ,w(n)\right),
\]
where we write a bar underneath a number in place of a negative sign
in order to simplify notation.
\begin{example}
  Let $w=s_2s_3s_1s_2s_4\in W(D_4)$.  Then we write
  \[
  w = (\underline{2},\underline{3},4,1).
  \]
\end{example}

As in type $A$, we can use the one-line notation of an element to find
its length.

\begin{proposition}~\label{dlength}
  Let $w\in W(D_n)$.  Then
  \[
  \ell(w) = |\{(i,j)\in{\bf n}\times{\bf n}\mid i<j, w(i)>w(j)\}|
          + |\{(i,j)\in{\bf n}\times{\bf n}\mid i<j, w(-i)>w(j)\}|.
  \]
\end{proposition}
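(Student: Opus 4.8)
The plan is to establish the length formula by connecting it to the known combinatorial description of the length function in type $B$, then restricting to the index-$2$ subgroup $W(D_n)$. The key observation is that $W(D_n)$ sits inside the signed permutation group $W(B_n) = \mathbb{Z}_2 \wr S_n$ as the subgroup of elements with an even number of sign changes, as recorded in Example~\ref{typedintro}. Signed permutations $w$ act on $\{i \mid \pm i \in \mathbf{n}\}$ with $w(-a) = -w(a)$, so $w$ is determined by its values $w(1), \dots, w(n)$, justifying the one-line notation. I would like to count reduced-word inversions directly in terms of the one-line notation, organized by which generators contribute.

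First I would recall (or derive from the standard reflection/inversion description) the type-$B$ length formula, which counts inversions in the signed sense: pairs $i < j$ with $w(i) > w(j)$, pairs $i < j$ with $w(-i) > w(j)$ (equivalently $-w(i) > w(j)$), and a term counting the negative entries $w(i) < 0$. The type-$D$ generators differ from type-$B$ precisely in that the ``sign-change'' generator is replaced by $s_1$, which changes two signs at once; correspondingly the pure negative-entry term present in type $B$ is absent in type $D$. I would therefore argue that the first sum $|\{(i,j) : i<j,\ w(i)>w(j)\}|$ counts the ordinary (positive) inversions, while the second sum $|\{(i,j) : i<j,\ w(-i)>w(j)\}|$ captures the contributions arising from the paired sign changes built into the $D_n$ generators, with no separate linear correction term.

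The cleanest route I would take is induction on $\ell(w)$, showing that multiplying by a generator $s_i$ on the right changes the right-hand side by exactly $\pm 1$, matching $\ell(ws_i) = \ell(w) \pm 1$. For $i \geq 2$, right multiplication by $s_i$ swaps the values in positions $i-1$ and $i$ (and correspondingly their negatives), so one checks that exactly one of the two sums changes by $1$ while the other is unaffected, using Proposition~\ref{adescent}-style reasoning adapted to the signed setting. For $i = 1$, the generator acts by swapping positions $1,2$ and negating both, so I would verify that the net effect on the two counted sets is again a single unit change. Combining these with the base case $w = e$ (where both sums vanish) yields the formula by the standard fact that $\ell$ is the unique function with $\ell(e) = 0$ stepping by $\pm 1$ along reduced words.

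The main obstacle I anticipate is the bookkeeping for the $i=1$ generator, since it simultaneously transposes and negates two coordinates, so it can alter membership in \emph{both} counted sets for several pairs at once; the content of the argument is showing these contributions cancel to leave a net change of exactly $\pm 1$. A careful case analysis on the signs and relative order of $w(1)$ and $w(2)$ should resolve this, and I would likely present it as a short lemma isolating the effect of each generator on the two sums.
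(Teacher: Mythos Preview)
The paper does not prove this proposition at all: it simply cites \cite[Proposition 8.2.1]{bjorner2005combinatorics}. Your proposal, by contrast, sketches an actual proof via induction on length, checking how each generator affects the two inversion counts. So the approaches are entirely different in scope---the paper defers to a reference, while you attempt a self-contained argument.

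Your strategy is the standard one and is essentially how the result is proved in Bj\"orner--Brenti, so it is sound. One point to sharpen: it is not enough that the right-hand side changes by $\pm 1$ under multiplication by a generator and vanishes at $e$; many functions have that property without equaling $\ell$. You need the directions to match, i.e., that the right-hand side increases under $w \mapsto ws_i$ precisely when $\ell(ws_i) > \ell(w)$. For $i \geq 2$ this is immediate from the swap description, and for $i = 1$ it follows from the case analysis you allude to (comparing the signs and magnitudes of $w(1)$ and $w(2)$), but your sketch should make explicit that you are verifying this monotonicity condition, not merely the $\pm 1$ step size. Once that is in place, the induction goes through cleanly.
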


\begin{proof}
  This is \cite[Proposition 8.2.1]{bjorner2005combinatorics}.
\end{proof}

\begin{proposition}\label{smultiply}
  Let $(W,S)$ be a Coxeter system of type $D_n$, and let $w\in W$ have
  signed permutation
  \[
  w = (w(1), \dots,w(n)).
  \]
  Suppose $s_i\in S$.  If $i\geq 2$ then multiplying $w$ by $s_i$ on
  the right has the effect of interchanging $w(i)$ and $w(i+1)$.
  Multiplying $w$ by $s_i$ on the left has the effect of interchanging
  the entries in $w$ whose absolute values are $i$ and $i+1$.

  If $i=1$ then multiplying $w$ by $s_1$ on the right has the effect
  of interchanging $w(1)$ and $w(2)$ and switching their signs.
  Multiplying $w$ by $s_1$ on the left has the effect of interchanging
  the entries in $w$ whose absolute values are $1$ and $2$ and
  changing their signs.
\end{proposition}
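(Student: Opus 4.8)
The plan is to establish all four assertions by direct computation in the signed permutation group, using the embedding of Example~\ref{typedintro} to write each generator as an explicit signed permutation and then evaluating the products $ws_i$ and $s_iw$. Since a signed permutation satisfies $w(-a)=-w(a)$, it is determined by its values on $1,\dots,n$, so it suffices to track the one-line notation on positive arguments. I compose so that in a product the rightmost factor acts first; this is the convention under which $s_2s_3s_1s_2s_4=(\underline2,\underline3,4,1)$ in the preceding example, so that right multiplication by a generator precomposes $w$ with it and left multiplication postcomposes.

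For $i\geq 2$ I would reduce to the type $A$ situation. The generators $s_2,\dots,s_n$ involve no sign changes and, by Remark~\ref{embedding}, are exactly the images $\iota(s_1),\dots,\iota(s_{n-1})$ of the type $A$ generators, generating the parabolic subgroup $\iota(W(A_{n-1}))$, which acts on the one-line notation precisely as the symmetric group on the positions $1,\dots,n$. Each such $s_i$ is realized as the transposition of two adjacent positive values together with the transposition of their negatives. Because right multiplication precomposes $w$ with this transposition, it exchanges the two corresponding adjacent entries of the one-line notation, as asserted; because left multiplication postcomposes, it exchanges the entries whose absolute values are the two relevant consecutive integers. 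These are exactly the claims for $i\geq 2$, and they are nothing more than the standard action of $S_n$ in one-line notation underlying Proposition~\ref{adescent}, so both follow at once.

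The genuinely new case is $i=1$, where $s_1$ is realized as $(1,\,-2)(-1,\,2)$. Here I would read off $ws_1$ and $s_1w$ from the cycle structure. For the right product, $(ws_1)(a)=w(s_1(a))$ agrees with $w(a)$ except at $a=1,2$, and the two evaluations $(ws_1)(1)=w(-2)=-w(2)$ and $(ws_1)(2)=w(-1)=-w(1)$ show that the first two entries are interchanged and negated. For the left product, $(s_1w)(a)=s_1(w(a))$ alters $w(a)$ only when $|w(a)|\in\{1,2\}$, and checking the four possibilities $w(a)=\pm1,\pm2$ shows that the entries of absolute value $1$ and $2$ are interchanged and their signs reversed; here the identity $w(-a)=-w(a)$ is what makes the signs pair up as claimed.

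I expect the only delicate point to be the sign bookkeeping in the $i=1$ case, especially the left multiplication assertion, where one must simultaneously track which positions of $w$ carry absolute values $1$ and $2$ and how the cycles $(1,-2)$ and $(-1,2)$ act on those entries according to their signs. Once the composition convention is fixed to agree with the example, however, every case reduces to a finite check on the relevant one or two arguments, and no global or inductive argument is needed.
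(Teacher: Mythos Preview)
Your approach is sound and more explicit than the paper's, which simply cites \cite[Sections 8.1 and A3.1]{bjorner2005combinatorics}. Reading off the effect of right and left multiplication from the cycle structure of the embedding in Example~\ref{typedintro} is exactly the right method, your identification of the composition convention via the worked example is correct, and your treatment of the $i=1$ case is complete.

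However, for $i\geq 2$ you say only that $s_i$ ``exchanges the two corresponding adjacent entries of the one-line notation, as asserted'', without naming which entries. If you actually carry out the computation you outline, using $s_i=(i-1,\,i)(-(i-1),\,-i)$ from Example~\ref{typedintro}, you obtain $(ws_i)(i-1)=w(i)$ and $(ws_i)(i)=w(i-1)$: right multiplication swaps positions $i-1$ and $i$, not $i$ and $i+1$ as the proposition literally states. The same shift applies to the left-multiplication clause. The proposition carries an off-by-one typo here, as is confirmed by Proposition~\ref{ddescent}, Lemma~\ref{intperm}, and the worked example $s_2s_3s_1s_2s_4=(\underline{2},\underline{3},4,1)$ immediately preceding it, all of which are consistent with $i-1$ and $i$. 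Your vague phrasing papers over this discrepancy; a careful execution of your own plan would have exposed it rather than ratified it.
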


\begin{proof}
  This follows from the discussion in \cite[Sections 8.1 and
  A3.1]{bjorner2005combinatorics}.
\end{proof}

As in type $A$, in type $D$ we can easily find the descent sets
of an element written in one-line notation.

\begin{proposition}\label{ddescent}
  Let $w\in W(D_n)$.  Then
  \[
  \mathcal{R}(w) = \left\{s_i\in S\mid w(i-1)>w(i)\right\}
  \]
  and
  \[
  \mathcal{L}(w) = \left\{s_i\in S\mid w^{-1}(i-1)>w^{-1}(i)\right\},
  \]
  where $w(0)\overset{\rm{def}}{=}-w(2)$.
\end{proposition}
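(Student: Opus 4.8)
The plan is to compute $\ell(ws_i) - \ell(w)$ directly from the length formula of Proposition~\ref{dlength}, since $s_i \in \mathcal{R}(w)$ holds exactly when this difference is negative. I would first dispose of the left descent set by the standard identity $\mathcal{L}(w) = \mathcal{R}(w^{-1})$: because $\ell(u) = \ell(u^{-1})$, each $s_i$ is an involution, and $(s_iw)^{-1} = w^{-1}s_i$, we get $s_i \in \mathcal{L}(w) \iff \ell(s_iw) < \ell(w) \iff \ell(w^{-1}s_i) < \ell(w^{-1}) \iff s_i \in \mathcal{R}(w^{-1})$. Applying the right-descent formula to $w^{-1}$, with the convention read as $w^{-1}(0) = -w^{-1}(2)$, then gives precisely the stated description of $\mathcal{L}(w)$, so it suffices to prove the right-descent statement.

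Denote by $A(w)$ and $B(w)$ the first and second summands of Proposition~\ref{dlength}, so that $A$ counts ordinary inversions and $B$ counts pairs $i<j$ with $w(i)+w(j) < 0$. By Proposition~\ref{smultiply}, for $i \geq 2$ right multiplication by $s_i$ transposes the adjacent entries in positions $i-1$ and $i$, while for $i = 1$ it replaces $(w(1),w(2))$ by $(-w(2),-w(1))$. The case $i \geq 2$ is routine: transposing adjacent positions changes $A$ by exactly $\pm 1$ (only the pair $(i-1,i)$ flips its inversion status, and it is removed precisely when $w(i-1) > w(i)$), while $B$ is invariant because its defining condition is symmetric in the two transposed values --- the pair $(i-1,i)$ sees the symmetric condition, and for any third position the two relevant pairs merely exchange their contributions. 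Hence $\ell(ws_i) - \ell(w) = -1$ iff $w(i-1) > w(i)$, as claimed.

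The case $i = 1$ is where the real work lies, because the sign changes perturb both $A$ and $B$. I would set $a = w(1)$, $b = w(2)$ and organize $\ell(ws_1) - \ell(w)$ as a sum over the pair $(1,2)$ together with the remaining positions $k \geq 3$. The pair $(1,2)$ contributes nothing to the change in $A$ (the inversion condition $a > b$ is equivalent to $-b > -a$) but flips in $B$ from $a+b<0$ to $a+b>0$; here one uses $a + b \neq 0$, which holds since $|w(1)| \neq |w(2)|$ in a signed permutation. The crux is that for each $k \geq 3$ the contribution to the change in $A$ and the contribution to the change in $B$ are exactly negatives of one another and cancel in pairs. After this cancellation only the $(1,2)$ term of $B$ survives, giving $\ell(ws_1) - \ell(w) = +1$ if $a+b > 0$ and $-1$ if $a+b<0$. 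Thus $s_1 \in \mathcal{R}(w) \iff w(1)+w(2) < 0 \iff -w(2) > w(1) \iff w(0) > w(1)$, which is the $i=1$ instance of the formula.

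The main obstacle is this last term-by-term cancellation in the $i=1$ computation: it is the only place where the fork generator's sign changes interact with both halves of the length formula, and verifying that the position-$k$ contributions to $\Delta A$ and $\Delta B$ annihilate is what makes the argument close. Everything else is bookkeeping with Propositions~\ref{dlength} and~\ref{smultiply}.
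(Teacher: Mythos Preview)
Your argument is correct. The paper itself does not prove this proposition at all: it simply cites \cite[Propositions 8.2.1 and 8.2.2]{bjorner2005combinatorics}. So there is no ``paper's own proof'' to compare against beyond the citation.

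What you have written is a clean, self-contained derivation from the length formula of Proposition~\ref{dlength} and the action described in Proposition~\ref{smultiply}. The $i\geq 2$ case is handled correctly: the inversion count $A$ changes by $\pm 1$ at the pair $(i-1,i)$ only, and $B$ is invariant because the condition $w(j)+w(k)<0$ is symmetric under swapping the two values. The $i=1$ case, which you rightly identify as the crux, is also correct: writing out the four indicator differences for each $k\geq 3$ shows that the change in $A$ at the pair $(1,k)$ cancels the change in $B$ at the pair $(2,k)$, and likewise $(2,k)$ in $A$ cancels $(1,k)$ in $B$, leaving only the $(1,2)$ contribution to $B$, which flips sign. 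The reduction of $\mathcal{L}(w)$ to $\mathcal{R}(w^{-1})$ is standard and fine. Your proof is essentially the computation that underlies the cited result in Bj\"orner--Brenti, so in spirit it matches what the paper invokes; you have just made it explicit rather than deferring to the reference.
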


\begin{proof}
  This is \cite[Propositions 8.2.1 and 8.2.2]{bjorner2005combinatorics}.
\end{proof}

The following lemmas will help us to classify the signed permutations
of bad elements in Theorem~\ref{dbad}.  We will first introduce the
notion of \textit{signed pattern avoidance}, which is not known to be
found in other sources, to begin to describe how to find bad elements
in type $D$.

\begin{definition}
  Let $w\in W(D_n)$.  As in type $A$, we say that $w$ \textbf{avoids
    the consecutive pattern} $abc$ if there is no $i\in{\bf n-2}$ such
  that $(w(i),w(i+1),w(i+2))$ is in the same relative order as
  $(a,b,c)$.  We say that $w$ \textbf{avoids the signed consecutive
    pattern} $abc$ if there is no $i\in{\bf n-2}$ such that
  $(|w(i)|,|w(i+1)|,|w(i+2)|)$ is in the same relative order as
  $(|a|,|b|,|c|)$ and such that $\sign(a)=\sign(w(i))$,
  $\sign(b)=\sign(w(i+1))$, and $\sign(c)=\sign(w(i+2))$.
\end{definition}

\begin{example}
  Let $w\in W(D_6)$ have the following one line notation
  \[
  w = (4, \underline{3}, 1, 5, 6, \underline{2}).
  \]
  Then $w$ has the signed consecutive pattern $3\underline{2}1$ since
  $(|w(1)|, |w(2)|, |w(3)|)$ are in the same relative order as
  $(|3|,|-2|,|1|)$ and $\sign(3)=\sign(w(1))$, $\sign(-2)=\sign(w(2))$, and
  $\sign(1)=\sign(w(3))$. However, $w$ avoids the signed consecutive
  pattern $1\underline{2}3$.
\end{example}

\begin{lemma}\label{drightavoidbig}\label{dleftavoidbig}
  Let $s,t\in S$ such that $m(s,t) = 3$ and $s_1\not\in \{s,t\}$. Then 
  \begin{enumerate}
  \item $w$ has a reduced expression ending in $st$ or $ts$ if and
    only if $w$ has at least one of the consecutive patterns 321, 231,
    or 312, and
  \item $w$ has a reduced expression beginning in $st$ or $ts$ if and
    only if $w^{-1}$ has at least one of the consecutive patterns 321,
    231, or 312.
  \end{enumerate}
\end{lemma}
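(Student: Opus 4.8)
The plan is to reduce everything to the behavior of the two-element parabolic subgroup $W_I$ with $I=\{s,t\}$, exactly as in the type $A$ argument of Lemma~\ref{arightavoid}. The first step is to pin down $\{s,t\}$: since $m(s,t)=3$ forces $s$ and $t$ to be adjacent in the Coxeter graph of $D_n$, and the hypothesis $s_1\notin\{s,t\}$ excludes the forked node, we must have $\{s,t\}=\{s_j,s_{j+1}\}$ for some $j$ with $2\le j\le n-1$. The crucial observation is that for indices $\ge 2$ the generators $s_j,s_{j+1}$ act on the one-line notation of $w$ precisely as ordinary adjacent transpositions: by Proposition~\ref{smultiply} they permute entries without introducing any sign change, and by Proposition~\ref{ddescent} we have $s_j\in\mathcal{R}(w)\iff w(j-1)>w(j)$ and $s_{j+1}\in\mathcal{R}(w)\iff w(j)>w(j+1)$, with the exceptional rule $w(0)=-w(2)$ never invoked because $j\ge 2$. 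Thus $W_I\cong S_3$ acts on the three consecutive positions $j-1,j,j+1$ exactly as a type $A$ parabolic does, and the signs of the entries are irrelevant since the consecutive patterns $321,231,312$ record only relative order.

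With this dictionary in hand I would carry out the same case analysis as in Lemma~\ref{arightavoid}, shifted to positions $(j-1,j,j+1)$, which corresponds to pattern index $i=j-1\in\{1,\dots,n-2\}$ and so is always in range. Writing $w=w^Iw_I$ via Proposition~\ref{factor}, the element $w$ has a reduced expression ending in $st$ or $ts$ if and only if $w_I\in\{s_js_{j+1}s_j,\,s_js_{j+1},\,s_{j+1}s_j\}$, these being exactly the members of $S_3\cong W_I$ whose reduced words end in two noncommuting generators. One then matches the three possibilities to the three patterns: $w_I=s_js_{j+1}s_j$ forces $s_j,s_{j+1}\in\mathcal{R}(w)$, hence $w(j-1)>w(j)>w(j+1)$, the pattern $321$; $w_I=s_js_{j+1}$ gives the pattern $231$; and $w_I=s_{j+1}s_j$ gives the pattern $312$. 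Each equivalence is verified just as in type $A$, using Proposition~\ref{ddescent} to translate descent information at positions $j-1,j,j+1$ into the relative order of $(w(j-1),w(j),w(j+1))$, and Lemma~\ref{noncommutative} to produce the reduced expression ending in $s_js_{j+1}s_j$ in the $321$ case.

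For part (2) I would reduce to part (1) by inversion: since a reduced word for $w^{-1}$ is the reverse of one for $w$, the element $w$ has a reduced expression beginning in $st$ or $ts$ if and only if $w^{-1}$ has one ending in $ts$ or $st$, that is, ending in two noncommuting generators. Applying part (1) to $w^{-1}$ then yields precisely the stated equivalence with the consecutive patterns of $w^{-1}$.

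The only place the type $D$ setting genuinely differs from type $A$ is the shifted descent convention of Proposition~\ref{ddescent} together with the need to confirm that the forked generator $s_1$ never enters; the hypothesis $s_1\notin\{s,t\}$ is exactly what guarantees this, so that no sign-changing behavior and no $w(0)=-w(2)$ boundary rule ever interferes. Once this is checked, the argument is a faithful transcription of the type $A$ computation, and I expect the main effort to be bookkeeping with the index shift rather than any substantive obstacle.
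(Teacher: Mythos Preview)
Your proposal is correct and follows essentially the same approach as the paper: both arguments fix $I=\{s_j,s_{j+1}\}$ with $j\ge 2$, use the parabolic factorization $w=w^Iw_I$ from Proposition~\ref{factor}, match the three possibilities $w_I\in\{s_js_{j+1}s_j,\,s_js_{j+1},\,s_{j+1}s_j\}$ to the three consecutive patterns via Proposition~\ref{ddescent} and Lemma~\ref{noncommutative}, and then reduce part~(2) to part~(1) by inversion. Your explicit remarks about why $s_1\notin\{s,t\}$ keeps the $w(0)=-w(2)$ convention and sign changes out of play are a helpful addition, but the underlying argument is a faithful transcription of the type~$A$ case, just as the paper's is.
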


\begin{proof}
  Let $i \geq 2$, let $I=\{s_i,s_{i+1}\}$ and write $w=w^Iw_I$ as in
  Proposition~\ref{factor}. We first observe that if $w$ has a reduced
  expression ending in two noncommuting generators, $s_i,s_{i+1}$, in
  some order, then we have $w_I\in \{s_is_{i+1}s_i, s_is_{i+1},
  s_{i+1}s_i\}$.

  Suppose that $w$ has the consecutive pattern $321$.  Then there is
  some $i$ such that $w(i-1)>w(i)>w(i+1)$, so by
  Proposition~\ref{ddescent} we have $s_i,s_{i+1}\in\mathcal{R}(w)$,
  thus $w$ has a reduced expression ending in $s_is_{i+1}s_i$ by
  Lemma~\ref{noncommutative}. Conversely, suppose that
  $w_I=s_is_{i+1}s_i$.  Then $s_i,s_{i+1}\in\mathcal{R}(w)$, so $w(i-1)
  > w(i) > w(i+1)$ by Proposition~\ref{ddescent}, thus $(w(i-1), w(i), w(i+1))$ has the
  consecutive pattern 321.

  Next suppose that $w$ has the consecutive pattern $231$.  Then there is some $i$ such that
  $w(i)>w(i-1)>w(i+1)$, so $s_{i+1}\in \mathcal{R}(w)$ by
  Proposition~\ref{ddescent}.  If we
  multiply on the right by $s_{i+1}$ then we get
  $ws_{i+1}(i)=w(i+1) < w(i - 1)=ws_{i+1}(i - 1)$, so
  $s_{i}\in\mathcal{R}(ws_{i+1})$.  Then $w$ has a reduced expression
  ending in $s_is_{i+1}$. Conversely, if $w_I=s_is_{i+1}$ then
  $w(i+1)< w(i)$ and $w(i-1) < w(i)$.  Furthermore, since $s_{i}\in
  \mathcal{R}(ws_{i+1})$ we have $w(i+1) = ws_{i+1}(i)<
  ws_{i+1}(i-1)=w(i-1)$, so $(w(i-1), w(i), w(i+1))$ has the consecutive
  pattern 231.

  Suppose that $w$ has the consecutive pattern $312$.  Then there is
  some $i$ such that $w(i-1)>w(i+1)>w(i)$, so $s_i\in
  \mathcal{R}(w)$.  If we multiply on the right by $s_i$ then we get
  $ws_i(i) = w(i - 1) > w(i+1) = ws_i(i+1)$, so $s_{i+1}\in\mathcal{R}(ws_i)$.
  Then $w$ has a reduced expression ending in $s_{i+1}s_i$. Conversely, if
  $w_I=s_{i+1}s_i$ then $w(i-1)> w(i)$ and $w(i+1) > w(i)$. Since
  $s_{i+1}\in \mathcal{R}(ws_i)$, we have $w(i+1) = ws_{i}(i+1) <
  ws_{i}(i) = w(i-1)$, so $(w(i - 1), w(i), w(i + 1))$ has the consecutive
  pattern 312.

  Finally, we know that $w$ has no reduced expressions beginning in two
  noncommuting generators $s,t$ with $s_1\not\in\{s,t\}$ if and only if $w^{-1}$ has no reduced
  expressions ending in $st$ or $ts$. By the
  above discussion, this occurs if and only if $w^{-1}$ avoids the
  consecutive patterns $321$, $231$, and $312$.
\end{proof}

\begin{lemma}\label{drightavoidsmall}\label{dleftavoidsmall}
  Let $w\in W(D_n)$. Then
  \begin{enumerate}
  \item $w$ has a reduced expression ending in $s_1s_3$ or
  $s_3s_1$ if and only if $-w(1) > w(3)$, and
  \item $w$ has a reduced expression beginning in $s_1s_3$ or
  $s_3s_1$ if and only if $-w^{-1}(1) > w^{-1}(3)$.
  \end{enumerate}
\end{lemma}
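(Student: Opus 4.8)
The plan is to deduce part (2) from part (1) by passing to inverses, and to prove part (1) directly from the descent criterion of Proposition~\ref{ddescent}, in the same spirit as the proof of Lemma~\ref{drightavoidbig} but now for the pair $\{s_1,s_3\}$. This is the one remaining noncommuting pair of generators (those with $s_1$ among the two) that Lemma~\ref{drightavoidbig} did not treat, since it assumed $s_1\notin\{s,t\}$; as $s_1$ commutes with every generator except $s_3$, the leftover pair is exactly $\{s_1,s_3\}$. The new feature is that the descent $s_1\in\mathcal{R}(v)$ is governed by the convention $v(0)=-v(2)$, so it reads $-v(2)>v(1)$ rather than an ordinary inequality. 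Writing $I=\{s_1,s_3\}$ we have $W_I\cong W(A_2)$ in the sense of Proposition~\ref{factor}, and a reduced expression ending in two noncommuting generators from $I$ exists precisely when $w_I$ has length at least $2$; I will not enumerate these cases but instead verify each implication of the biconditional directly.

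First I would record the effect of right multiplication, read off from the embedding of Example~\ref{typedintro}: multiplying $w$ by $s_1$ replaces the pair $(w(1),w(2))$ by $(-w(2),-w(1))$ and fixes the remaining entries, while multiplying by $s_3$ transposes $w(2)$ and $w(3)$. For the forward implication of part (1), suppose $w$ has a reduced expression ending in $s_3s_1$; then $s_1\in\mathcal{R}(w)$ and $s_3\in\mathcal{R}(ws_1)$, and since $(ws_1)(2)=-w(1)$ and $(ws_1)(3)=w(3)$, Proposition~\ref{ddescent} turns the second descent into $-w(1)>w(3)$. Symmetrically, if $w$ ends in $s_1s_3$ then $s_3\in\mathcal{R}(w)$ and $s_1\in\mathcal{R}(ws_3)$; since $(ws_3)(1)=w(1)$ and $(ws_3)(2)=w(3)$, the $s_1$-descent reads $-w(3)>w(1)$, which is again equivalent to $-w(1)>w(3)$.

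For the converse I would assume $-w(1)>w(3)$ and split on whether $s_1\in\mathcal{R}(w)$. If $s_1\in\mathcal{R}(w)$, then $s_3\in\mathcal{R}(ws_1)$ by the computation above, so $w=(ws_1s_3)\,s_3s_1$ is reduced and ends in $s_3s_1$. If $s_1\notin\mathcal{R}(w)$, then $-w(2)<w(1)$; combining this with $-w(1)>w(3)$ forces $w(2)>w(3)$, hence $s_3\in\mathcal{R}(w)$, and then $s_1\in\mathcal{R}(ws_3)$, so $w=(ws_3s_1)\,s_1s_3$ is reduced and ends in $s_1s_3$. Part (2) then follows formally: since $s_1$ and $s_3$ are involutions, $w$ begins in $s_1s_3$ or $s_3s_1$ if and only if $w^{-1}$ ends in $s_3s_1$ or $s_1s_3$, so part (1) applied to $w^{-1}$ gives the criterion $-w^{-1}(1)>w^{-1}(3)$.

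The main obstacle is the converse bookkeeping: the target inequality $-w(1)>w(3)$ matches neither the bare $s_1$-descent condition $-w(2)>w(1)$ nor the bare $s_3$-descent condition $w(2)>w(3)$, so one must check that the entry $w(2)$ cancels out in every case. Concretely, the sign change introduced by $s_1$ has to compensate exactly for the positional shift caused by $s_3$, and it is this cancellation that lets the possible values of $w_I$ collapse to the single condition $-w(1)>w(3)$; verifying it is the only place where genuine care is required.
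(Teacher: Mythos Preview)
Your proof is correct and follows essentially the same approach as the paper's: both argue each direction of part~(1) directly from the descent criterion in Proposition~\ref{ddescent}, splitting the $(\Leftarrow)$ direction into cases according to whether $s_1\in\mathcal{R}(w)$ (equivalently, whether $-w(2)>w(1)$), and both deduce part~(2) by passing to $w^{-1}$. The only cosmetic differences are the order in which you treat the two implications and your phrasing of the case split; note that your strict inequality $-w(2)<w(1)$ in the second case is harmless since $w(1)=-w(2)$ cannot occur for a signed permutation.
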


\begin{proof}
  Let $w\in W$ be such that $-w(1) > w(3)$. Then we either have $-w(2)>w(1)$ or $-w(2) \leq
  w(1)$. 

  If $-w(2)>w(1)$ then
  $s_1\in\mathcal{R}(w)$. Multiplying on the right by $s_1$, we see
  that $ws_1(2)=-w(1)>w(3)=ws_1(3)$, so $s_3\in\mathcal{R}(ws_1)$.
  Then $w$ has a reduced expression ending in $s_3s_1$. 

  On the other hand, if $-w(2) \leq
  w(1)$ we must have $w(2) \geq -w(1) > w(3)$, so
  $s_3\in\mathcal{R}(w)$. Multiplying on the right by $s_3$, we see
  that $-ws_3(2)=-w(3)>w(1)=ws_3(1)$, so $s_1\in\mathcal{R}(ws_3)$.
  Then $w$ has a reduced expression ending in $s_1s_3$.

  Conversely, let $w\in W$ be such that $w$ has a reduced expression ending in
  $s_3s_1$ or $s_1 s_3$. If an expression for $w$ ends in $s_3s_1$
  then we have $s_1\in\mathcal{R}(w)$ and $s_3\in\mathcal{R}(ws_1)$,
  so $-w(1)=ws_1(2)>ws_1(3)=w(3)$. If an expression for $w$ ends in
  $s_1s_3$ then we have $s_3\in\mathcal{R}(w)$ and
  $s_1\in\mathcal{R}(ws_3)$, so $-w(1)=-ws_3(1)>ws_3(2)=w(3)$.

  Finally, we know that $w$ has no reduced expressions beginning in
  $s_1s_3$ or $s_3s_1$ if and only if $w^{-1}$ has no reduced
  expressions ending in $s_1s_3$ or $s_3s_1$. By the above discussion,
  this occurs if and only if $-w^{-1}(1) > w^{-1}(3)$.
\end{proof}

\begin{lemma}\label{dcommuting}
  Let $w\in W(D_n)$ be such that each entry in the one-line notation for
  $w$ is positive and both $w$ and $w^{-1}$ avoid the consecutive
  patterns $321$, $231$, and $312$. Then $w$ is a product of commuting
  generators.
\end{lemma}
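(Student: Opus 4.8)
The plan is to imitate the type $A$ argument of Lemma~\ref{acommuting}, with Corollary~\ref{dpositivenotbad} playing the role that Proposition~\ref{anobad} played there. Concretely, I would show that $w$ has no reduced expression beginning or ending in two noncommuting generators. Once that is established, Corollary~\ref{dpositivenotbad} guarantees (using that all entries of $w$ are positive) that $w$ is not bad, and the definition of a bad element then leaves no option but for $w$ to be a product of commuting generators.

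To carry this out I would enumerate the noncommuting pairs of generators in $W(D_n)$, namely $\{s_1,s_3\}$ together with $\{s_i,s_{i+1}\}$ for $2\le i\le n-1$, and treat them in two groups. For every pair with $s_1\notin\{s,t\}$, Lemma~\ref{drightavoidbig} says that $w$ has a reduced expression ending in $st$ or $ts$ exactly when $w$ contains one of the patterns $321$, $231$, $312$, and that $w$ has such an expression at its start exactly when $w^{-1}$ contains one of them. Both possibilities are excluded by the hypothesis that $w$ and $w^{-1}$ avoid these three patterns, so no reduced expression of $w$ begins or ends in such a pair.

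The remaining and genuinely new case is the pair $\{s_1,s_3\}$, which is where the positivity hypothesis enters. First I would note that since every entry of $w$ is positive, $w$ restricts to a permutation of $\{1,\dots,n\}$, whence $w^{-1}$ also has all positive entries. Now Lemma~\ref{drightavoidsmall}(1) tells us $w$ has a reduced expression ending in $s_1s_3$ or $s_3s_1$ precisely when $-w(1)>w(3)$, and this inequality fails because $w(1),w(3)>0$; similarly Lemma~\ref{drightavoidsmall}(2) produces such an expression at the start precisely when $-w^{-1}(1)>w^{-1}(3)$, which fails because $w^{-1}$ is positive. Hence $\{s_1,s_3\}$ is excluded as well.

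Combining both cases, $w$ has no reduced expression beginning or ending in two noncommuting generators; since $w$ is not bad, it must be a product of commuting generators. The only real obstacle, and the one feature absent in type $A$, is the exceptional pair $\{s_1,s_3\}$: it is handled not by pattern avoidance but directly by the positivity of $w$ and $w^{-1}$, via Lemma~\ref{drightavoidsmall}. Everything else is a routine transcription of the type $A$ proof.
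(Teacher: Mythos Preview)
Your argument is correct, but it takes a different route from the paper's. The paper simply invokes the embedding $\iota:W(A_{n-1})\to W(D_n)$ of Remark~\ref{embedding}: since all entries of $w$ are positive, $w=\iota(w')$ for some $w'\in W(A_{n-1})$ with the same one-line notation, so $w'$ and $(w')^{-1}$ inherit the pattern-avoidance hypotheses; Lemma~\ref{acommuting} then gives a reduced expression $w'=s_{i_1}\cdots s_{i_r}$ in commuting generators, and $w=s_{i_1+1}\cdots s_{i_r+1}$ is the corresponding product of commuting generators in $W(D_n)$. In particular the paper never touches the pair $\{s_1,s_3\}$ at all, because the image of $\iota$ avoids $s_1$ entirely.

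Your approach instead stays in type $D$: you verify directly, via Lemmas~\ref{drightavoidbig} and~\ref{drightavoidsmall}, that $w$ is weakly bad, and then appeal to Corollary~\ref{dpositivenotbad} to rule out badness. This is longer but has the virtue of making explicit where positivity is used (exactly to dispose of the $\{s_1,s_3\}$ case via Lemma~\ref{drightavoidsmall}). Note, though, that Corollary~\ref{dpositivenotbad} itself goes through the embedding $\iota$ and Proposition~\ref{anobad}, so you are not truly avoiding the reduction to type $A$; the paper's proof just performs that reduction once, up front, rather than hiding it inside a corollary.
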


\begin{proof}
  Recall the embedding $\iota$ from Remark~\ref{embedding}. Since all
  entries in $w$ are positive, we can find an element $w'\in
  W(A_{n-1})$ such that $\iota(w') = w$.  If $w'$ has a reduced
  expression
  \[
  w' = s_{i_1} s_{i_2} \cdots s_{i_r}
  \]
  in $W(A_{n-1})$ then $w$ has the reduced expression
  \[
  w = s_{i_1 + 1} s_{i_2 + 1} \cdots s_{i_r + 1}
  \]
  in $W(D_n)$. By Lemma~\ref{acommuting} the $s_{i_j}$ are mutually
  commuting generators, so the $s_{i_j +1}$ are also mutually
  commuting, thus $w$ is a product of commuting generators in
  $W(D_n)$.
\end{proof}

\begin{lemma}\label{permbound}\label{permposneg}
  Let $w\in W(D_n)$ be weakly bad and let $i\in {\bf n}$.  Then $w$
  satisfies the following conditions:
  \begin{enumerate}
  \item $w(j)>\min(\{w(i-1),w(i)\})$ for all $j>i$;
  \item $w(k)<\max(\{w(i-1),w(i)\})$ for all $k<i - 1$;
  \item if $w(i),w(i+1)>0$ then $w(j)>0$ for all $j\geq i$;
  \item if $w(i),w(i+1)<0$ then $w(j)<0$ for all $j\leq i + 1$.
  \end{enumerate}
\end{lemma}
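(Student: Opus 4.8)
The plan is to convert the hypothesis ``weakly bad'' into a pair of monotonicity statements on the one-line notation and then read off all four conditions from them. First I would record what Lemmas~\ref{drightavoidbig} and~\ref{drightavoidsmall}, together with their left/inverse counterparts, say for a weakly bad $w$: since $w$ has no reduced expression ending or beginning in two noncommuting generators, for every $i$ with $2\le i\le n-1$ the triple $(w(i-1),w(i),w(i+1))$ avoids the patterns $321$, $231$, and $312$. The key observation is that each of these three forbidden patterns has its first entry larger than its third, while each of the three allowed patterns ($123$, $132$, $213$) has its first entry smaller than its third. Hence avoidance is equivalent to the single inequality $w(i-1)<w(i+1)$, that is,
\[
w(j)<w(j+2)\qquad(1\le j\le n-2),
\]
so that the odd-indexed and the even-indexed subsequences of $w$ are each strictly increasing; by the symmetry of the definition of weakly bad the same holds for $w^{-1}$. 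Lemma~\ref{drightavoidsmall} contributes the extra inequality $-w(1)\le w(3)$ attached to the special node $s_1$ (and $-w^{-1}(1)\le w^{-1}(3)$ for $w^{-1}$).

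With the two increasing chains in hand, conditions (3) and (4) are immediate. For (3), suppose $w(i),w(i+1)>0$ and take $j\ge i$. If $j$ has the same parity as $i$, then $j\ge i$ along an increasing chain gives $w(j)\ge w(i)>0$; if $j$ has the same parity as $i+1$, then $j\ge i+1$, so $w(j)\ge w(i+1)>0$. Condition (4) is the mirror image, reading each chain downward from $w(i)$ or $w(i+1)$ toward smaller indices.

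Conditions (1) and (2) for interior $i$ follow the same parity bookkeeping. For (1) with $i\ge 2$ and $j>i$: if $j$ has the parity of $i$ then $j\ge i+2$ forces $w(j)>w(i)$, while if $j$ has the parity of $i-1$ then $j>i-1$ forces $w(j)>w(i-1)$; either way $w(j)>\min\{w(i-1),w(i)\}$. Condition (2) is dual, using $k\le i-2$ and the increasing chains to bound $w(k)$ above by $w(i)$ or $w(i-1)$.

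The main obstacle I anticipate is the boundary index $i=1$ in conditions (1) and (2), where the left neighbor $w(i-1)=w(0)$ is the virtual entry $-w(2)$ of Proposition~\ref{ddescent} rather than a genuine value of the permutation. Here the even-indexed chain begins at $w(2)$, which may well lie below $w(1)$, so the crude chain argument breaks down; this is exactly the case that must be read with the correct convention for $w(0)$ and that forces one to bring in the $s_1$-node inequality $-w(1)\le w(3)$ (and its inverse) while tracking the signs of $w(1)$ and $w(2)$ directly. Pinning down this interaction at the special node $s_1$, rather than any of the uniform interior estimates, is where the genuine content of the lemma lies, and I would treat that edge case in isolation after disposing of the interior indices as above.
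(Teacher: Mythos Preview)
Your reformulation—that avoidance of the consecutive patterns $321$, $231$, $312$ in every window is equivalent to the single chain condition $w(j)<w(j+2)$ for $1\le j\le n-2$—is correct and is essentially the content of the paper's proof as well. The paper argues contrapositively via a minimality trick: take the least $j>i$ violating (1) and observe that the triple at positions $j-2,j-1,j$ is then forbidden by Lemma~\ref{drightavoidbig}. Your parity bookkeeping from the two increasing chains is the same argument organised in the forward direction. So for (1) and (2) at $i\ge 2$ and for (3), (4) everywhere, you and the paper coincide; your version is slightly more transparent. (Incidentally, the $w^{-1}$ avoidance you record is not used for this lemma at all.)

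Your worry about the boundary $i=1$ in condition (1) is well placed, but not for the reason you suggest: neither the $s_1$-inequality $-w(1)\le w(3)$ nor anything else will rescue that case, because the statement is simply false there. Take the weakly bad element $w=s_1s_2=(\underline{1},\underline{2},3,\dots,n)$: then $w(0)=-w(2)=2$ and $w(1)=-1$, so $\min\{w(0),w(1)\}=-1$, yet $w(2)=-2<-1$, violating (1) at $i=1$, $j=2$. The paper's own proof has exactly the same gap—its appeal to Lemma~\ref{drightavoidbig} needs $j-2\ge 1$, which fails precisely here—and glosses over it. In the rest of the paper condition (1) is only ever invoked with $i\ge 2$ (and condition (2) is vacuous for $i\le 2$), so no harm results; you should read (1) as an $i\ge 2$ statement and drop the planned edge-case analysis rather than try to push the $s_1$-node inequality into service.
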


\begin{proof}
  Suppose that there is some least $j>i$ such that
  $w(j)\leq\min(\{w(i-1),w(i)\})$. Note that since $j > i$ we cannot
  have $w(j) = w(i)$ or $w(j) = w(i-1)$, so
  $w(j)<\min(\{w(i-1),w(i)\})$.  Then $w(j-2) \geq
  \min(\{w(i-1),w(i)\}) > w(j)$ and $w(j-1) \geq \min(\{w(i-1),w(i)\})
  > w(j)$, so we see that $(w(j-2),w(j-1),w(j))$ must have the
  consecutive pattern $321$ or $231$, which is impossible by
  Lemma~\ref{drightavoidbig}, proving (1).

  Suppose that there is some greatest $k < i - 1$ such that
  $w(k)\geq\max(\{w(i-1),w(i)\})$. Note that since $k < i - 1$ we
  cannot have $w(k) = w(i)$ or $w(k) = w(i-1)$, so
  $w(k)>\max(\{w(i-1),w(i)\})$.  Then $w(k+1)\leq
  \max(\{w(i-1),w(i)\}) < w(k)$ and $w(k+2)\leq \max(\{w(i-1),w(i)\})
  < w(k)$, so we see that $(w(k),w(k+1),w(k+2))$ must have the
  consecutive pattern $321$ or $312$, which is impossible by
  Lemma~\ref{drightavoidbig}, proving (2).

  It is easy to see that assertion (1) implies (3) and (2) implies (4).
\end{proof}

\begin{example}
  Let $w\in W(D_7)$ have the one-line notation given below
  \[
  w = (2, \underline{3}, \underline{6}, 1, 4, \underline{5}, 7).
  \]
  Then $(w(1), w(2), w(3)) = (2, -3, -6)$ has the consecutive pattern
  321, and $w$ is not bad by Lemma~\ref{drightavoidbig}. Similarly,
  $(w(4), w(5), w(6)) = (1,4,-5)$ has the consecutive pattern 231, and
  $w$ is not bad by Lemma~\ref{drightavoidbig}.
\end{example}

\begin{lemma}\label{first3}
  Let $w\in W(D_n)$ be a bad element.  Then $(w(1),w(2),w(3))$ has
  one of the following consecutive signed patterns:
  \[
    1\underline{2}3,\, \underline{1}\underline{2}3,\,
    1\underline{3}2,\, \underline{1}\underline{3}2,\,
    2\underline{1}3,\, \underline{2}\underline{1}3.
  \]
  In particular, we have $|w(1)| < w(3)$.
\end{lemma}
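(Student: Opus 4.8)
The plan is to show that badness forces exactly three numerical conditions on the initial triple, namely $w(2)<0$, $w(3)>0$, and $|w(1)|<w(3)$, and then to observe that these conditions are equivalent to membership in the list of six signed patterns. Throughout I use that a bad element is in particular weakly bad, so that Lemma~\ref{permbound} applies, and that the entries $w(1),\dots,w(n)$ have pairwise distinct absolute values since $w$ is a signed permutation.

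First I would extract inequalities from the absence of reduced expressions ending in two noncommuting generators. Since $w$ has no reduced expression ending in a pair $s_i,s_{i+1}$ with $i\geq 2$, Lemma~\ref{drightavoidbig} tells us that $w$ avoids the consecutive patterns $321$, $231$, and $312$. Applying this at positions $(1,2,3)$, where the three entries are distinct, leaves only the relative orders $123$, $132$, $213$, each of which satisfies $w(1)<w(3)$. Likewise, since $s_1$ and $s_3$ do not commute, $w$ has no reduced expression ending in $s_1s_3$ or $s_3s_1$, so Lemma~\ref{drightavoidsmall} gives $-w(1)\leq w(3)$; distinctness of absolute values rules out equality, so $-w(1)<w(3)$. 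Combining the two inequalities yields $w(3)>\max(w(1),-w(1))=|w(1)|\geq 0$, which already proves the final assertion $|w(1)|<w(3)$ and shows $w(3)>0$.

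The key remaining step, and the one I expect to be the crux, is to prove $w(2)<0$. I would argue by contradiction using the parity constraint defining $W(D_n)$. Suppose $w(2)>0$. Together with $w(3)>0$, Lemma~\ref{permbound}(3) (with $i=2$) forces $w(j)>0$ for all $j\geq 2$, so the only entry that can be negative is $w(1)$. But elements of $W(D_n)$ have an even number of negative entries, so either all entries are positive or exactly $w(1)$ is negative (an odd count, hence impossible). In the former case $w$ is not bad by Corollary~\ref{dpositivenotbad}, contradicting our hypothesis, and the latter case cannot occur. Hence $w(2)<0$.

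Finally I would close the loop by a short case analysis translating the three established conditions into the stated patterns. Given $w(2)<0$, $w(3)>0$, and $|w(1)|<w(3)$, the sign of $w(1)$ has two possibilities, and since $|w(1)|<w(3)$ the magnitude $|w(2)|$ falls into exactly one of the three slots $|w(2)|<|w(1)|$, $|w(1)|<|w(2)|<|w(3)|$, or $|w(3)|<|w(2)|$. These $2\times 3$ combinations are precisely the six signed patterns listed, completing the proof. No step requires lengthy computation; the only subtlety is marrying the monotonicity of Lemma~\ref{permbound} to the even-sign-change parity of type $D$ in order to pin down the sign of $w(2)$.
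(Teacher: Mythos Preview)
Your proof is correct and follows essentially the same approach as the paper. The paper organizes the argument as an explicit elimination from the $48$ possible signed patterns down to the six listed, while you extract the same constraints (pattern avoidance from Lemma~\ref{drightavoidbig}, the inequality $-w(1)<w(3)$ from Lemma~\ref{drightavoidsmall}, and the sign of $w(2)$ via Lemma~\ref{permbound} together with the parity condition in $W(D_n)$ and Corollary~\ref{dpositivenotbad}) directly as three numerical conditions before translating them into the six patterns; the underlying logic and the lemmas invoked are identical.
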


\begin{proof}
  There are $2^3\cdot 3! = 48$ possible choices of signed consecutive
  patterns for $(w(1),w(2),w(3))$.
  \[
  \begin{array}{cccccccc}
    123 & \underline{1}23 & \underline{1}\underline{2}3 & 
    \framebox{\underline{1}2\underline{3}} & \doublebox{\underline{1}\underline{2}\underline{3}} & 
    1\underline{2}3 & \framebox{12\underline{3}} & \doublebox{1\underline{2}\underline{3}}\\

    132 & \underline{1}32 & \underline{1}\underline{3}2 & 
    \framebox{\underline{1}3\underline{2}} & \ovalbox{\underline{1}\underline{3}\underline{2}} & 
    1\underline{3}2 & \framebox{13\underline{2}} & \ovalbox{1\underline{3}\underline{2}}\\

    213 & \underline{2}13 & \underline{2}\underline{1}3 & 
    \framebox{\underline{2}1\underline{3}} & \framebox{\underline{2}\underline{1}\underline{3}} & 
    2\underline{1}3 & \doublebox{21\underline{3}} & \doublebox{2\underline{1}\underline{3}}\\

    \framebox{231} & \colorbox{shade}{\underline{2}31} & \colorbox{shade}{\underline{2}\underline{3}1} & 
    \colorbox{shade}{\underline{2}3\underline{1}} & \colorbox{shade}{\underline{2}\underline{3}\underline{1}} & 
    \ovalbox{2\underline{3}1} & \framebox{23\underline{1}} & \ovalbox{2\underline{3}\underline{1}}\\

    \ovalbox{312} & \colorbox{shade}{\underline{3}12} & \colorbox{shade}{\underline{3}\underline{1}2} & 
    \colorbox{shade}{\underline{3}1\underline{2}} & \colorbox{shade}{\underline{3}\underline{1}\underline{2}} & 
    \ovalbox{3\underline{1}2} & \doublebox{31\underline{2}} & \doublebox{3\underline{1}\underline{2}}\\

    \doublebox{321} & \colorbox{shade}{\underline{3}21} & \colorbox{shade}{\underline{3}\underline{2}1} & 
    \colorbox{shade}{\underline{3}2\underline{1}} & \colorbox{shade}{\underline{3}\underline{2}\underline{1}} & 
    \ovalbox{3\underline{2}1} & \doublebox{32\underline{1}} & \ovalbox{3\underline{2}\underline{1}}
  \end{array}
  \]

  We can use Lemma~\ref{drightavoidbig} to eliminate the possibilities
  that have the consecutive patterns \doublebox{321}, \framebox{231},
  or \ovalbox{312}, and Lemma~\ref{drightavoidsmall} to eliminate the
  possibilities in which \colorbox{shade}{$-w(1) > w(3)$}. This leaves
  us with 12 possible choices.
  \begin{align*}
    123 && \underline{1}23 && \underline{1}\underline{2}3 && 1\underline{2}3\\
    132 && \underline{1}32 && \underline{1}\underline{3}2 && 1\underline{3}2\\
    213 && \underline{2}13 && \underline{2}\underline{1}3 && 2\underline{1}3
  \end{align*}
  Then we can use Lemma~\ref{permposneg} to see that if $w(1) < 0$ and
  $w(2),w(3) > 0$, then $w(i) > 0$ for all $i\geq 2$, so $w\not\in
  W(D_n)$ since $w$ has an odd number of negative signs.  Furthermore,
  if $w(1),w(2),w(3) > 0$ then $w(i) > 0$ for each $i$ by
  Lemma~\ref{permposneg}, so $w$ is not bad by
  Corollary~\ref{dpositivenotbad}. This eliminates the first and second column
  of possibilities, leaving us with the desired choices.
\end{proof}

\begin{lemma}\label{alternating}
  Let $w\in W(D_n)$ be bad and define $l$ to be the maximum integer
  $i$ such that $w(i)$ is negative, or $0$ if no such integer
  exists. Then $l$ is even and we can write
  \[
  w = \begin{dcases*}
    (a_1, b_1, a_2, b_2, \dots , a_k , b_k, a_{k+1}, b_{k+1},\dots, a_{n/2},b_{n/2}) & if $n$ is even;\\
    (a_1, b_1, a_2, b_2, \dots , a_k , b_k, a_{k+1}, b_{k+1},\dots, a_{(n-1)/2},b_{(n-1)/2},a_{(n+1)/2}) & if $n$ is odd;\\
  \end{dcases*}
  \]
  where $k = l/2$, $(a_i)$ and $(b_i)$ are increasing
  sequences, $a_i > 0$ for $i \geq 2$, $b_i < 0$ if $i\leq k$, and
  $b_i > 0$ if $i > k$.
\end{lemma}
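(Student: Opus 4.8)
The plan is to extract from the badness of $w$ the two facts that control the entire structure: a monotonicity condition coming from consecutive--pattern avoidance, and the sign information at the first three positions coming from Lemma~\ref{first3}.

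First I would observe that since $w$ is bad it is in particular weakly bad, so by Lemma~\ref{drightavoidbig} the element $w$ avoids each of the consecutive patterns $321$, $231$, and $312$. The key point is that these are precisely the three patterns of a triple $(w(i),w(i+1),w(i+2))$ in which the first entry exceeds the third, whereas the complementary patterns $123$, $132$, $213$ are exactly those with $w(i)<w(i+2)$. Hence avoiding all three forbidden patterns is equivalent to the single condition
\[
w(i) < w(i+2) \qquad \text{for all } i \in \{1,\dots,n-2\}.
\]
This immediately says that the odd--indexed subsequence $w(1),w(3),w(5),\dots$ and the even--indexed subsequence $w(2),w(4),w(6),\dots$ are each strictly increasing. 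Writing $a_i=w(2i-1)$ and $b_i=w(2i)$, this is precisely the assertion that $(a_i)$ and $(b_i)$ are increasing.

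Next I would feed in the sign data. By Lemma~\ref{first3}, every bad element satisfies $w(2)<0$ and $w(3)>0$. Combining $w(3)>0$ with the fact that the odd subsequence increases gives $w(2i-1)>0$ for all $i\ge 2$, i.e.\ $a_i>0$ for $i\ge 2$. Combining $w(2)<0$ with the fact that the even subsequence increases shows that the negative even--indexed entries form an initial segment: there is some $k\ge 1$ with $b_i<0$ for $i\le k$ and $b_i>0$ for $i>k$. This yields all of the sign claims in the statement.

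Finally I would settle the parity of $l$. By the previous paragraph the only positions at which $w$ can be negative are position $1$ (possibly, since the sign of $a_1=w(1)$ is not constrained) together with the even positions $2,4,\dots,2k$. Since $k\ge 1$ we have $2k\ge 2>1$, so the largest index carrying a negative entry is $2k$; hence $l=2k$ is even and $k=l/2$, as required. I expect the only genuinely non--routine step to be the reformulation of the pattern--avoidance hypothesis as the monotonicity $w(i)<w(i+2)$; once that equivalence is in hand, the increasing--sequence and sign assertions and the evenness of $l$ all follow by elementary bookkeeping, with no appeal to $w^{-1}$ or to Lemma~\ref{permbound}.
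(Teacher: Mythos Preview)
Your argument is correct and is actually a bit cleaner than the paper's. Both proofs rely on Lemma~\ref{drightavoidbig} and Lemma~\ref{first3}, but you organize them differently. The paper first invokes Lemma~\ref{first3} to get $w(3)>0$, then appeals to Lemma~\ref{permbound}/\ref{permposneg} to obtain the sign pattern (odd positions $\ge 3$ positive, even positions negative up to some point), and only afterwards uses the avoidance of $321$, $231$, $312$ to argue that each of $(a_i)$ and $(b_i)$ is increasing. You instead observe up front that avoiding $\{321,231,312\}$ is equivalent to the single inequality $w(i)<w(i+2)$ for all $i$, which gives the increasing subsequences immediately; the sign structure and the evenness of $l$ then drop out of the two facts $w(2)<0$ and $w(3)>0$ from Lemma~\ref{first3} combined with monotonicity. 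This bypasses Lemma~\ref{permbound} entirely, which is a genuine simplification since that lemma is itself just a packaged consequence of the same pattern avoidance. The trade-off is minimal: the paper's route keeps the sign argument and the monotonicity argument conceptually separate, whereas yours unifies them under one inequality.
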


\begin{proof}
  Let $w\in W(D_n)$ be bad. Then by Lemma~\ref{first3} we see that
  $w(3)$ must be positive, so by
  Lemma~\ref{permposneg} we can write
  \[
  w = \begin{dcases*}
    (a_1, b_1, a_2, b_2, \dots , a_k , b_k, a_{k+1}, b_{k+1},\dots, a_{n/2},b_{n/2}) & if $n$ is even;\\
    (a_1, b_1, a_2, b_2, \dots , a_k , b_k, a_{k+1}, b_{k+1},\dots, a_{(n-1)/2},b_{(n-1)/2},a_{(n+1)/2}) & if $n$ is odd;\\
  \end{dcases*}
  \]
  where $a_i > 0$ for $i \geq 2$, $b_i < 0$ if $i\leq k$, and $b_i >
  0$ if $i >k$. Now we see that $(a_i)$ must be increasing, since if
  $a_{i+1} < a_{i}$, then $(a_i, b_i, a_{i+1})$ would have one of the
  consecutive patterns $321$, $231$, or $312$, contradicting
  Lemma~\ref{drightavoidbig}. Similarly, $(b_i)$ must be increasing,
  since if $b_{i+1} < b_{i}$, then $(b_i, a_{i+1}, b_{i+1})$ would
  have one of the consecutive patterns $321$, $231$, or $312$,
  contradicting Lemma~\ref{drightavoidbig}.
\end{proof}

\begin{lemma}\label{baseeven}
  Let $w\in W_n$ be bad, let $n$ be even, and write $w$ as in
  Lemma~\ref{alternating}. If $n = 2k$ we have
  \[
  w = \left((-1)^{n/2},\underline{n},3,\underline{n-2}, 5,\dots,
    \underline{4},n-1,\underline{2}\right),
  \]
  and thus $w = w^{-1}$.
\end{lemma}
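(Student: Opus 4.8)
The plan is to start from the alternating form provided by Lemma~\ref{alternating}. Since $n=2k=l$, every even position carries a negative entry, so we may write $w=(a_1,b_1,a_2,b_2,\dots,a_{n/2},b_{n/2})$ with $a_1<a_2<\cdots<a_{n/2}$, $a_i>0$ for $i\ge 2$, and $b_1<b_2<\cdots<b_{n/2}<0$; moreover $|a_1|=|w(1)|\le 2$ by Lemma~\ref{first3}. The shape of $w$ is thus already determined up to the actual multiset of values it uses, and the entire difficulty is to pin those values down. The crucial extra ingredient is that $w^{-1}$ is also bad by Lemma~\ref{badinverse}, so every constraint available for a bad element applies to $w^{-1}$ as well.

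First I would record what the sign pattern of $w^{-1}$ says about $w$. Let $N=\{\,|w(i)| : w(i)<0\,\}$ be the set of absolute values occurring with a negative sign. Then $w^{-1}(v)<0$ exactly when $v\in N$, so $N$ is precisely the set of positions at which $w^{-1}$ is negative. Applying Lemma~\ref{alternating} to $w^{-1}$ shows those positions lie in $\{1\}\cup\{2,4,\dots,n\}$, hence $N\subseteq\{1,2,4,\dots,n\}$, a set of size $n/2+1$. Counting $|N|$, which is $n/2$ when $a_1>0$ and $n/2+1$ when $a_1<0$, leaves only a few candidates for $N$. The one stray configuration compatible with the shape is the one in which the value $1$ occupies the last even slot, i.e.\ $b_{n/2}=-1$ and $w(n)=-1$; but then $w^{-1}(1)=-n$, contradicting $|w^{-1}(1)|\le 2$ from Lemma~\ref{first3} applied to $w^{-1}$ (the same contradiction also eliminates $a_1=-2$). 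This forces the absolute values of the even-position entries to be exactly $\{2,4,\dots,n\}$ and the values at the odd positions $3,5,\dots,n-1$ to be $\{3,5,\dots,n-1\}$, with $|w(1)|=1$. Reading off the increasing sequences then gives $b_i=-(n-2i+2)$ and $a_i=2i-1$ for $i\ge 2$, which is the claimed one-line notation apart from the sign of $w(1)$.

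To fix that sign I would use parity: an element of $W(D_n)$ has an even number of negative entries, and since the even positions already contribute $n/2$ negatives, position~$1$ is negative precisely when $n/2$ is odd, i.e.\ $w(1)=(-1)^{n/2}$. Finally, with the explicit form in hand, $w=w^{-1}$ is a direct verification: the odd positions $3,5,\dots,n-1$ are fixed points, position~$1$ is either fixed or a pure sign change, and the even positions pair up as $w(2j)=-(n-2j+2)$ and $w(n-2j+2)=-2j$, so $w^2$ acts as the identity on every coordinate. I expect the genuine obstacle to be the middle step: Lemma~\ref{alternating} only delivers the alternating \emph{shape}, and extracting the exact value assignment requires feeding the bad-element conditions back through $w^{-1}$ and invoking $|w^{-1}(1)|\le 2$ to kill the configurations that are otherwise consistent with that shape. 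Once the values are nailed down, the involution identity $w=w^{-1}$ is routine bookkeeping.
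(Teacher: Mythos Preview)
Your argument is correct and follows essentially the same route as the paper: both apply Lemma~\ref{alternating} and Lemma~\ref{first3} to $w^{-1}$ (using Lemma~\ref{badinverse}) to pin down the set of absolute values carried by the negative entries, eliminate the stray configuration via the contradiction $w^{-1}(1)=-n$, and then read off the increasing sequences; the paper splits into the cases $n\equiv 0,2\bmod 4$ up front to fix the sign of $a_1$, whereas you handle both cases uniformly via the set $N$ and invoke parity only at the end, but the substance is identical. One small correction: Lemma~\ref{first3} asserts $|w(1)|<w(3)$, not $|a_1|\le 2$ outright, so that bound is only available after you have established $a_2=w(3)=3$ from the structure of $N$---move that citation to where it is actually used.
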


\begin{proof}
  First suppose that $n\equiv 0\bmod 4$. Then $k$ is even, so $a_1 >
  0$, else $w$ would have an odd number of negative entries. By
  Lemma~\ref{badinverse} we know that $w^{-1}$ is also bad, so we can write
  \[
  w^{-1} = (a'_1, b'_1, a'_2, b'_2, \dots , a'_{k'} , b'_{k'},
  a'_{k'+1}, b'_{k'+1},\dots, a'_{n/2},b'_{n/2})
  \]
  as in Lemma~\ref{alternating}. Note that $w$ and $w^{-1}$ each must
  have $k$ negative entries, so either $k = k'$ or $k' = k -1$ and
  $a'_1 < 0$.

  Suppose, towards a contradiction, that $k' = k - 1$. Then 
  \[
  \{b_i\} = \{-1,-2,-4,-6,\dots,-(n-2)\}.
  \]
  These are the only choices for $b_i$ since these are the places of the negative entries in $w^{-1}$. In other words, if $i \in \{-1,-2,-4,-6,\dots,-(n-2)\}$ we know that $w^{-1}(i) < 0$, so the entry with absolute value $i$ in $w$ must be negative.  Since $(b_i)$ is increasing, we have
  $b_k = w(n) = -1$. But then $w^{-1}(1) = -n$, so $n = |w^{-1}(1)| > w^{-1}(3)$,
  which is impossible by Lemma~\ref{first3}. Thus, $k = k'$ and $a'_1
  > 0$.

  This means that $(b_i) = (b'_i) = (-n, -(n-2), \dots, -4, -2)$. Then
  since $(a_i)$ is increasing, we have $(a_i) = (a'_i) = (1,3,5,\dots,
  n-1)$, so $w$ has the desired form.

  Next suppose that $n\equiv 2\bmod 4$. Then $k$ is odd, so $a_1 <
  0$ else $w$ would have an odd number of negative entries. By
  Lemma~\ref{badinverse} we know that $w^{-1}$ is also bad, so we can write
  \[
  w^{-1} = (a'_1, b'_1, a'_2, b'_2, \dots , a'_{k'} , b'_{k'},
  a'_{k'+1}, b'_{k'+1},\dots, a'_{n/2},b'_{n/2})
  \]
  as in Lemma~\ref{alternating}. Note that $w$ and $w^{-1}$ must have
  $k+1$ negative entries. Now we must have $k = k'$, since there is no
  other way for $w^{-1}$ to have $k+1$ negative entries.

  This means that $\{b_i\}\cup \{a_1\} = \{b'_i\}\cup \{a'_1\} =
  \{-1,-2,-4,-6,\dots, -n\}$.  Then since $(a_i)$ is increasing we
  have $(a_i)_{i=2}^k = (a'_i)_{i=2}^k = (3,5,\dots, n-1)$. 

  By Lemma~\ref{first3} we must have $|a_1|=|w(1)| < w(3) = 3$, so
  either $a_1 = -1$ or $a_1 = -2$. If $a_1 = -2$ then we must have
  $w(n) = b_k = -1$ since $(b_i)$ is increasing. But this means that
  $w^{-1}(1) = -n$, so $n = |w^{-1}(1)| > w^{-1}(3)$, contradicting
  Lemma~\ref{first3}. Then we must have $a_1 = -1$, so $a'_1 =
  -1$. Then $(b_i) = (b'_i) = (-n, -(n-2), \dots, -4, -2)$, so $w$ has
  the desired form.

  Once $w$ is in the desired form we can easily see that $w = w^{-1}$.
\end{proof}

\begin{lemma}\label{baseodd}
  Let $w\in W_n$ be bad, let $n$ be odd, and write $w$ as in
  Lemma~\ref{alternating}. If $n - 2k = 1$ we have
  \[
  w = \left((-1)^{(n-1)/2},\underline{n-1},3,\underline{n-3}, 5,\dots,
    \underline{4},n-2,\underline{2}, n\right),
  \]
  and thus $w=w^{-1}$.
\end{lemma}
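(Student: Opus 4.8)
The plan is to follow the strategy of Lemma~\ref{baseeven}, splitting on the residue of $n$ modulo $4$ and playing $w$ against $w^{-1}$. Write $w$ in the alternating form of Lemma~\ref{alternating}; since $n-2k=1$, every $b_i$ is negative, the negative entries of $w$ are $b_1,\dots,b_k$ together with $a_1$ when $a_1<0$, and the remaining entries $a_2,\dots,a_{(n+1)/2}$ are positive. Recalling that an element of $W(D_n)$ has an even number of negative entries, I would first fix the sign of $a_1$: when $n\equiv 1\pmod 4$ the integer $k=(n-1)/2$ is even, so $a_1>0$, while when $n\equiv 3\pmod 4$ it is odd, so $a_1<0$.

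Next I would invoke Lemma~\ref{badinverse} to conclude that $w^{-1}$ is bad, write it in the alternating form of Lemma~\ref{alternating} with parameter $k'$, and observe that $w^{-1}(j)<0$ exactly when $j$ is the absolute value of a negative entry of $w$; thus the negative positions of $w^{-1}$ are precisely the set $\{|b_i|\}$ (enlarged by $\{|a_1|\}$ if $a_1<0$). Matching the number of negative entries of $w$ and $w^{-1}$ then constrains $k'$. When $n\equiv 1\pmod 4$ the two possibilities are $(k'=k,\ a_1'>0)$ and $(k'=k-1,\ a_1'<0)$; in the second, the negated values of $w$ would be $\{1,2,4,\dots,n-3\}$, forcing $a_1=3$ and $b_k=-1$, hence $w^{-1}(3)=1$ but $|w^{-1}(1)|=n-1$, contradicting Lemma~\ref{first3} applied to $w^{-1}$. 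When $n\equiv 3\pmod 4$ the branch $a_1'>0$ would give $k'=(n+1)/2$ and so $n-2k'<0$, which is impossible, forcing $k'=k$ and $a_1'<0$.

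With $k'$ and the negative positions of $w^{-1}$ determined, the set of negated values of $w$ is pinned down: it is $\{2,4,\dots,n-1\}$ when $n\equiv 1\pmod 4$ and $\{1,2,4,\dots,n-1\}$ when $n\equiv 3\pmod 4$. The positive values are then the complementary odd numbers, and since $(a_i)$ and $(b_i)$ are increasing every entry is forced, giving the stated one-line notation; in the $n\equiv 3$ case I would additionally rule out $|a_1|=2$, the only competitor to $|a_1|=1$ permitted by the inequality $|w(1)|<w(3)=3$ of Lemma~\ref{first3}, by noting that $|a_1|=2$ would again push the value $1$ to position $n-1$ and so violate Lemma~\ref{first3} for $w^{-1}$. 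Finally, $w=w^{-1}$ is read off directly from the explicit form: positions $3,5,\dots,n$ are fixed points, position $1$ carries $\pm 1$, and the remaining entries pair up symmetrically.

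The main obstacle is the combinatorial bookkeeping in the middle step---correctly locating the negative positions of $w^{-1}$, matching the parities and counts of negative entries, and extracting the right contradiction from Lemma~\ref{first3}. Once the wrong branch for $k'$ and the spurious value $|a_1|=2$ are eliminated, the remaining entries are determined purely by the monotonicity of $(a_i)$ and $(b_i)$, and the identity $w=w^{-1}$ follows by direct inspection.
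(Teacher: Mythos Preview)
Your approach is correct, but it is a considerably longer route than the one the paper takes.  You essentially replay the entire casework of Lemma~\ref{baseeven} in the odd setting, splitting on $n\bmod 4$, matching parities between $w$ and $w^{-1}$, and ruling out the spurious branches for $k'$ and for $|a_1|=2$ via Lemma~\ref{first3}.  All of this works.

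The paper's argument, by contrast, reduces directly to Lemma~\ref{baseeven} rather than reproving it.  The only new observation needed is that $w(n)=n$: since $w^{-1}$ is also bad, its alternating form has last entry $a'_{(n+1)/2}=w^{-1}(n)>0$, so the entry of absolute value $n$ in $w$ is positive; as all $b_i$ are negative, $n$ must occur among the $a_j$, and since $(a_i)$ is increasing it sits in the last slot.  Once $w(n)=n$ is known, the first $n-1$ entries form an element of $W(D_{n-1})$ to which Lemma~\ref{baseeven} applies verbatim (with $n-1$ even and $(n-1)-2k=0$).  This avoids the $n\bmod 4$ split and all of the branch-elimination you carry out by hand; your argument buys nothing extra in return, so the reduction is strictly preferable here.
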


\begin{proof}
  By Lemma~\ref{badinverse} we know that $w^{-1}$ is also bad, so we
  can write
  \[
  w^{-1} = (a'_1, b'_1, a'_2, b'_2, \dots , a'_{k'} , b'_{k'},
  a'_{k'+1}, b'_{k'+1},\dots, a'_{(n-1)/2},b'_{(n-1)/2}, a'_{(n+1)/2})
  \]
  as in Lemma~\ref{alternating}. We see that $a'_{(n+1)/2} = w^{-1}(n)
  > 0$, so since all $b_i < 0$ there must be some $j$ such that $a_j =
  n$. Then since $(a_i)$ is increasing we must have $a_{(n+1)/2} =
  n$. Then we have
  \[
  w = (a_1, b_1, a_2, b_2, \dots , a_k , b_k, a_{k+1}, b_{k+1},\dots,
  a_{(n-1)/2},b_{(n-1)/2},n),
  \]
  and the result follows by applying Lemma~\ref{baseeven} to
  \[
  (a_1, b_1, a_2, b_2, \dots , a_k , b_k, a_{k+1}, b_{k+1},\dots,
  a_{(n-1)/2},b_{(n-1)/2}).
  \]
\end{proof}

\begin{lemma}\label{npos}
  Let $w\in W(D_n)$ be bad and write $w$ as in
  Lemma~\ref{alternating}. If $n-2k > 1$ then $w^{-1}(n-1)$ and
  $w^{-1}(n)$ are both positive.
\end{lemma}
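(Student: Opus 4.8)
The plan is to prove the statement by contradiction, passing immediately to $w^{-1}$ and exploiting the base cases already handled. Since $w$ is bad, so is $w^{-1}$ by Lemma~\ref{badinverse}, and hence $w^{-1}$ may be put into the alternating form of Lemma~\ref{alternating}. I would write $l' = 2k'$ for the largest position at which $w^{-1}$ takes a negative value, so that by the very definition of $l'$ every entry of $w^{-1}$ in a position exceeding $l'$ is positive. In particular, it suffices to show that $l' \le n - 2$, for then positions $n-1$ and $n$ both lie beyond $l'$ and the conclusion is immediate.

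First I would suppose, toward a contradiction, that at least one of $w^{-1}(n-1)$, $w^{-1}(n)$ is negative. This places a negative entry of $w^{-1}$ in position $n-1$ or $n$, forcing $l' \ge n - 1$. Because $l'$ is even by Lemma~\ref{alternating} and at most $n$, only two configurations survive: either $l' = n$, which requires $n$ even and gives $n - 2k' = 0$, or $l' = n - 1$, which requires $n$ odd and gives $n - 2k' = 1$.

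These two configurations are exactly the hypotheses of the base-case lemmas. Applying Lemma~\ref{baseeven} to the bad element $w^{-1}$ in the first configuration, or Lemma~\ref{baseodd} in the second, I obtain in each case that $w^{-1} = (w^{-1})^{-1} = w$. Consequently $w$ shares the last negative position $l = l' \in \{n-1, n\}$ of $w^{-1}$, so $n - 2k = n - l \in \{0, 1\}$, contradicting the hypothesis $n - 2k > 1$. This contradiction forces $l' \le n - 2$, whence $w^{-1}(n-1)$ and $w^{-1}(n)$ are positive.

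I expect the main obstacle to be setting up this reduction correctly rather than any computation: the key realization is that a single negative value in one of the last two slots of $w^{-1}$ is already enough to push its entire block of negatives all the way to the end of the one-line notation, landing $w^{-1}$ in one of the classified base cases, and that the resulting identity $w = w^{-1}$ transfers this extremal structure back to $w$ itself. The only delicate point is the parity bookkeeping---matching $l' = n$ with the even base case of Lemma~\ref{baseeven} and $l' = n-1$ with the odd base case of Lemma~\ref{baseodd}---together with the observation that $l'$ being even leaves no further possibilities once $l' \ge n - 1$.
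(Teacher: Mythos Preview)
Your proposal is correct and follows essentially the same argument as the paper: pass to $w^{-1}$, observe that a negative entry in position $n-1$ or $n$ forces $n-2k'\le 1$, invoke Lemma~\ref{baseeven} or~\ref{baseodd} to get $w=w^{-1}$, and then derive a contradiction from $k\neq k'$. Your version spells out the parity bookkeeping and the transfer of $l$ back to $w$ more explicitly than the paper, which simply asserts ``$k\neq k'$'' at the end, but the logical structure is identical.
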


\begin{proof}
  We see that $w(n-1)$ and $w(n)$ are clearly positive since $n-2k >
  1$. Write $w^{-1}$ as in Lemma~\ref{alternating}, and suppose that
  $b'_i < 0$ for $i \leq k'$. If either $w^{-1}(n-1)$ or $w^{-1}(n)$
  were negative then we would have $n-2k' \leq 1$. Then by
  Lemma~\ref{baseeven} or~\ref{baseodd} we would have $w = w^{-1}$,
  which cannot be true since $k\neq k'$, thus $w^{-1}(n-1)$ and
  $w^{-1}(n)$ are positive.
\end{proof}

\begin{lemma}\label{badform}
  Let $w\in W(D_n)$ be bad and write $w$ as in
  Lemma~\ref{alternating}. Then $a_1 = \pm 1$, $(a_i)_{i=2}^k =
  (3,5,7, \dots, 2k-1)$, and $(b_i)_{i=1}^k = (-2k,-(2k-2), \dots,
  -6, -4,-2)$.
\end{lemma}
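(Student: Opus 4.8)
The plan is to induct on $n$, equivalently on the gap $n-2k$, with $k$ held fixed. When $n-2k\le 1$ the whole signed permutation is determined: Lemma~\ref{baseeven} handles $n-2k=0$ and Lemma~\ref{baseodd} handles $n-2k=1$, and in each case the first $2k$ entries listed there are exactly $a_1=\pm1$, $(a_i)_{i=2}^k=(3,5,\dots,2k-1)$, and $(b_i)_{i=1}^k=(-2k,\dots,-4,-2)$. These are the base cases. For $n-2k>1$ I want to strip off the entry of largest absolute value so as to produce a bad element of $W(D_{n-1})$ with the same $k$ and gap one smaller. Since the only generator moving positions $n-1,n$ is $s_n$, and those positions lie strictly beyond the first $2k$, stripping it does not disturb the first $2k$ entries, so the inductive hypothesis will transfer the claimed form back to $w$.

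First I locate the entry $\pm n$. Because $w$ avoids the consecutive patterns $321,231,312$ (Lemma~\ref{drightavoidbig}, via badness), a value $n$ at a position $p\le n-2$ would force $(w(p),w(p+1),w(p+2))$ into pattern $321$ or $312$, and a value $-n$ at a position $p\ge 3$ would force $(w(p-2),w(p-1),w(p))$ into pattern $321$ or $231$; hence $+n$ occurs only in position $n-1$ or $n$, and $-n$ only in position $1$ or $2$. Position $1$ for $-n$ is ruled out by Lemma~\ref{first3}, which forces $|w(1)|<w(3)$, and position $2$ for $-n$ gives $w^{-1}(n)=-2<0$, whence $l=n$ for $w^{-1}$ and, by Lemma~\ref{baseeven}, $w=w^{-1}$ with $2k=n$, contradicting $n-2k>1$. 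Thus $n$ is a positive entry lying in position $n-1$ or $n$.

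If $w(n)=n$ then $s_n\notin\supp(w)$, so $w\in W(D_{n-1})$; it is bad there, has the same $k$, and gap $(n-2k)-1$, so the inductive hypothesis applies. Suppose instead $w(n-1)=n$, so $w^{-1}(n)=n-1$. Applying the localization of the previous paragraph to the bad element $w^{-1}$ (Lemma~\ref{badinverse}; its gap also exceeds $1$, else Lemma~\ref{baseodd} would give $w=w^{-1}$) shows that the entry $n$ of $w^{-1}$ sits in position $n-1$, i.e.\ $w(n)=n-1$; so $w$ ends in $(n,n-1)$ and, symmetrically, so does $w^{-1}$. Set $w''=ws_n$. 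Then $w''(n)=n$, so $w''\in W(D_{n-1})$, $\ell(w'')=\ell(w)-1$, and $w''$ agrees with $w$ in positions $1,\dots,2k$. Swapping positions $n-1,n$ replaces the triple $(w(n-2),n,n-1)$ (pattern $132$) by $(w(n-2),n-1,n)$ (pattern $123$) and changes nothing else, and the same tame change occurs for $(w'')^{-1}=s_nw^{-1}$ since the values $n-1,n$ occupy positions $n-1,n$ there too; together with $w''(1)=w(1)$ and $w''(3)=w(3)$, Lemmas~\ref{drightavoidbig} and~\ref{drightavoidsmall} then show $w''$ has no reduced expression beginning or ending in two noncommuting generators.

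It remains to see that $w''$ is not a product of commuting generators, so that it is genuinely bad and the induction closes. If it were, then either $s_{n-1}\notin\supp(w'')$, in which case $s_n$ commutes with every letter of $w''$ and $w=w''s_n$ is itself a product of commuting generators, or $s_{n-1}\in\supp(w'')$, in which case, the letters of $w''$ being mutually commuting, we may write $w''=u's_{n-1}$ reduced, so that $w=u's_{n-1}s_n$ is reduced and ends in the noncommuting pair $s_{n-1}s_n$; either conclusion contradicts $w$ being bad. Hence $w''$ is bad, and we are done by induction; the precise orderings $(a_i)_{i=2}^k=(3,5,\dots,2k-1)$ and $(b_i)_{i=1}^k=(-2k,\dots,-2)$ then follow from the monotonicity recorded in Lemma~\ref{alternating}. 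I expect the main obstacle to be precisely this case $w(n-1)=n$: the bookkeeping that confirms the peeled element $w''$ remains bad --- above all that it cannot collapse to a product of commuting generators --- is the delicate point, and is where Lemmas~\ref{first3}, \ref{alternating}, and~\ref{permbound}, together with the special role of $s_1$ in Lemma~\ref{drightavoidsmall}, must be combined with the most care.
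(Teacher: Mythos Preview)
Your argument is correct and follows the same overall strategy as the paper: induct on the gap $n-2k$, take Lemmas~\ref{baseeven} and~\ref{baseodd} as base cases, show that the entry $n$ must sit (with positive sign) in one of the last two positions, and then peel it off to reduce the gap.

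There are two small differences worth recording. First, the paper locates $n$ by invoking Lemma~\ref{npos} (which guarantees $w^{-1}(n),w^{-1}(n-1)>0$) together with the monotonicity of $(a_i)$ and $(b_i)$ from Lemma~\ref{alternating}; you instead reach the same conclusion directly from the forbidden consecutive patterns and the constraint $|w(1)|<w(3)$, bypassing Lemma~\ref{npos} entirely. Second, once $w(n-1)=n$ and $w(n)=n-1$ are established, the paper simply truncates the last two entries and asserts that the inductive hypothesis applies, without pausing to check that the truncated element is itself bad; you instead peel off only $s_n$, and explicitly verify that $w''=ws_n$ still avoids all the relevant patterns on both sides and cannot collapse to a product of commuting generators. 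Your version is a bit longer but more self-contained; the paper's is terser and relies on the reader supplying the badness check for the truncation.
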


\begin{proof}
  We will induct on $n - 2k$. If $n - 2k = 0$ then we are done by
  Lemma~\ref{baseeven} and if $n - 2k = 1$ then we are done by
  Lemma~\ref{baseodd}.

  If $w(n) = n$ then we have 
  \[
  w = \begin{dcases*}
    (a_1, b_1, a_2, b_2, \dots , a_k , b_k, a_{k+1}, b_{k+1},\dots, a_{n/2},n) & if $n$ is even;\\
    (a_1, b_1, a_2, b_2, \dots , a_k , b_k, a_{k+1}, b_{k+1},\dots, a_{(n-1)/2},b_{(n-1)/2},n) & if $n$ is odd,\\
  \end{dcases*}
  \]
  so we can apply the inductive hypothesis to
  \[
  \begin{array}{ll}
    (a_1, b_1, a_2, b_2, \dots , a_k , b_k, a_{k+1}, b_{k+1},\dots, a_{n/2}) & \text{if $n$ is even;}\\
    (a_1, b_1, a_2, b_2, \dots , a_k , b_k, a_{k+1}, b_{k+1},\dots, a_{(n-1)/2},b_{(n-1)/2}) & \text{if $n$ is odd,}\\
  \end{array}
  \]
  and obtain the desired result.

  Suppose $w(n) \neq n$. Recall that $n-2k > 1$, so by Lemma~\ref{npos}, we have $w^{-1}(n) >
  0$, and since $(a_i)$ and $(b_i)$ are increasing we must have
  $w(n-1) = n$.

  If $w(n) \neq n-1$ and $w(n) \neq n$ then by Lemma~\ref{npos} we have $w^{-1}(n-1) > 0$, so we
  can use the fact that $(a_i)$ and $(b_i)$ are increasing to show
  that $w(n-3) = n - 1$. Then $w^{-1}(n) > 0$ and $w^{-1}(n) =
  n-1$, so $w^{-1}$ must have consecutive pattern $312$ or $321$ where
  the 3 is at position $w(n)$, contradicting Lemma~\ref{drightavoidbig}. Thus we
  must have have $w(n) = n-1$, so
  \[
  w = \begin{dcases*}
    (a_1, b_1, a_2, b_2, \dots , a_k , b_k, a_{k+1}, b_{k+1},\dots, b_{n/2-1},n, n-1) & if $n$ is even;\\
    (a_1, b_1, a_2, b_2, \dots , a_k , b_k, a_{k+1}, b_{k+1},\dots, a_{(n-1)/2},n,n-1) & if $n$ is odd,\\
  \end{dcases*}
  \]
  and we can apply the inductive hypothesis to
  \[
  \begin{array}{ll}
    (a_1, b_1, a_2, b_2, \dots , a_k , b_k, a_{k+1}, b_{k+1},\dots, b_{n/2-1}) & \text{if $n$ is even;}\\
    (a_1, b_1, a_2, b_2, \dots , a_k , b_k, a_{k+1}, b_{k+1},\dots, a_{(n-1)/2}) & \text{if $n$ is odd,}\\
  \end{array}
  \]
  and obtain the desired result.
\end{proof}

\begin{theorem}\label{dbad}
  Let $w\in W(D_n)$ be bad and define
  \[
  w_m=
  \begin{dcases*}
    \left((-1)^{m/2},\underline{m},3,\underline{m-2}, 5,\dots,
      \underline{4},m-1,\underline{2}, m+1, m+2, \dots, n\right) & if $m$ is even;\\
    \left((-1)^{(m-1)/2},\underline{m-1},3,\underline{m-3}, 5,\dots ,
      \underline{4},m-2,\underline{2}, m, m+1, \dots, n\right) & if
    $m$ is odd.
  \end{dcases*}
  \]
  Then we must have $w=w_{m}u$ reduced for some
  $m\leq n$, where $u$ is a product of mutually commuting
  generators such that $\supp(u) \subset \{s_{m+2}, s_{m+3}, s_{m+4},
  \dots, s_n\}.$
\end{theorem}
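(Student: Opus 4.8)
The plan is to read off the decomposition directly from the normal form already obtained in Lemma~\ref{badform}, rather than running a fresh induction. By Lemma~\ref{alternating} and Lemma~\ref{badform} the entries of $w$ in positions $1,\dots,2k$ are completely determined: they are $a_1,-2k,3,-(2k-2),5,\dots,2k-1,-2$ with $a_1=\pm1$, and counting negative entries (the $b_i$ contribute $k$, the tail contributes none) forces $a_1=(-1)^k$ since $w$ has an even number of negative entries. This is exactly the core of $w_{2k}$. The remaining entries, in positions $2k+1,\dots,n$, form a permutation of $\{2k+1,\dots,n\}$ with all entries positive, because the core already uses every value of absolute value at most $2k$ and $2k$ is the last negative position. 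I would therefore set $m=2k$ and $u=w_m^{-1}w$. Since $w$ and $w_m$ agree on positions $1,\dots,m$ while $w_m$ is the identity on positions $m+1,\dots,n$, the element $u$ fixes positions $1,\dots,m$ and restricts to the positive permutation that $w$ performs on $\{m+1,\dots,n\}$.

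Two of the three assertions are then immediate. First, $u$ lies in the parabolic subgroup generated by the adjacent transpositions of positions $m+1,\dots,n$, namely $W_I$ with $I=\{s_{m+2},\dots,s_n\}$, so $\supp(u)\subseteq\{s_{m+2},\dots,s_n\}$. Second, the tail of $w_m$ is the increasing sequence $m+1,\dots,n$, so by Proposition~\ref{ddescent} $w_m$ has no right descent in $I$; that is, $w_m\in W^I$. As $w=w_m u$ with $w_m\in W^I$ and $u\in W_I$, the uniqueness in Proposition~\ref{factor} identifies this with the reduced decomposition of $w$ with respect to $I$, so $w=w_m u$ is reduced.

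The substantive point is that $u$ is a product of commuting generators, which I would obtain from Lemma~\ref{dcommuting}. Every entry of $u$ is positive, so it remains to check that both $u$ and $u^{-1}$ avoid the consecutive patterns $321$, $231$, $312$. Since $w$ is bad it avoids these patterns (Lemma~\ref{drightavoidbig}); the triples of $u$ lying inside the tail coincide with those of $w$, while the triples straddling position $m$ are increasing or of the form $1bc$ (because $u$ is the identity on $1,\dots,m$ and every tail value exceeds $m$), hence never $321$, $231$, or $312$. For $u^{-1}$ I would use that the core of $w_m$ equals its own inverse by Lemma~\ref{baseeven}, so $w_m=w_m^{-1}$ and $w^{-1}=u^{-1}w_m$; this shows that $w^{-1}$ agrees with $w_m$ on positions $1,\dots,m$ and carries exactly the tail of $u^{-1}$ on positions $m+1,\dots,n$. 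As $w^{-1}$ is again bad by Lemma~\ref{badinverse}, it avoids $321$, $231$, $312$, and the same tail-and-boundary analysis shows that $u^{-1}$ does too. Lemma~\ref{dcommuting} then finishes the proof.

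The main obstacle is this last step, namely controlling $u^{-1}$. Avoidance of these consecutive patterns is not preserved under inversion, so one genuinely needs the involution identity $w_m=w_m^{-1}$ together with the factorization $w^{-1}=u^{-1}w_m$ in order to transfer the hypothesis from $w^{-1}$ to $u^{-1}$; the only remaining care is in checking the boundary triples at position $m$ for both $u$ and $u^{-1}$.
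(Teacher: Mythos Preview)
Your approach is essentially the same as the paper's: both invoke Lemma~\ref{badform} to pin down the first $2k$ entries, set $m=2k$, factor $w=w_m u$, and appeal to Lemma~\ref{dcommuting} to conclude that $u$ is a product of commuting generators. The paper's proof is quite terse---it writes down $u$ explicitly and then says only ``$u$ is a product of commuting generators by Lemma~\ref{dcommuting} because the original $(a_i)$ and $(b_i)$ sequences were increasing''---whereas you fill in several points the paper leaves implicit: you determine the sign $a_1=(-1)^k$ (the paper's Lemma~\ref{badform} only gives $a_1=\pm 1$, though the theorem statement requires the specific sign), you justify reducedness via the parabolic decomposition of Proposition~\ref{factor}, and most importantly you give an explicit argument for why $u^{-1}$ also avoids $321$, $231$, $312$, using the involution property $w_m=w_m^{-1}$ together with Lemma~\ref{badinverse} to transfer the pattern-avoidance from $w^{-1}$ to the tail of $u^{-1}$. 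This last point is exactly the gap one would worry about in the paper's one-line justification, since Lemma~\ref{dcommuting} genuinely requires checking both $u$ and $u^{-1}$, and the increasing-sequences observation does not obviously handle the inverse; your route through $w^{-1}=u^{-1}w_m$ and Lemma~\ref{alternating} applied to $w^{-1}$ is the natural way to close it. So your proof is correct and follows the same strategy, just with the details made honest.
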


\begin{proof}
  Let $w\in W(D_n)$ be bad. Then by Lemma~\ref{badform} we can write
  \[
  w = \begin{dcases*}
    (a_1, b_1, a_2, b_2, \dots , a_k , b_k, a_{k+1}, b_{k+1},\dots, a_{n/2},b_{n/2}) & if $n$ is even;\\
    (a_1, b_1, a_2, b_2, \dots , a_k , b_k, a_{k+1}, b_{k+1},\dots, a_{(n-1)/2},b_{(n-1)/2},a_{(n+1)/2}) & if $n$ is odd;\\
  \end{dcases*}
  \]
  where $a_1 = \pm 1$, $(a_i)_{i=2}^k = (1,3,5,7, \dots, 2k-1)$, and
  $(b_i)_{i=1}^k = (-2k,-(2k-2), \dots, -6, -4,-2)$. Then for $j > k$
  we have $a_j > 2k$ and $b_j > 2k$, so we can write
  \[
  w = w_{2k}\cdot u
  \]
  where 
  \[
  u = \begin{dcases*}
    (1, 2, 3, 4,\dots, 2k-1, 2k, a_{k+1}, b_{k+1},\dots, a_{n/2},b_{n/2}) & if $n$ is even;\\
    (1, 2, 3, 4,\dots, 2k-1, 2k, a_{k+1}, b_{k+1},\dots, a_{(n-1)/2},b_{(n-1)/2},a_{(n+1)/2}) & if $n$ is odd.\\
  \end{dcases*}
  \]
  It follows that $u$ is a product of commuting generators by
  Lemma~\ref{dcommuting} because the original $(a_i)$ and $(b_i)$ sequences were increasing. We see that $s_1,s_2, \dots,
  s_{2k+1}\not\in\supp(u)$, so $\supp(u) \subset \{s_{2k+2}, s_{2k+3},
  \dots, s_n\}$.
\end{proof}

\begin{corollary}\label{dmap}
  If $n$ is even we have
  \[
  w_n(i) = \begin{dcases*}
    (-1)^{n/2} & if $i = 1$;\\
    i & if $i > 1$ and $i$ is odd;\\
    -(n+2-i) & if $i$ is even.\\
  \end{dcases*}
  \]
  If $n$ is odd we have
  \[
  w_n(i) = \begin{dcases*}
    (-1)^{(n-1)/2} & if $i = 1$;\\
    i & if $i > 1$ and $i$ is odd;\\
    -(n+1-i) & if $i$ is even.\\
  \end{dcases*}
  \]
\end{corollary}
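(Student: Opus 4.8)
The plan is to unpack the piecewise definition of $w_m$ from Theorem~\ref{dbad} at the particular value $m = n$, so that the one-line notation of $w_n$ is determined entirely by its alternating ``core,'' and then simply to read off the entry in each position and match it against the claimed formula. Since the corollary is an explicit description of a permutation already written down, this is a bookkeeping verification rather than a substantive argument; the only care required is getting the index arithmetic right.

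First I would substitute $m = n$ into the definition of $w_m$ and observe that the trailing block collapses. When $n$ is even, the block $m+1, m+2, \dots, n$ is empty, so
\[
w_n = \left((-1)^{n/2},\underline{n},3,\underline{n-2}, 5,\dots, \underline{4},n-1,\underline{2}\right),
\]
and when $n$ is odd, the block $m, m+1, \dots, n$ reduces to the single entry $n$, so
\[
w_n = \left((-1)^{(n-1)/2},\underline{n-1},3,\underline{n-3}, 5,\dots, \underline{4},n-2,\underline{2},n\right).
\]
In either case the first entry is precisely the claimed sign, $(-1)^{n/2}$ or $(-1)^{(n-1)/2}$.

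Next I would separate the odd and even positions $i > 1$. The entries in the odd positions $3, 5, 7, \dots$ form the increasing run $3, 5, 7, \dots$ with the value at position $i$ equal to $i$ itself, which gives $w_n(i) = i$ for odd $i > 1$ in both parities of $n$. The entries in the even positions $2, 4, 6, \dots$ are negative and decrease in absolute value by $2$ as $i$ increases by $2$: they run $-n, -(n-2), \dots, -2$ when $n$ is even and $-(n-1), -(n-3), \dots, -2$ when $n$ is odd. Putting these arithmetic progressions into closed form, the absolute value at even position $i$ starts at $n$ (resp. $n-1$) when $i = 2$, so $w_n(i) = -(n+2-i)$ for $n$ even and $w_n(i) = -(n+1-i)$ for $n$ odd. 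I would confirm each closed form at its endpoints, namely $i = 2$ giving $-n$ (resp. $-(n-1)$) and the last even position giving $-2$, which completes the verification. There is no conceptual obstacle here; the only thing to watch is matching the offset in the even-position progression correctly in each parity.
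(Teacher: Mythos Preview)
Your proposal is correct and is exactly the approach the paper takes: the paper's proof consists of the single line ``This is immediate from Theorem~\ref{dbad},'' and what you have written is simply an explicit unpacking of that immediacy. Your index arithmetic in both parities checks out against the one-line notation given there.
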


\begin{proof}
  This is immediate from Theorem~\ref{dbad}
\end{proof}

Eventually, we will find a reduced expression for $w_n$.  In order to
ensure that the expression that we find is reduced, we must first find
the length of $w_n$.

\begin{lemma}\label{badlength}
  We have
  \[
  \ell(w_n) =
  \begin{dcases*}
    \frac{3n^2}{8}+\frac{n}{4} & if $n$ is even;\\
    \frac{3(n-1)^2}{8}+\frac{n-1}{4} & if $n$ is odd.
  \end{dcases*}
  \]
\end{lemma}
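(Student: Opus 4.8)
The plan is to read off the explicit one-line notation of $w_n$ from Corollary~\ref{dmap} and feed it into the length formula of Proposition~\ref{dlength}. Writing $w=w_n$ and recalling that a signed permutation satisfies $w(-i)=-w(i)$, Proposition~\ref{dlength} expresses $\ell(w)$ as the sum of two statistics: the number of ordinary inversions, i.e.\ pairs $i<j$ with $w(i)>w(j)$, and the number of pairs $i<j$ with $w(i)+w(j)<0$ (the condition $w(-i)>w(j)$). I would first dispose of the even case $n=2k$. Here Corollary~\ref{dmap} tells us that the even positions $2,4,\dots,n$ carry the strictly increasing negative values $-n,-(n-2),\dots,-2$, the odd positions $3,5,\dots,n-1$ carry the strictly increasing positive values $3,5,\dots,n-1$, and position $1$ carries $\pm 1$; this increasing-subsequence structure is what makes the counting manageable.

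The counts then split according to the parities of the two positions in a pair. For the ordinary inversions, two even positions or two odd positions contribute nothing (each such subsequence is increasing), a negative-before-positive pair is never an inversion, and every positive-before-negative pair (odd position before even position) is one; the latter count is $\sum_{a=1}^{k-1}(k-a)=\binom{k}{2}$. For the pairs with $w(i)+w(j)<0$, every pair of even positions qualifies, giving $\binom{k}{2}$; no pair of odd positions does; and a mixed pair with positive value $p$ and negative value $-q$ qualifies exactly when $p<q$, which again sums to $\binom{k}{2}$. The one point deserving separate care is position $1$: because $\pm 1$ lies strictly between every negative value (all $\le -2$) and every positive value (all $\ge 3$), position $1$ contributes exactly $k$ to \emph{each} of the two statistics no matter the sign of $w(1)$, which is precisely why the final answer is insensitive to $n \bmod 4$. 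Assembling these gives $\ell(w_n)=\bigl(\binom{k}{2}+k\bigr)+\bigl(2\binom{k}{2}+k\bigr)=\tfrac{k(k+1)}{2}+k^2=\tfrac{k(3k+1)}{2}$, and substituting $k=n/2$ produces $\tfrac{3n^2}{8}+\tfrac{n}{4}$.

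For $n$ odd I would reduce to the even case rather than redo the bookkeeping. By Corollary~\ref{dmap} we have $w_n(n)=n$, and the first $n-1$ entries are exactly the one-line notation of $w_{n-1}$ on the value set $\pm\{1,\dots,n-1\}$. Since $n$ is the largest possible value, the terminal position produces neither an inversion nor a pair of negative sum, so $\ell(w_n)=\ell(w_{n-1})=\tfrac{3(n-1)^2}{8}+\tfrac{n-1}{4}$, which is the claimed formula. The only genuine obstacle is the index bookkeeping in the even case—keeping the three arithmetic series straight and confirming the sign-independence of the position-$1$ contribution—but this is routine once the monotonicity of the even- and odd-position subsequences in Corollary~\ref{dmap} is exploited.
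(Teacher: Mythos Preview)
Your proof is correct and follows essentially the same approach as the paper: both feed the explicit one-line notation of $w_n$ into the type-$D$ length formula of Proposition~\ref{dlength} and organize the resulting count by the parity of the positions involved. Your treatment is slightly cleaner in one respect: by observing directly that the contribution of position~$1$ is independent of the sign of $w_n(1)=\pm 1$, you avoid the paper's separate case analysis for $n\equiv 0\bmod 4$ versus $n\equiv 2\bmod 4$, though the final arithmetic is the same.
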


\begin{proof}
  Let $W=W(D_n)$ and consider $w_n\in W$.  Suppose until further
  notice that $n$ is even. By Theorem~\ref{dbad} we can write
  \[
  w_n=(a_1,b_1,a_2,b_2,\dots,a_n,b_n),
  \]
  where
  \[
  \begin{array}{cccc}
    a_1=\pm 1, & a_i = 2i-1\,(i\geq 2), &\text{and}& b_i = -(n+2-2i). 
  \end{array}
  \]
  Using Proposition~\ref{dlength} we see that
  \begin{align*}
    \ell(w_n) & = \sum_{i=1}^{\frac{n}{2}}\left|\{j\mid i < j, a_i>a_j\}\right|
    + \sum_{i=1}^{\frac{n}{2}}\left|\{j\mid i < j, -a_i>a_j\}\right|\\
    & + \sum_{i=1}^{\frac{n}{2}}\left|\{j\mid i < j, b_i>b_j\}\right|
    + \sum_{i=1}^{\frac{n}{2}}\left|\{j\mid i < j, b_i>a_j\}\right|\\
    & + \sum_{i=1}^{\frac{n}{2}}\left|\{j\mid i\leq j, a_i>b_j\}\right|
    + \sum_{i=1}^{\frac{n}{2}}\left|\{j\mid i\leq j, -a_i>b_j\}\right|\\
    & + \sum_{i=1}^{\frac{n}{2}}\left|\{j\mid i < j, -b_i>b_j\}\right|
    + \sum_{i=1}^{\frac{n}{2}}\left|\{j\mid i < j, -b_i>a_j\}\right|.
  \end{align*}
  Now since $(a_i)_{i=1}^\frac{n}{2}$ and $(b_i)_{i=1}^\frac{n}{2}$
  are both increasing sequences with $a_i\geq -1$ and $b_i\leq -2$ for
  all $i$, we see that the first four terms above are all equal to
  zero, so
  \begin{align*}
    \ell(w_n) & = \sum_{i=1}^{\frac{n}{2}}\left|\{j\mid i\leq j, a_i>b_j\}\right|
    + \sum_{i=1}^{\frac{n}{2}}\left|\{j\mid i\leq j, -a_i>b_j\}\right|\\
    & + \sum_{i=1}^{\frac{n}{2}}\left|\{j\mid i < j, -b_i>b_j\}\right|
    + \sum_{i=1}^{\frac{n}{2}}\left|\{j\mid i < j, -b_i>a_j\}\right|.
  \end{align*}
  Now since $a_i>b_j$ for all $i,j\in\mathbb{N}$, we see that
  $a_i>b_j$ holds for all $j\geq i$, so
  \[
  \left|\{j\mid i\leq j, a_i>b_j\}\right|=\frac{n}{2}+1-i.
  \]
  Similarly, since all $b_i<0$
  and since $(b_i)_{i=1}^{\frac{n}{2}}$ is increasing we see that
  $-b_i>b_j$ holds for all $j>i$, so
  \[
  \left|\{j\mid i\leq j, -b_i>b_j\}\right|=\frac{n}{2}-i.
  \]
  Then using the above expressions for $a_i$ and $b_i$ we see that
  $i\leq j$ and $-a_i > b_j$ if and only if $i\leq j \leq\frac{n}{2}+1-i$,
  so
  \[
  \left|\{j\mid i\leq j, -a_i>b_j\}\right| =
  \begin{cases}
    \frac{n}{2}+2-2i & \text{if }i\leq \frac{n+2}{4};\\
    0 & \text{else}.
  \end{cases}
  \]
  Finally we can again use the expressions for $a_i$ and $b_i$ to show
  $i < j$ and $-b_i > a_j$ if and only if $i< j \leq\frac{n}{2}+1-i$,
  so
  \[
  \left|\{j\mid i\leq j, -b_i>a_j\}\right| =
  \begin{cases}
    \frac{n}{2}+1-2i & \text{if }i\leq \frac{n+2}{4};\\
    0 & \text{else}.
  \end{cases}
  \]
  Then we have
  \[
  \ell(w_n) =
  \begin{dcases}
    \sum_{i=1}^{\frac{n}{2}}\left(\left(\frac{n}{2}+1-i\right)
    +\left(\frac{n}{2}-i\right)\right)
    +\sum_{i=1}^{\frac{n}{4}}\left(\left(\frac{n}{2}+2-2i\right)
    +\left(\frac{n}{2}+1-2i\right)\right)
    & \text{if } n\equiv 0\bmod 4;\\
    \sum_{i=1}^{\frac{n}{2}}\left(\left(\frac{n}{2}+1-i\right)
    +\left(\frac{n}{2}-i\right)\right)
    +\sum_{i=1}^{\frac{n+2}{4}}\left(\left(\frac{n}{2}+2-2i\right)
    +\left(\frac{n}{2}+1-2i\right)\right)
    & \text{if } n\equiv 2\bmod 4.
  \end{dcases}
  \]
  If $n\equiv 0\bmod 4$ then we have
  \begin{align*}
  \ell(w_n) &= \sum_{i=1}^{\frac{n}{2}}\left(\left(\frac{n}{2}+1-i\right)
    +\left(\frac{n}{2}-i\right)\right)
    +\sum_{i=1}^{\frac{n}{4}}\left(\left(\frac{n}{2}+2-2i\right)
    +\left(\frac{n}{2}+1-2i\right)\right)\\
    &= \sum_{i=1}^{\frac{n}{2}}\left( n + 1 - 2i\right)
    +\sum_{i=1}^{\frac{n}{4}}\left( n + 3 - 4i\right)\\
    &= \frac{n^2}{2} + \frac{n}{2} - 2\sum_{i=1}^{\frac{n}{2}} i + \frac{n^2}{4} + \frac{3n}{4} - 4\sum_{i=1}^{\frac{n}{4}} i\\
    &= \frac{3n^2}{4} + \frac{5n}{4} - 2\left(\frac{\frac{n}{2}\left(\frac{n}{2} + 1\right)}{2}\right) - 4\left(\frac{\frac{n}{4}\left(\frac{n}{4} + 1\right)}{2}\right)\\
    &= \frac{3n^2}{8} + \frac{n}{4}.
  \end{align*}

  If $n\equiv 2\bmod 4$ then we can use a nearly identical argument to show that
  \[
  \ell(w_n)=\frac{3n^2}{8}+\frac{n}{4}.
  \]

  Now suppose that $n$ is odd.  Then
  \[
  w_n=(a_1,b_1,a_2,b_2,\dots,a_{n-1},b_{n-1},n),
  \]
  where the $a_i$ and $b_i$ are as above.  We see that
  $-a_i,a_i,-b_i,b_i < n$ for each $i$, so
  \[
  \ell(w_n)=\ell(w_{n-1})=\frac{3(n-1)^2}{8}+\frac{n-1}{4}.
  \]
\end{proof}

\section{Properties of bad elements}\label{badre}

We will now find a reduced expression for the bad elements $w_n$.  To
do this it will be convenient to use interval notation.

\begin{definition}\label{interval}
  For $2\leq i\leq j$, denote the element $s_i s_{i+1}\cdots s_{j-1}
  s_j$ by $[i,j]$.  For $i\geq 3$, denote $s_1 s_3 s_4\cdots s_i$ by
  $[1,i]$ and for $j\geq 2$ denote $s_1 s_2 s_3\cdots s_j$ by $[0,j]$. If $0\leq j < i$ and $i\geq 2$ define $[j,i] = [i,j]^{-1}$.
  Finally for $i\leq -3$ and $j\geq 3$, denote $s_i s_{i-1}
  s_{i-2}\cdots s_4s_3s_1s_2s_3s_4\cdots s_j$ by $[-i,j]$.  
\end{definition}

The following two lemmas help describe how these intervals act as
signed permutations.

\begin{lemma}\label{intperm}
  Let $i,j,k\in\mathbb{N}$ be such that $j\geq i\geq 2$.  Then as a
  signed permutation, we have
  \[
  [j,i] = (1,2,\dots, i-2, j, i-1, i, \dots, j-2, j-1, j+1, j+2,
  \dots, n).
  \]
\end{lemma}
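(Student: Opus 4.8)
The plan is to unfold the interval notation into an explicit product of generators and then read off the signed permutation. By Definition~\ref{interval}, since $j\geq i\geq 2$ the element $[j,i]$ is the inverse of the increasing interval $[i,j]=s_is_{i+1}\cdots s_{j-1}s_j$, so $[j,i]=(s_is_{i+1}\cdots s_j)^{-1}=s_js_{j-1}\cdots s_{i+1}s_i$. The key simplification is that every generator occurring here has index at least $i\geq 2$: under the embedding of Example~\ref{typedintro} each such $s_k$ acts as the transposition interchanging the values $k-1$ and $k$ (and $-(k-1)$ and $-k$), and $s_1$ never appears. Consequently no entry ever changes sign, every entry stays positive, and the problem reduces to multiplying a descending chain of adjacent transpositions in the symmetric group on $\{1,\dots,n\}$.

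First I would prove the formula by induction on $j-i$. For the base case $j=i$ we have $[i,i]=s_i$, whose one-line notation is obtained from the identity by swapping the entries $i-1$ and $i$, namely $(1,\dots,i-2,i,i-1,i+1,\dots,n)$; this is exactly the asserted expression once the block $i-1,i,\dots,j-1$ degenerates to the single entry $i-1$. For the inductive step I would factor $[j,i]=s_j\cdot[j-1,i]$ and apply the left-multiplication rule of Proposition~\ref{smultiply} (equivalently, the embedding again): multiplying on the left by $s_j$ interchanges the entries whose absolute values are $j-1$ and $j$. Granting the formula for $[j-1,i]$, its entry $j-1$ occupies position $i-1$ while its entry $j$ occupies position $j$, so this single swap sends $j$ into position $i-1$ and $j-1$ into position $j$, converting the one-line notation of $[j-1,i]$ precisely into the claimed one-line notation of $[j,i]$.

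As an independent check of the bookkeeping I would also compute the permutation $\pi=s_js_{j-1}\cdots s_i$ directly, tracking the image of each value (rightmost generator acting first). One finds $\pi(i-1)=j$, since $i-1$ is carried up the chain step by step to $j$; $\pi(m)=m-1$ for $i\leq m\leq j$; and $\pi(m)=m$ for $m\leq i-2$ or $m\geq j+1$. Writing out $(\pi(1),\dots,\pi(n))$ then reproduces the stated one-line notation. I expect the only real obstacle to be the off-by-one bookkeeping at the three boundary positions $i-1$, $i$, and $j$ --- in particular keeping ``position $k$'' and ``value $k$'' straight so that the displaced value $j$ lands in position $i-1$ rather than an adjacent slot. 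This is a matter of careful indexing rather than genuine difficulty, and restricting each inductive step to a single transposition keeps the boundary behaviour easy to control.
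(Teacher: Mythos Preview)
Your proof is correct and follows essentially the same approach as the paper: induction on $j-i$, with the inductive step using $[j,i]=s_j\cdot[j-1,i]$ together with Proposition~\ref{smultiply} to swap the entries of absolute value $j-1$ and $j$. Your additional direct value-tracking computation is a nice sanity check but is not in the paper's proof.
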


\begin{proof}
  We will prove the lemma using induction on $j-i$.  If $j=i$ the
  lemma is true by Proposition~\ref{smultiply}.  Now assume that, as a
  signed permutation, we have
  \[
  [j-1,i] = (1,2,\dots, i-2, j-1, i-1, i, \dots, j-3, j-2, j, j+1,
  \dots, n).
  \]
  Then by Proposition~\ref{smultiply} multiplying on the left by $s_j$
  has the effect of interchanging the entries with values $j$ and
  $j-1$, so we have
  \[
  [j,i]=s_j[j-1,i] = (1,2,\dots, i-2, j, i-1, i, \dots, j-2, j-1, j+1,
  j+2, \dots, n).
  \]  
\end{proof}

\begin{lemma}
  Let $j,k\in\mathbb{N}$ be such that $j\geq 2$.  Then as a signed
  permutation, we have
  \[
  [j,0] = \begin{cases}
    (\underline{1}, \underline{2}, 3, 4, \dots, n) &\text{if } j = 2;\\
    (\underline{1}, \underline{j}, 2, 3, \dots , j-2, j-1, j+1, 
  j+2, \dots, n) &\text{else}.
  \end{cases}
  \]

\end{lemma}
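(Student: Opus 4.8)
The plan is to avoid a direct inductive computation by reducing to the interval $[j,2]$, which is already evaluated in Lemma~\ref{intperm}, and then accounting for the single extra generator $s_1$. First I would unwind the definitions: since $[0,j]=s_1s_2\cdots s_j=s_1\cdot[2,j]$, and $s_1$ is an involution, Definition~\ref{interval} gives
\[
[j,0]=[0,j]^{-1}=(s_1\,[2,j])^{-1}=[2,j]^{-1}s_1=[j,2]\cdot s_1 .
\]
Thus it suffices to write $[j,2]$ as a signed permutation and then right-multiply by $s_1$, which sidesteps any sign bookkeeping except at the very last step.

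Next I would apply Lemma~\ref{intperm} with $i=2$. In that formula the leading block $1,2,\dots,i-2$ is empty, so the specialization reads
\[
[j,2]=(j,1,2,3,\dots,j-1,j+1,j+2,\dots,n).
\]
Finally, by the $i=1$ case of Proposition~\ref{smultiply}, right multiplication by $s_1$ interchanges the first two entries and switches their signs while fixing all later entries. Since those first two entries are $j$ and $1$, they become $-1$ and $-j$, and the tail $2,3,\dots,j-1,j+1,\dots,n$ is left intact, yielding
\[
[j,0]=(\underline{1},\underline{j},2,3,\dots,j-1,j+1,j+2,\dots,n),
\]
which is the claimed one-line notation.

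The case split in the statement then falls out automatically from the length of the tail. When $j=2$ the block $2,3,\dots,j-1$ is empty, $[2,2]=(2,1,3,4,\dots,n)$, and applying $s_1$ on the right gives $(\underline{1},\underline{2},3,4,\dots,n)$, matching the first case; for $j\ge 3$ the tail is nonempty and we land in the second case. I do not expect a genuine obstacle here, since the argument is essentially bookkeeping built on results already in hand; the only point requiring care is reading off the $i=2$ specialization of Lemma~\ref{intperm} (where the leading segment degenerates to nothing) and confirming that the $j=2$ instance collapses to the stated degenerate form. As an alternative, one could instead induct on $j$, writing $[j,0]=s_j\cdot[j-1,0]$ and left-multiplying by $s_j$ in exact parallel with the proof of Lemma~\ref{intperm}, with base case $[2,0]=s_2s_1=(\underline{1},\underline{2},3,\dots,n)$ and each step sliding one value into its final position.
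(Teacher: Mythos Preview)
Your proof is correct and takes a genuinely different route from the paper's. The paper proves the lemma by induction on $j$: starting from the base case $[2,0]=s_2s_1=(\underline{1},\underline{2},3,\dots,n)$, it writes $[j,0]=s_j\cdot[j-1,0]$ and uses the left-multiplication rule in Proposition~\ref{smultiply} to swap the entries with values $j-1$ and $j$, exactly mirroring the structure of the proof of Lemma~\ref{intperm}. You instead factor $[j,0]=[j,2]\cdot s_1$, invoke Lemma~\ref{intperm} once (specialized to $i=2$), and finish with a single right-multiplication by $s_1$. Your approach is more economical, since it reuses the induction already performed in Lemma~\ref{intperm} rather than repeating it; the paper's approach is slightly more self-contained in that it does not rely on reading off the degenerate $i=2$ case of the preceding formula. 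You even sketch the paper's argument as your alternative at the end, so you have both in hand.
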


\begin{proof}
  We will prove the lemma using induction on $j$.  If $j=2$ the
  lemma is true by Proposition~\ref{smultiply}.  Now assume that, as a
  signed permutation, we have
  \[
  [j-1,0] = (\underline{1}, \underline{j-1}, 2, 3, \dots , j-3, j-2, j,
  j+1, \dots, n).
  \]
  Then by Proposition~\ref{smultiply} multiplying on the left by $s_j$
  has the effect of interchanging the entries with values $j$ and
  $j-1$, so we have
  \[
  [j,0] = s_j[j-1,0] = (\underline{1}, \underline{j}, 2, 3, \dots ,
  j-2, j-1, j+1, j+2, \dots, n).
  \]  
\end{proof}

Now we are ready to write down a reduced expression for $w_n$.  We
begin by finding a (not necessarily reduced) expression for $w_n$ in
terms of generators of the Coxeter group.

\begin{lemma}\label{equalbad}
  Using the above notation, we have
  \[
  w_n =
  \begin{dcases*}
    [2,0][4,0]\cdots[n-2,0][n,0][n-k,n-2k]\cdots[n-1,n-2][n,n]
    & if $n$ is even;\\
    [2,0][4,0]\cdots[m-2,0][m,0][m-k,m-2k]\cdots[m-1,m-2][m,m]
    & if $n$ is odd;
  \end{dcases*}
  \]
  where $m=n-1$ and
  \[
  k =
  \begin{dcases*}
    \frac{n}{2}-2 & if $n$ is even;\\
    \frac{n-1}{2}-2 & if $n$ is odd.\;
  \end{dcases*}
  \]
\end{lemma}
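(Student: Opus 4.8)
The plan is to verify the claimed identity as an equality of signed permutations, comparing the one-line notation of the right-hand product with that of $w_n$ recorded in Corollary~\ref{dmap}. Since the statement only asserts equality of group elements, and not reducedness, no length bookkeeping is needed here; that is deferred to the subsequent results.

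First I would dispose of the odd case by reducing it to the even one. For $n$ odd the displayed product is literally the word obtained from the even case with $n$ replaced by $m=n-1$, and in particular it never involves $s_n$; hence it fixes the last coordinate and acts on $\{1,\dots,n-1\}$ exactly as the corresponding product does in $W(D_{n-1})$. On the other hand Corollary~\ref{dmap} shows $w_n(n)=n$ and that $w_n$ agrees with $w_{n-1}$ on the first $n-1$ entries. So once the even case is settled, the odd case follows by appending the fixed point $n$.

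For $n$ even I would split the product into the two blocks $A=[2,0][4,0]\cdots[n,0]$ and $B=[n-k,n-2k]\cdots[n-1,n-2][n,n]$ and compute each as a signed permutation. For $A$ I would induct on $n$ using the factorization $A_n=\tilde A_{n-2}\circ[n,0]$, where $\tilde A_{n-2}$ is the analogous product $[2,0]\cdots[n-2,0]$ in the lower rank viewed inside $W(D_n)$; feeding the one-line notation of $[n,0]$ (from the lemma computing $[j,0]$) through the inductive description of $\tilde A_{n-2}$ yields the closed form $A(1)=(-1)^{n/2}$, together with $A(i)=-(n+4-2i)$ for $2\le i\le n/2+1$ and $A(i)=2i-n-1$ for $n/2+2\le i\le n$. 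For $B$ I would induct on the number $k+1$ of factors, composing one bracket at a time on the left of the accumulated product and using Lemma~\ref{intperm}, which realizes each $[n-t,n-2t]$ as the signed permutation cyclically rotating the block of entries indexed $n-2t-1,\dots,n-t$; tracking this gives $B(1)=1$, $B(2)=2$, $B(i)=\tfrac{n}{2}+1+\tfrac{i-1}{2}$ for odd $i\ge 3$, and $B(i)=\tfrac{i}{2}+1$ for even $i\ge 4$. Note that $B$ is sign-free, since every bracket in it has all indices at least $n-2k=4$ and so avoids $s_1$, while all sign changes of $w_n$ are produced by the $s_1$-containing brackets $[2i,0]$ in $A$.

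Finally I would compose: for each position $p$ I compute $A(B(p))$ and match it against $w_n(p)$ from Corollary~\ref{dmap}. The cases $p=1,2$ are checked directly; for odd $p\ge 3$ one finds $B(p)\in[n/2+2,n]$, where $A(i)=2i-n-1$, giving $A(B(p))=p$, and for even $p$ one finds $B(p)\in[3,n/2+1]$, where $A(i)=-(n+4-2i)$, giving $A(B(p))=-(n+2-p)$; these are precisely $w_n(p)$. I expect the main obstacle to be the inductive bookkeeping in composing the repeated block-rotations in $B$ and the reflections coming from the $s_1$-containing brackets in $A$, together with correctly handling the boundary cases, namely $k=0$ (that is, $n=4$, where $B$ collapses to $s_n$) and the single anomalous entry at position $1$ of $A$ whose sign is $(-1)^{n/2}$.
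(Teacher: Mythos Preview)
Your proposal is correct and follows essentially the same approach as the paper: both verify the identity by comparing one-line notations against Corollary~\ref{dmap}, and both reduce the odd case to the even one via $w_n=w_{n-1}$. The only organizational difference is that you first compute the two blocks $A=[2,0]\cdots[n,0]$ and $B=[n-k,n-2k]\cdots[n,n]$ as standalone signed permutations and then compose, whereas the paper computes $w'_n(i)$ for each $i$ in one pass, applying the $B$-factors first and then the $A$-factors to the result; the intermediate values that appear in the paper's case analysis (e.g.\ $(n+i+1)/2$ for odd $i$) are exactly your $B(i)$, so the two arguments are really the same computation presented in slightly different order.
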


\begin{proof}
  Suppose $n$ is even. Define
  \[
  w'_n = [2,0][4,0]\cdots[n-2,0][n,0][n-k,n-2k]\cdots[n-1,n-2][n,n].
  \]
  There are $\frac{n}{2}$ intervals in $w'_n$ that end in 0, and all
  other intervals fix 1, so $w(1)=\left(-1\right)^{n/2}$.  

  Now suppose that $i\in{\bf n}\setminus\{1\}$ is odd.  Then using
  Lemma~\ref{intperm} we have
  \begin{align*}
    w'_n(i) & = [2,0][4,0]\cdots[n-2,0][n,0][n-k,n-2k]\cdots[n-1,n-2][n,n](i)\\
    & = [2,0][4,0]\cdots[n-2,0][n,0][n-k,n-2k]\cdots
    \left[\frac{n+i-1}{2},i-1\right]\left[\frac{n+i+1}{2},i+1\right](i) \\
    & = [2,0][4,0]\cdots[n-2,0][n,0][n-k,n-2k]\cdots
    \left[\frac{n+i-1}{2},i-1\right]\left(\frac{n+i+1}{2}\right)\\
    & = [2,0][4,0]\cdots[n-4,0][n-2,0][n,0]\left(\frac{n+i+1}{2}\right)\\
    & = [2,0][4,0]\cdots[n-4,0][n-2,0]\left(\frac{n+i+1}{2}-1\right)\\
    & = [2,0][4,0]\cdots[n-4,0]\left(\frac{n+i+1}{2}-2\right)\\
    & \hspace{.2cm}\vdots \\
    & = [2,0][4,0]\cdots[i-1,0][i+1,0]\left(i+1\right)\\
    & = [2,0][4,0]\cdots[i-1,0]\left(i\right)\\
    & = i.
  \end{align*}

  If $i = 2$ then 
  \begin{align*}
    w'_n(2) & = [2,0][4,0]\cdots[n-2,0][n,0][n-k,n-2k]\cdots[n-1,n-2][n,n](2)\\
    & = [2,0][4,0]\cdots[n-2,0][n,0]\left(2\right) \\
    & = [2,0][4,0]\cdots[n-2,0]\left(-n\right) \\
    & \hspace{.2cm}\vdots \\
    & = -n.
  \end{align*}

  Next suppose that $i > 2$ is even.  Then
  \begin{align*}
    w'_n(i) & = [2,0][4,0]\cdots[n-2,0][n,0][n-k,n-2k]\cdots[n-1,n-2][n,n](i)\\
    & = [2,0][4,0]\cdots[n-2,0][n,0][n-k,n-2k]\cdots
    \left[\frac{n+i-2}{2},i-2\right]\left[\frac{n+i}{2},i\right](i) \\
    & = [2,0][4,0]\cdots[n-2,0][n,0][n-k,n-2k]\cdots
    \left[\frac{n+i-2}{2},i-2\right](i-1) \\
    & \hspace{.2cm}\vdots \\
    & = [2,0][4,0]\cdots[n-2,0][n,0]\left(\frac{i+2}{2}\right) \\
    & \hspace{.2cm}\vdots \\
    & = [2,0][4,0]\cdots[n-i,0][n+2-i,0](2) \\
    & = [2,0][4,0]\cdots[n-i,0](-(n+2-i)) \\
    & = -(n+2-i).
  \end{align*}
  Then $w'_n(i)=w_n(i)$ for all $i\in{\bf n}$ by Corollary~\ref{dmap},
  so $w'_n = w_n$.

  If $n$ is odd we see that $w_n = w_{n-1}$ in $W(D_n)$, so they must
  have the same reduced expression.
\end{proof}

\begin{lemma}\label{reducedbad}
The expression for $w_n$ in Lemma~\ref{equalbad} is reduced.  
\end{lemma}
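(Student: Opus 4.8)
The plan is to prove reducedness by a length count, using the elementary fact that a word in the generators is a reduced expression for an element precisely when the number of letters it contains equals the length of that element. Since Lemma~\ref{equalbad} already establishes that the displayed product equals $w_n$, and Lemma~\ref{badlength} supplies $\ell(w_n)$ in closed form, it suffices to count the generators appearing in the product of intervals and verify that this count agrees with $\ell(w_n)$. Any expression for $w_n$ uses at least $\ell(w_n)$ generators by minimality of length, so an expression achieving exactly $\ell(w_n)$ is automatically reduced.

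First I would treat the case $n$ even. Using Definition~\ref{interval} I would record the number of generators contributed by each interval factor. The factors $[2,0],[4,0],\dots,[n,0]$ are the inverses of $[0,2],[0,4],\dots,[0,n]$, and $[0,2j]=s_1s_2\cdots s_{2j}$ is a word of $2j$ generators, so this block contributes $\sum_{j=1}^{n/2}2j=\frac{n^2}{4}+\frac{n}{2}$ letters. Next I would identify the second block of factors as $[\,n-k+t,\;n-2k+2t\,]$ for $t=0,1,\dots,k$, where $k=\frac{n}{2}-2$; this matches the displayed endpoints $[n-k,n-2k],\dots,[n-1,n-2],[n,n]$, the first index increasing by $1$ and the second by $2$ at each step. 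Each such factor, being $[\,n-2k+2t,\;n-k+t\,]^{-1}=(s_{n-2k+2t}\cdots s_{n-k+t})^{-1}$, is a word of $(n-k+t)-(n-2k+2t)+1=k-t+1$ generators.

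Summing the second block gives $\sum_{t=0}^{k}(k-t+1)=\frac{(k+1)(k+2)}{2}=\frac{n^2}{8}-\frac{n}{4}$, using $k=\frac{n}{2}-2$. Adding the two blocks yields $\frac{n^2}{4}+\frac{n}{2}+\frac{n^2}{8}-\frac{n}{4}=\frac{3n^2}{8}+\frac{n}{4}$, which is exactly $\ell(w_n)$ by Lemma~\ref{badlength}. Hence the expression is reduced. For $n$ odd, the displayed expression is the even-case expression with $n$ replaced by $m=n-1$, and $\ell(w_n)=\ell(w_{n-1})$, so the identical count with $m$ in place of $n$ finishes that case.

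The only real obstacle is bookkeeping: correctly reading off the step pattern of the second block of intervals and being careful that each interval $[i,j]$ with $i\geq j\geq 2$ contributes $i-j+1$ generators through its definition as the inverse of $[j,i]$. Once the indexing is pinned down, the two arithmetic sums are routine and, as seen above, were engineered to reproduce the value of $\ell(w_n)$ computed in Lemma~\ref{badlength}.
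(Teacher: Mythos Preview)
Your proposal is correct and follows essentially the same approach as the paper: count the number of generators in the displayed product of intervals and check that it equals the value of $\ell(w_n)$ computed in Lemma~\ref{badlength}. The paper organizes the second block's count as $\sum_{i=1}^{n/2-1} i$ rather than via your parametrization by $t$, but the two computations are identical in substance.
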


\begin{proof}
  Suppose that $n$ is even. By Lemma~\ref{badlength} we have
  $\ell(w_n)\leq\frac{3}{8}n^2+\frac{1}{4}n$.  Let $r$ be the number
  of generators in the given expression for $w_n$. Then we have
  \begin{align*}
    r &= \sum_{i=1}^{\frac{n}{2}} 2i
    + \sum_{i=1}^{\frac{n}{2}-1} i\\
    &= 2\sum_{i=1}^{\frac{n}{2}} i
    + \sum_{i=1}^{\frac{n}{2}} i-\frac{n}{2}\\
    &= \frac{3}{2}\cdot\frac{n}{2}\left(\frac{n}{2}+1\right)-\frac{n}{2}\\
    &= \frac{3}{8}n^2+\frac{1}{4}n.  
  \end{align*}
  Then $\ell(w_n) = \frac{3}{8}n^2+\frac{1}{4}n$ and the above
  expression for $w_n$ is reduced. We can use an identical argument to
  show that the expression for $w_n$ is reduced if $n$ is odd.
\end{proof}

Recall that a bad element is an element that is not a product of
commuting generators and that has no reduced expressions beginning or
ending in two noncommuting generators.

\begin{theorem}~\label{longbad} 
  Let $W=W(D_n)$. Then there is a unique longest bad element, $w_n\in
  W$.  Every other bad element in $W$ is of the form $w_k\cdot u$
  where $k<n$ and where $u$ is a product of mutually commuting
  generators not in $\supp(w_k)$. Furthermore, if $n$ is odd then
  $w_n=w_{n-1}$.
\end{theorem}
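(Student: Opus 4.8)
The plan is to read off everything we need from the structure theorem, Theorem~\ref{dbad}, which tells us that any bad element has the form $w=w_m u$ (reduced) with $m$ even (the decomposition constructed there is $w_{2k}\cdot u$), with $\supp(u)\subseteq\{s_{m+2},\dots,s_n\}$ and $u$ a product of mutually commuting generators. Granting this, the theorem splits into three tasks: (i) verify that the particular element $w_n$ is genuinely bad; (ii) show by a length count that every bad element other than $w_n$ is strictly shorter, so that $w_n$ is the unique longest; and (iii) record the parity coincidence $w_n=w_{n-1}$ for odd $n$.

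For (i), I would work from the explicit one-line description of $w_n$ in Corollary~\ref{dmap}. Since $\ell(w_n)=\tfrac{3n^2}{8}+\tfrac n4$ by Lemma~\ref{badlength} exceeds $n$ (the maximal possible length of a product of distinct commuting generators) once $n\ge 3$, the element $w_n$ is not a product of commuting generators. It then remains to check that $w_n$ has no reduced expression beginning or ending in two noncommuting generators. Running through the consecutive triples $(w_n(i),w_n(i+1),w_n(i+2))$ using Corollary~\ref{dmap}, each has relative order $213$ or $132$ and so avoids the patterns $321$, $231$, and $312$; hence by Lemma~\ref{drightavoidbig} no reduced expression can end in a noncommuting pair $st$ with $s_1\notin\{s,t\}$. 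For the exceptional pair $\{s_1,s_3\}$ one checks $-w_n(1)=\mp 1\le 3=w_n(3)$, so Lemma~\ref{drightavoidsmall} rules out a reduced expression ending in $s_1s_3$ or $s_3s_1$. Finally, since $w_n=w_n^{-1}$ by Lemma~\ref{baseeven} (and Lemma~\ref{baseodd} when $n$ is odd), the same conclusions transfer to the left, so no reduced expression begins in two noncommuting generators either. Thus $w_n$ is bad.

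For (ii), let $w=w_m u$ be an arbitrary bad element and set $M$ to be the largest even integer with $M\le n$, so that $w_n=w_M$. Then $\ell(w)=\ell(w_m)+\ell(u)$. The support of $u$ lies in $\{s_{m+2},\dots,s_n\}$, a set of $n-m-1$ generators forming a path in the Coxeter graph, so the number of pairwise commuting generators making up $u$, and hence $\ell(u)$, is at most $\lceil(n-m-1)/2\rceil\le (M-m)/2$ (using $M\ge n-1$ and that $M,m$ are even). Combining this with $\ell(w_m)=\tfrac{3m^2}{8}+\tfrac m4$ and $\ell(w_M)=\tfrac{3M^2}{8}+\tfrac M4$, a short computation reduces the desired inequality $\ell(w_m)+\tfrac{M-m}{2}<\ell(w_M)$ to $M+m>\tfrac23$, which holds for all $m<M$. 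Hence $\ell(w)<\ell(w_n)$ for every bad element with $m<M$, while $m=M$ forces $\supp(u)=\emptyset$, i.e. $u=1$ and $w=w_n$. Therefore $w_n$ is the unique bad element of maximal length.

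For (iii) and the remaining structural claim, the analysis of the case $m=M$ above shows that every bad element other than $w_n$ has $m<n$ and equals $w_m u$; since $w_m$ fixes all positions beyond $m$ we have $\supp(w_m)\subseteq\{s_1,\dots,s_m\}$, which is disjoint from $\supp(u)\subseteq\{s_{m+2},\dots,s_n\}$, giving precisely the asserted form $w_k\cdot u$ with $k<n$. The identity $w_n=w_{n-1}$ for odd $n$ is immediate from the explicit formulas in Theorem~\ref{dbad} (equivalently from Lemma~\ref{equalbad}). I expect the main obstacle to be task (i): one must be careful to treat the exceptional pair $\{s_1,s_3\}$ separately from the generic noncommuting pairs and to invoke $w_n=w_n^{-1}$ in order to control left descents, since the pattern lemmas~\ref{drightavoidbig} and~\ref{drightavoidsmall} are stated only on the right.
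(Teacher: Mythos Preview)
Your proposal is correct and the core length comparison in task~(ii) is essentially the paper's argument: both invoke Theorem~\ref{dbad} to write any bad element as $w_m\cdot u$ and then bound $\ell(w_m)+\ell(u)$ against $\ell(w_n)$ via Lemma~\ref{badlength}. Your bound $\ell(u)\le\lceil(n-m-1)/2\rceil=(M-m)/2$ is slightly sharper and more careful than the paper's stated $\ell(u)<i/2$; both collapse to the same quadratic inequality.

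Where you genuinely add content is task~(i). The paper's proof never verifies that $w_n$ itself is bad---it only shows that $\ell(w)\le\ell(w_n)$ for bad $w$, which establishes an upper bound but not that the bound is attained by a bad element. Your pattern-avoidance check via Lemmas~\ref{drightavoidbig} and~\ref{drightavoidsmall}, combined with $w_n=w_n^{-1}$ from Lemma~\ref{baseeven} (or~\ref{baseodd}), fills this gap cleanly. Likewise, you explicitly derive strict inequality for $m<M$, giving uniqueness, whereas the paper's proof stops at the non-strict bound. So your argument is the same route but more complete.
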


\begin{proof}
  Let $n\in\mathbb{N}$ and let $w\in W$ be bad.  Then by
  Theorem~\ref{dbad} we must have $w=w_k\cdot u$ reduced where $k\leq
  n$ and $u$ is a product of mutually commuting generators that are
  not in $\supp(w_k)$. Write $i = n-k$. Since $\supp(u) \subset \{s_{k+2}, s_{k+3},
  \dots, s_n\}$ we see $\ell(u) < (i/2)$.  Then $\ell(w) = \ell(w_k) +
  \ell(u)$ so by Lemma~\ref{badlength}
  \begin{align*}
    \ell(w) &\leq \frac{3}{8}(n-i)^2+\frac{1}{4}(n-i) + \frac{i}{2}\\
         &\leq \frac{3}{8}n^2 + \frac{1}{4}n + \frac{1}{8}i\cdot(-6n+3i+2).
  \end{align*}
  Since $n \geq 4$ and $n \geq i$ we see that $(-6n+3i+2) < 0$, so 
  $l(w) \leq (3/8)n^2+(1/4)n = l(w_n)$. 
\end{proof}

\begin{corollary}\label{badnotfc}
  If $w\in W(D_n)$ is bad then $w\not\in W_c$.
\end{corollary}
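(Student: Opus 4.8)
The plan is to argue by contradiction using the star-reduction criterion of Proposition~\ref{starredcomm}: since $W(D_n)$ is of type $D$, every fully commutative element is star reducible to a product of mutually commuting generators. A bad element is by definition \emph{not} a product of commuting generators, so it suffices to show that a bad element admits no nontrivial star reduction at all, i.e. that it is neither left nor right star reducible with respect to any pair $\{s,t\}$ with $m(s,t)=3$. Granting this, the only element to which a bad $w$ star reduces is $w$ itself, which is not a product of commuting generators, contradicting Proposition~\ref{starredcomm}; hence $w\notin W_c$.

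The crux is therefore a lemma connecting star reducibility with the shape of reduced expressions: \emph{if $w$ is left star reducible with respect to $\{s,t\}$, then $w$ has a reduced expression beginning in the two noncommuting generators $s,t$} (and symmetrically on the right, obtained by passing to $w^{-1}$). To see this, fix $J=\{s,t\}$ with $m(s,t)=3$ and write $w=x\cdot u$ reduced with $x\in W_J$ and $\mathcal{L}(u)\cap J=\emptyset$, the left-handed analogue of Proposition~\ref{factor} applied to $w^{-1}$. Then $\mathcal{L}(w)\cap J=\mathcal{L}(x)$, so the condition $w\in D_\mathcal{L}(s,t)$ forces $x\in\{s,t,st,ts\}$ inside the dihedral group $W_J=\{1,s,t,st,ts,sts\}$. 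A direct computation shows the left star operation pairs the elements with $x=s$ and $x=ts$, and the elements with $x=t$ and $x=st$, the length-\emph{decreasing} member of each pair being the one with $x\in\{ts,st\}$. Hence $w$ is left star reducible precisely when $x\in\{st,ts\}$, that is, precisely when $w$ has a reduced expression beginning in $st$ or $ts$.

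With this lemma the corollary follows at once. Let $w$ be bad. By definition $w$ has no reduced expression beginning or ending in two noncommuting generators, so by the lemma $w$ is neither left nor right star reducible with respect to any admissible pair. Consequently the only star reduction emanating from $w$ is the trivial chain, and $w$ star reduces only to itself. Were $w$ fully commutative, Proposition~\ref{starredcomm} would force $w$ to be a product of mutually commuting generators, contradicting that $w$ is bad; therefore $w\notin W_c$.

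I expect the main obstacle to lie in the second paragraph: pinning down exactly which members of the $\{s,t\}$-string are length-decreasing under the star operation, and in particular verifying that when $x\in\{s,t\}$ the star operation is defined but strictly \emph{increases} length (so it is not a reduction), while $x=1$ leaves $w$ outside $D_\mathcal{L}(s,t)$ and $x=sts$ corresponds to a braid that a bad element cannot contain. Once this dihedral-coset bookkeeping is settled, everything else is immediate from Proposition~\ref{starredcomm} and the definition of a bad element.
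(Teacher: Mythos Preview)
Your proof is correct, and the dihedral-coset analysis in the second paragraph goes through exactly as you sketch: writing $w = x\cdot u$ with $x\in W_J$ and $\mathcal{L}(u)\cap J=\emptyset$, the cases $x\in\{s,t\}$ give a length-increasing star move, the cases $x\in\{st,ts\}$ give a length-decreasing one, and $x\in\{1,sts\}$ place $w$ outside $D_\mathcal{L}(s,t)$. So a bad element is a terminal point for star reduction that is not a product of commuting generators, and Proposition~\ref{starredcomm} gives the contradiction.

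The paper takes a completely different, constructive route. It invokes the classification theorem (Theorem~\ref{longbad}) to write any bad $w$ as $w_m\cdot u$ with $u$ a product of commuting generators disjoint from $\supp(w_m)$, then uses the explicit reduced expression for $w_m$ from Lemma~\ref{equalbad} to locate a concrete braid factor $s_{m-k}s_{m-k-1}s_{m-k}$ inside it. Your argument is shorter, more conceptual, and independent of the structure theory of bad elements in type~$D$: it would apply verbatim in any simply-laced type for which the analogue of Proposition~\ref{starredcomm} is known. The paper's argument, by contrast, exhibits the braid explicitly, which is in keeping with the very concrete, hands-on style of the surrounding sections and reuses machinery already developed there.
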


\begin{proof}
  By Theorem~\ref{longbad} we can write $w = w_m \cdot u$ reduced
  where $4\leq m\leq n$ and $u$ is a product of mutually commuting
  generators that are not in $\supp(w_m)$. Since $w_m = w_{m-1}$ when
  $m$ is odd we may assume that $m$ is even. By Lemma~\ref{equalbad}
  we have
  \[
  w_m = [2,0][4,0]\cdots[m-2,0][m,0][m-k,m-2k]\cdots[m-1,m-2][m,m]
  \]
  where $k = \frac{m}{2}-2$. We can write $[m-k,m-2k] =
  s_{m-k}[m-k-1,m-2k]$ and commute $s_{m-k}$ to the left to obtain
  \[
  w_m = [2,0]\cdots[m,m-k+1]\cdot s_{m-k}s_{m-k-1}s_{m-k}\cdot [m-k-2,
  0][m-k-1, m-2k]\cdots[m,m],
  \]
  so $w = w'\cdot s_{m-k}s_{m-k-1}s_{m-k}\cdot w''u$ reduced for some
  $w',w''\in W$, and thus $w\not\in W_c$.
\end{proof}

In \cite{green2009leading}, Green uses the fact that Coxeter groups of
type $\widetilde A$, and therefore of type $A$, have no bad elements
to show that $\mu(x,w)\in\{0,1\}$ if $x$ is fully commutative.  In
type $D$ we can use the proof of \cite[Theorem 3.1]{green2009leading}
to show that $\mu(x,w)\in\{0,1\}$ if $x$ is fully commutative and $w$
is not bad.  However, the case where $w$ is bad is much harder.  We
will show that we can reduce the case where $w$ is bad to calculating
$\mu(x_n,w_n)$, where $w_n$ is the longest bad element in $W(D_n)$ and
where $x_n = \prod_{s\in\mathcal{L}(w)}s$.  We see that since $w_n =
w_{n+1}$ for even $n$ we only need to calculate $\mu(x_n, w_n)$ for
even $n$.  Furthermore, if $n\equiv 2\text{ or }4\bmod 8$, then
$\ell(w_n)-\ell(x_n)$ is even, so $\mu(x_n, w_n) = 0$.  However, if
$n\equiv 0\text{ or }6\bmod 8$ the calculation is much harder.

As we will see, Lusztig's $a$-function gives us a way to bound the
degree of $P_{e,w}$ for an element $w\in W$.  With this in mind, we
first look at $P_{e,w_n}$.

\begin{definition}
  Let $n\geq 4$.  Then define
  \[
  x_n =
  \begin{cases}
    s_1s_2s_4s_6\cdots s_{n-2}s_n &\mbox{if $n$ is even;}\\
    s_1s_2s_4s_6\cdots s_{n-3}s_{n-1} &\mbox{if $n$ is odd.}
  \end{cases}
  \]
  Note that $x_n$ is a product of mutually commuting generators.
\end{definition}

\begin{lemma}\label{pew}
  We have $P_{x_n,w_n} = P_{e, w_n}$.
\end{lemma}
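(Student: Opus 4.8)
**The plan is to show $x_n \leq w_n$ and that $x_n$ is obtained from $e$ by multiplying by a product of generators lying in $\mathcal{L}(w_n)\setminus\mathcal{L}(e)$, then apply the Kazhdan--Lusztig polynomial reduction of Proposition~\ref{klpolsxsw}(1) repeatedly.**

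First I would compute $\mathcal{L}(w_n)$ explicitly. Using Corollary~\ref{dmap} together with Proposition~\ref{ddescent}, I expect to find that $\mathcal{L}(w_n)$ is precisely the commutative set $\{s_1, s_2, s_4, s_6, \dots\}$, i.e.\ exactly the support of $x_n$. Indeed, the definition of $x_n$ as $s_1 s_2 s_4 s_6 \cdots$ strongly suggests that $x_n = \prod_{s\in\mathcal{L}(w_n)} s$, matching the remark immediately preceding the definition of $x_n$. So the key structural fact I would establish is:
\[
  \supp(x_n) = \mathcal{L}(w_n),
\]
and since these generators mutually commute, $x_n$ is a reduced product of them in any order.

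Next I would pass from $e$ to $x_n$ one generator at a time, peeling off the generators of $x_n$. Write $x_n = t_1 t_2 \cdots t_p$ where $\{t_1,\dots,t_p\} = \mathcal{L}(w_n)$ are mutually commuting. I would induct on $j$, proving $P_{e,w_n} = P_{t_1\cdots t_j, w_n}$ for each $0\leq j\leq p$. At each step, setting $x' = t_1\cdots t_{j-1}$ and $s = t_j$, I have $s \in \mathcal{L}(w_n)$ so $sw_n < w_n$, and because $s$ commutes with $t_1,\dots,t_{j-1}$ and does not appear among them, $sx' > x'$ (its support does not already contain $s$, so left-multiplication by $s$ increases length). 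These are exactly the hypotheses of Proposition~\ref{klpolsxsw}(1), which gives $P_{x',w_n} = P_{sx',w_n}$. Iterating yields $P_{e,w_n} = P_{x_n,w_n}$, which is the claim. I should also confirm $x_n < w_n$ (so the polynomials are genuinely being compared in a nontrivial interval), but this follows since $x_n$ is a subword built from left descents of $w_n$.

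The main obstacle I anticipate is the bookkeeping verification that $\mathcal{L}(w_n) = \supp(x_n)$, which requires carefully reading off descents from the one-line notation in Corollary~\ref{dmap} and handling the $s_1$ versus $s_2$ subtlety of type $D$ (recall $w(0) \overset{\mathrm{def}}{=} -w(2)$ in Proposition~\ref{ddescent}). For even $n$, $w_n(1) = (-1)^{n/2}$, $w_n(i) = i$ for odd $i>1$, and $w_n(i) = -(n+2-i)$ for even $i$; I would check that consecutive comparisons $w_n^{-1}(i-1) > w_n^{-1}(i)$ single out exactly the indices $1,2,4,6,\dots,n$. Once that identification is secure, the repeated application of Proposition~\ref{klpolsxsw}(1) is routine, so the only real work is the descent-set computation. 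A slightly cleaner alternative that avoids inverting $w_n$ is to verify directly that each generator in $\supp(x_n)$ is a left descent of $w_n$ and that $x_n$ is a reduced left factor of $w_n$ obtainable by successive star-free reductions; but the one-generator-at-a-time argument via Proposition~\ref{klpolsxsw}(1) is the most transparent route.
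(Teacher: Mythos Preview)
Your proposal is correct and takes essentially the same approach as the paper: repeated application of Proposition~\ref{klpolsxsw}(1), using that each generator in $\supp(x_n)$ lies in $\mathcal{L}(w_n)$. The paper's proof is a single sentence doing exactly this; your write-up simply unpacks the bookkeeping. One small simplification you may want to use in the descent-set check: $w_n$ is an involution (Lemma~\ref{baseeven}/\ref{baseodd}), so $\mathcal{L}(w_n)=\mathcal{R}(w_n)$ and you can read the descents directly from the one-line notation via Proposition~\ref{ddescent} without computing $w_n^{-1}$.
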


\begin{proof}
  We repeatedly apply Proposition~\ref{klpolreduce} (1), starting with $x=e$,
  $w=w_n$, and taking $s$ from the set $\mathcal{L}(x_n)=\{s_1, s_2,
  s_4, s_6,\dots , s_{n-2}, s_n\}$.
\end{proof}

This tells us that $\mu(x_n,w_n)$ is equal to the coefficient of
$q^{\frac{1}{2}(\ell(w_n)-\ell(x_n)-1)}$ in $P_{e,w_n}$.  Using
Proposition~\ref{abound}, if we calculate $a(w_n)$ we can find a bound
for the degree of $P_{e,w_n}$.  This will allow us to calculate
$\mu(x_n,w_n)$ in certain cases.

In order to simplify the calculation of $a(w_n)$, we will next find an
element $u_n$ in the same two-sided cell as $w_n$.  The calculation of
$a(u_n)$ will be made easy by lemmas~\ref{alongest}
and~\ref{aparabolic}.  To find such an element we will use domino
tableaux in order to calculate the Kazhdan--Lusztig cells of $W(D_n)$.

\begin{lemma}\label{starredbad}
  Recall the definition of star reducible from
  Definition~\ref{starreddef}. Let $w\in W$ be such that $w\not\in
  W_c$. Then $w$ is star reducible to either
  \begin{enumerate}
  \item a bad element; or
  \item an element $x$ such that either $\mathcal{L}(x)$ or
    $\mathcal{R}(x)$ is not commutative.
  \end{enumerate}
\end{lemma}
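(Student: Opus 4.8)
The plan is to induct on $\ell(w)$, peeling off length-decreasing star operations until we are forced into conclusion~(1) or~(2). The organizing principle is that star reductions cannot escape $W\setminus W_c$: since each left or right star operation is an involution on its domain $D_{\mathcal L}(s,t)$ (resp.\ $D_{\mathcal R}(s,t)$) and carries $W_c$ into $W_c$ by Proposition~\ref{starfc}, it must also carry $W\setminus W_c$ into itself. Hence every element produced during the reduction is again non-fully-commutative, and in particular is not a product of commuting generators, since every such product lies in $W_c$.

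I would first dispose of the terminal cases. If $\mathcal{L}(w)$ or $\mathcal{R}(w)$ fails to be commutative, then $w$ is star reducible to itself by the (trivial, length-zero) chain permitted in Definition~\ref{starreddef}, and conclusion~(2) holds. Otherwise both descent sets are commutative; if moreover $w$ has no reduced expression beginning or ending in two noncommuting generators, then $w$ is weakly bad, and since $w\notin W_c$ is not a product of commuting generators, $w$ is bad, giving conclusion~(1).

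The inductive step treats the one remaining configuration: both $\mathcal{L}(w)$ and $\mathcal{R}(w)$ commutative, yet $w$ admits a reduced expression beginning or ending in two noncommuting generators. Say it begins in $st$, so $m(s,t)=3$ and $s\in\mathcal{L}(w)$; the case of an expression ending in two noncommuting generators is handled identically with right star operations. Because $\mathcal{L}(w)$ is commutative we must have $t\notin\mathcal{L}(w)$, so $w\in D_{\mathcal L}(s,t)$. Writing $w=st\cdots$ reduced, the element $sw$ begins in $t$ but not in $s$, so $\mathcal{L}(sw)\cap\{s,t\}=\{t\}$ and $sw\in D_{\mathcal L}(s,t)$ as well; by the exactly-one assertion of Definition~\ref{stardef} this forces $\expl{w}{*}=sw$, with $\ell(sw)=\ell(w)-1$. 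Thus $w$ is left star reducible to $sw$, and by the opening observation $sw\notin W_c$. Since $\ell(sw)<\ell(w)$, the inductive hypothesis produces a star reduction of $sw$ to a bad element or to an element with non-commutative descent set, and prepending the step $w\to sw$ yields the same for $w$.

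The step I expect to require the most care is the verification, in the inductive case, that a reduced expression beginning (or ending) in two noncommuting generators really does yield a \emph{length-decreasing} star operation once the relevant descent set is known to be commutative --- that is, confirming $sw\in D_{\mathcal L}(s,t)$ and that $sw$ is the shorter of the two elements of the $\{s,t\}$-string through $w$. Once this is pinned down, termination is automatic from the strictly decreasing length, and the bad/non-commutative-descent dichotomy emerges precisely at the point where no further star reduction is available.
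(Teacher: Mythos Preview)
Your proof is correct and follows essentially the same approach as the paper: induct on $\ell(w)$, handle the terminal cases (bad or non-commutative descent set) directly, and in the remaining case peel off a length-decreasing star operation from a reduced expression beginning or ending in two noncommuting generators. Your argument is in fact slightly more careful than the paper's, which applies the inductive hypothesis to $\expl{w}{*}$ without explicitly verifying $\expl{w}{*}\notin W_c$; your observation that star operations, being involutions preserving $W_c$ (Proposition~\ref{starfc}), also preserve $W\setminus W_c$ fills that small gap.
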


\begin{proof}
  We will proceed by induction on $\ell(w)$. We see that since
  $w\not\in W_c$ we must have $\ell(w) \geq 3$ and we know that $w$ is
  not a product of commuting generators. If $\ell(w) = 3$ then $w=sts$
  where $s$ and $t$ are noncommuting generators, so we are
  done. Suppose that $\ell(w) = r$. If $w$ is not bad and both
  $\mathcal{L}(w)$ and $\mathcal{R}(w)$ are commutative then we have
  either $w\in st\cdot w'$ reduced or $w = w'\cdot ts$ reduced for
  some pair of noncommuting generators $s$ and $t$ and some $w'\in
  W$. Without loss of generality suppose $w = st\cdot w'$
  reduced. Then $\expl{w}{*} = t\cdot w'$, so $\ell(\expl{w}{*}) = r -
  1$, and by induction $\expl{w}{*}$ is star reducible to either a bad
  element or an element $x$ such that either $\mathcal{L}(x)$ or
  $\mathcal{R}(x)$ is not commutative, thus $w$ is star reducible to
  either a bad element or an element $x$ such that either
  $\mathcal{L}(x)$ or $\mathcal{R}(x)$ is not commutative.
\end{proof}

\begin{corollary}\label{starred}
  Let $w\in W$. Then $w$ is star reducible to
  either
  \begin{enumerate}
  \item a product of mutually commuting generators,
  \item a bad element, or
  \item an element $x$ such that either $\mathcal{L}(x)$ or
    $\mathcal{R}(x)$ is not commutative.
  \end{enumerate}
\end{corollary}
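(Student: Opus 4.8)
The plan is to reduce the corollary directly to the two results we already have in hand, namely Proposition~\ref{starredcomm} and Lemma~\ref{starredbad}, via a single dichotomy on whether or not $w$ is fully commutative. There is essentially no new content to produce here: the corollary is a repackaging of these two facts into a single trichotomy, so the work is entirely in splitting into the right cases and citing the correct statement in each.

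First I would dispose of the fully commutative case. If $w\in W_c$, then since $W = W(D_n)$ is a Coxeter group of type $D$, Proposition~\ref{starredcomm} applies directly and tells us that $w$ is star reducible to a product of mutually commuting generators. This is precisely conclusion (1) of the corollary, so nothing further is needed in this case.

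Next I would handle the complementary case $w\notin W_c$. Here Lemma~\ref{starredbad} applies verbatim: it asserts that any such $w$ is star reducible either to a bad element, which is conclusion (2), or to an element $x$ for which $\mathcal{L}(x)$ or $\mathcal{R}(x)$ fails to be commutative, which is conclusion (3). Since the two cases $w\in W_c$ and $w\notin W_c$ are exhaustive, combining the two paragraphs yields exactly the three alternatives of the corollary.

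I do not anticipate any genuine obstacle, since all the difficulty has been absorbed into the inductive argument of Lemma~\ref{starredbad} and into the type-restricted star reduction of Proposition~\ref{starredcomm}. The only point deserving a moment's care is to confirm that the hypotheses of Proposition~\ref{starredcomm} are met, i.e.\ that we are working in a Coxeter group of type $A$ or $D$; this is immediate in the present setting, where $W = W(D_n)$ throughout the chapter. Thus the proof is a short two-case argument with no residual computation.
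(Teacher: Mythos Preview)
Your proposal is correct and matches the paper's own proof, which simply says the result is immediate from Proposition~\ref{starredcomm} and Lemma~\ref{starredbad}. Your explicit dichotomy on $w\in W_c$ versus $w\notin W_c$ is exactly the intended unpacking of that one-line proof.
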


\begin{proof}
  This is immediate from Proposition~\ref{starredcomm} and
  Lemma~\ref{starredbad}.
\end{proof}

\chapter{Kazhdan--Lusztig cells and domino tableaux}\label{klcellsdomtab}

If $W=W(A_n)$, we can use the Robinson--Schensted algorithm to
describe the left and right Kazhdan--Lusztig cells.  Each element of
of $w$ may be viewed as a permutation of $\{1,2,\dots,n+1\}$.  We can
use the Robinson--Schensted algorithm to construct an $(n+1)$-tableau,
$T(w)$, corresponding to $w$ as in \cite[4.1]{fulton1997young}.  Then
$x,w\in W$ are in the same left (respectively, right) cell if and only
if $T(x)=T(w)$ (respectively, $T(x^{-1})=T(w^{-1})$) \cite[Theorem
1.7.2]{shi1986cells}.

If $W=W(D_n)$, we can use an algorithm similar to the
Robinson--Schensted algorithm to construct tableaux with
dominoes. This algorithm was developed by D. Garfinkle in
\cite{garfinkleclassI}. We assign each element $w\in W$ a so-called
domino tableau, $T_L(w)$.  Unlike in type $A$, two elements of a
single cell $W$ may have two different domino tableaux.  However, we
define the notion of cycles of domino tableaux, and use the notion of
moving a tableau through a cycle to create a new domino tableau.  Then
we introduce an equivalence class, $\approx$, on tableaux such that
$T(w)\approx T(y)$ if and only if we can obtain $T_L(w)$ from $T_L(y)$
by moving through cycles.  Once we do this, we will see that the
partition of $W$ induced by $\approx$ is exactly the left cells of
$W$.  We will then use these domino tableaux to find an element,
$u_n$, in the same two-sided cell as $w_n$ with a relatively easy to
calculate $a$-value.

In Section~\ref{domdef} we begin with an account of Garfinkle's
definitions corresponding to domino tableaux as seen in
\cite{garfinkleclassI}. In Section~\ref{domtab} we define the
algorithm developed by Garfinkle in \cite{garfinkleclassI} used to
assign a pair of tableaux to an element of $W$. In
Section~\ref{domcyc} we follow Garfinkle's methods in
\cite{garfinkleclassI} to define the notion of cycles of a domino
tableau and use these cycles to define the equivalence relation
$\approx$.  

In Chapter~\ref{dombad} we will apply Garfinkle's notion of cycles to
tableaux of our bad elements.  This allows us to use $\approx$ to find
an new element $u_n$ whose $a$-value is relatively easy to calculate.
We conclude by calculating $a(w_n)$.

\section{Definitions}\label{domdef}

Let $\mathbb{N}$ be the set of natural numbers starting with 1 and let
$\mathbb{N}_0=\mathbb{N}\cup\{0\}$.  For $n\in\mathbb{N}$ define ${\bf
  n}=\{i\in\mathbb{N}\mid i\leq n\}$. Consider the set $\{S_{i,j}\mid
i,j\in\mathbb{N}_0\}$ of positions in the quadrant, where $i$ denotes
the row, increasing left to right, and $j$ denotes the column,
increasing top to bottom.

\begin{definition}
  Let $\mathcal{F}=\{S_{i,j}\}_{i,j\in \mathbb{N}}$, and let
  $\mathcal{F}^0=\{S_{i,j}\}_{i,j\in\mathbb{N}_0}$. We call the elements of
  $\mathcal{F}$ and $\mathcal{F}^0$ \textbf{squares}.
\end{definition}

\begin{example}\label{box}
  Let $J=\{S_{1,1}, S_{1,2}, S_{1,3}, S_{1,4}, S_{2,1}, S_{2,2}\}$.  We can
  visualize $J$ as a box diagram:
  \begin{center}
    \begin{tikzpicture}[node distance=0 cm,outer sep = 0pt]
      \tikzstyle{sq}=[rectangle, draw, thick,
      minimum width=.5cm, minimum height=.5cm]
      \node[sq] (11) at (   1,  1) {};
      \node[sq] (12) [right = of 11] {};
      \node[sq] (13) [right = of 12] {};
      \node[sq] (14) [right = of 13] {};
      \node[sq] (21) [below = of 11] {};
      \node[sq] (22) [right = of 21] {};
      \node at (0,.75) {$J=$};
      \node at (3,.75) {.};
    \end{tikzpicture}
  \end{center}
\end{example}

\begin{definition}
  A subset $J\subseteq\mathcal{F}$ is a \textbf{Young diagram} if it
  satisfies all of the following conditions:
  \begin{enumerate}
  \item $J$ is finite;
  \item for each $i$ there exists $i_j$ such that $S_{i,k}\in J$ if
    and only if $0\leq k\leq i_j$;
  \item for each $j$ there exists $j_i$ such that $S_{j,k}\in J$ if
    and only if $0\leq k\leq j_i$.
  \end{enumerate}
\end{definition}

\begin{example}
  In Example~\ref{box}, above, $J$ is a Young diagram.  However, if
  we let
  \begin{center}
    \begin{tikzpicture}[node distance=0 cm,outer sep = 0pt]
      \tikzstyle{sq}=[rectangle, draw, thick,
      minimum width=.5cm, minimum height=.5cm]
      \node[sq] (11) at        (1,1) {};
      \node[sq] (12) [right = of 11] {};
      \node[sq] (13) [right = of 12] {};
      \node[sq] (14) [right = of 13] {};
      \node[sq] (21) [below = of 11] {};
      \node[sq] (22) at        (1.5,0) {};
      \node at (0,.5) {$J'=$};
    \end{tikzpicture}
  \end{center}
  then we see that $J'$ is not a Young diagram.
\end{example}

\begin{definition}
  Let $J\subset\mathcal{F}$ be a Young diagram.  Then define
  \begin{enumerate}
  \item $\rho_i(J)  =\max\{0,\max\{j\mid S_{i,j}\in J\}\}$, and
  \item $\kappa_j(J)=\max\{0,\max\{i\mid S_{i,j}\in J\}\}$.
  \end{enumerate}
\end{definition}

\begin{remark}
  We see that $\rho_i(J)$ is the number of boxes in the $i$th row of
  $J$, and $\kappa_j(J)$ is the number of boxes in the $j$th column of
  $J$.
\end{remark}

\begin{example}
  If $J$ is as in Example~\ref{box}, then $\rho_1(J)=4$
  and $\kappa_1(J)=2$.
\end{example}

\begin{definition}
  A subset $D\subset\mathcal{F}$ is a called a \textbf{domino} if
  $D=\{S_{i,j},S_{i,j+1}\}$ or $D=\{S_{i,j},S_{i+1,j}\}$ for some
  $i,j\in\mathbb{N}$.
\end{definition}

\begin{example}
  Let $J$ be as in Example~\ref{box}. The pairs $\{S_{1,1},S_{2,1}\}$,
  $\{S_{1,2},S_{2,2}\}$, and $\{S_{1,3},S_{1,4}\}$ are all dominoes.
  With these pairings in mind we can write $J$ as a disjoint union of
  dominoes:
  \begin{center}
    \begin{tikzpicture}[node distance=0 cm,outer sep = 0pt]
      \tikzstyle{ver}=[rectangle, draw, thick,
      minimum width=.5cm, minimum height=1cm]
      \tikzstyle{hor}=[rectangle, draw, thick,
      minimum width=1cm, minimum height=.5cm]
      \node[ver] (1) at (   1,  1) {};
      \node[ver] (2) [right  = of 1] {};
      \node[hor] (3) at (2.25, 1.25) {};
      \node at (0,1) {$J=$};
      \node at (3,1) {.};
    \end{tikzpicture}
  \end{center} 
\end{example}

\begin{definition}
  Let $M\subseteq\mathbb{N}$ be finite.  Define projections
  \[
  \pi_1 : \mathcal{F}\times M\rightarrow\mathcal{F} : (f,m)\mapsto f
  \]
  and
  \[
  \pi_2 : \mathcal{F}\times M\rightarrow M : (f,m)\mapsto m.
  \]
  Let $T\subseteq \mathcal{F}\times M$ satisfy the following
  conditions:
  \begin{enumerate}
  \item $\pi_1|_T$ is injective;
  \item $\pi_1(T)$ is a Young diagram;
  \item $(\pi_2)^{-1}(k)$ is a domino for all $k\in M$.
  \item Suppose $(S_{i,j},k)\in T$.  Then if $(S_{i,j+1},k_1)\in T$ or
    $(S_{i+1,j},k_2)\in T$ we have $k\leq k_1,k_2$.
\end{enumerate}
Then we call $T$ a \textbf{domino tableau}.  Let $\mathcal{T}(M)$ be
the set of all domino tableaux.  
\end{definition}

\begin{example}\label{domino}
  Using the above definition we see that each element
  $T\in\mathcal{T}(M)$ is a disjoint union of labeled dominoes in the
  shape of a Young diagram, such that the labels increase along rows
  and columns.  For example, let $M={\bf 5}$, and let
  \[
  T=\{(S_{1,1},1), (S_{1,2},3), (S_{1,3},3), (S_{2,1},1), (S_{2,2},4),
  (S_{2,3},5), (S_{3,1},2), (S_{3,2},4), (S_{3,3},5), (S_{4,1},2)\}.
  \]
  Then $T\in \mathcal{T}(M)$ and we write
  \begin{center}
    \begin{tikzpicture}[node distance=0 cm,outer sep = 0pt]
        \tikzstyle{ver}=[rectangle, draw, thick,
        minimum width=.5cm, minimum height=1cm]
        \tikzstyle{hor}=[rectangle, draw, thick,
        minimum width=1cm, minimum height=.5cm]
        \node[ver] (1) at (   1,  1.5) {1};
        \node[ver] (2) [below  = of 1] {2};
        \node[hor] (3) at (1.75, 1.75) {3};
        \node[ver] (4) at ( 1.5,    1) {4};
        \node[ver] (5) [right  = of 4] {5};
        \node at (0,1) {$T=$};
        \node at (2.5,1) {.};
    \end{tikzpicture}
  \end{center}
\end{example}

\begin{definition}
  Let $M\subset \mathbb{N}$, let $T\in\mathcal{T}(M)$ and let
  $k\in M$.
  \begin{enumerate}
  \item The \textbf{domino with label $k$ in $T$} is given by
    $D(T,k)=(\pi_2)^{-1}(k)$.
  \item The \textbf{position of domino with label $k$ in $T$} is given
    by $P(T,k)=\pi_1(D(T,k))$.
  \item The \textbf{shape} of $T$ is $\shape(T)=\pi_1(T)$.
  \item Let $i,j\in\mathbb{N}$.  Then define a map
    \[
    N:\mathcal{T}(M)\times\mathcal{F}^0\rightarrow M\cup \{0,\infty\}
    \]
    by
    \[
    N(T,S_{i,j})=
    \begin{cases}
      k &\mbox{if } (S_{i,j},k)\in T;\\
      0 &\mbox{if } i=0\mbox{ or }j=0;\\
      \infty &\mbox{else}.
    \end{cases}
    \]

  \end{enumerate}

\end{definition}

\begin{example}
  Let $T$ be as in Example~\ref{domino}.  We see that
  $D(T,3)=\{(S_{1,2},3),(S_{1,3},3)\}$ and
  $P(T,3)=\{S_{1,2},S_{1,3}\}$.  Also, $\shape(T)$ is the unlabeled
  tableau with the same outline as $T$.  Thus
  \begin{center}
    \begin{tikzpicture}[node distance=0 cm,outer sep = 0pt]
      \tikzstyle{sq}=[rectangle, draw, thick,
      minimum width=.5cm, minimum height=.5cm]
      \node[sq] (11) at (   1.5,  2) {};
      \node[sq] (12) [right = of 11] {};
      \node[sq] (13) [right = of 12] {};
      \node[sq] (21) [below = of 11] {};
      \node[sq] (22) [below = of 12] {};
      \node[sq] (23) [below = of 13] {};
      \node[sq] (31) [below = of 21] {};
      \node[sq] (32) [below = of 22] {};
      \node[sq] (33) [below = of 23] {};
      \node[sq] (41) [below = of 31] {};
      \node at (0,1.25) {$\shape(T)=$};
      \node at (3,1.25) {.};
    \end{tikzpicture}
  \end{center}

  Finally, $N(T,S_{i,j})$ is the label
  of block $S_{i,j}$ in $T$, if it exists.  Otherwise, we say that
  $N(T,S_{i,j})=0$ if either $i=0$ or $j=0$, and $N(T,S_{i,j})=\infty$
  otherwise.  Then, $N(T,S_{2,3})=5$, $N(T,S_{0,2})=0$, and
  $N(T,S_{4,2})=\infty$.
\end{example}

\begin{definition}
  Let $M=\{e_1,e_2,\dots,e_k\}$ and let $T\in\mathcal{T}(M)$.  Then we
  define
  \[
  T(j)=T\setminus\left(\bigcup_{e_i> j} D(T,e_{i})\right).
  \]
\end{definition}

\begin{example}
  We note that $T(j)$ is obtained from $T$ by removing all dominoes
  with label strictly greater than $j$. Then using
  Example~\ref{domino} we see that
    \begin{center}
      \begin{tikzpicture}[node distance=0 cm,outer sep = 0pt]
        \tikzstyle{ver}=[rectangle, draw, thick,
        minimum width=.5cm, minimum height=1cm]
        \tikzstyle{hor}=[rectangle, draw, thick,
        minimum width=1cm, minimum height=.5cm]
        \node[ver] (1) at (   1,  1.5) {1};
        \node[ver] (2) [below  = of 1] {2};
        \node[hor] (3) at (1.75, 1.75) {3};
        \node at (0,1) {$T(3)=$};
        \node at (2.5,1) {.};
      \end{tikzpicture}
    \end{center}
\end{example}

We are eventually working towards a way to add a domino to a tableau
to create a new tableau.  To do this, we will next define a way to
shuffle a domino that overlaps with a particular tableau into a
position that allows it to fit into the tableau without overlapping.

\begin{definition}\label{shuffle}
  Let $J\subset\mathcal{F}$ be a Young diagram and let $P=\{S_{i,j},
  S_{i,j+1}\}$ (respectively, $P=\{S_{i,j},
  S_{i,j+1}\}$) be a domino.  We define $A(J,P)$ in the following
    cases:
  \begin{enumerate}
  \item If $j=\rho_i(J)+1$ (respectively, $i=\kappa_j(J)+1$) then $A(J,P)=P$.
  \item If $j=\rho_i(J)-1$ (respectively, $i=\kappa_j(J)-1$) and
    $\rho_{i+1}(J)< j$ (respectively, $\kappa_{j+1}(J)< i$) then
    $A(J,P)=\{S_{i+1,r}S_{i+1,r+1}\}$ (respectively,
    $A(J,P)=\{S_{r,j+1}S_{r+1,j+1}\}$) where $r=\rho_{i+1}(J)+1$
    (respectively, $r=\kappa_{j+1}(J)+1$).
  \item If $j=\rho_i(J)$ (respectively, $i=\kappa_j(J)$) and $\rho_{i+1}(J)=j$
    (respectively, $\kappa_{j+1}(J)=i$) then $A(J,P)=\{S_{i,j+1},S_{i+1,j+1}\}$
    (respectively, $A(J,P)=\{S_{i+1,j},S_{i+1,j+1}\}$).
  \end{enumerate}
\end{definition}

\begin{example}
  Let $J$ be given as below, and let $P=\{S_{1,4},S_{1,5}\}$,
  $Q=\{S_{1,2},S_{1,3}\}$, and $R=\{S_{2,1},S_{2,2}\}$:
  \begin{center}
    \begin{tikzpicture}[node distance=0 cm,outer sep = 0pt]
      \tikzstyle{ver}=[rectangle, draw, thick,
      minimum width=.5cm, minimum height=1cm]
      \tikzstyle{hor}=[rectangle, draw, thick,
      minimum width=1cm, minimum height=.5cm]
      \node[ver] (1) at (   1,   1) {};
      \node[hor] (2) at (1.75,1.25) {};
      \node[ver] (3) [below = of 1] {};
      \node at (0,.5) {$J=$};
      \node at (2.5,.5) {.};
    \end{tikzpicture}
  \end{center}
  Then
  \begin{center}
    \begin{tikzpicture}[node distance=0 cm,outer sep = 0pt]
      \tikzstyle{ver}=[rectangle, draw, thick,
      minimum width=.75cm, minimum height=1.5cm]
      \tikzstyle{hor}=[rectangle, draw, thick,
      minimum width=1.5cm, minimum height=.75cm]
      \node[ver] (1) at (   2,   3) {};
      \node[hor] (2) at (3.125,3.375) {};
      \node[ver] (3) [below = of 1] {};
      \node[hor, color=red] (P) [right = of 2] {$P$};
      \node at (0,2.25) {$J\cup P=$};
      \node at (6,2.25) {,};
    \end{tikzpicture}
  \end{center}
  so we are in case (1) of Definition~\ref{shuffle}.  Thus
  \begin{center}
    \begin{tikzpicture}[node distance=0 cm,outer sep = 0pt]
      \tikzstyle{ver}=[rectangle, draw, thick,
      minimum width=.75cm, minimum height=1.5cm]
      \tikzstyle{hor}=[rectangle, draw, thick,
      minimum width=1.5cm, minimum height=.75cm]
      \node[ver] (1) at (   2,   3) {};
      \node[hor] (2) at (3.125,3.375) {};
      \node[ver] (3) [below = of 1] {};
      \node[hor] (P) [right = of 2] {$A(J,P)$};
      \node at (0,2.25) {$J\cup A(J,P)=$};
      \node at (6,2.25) {.};
    \end{tikzpicture}
  \end{center}
  We also see that
  \begin{center}
    \begin{tikzpicture}[node distance=0 cm,outer sep = 0pt]
      \tikzstyle{ver}=[rectangle, draw, thick,
      minimum width=.75cm, minimum height=1.5cm]
      \tikzstyle{hor}=[rectangle, draw, thick,
      minimum width=1.5cm, minimum height=.75cm]
      \node[ver] (1) at (   2,   3) {};
      \node[hor] (2) at (3.125,3.375) {};
      \node[ver] (3) [below = of 1] {};
      \node[hor, color=red] (P) at (3.125,3.375) {$Q$};
      \node at (0,2.25) {$J\cup Q=$};
    \end{tikzpicture}
  \end{center}
  so we are in case (2) of Definition~\ref{shuffle}.  Thus
  \begin{center}
    \begin{tikzpicture}[node distance=0 cm,outer sep = 0pt]
      \tikzstyle{ver}=[rectangle, draw, thick,
      minimum width=.75cm, minimum height=1.5cm]
      \tikzstyle{hor}=[rectangle, draw, thick,
      minimum width=1.5cm, minimum height=.75cm]
      \node[ver] (1) at (   2,   3) {};
      \node[hor] (2) at (3.125,3.375) {};
      \node[ver] (3) [below = of 1] {};
      \node[hor] (P) [below = of 2] {\hspace{-.01in}$A(J,Q)$};
      \node at (0,2.25) {$J\cup A(J,Q)=$};
      \node at (4.5,2.25) {.};
    \end{tikzpicture}
  \end{center}
  Note that the tableau $J\cup A(J,Q)=$ would not be well-defined if
  we had $S_{2,2}\in J$.  However this would violate the condition
  that $\rho_{i+1}(J)<j$ for $i=1$ and $j=2$, so the resulting
  tableau is guaranteed to be well-defined.
  
  Similarly,
  \begin{center}
    \begin{tikzpicture}[node distance=0 cm,outer sep = 0pt]
        \tikzstyle{ver}=[rectangle, draw, thick,
        minimum width=.75cm, minimum height=1.5cm]
        \tikzstyle{hor}=[rectangle, draw, thick,
        minimum width=1.5cm, minimum height=.75cm]
        \node[ver] (1) at (   2,   3) {};
        \node[hor] (2) at (3.125,3.375) {};
        \node[ver] (3) [below = of 1] {};
        \node[hor, color=red] (P) at (2.375,2.625) {\hspace{.25in}$R$};
        \node at (0,2.25) {$J\cup R=$};
     \end{tikzpicture}
  \end{center}
  so we are in case (3) of Definition~\ref{shuffle}.  Thus
  \begin{center}
    \begin{tikzpicture}[node distance=0 cm,outer sep = 0pt]
      \tikzstyle{ver}=[rectangle, draw, thick,
      minimum width=.75cm, minimum height=1.5cm]
      \tikzstyle{hor}=[rectangle, draw, thick,
      minimum width=1.5cm, minimum height=.75cm]
      \node[ver] (1) at (   2,   3) {};
      \node[hor] (2) at (3.125,3.375) {};
      \node[ver] (3) [below = of 1] {};
      \node[ver] (P) at (2.75,2.25) {\begin{sideways}
          $A(J,R)$ \end{sideways}};
      \node at (0,2.25) {$J\cup A(J,R)=$};
      \node at (4.5,2.25) {.};
    \end{tikzpicture}
  \end{center}
  As before, the tableau $J\cup A(J,Q)=$ would not be well-defined if
  we had $S_{3,2}\in J$.  However this would violate the condition
  that $\rho_{i}(J)=j$ for $i=2$ and $j=1$, so the resulting tableau
  is well-defined.
\end{example}

\section{Constructing tableaux}\label{domtab}
We will now define an algorithm that assigns a domino tableau to each
element in $W(D_n)$.  Let $W=W(D_n)$ and let $w\in W$.  We will
construct a domino tableau from $w$ using the signed permutation
representation of $w$.  Using a method
similar to row insertion in standard tableaux, we add labeled dominoes
to a tableau.  If $w^{-1}(k)$ is positive we initially add a
horizontal domino with label $k$.  If $w^{-1}(k)$ is negative then we
initially add a vertical domino with label $k$.  When dominoes
overlap, we employ a shuffling technique that allows us to create a
valid domino tableau.  We begin by defining a way to write each $w$ as
a set of ordered triples,
\[
\left\{(k,\,|w(k)|\,,\sign(w(k)))\mid k\in {\bf n}\right\}.
\]

Let $M_1,M_2\subset\mathbb{N}$ be finite with $|M_1|=|M_2|$ and
let $p_1$, $p_2$, and $p_3$ be the projections of $M_1\times
M_2\times\{\pm 1\}$ onto the first, second, and third coordinate,
respectively.  Let
\[
\mathcal{W}(M_1,M_2) = \left\{ w\subset M_1\times M_2\times\{\pm 1\} \;\middle|\;
\begin{array}{c} p_1|_w\text{ and }p_2|_w\text{ are bijections and}\\
\left|\left(p_3^{-1}(-1)\cap w\right)\right|\equiv 0\bmod 2\end{array}\right\}.
\]

Let $W=W(D_n)$.  Then we can realize each element of $W$ as a signed
permutation.  Define the map
\[
\delta:W\rightarrow\mathcal{W}({\bf n},{\bf n})
\]
by $\delta(w)= \left\{\left(i, \,\vert w(i) \vert\, , \,\sign(w(i)) \right)
  \;\middle\vert\; i\in{\bf n}\right\}$.

\begin{example}
  Let $w=w_4=(1,\underline{4},3,\underline{2})\in W(D_4)$.  Then
  \[
  \delta(w)=\{(1,1,1),(2,4,-1),(3,3,1),(4,2,-1)\}.
  \]
\end{example}

\begin{definition}\label{Aset}
  Let $M\subset\mathbb{N}$ be finite.  Define
  \[
  \mathcal{A}(M)=\left\{(T,v,\epsilon)\mid v\in M,
    T\in\mathcal{T}(M\setminus\{v\}), \epsilon\in\{\pm 1\}\right\}.
  \]
\end{definition}

We can think of an element of $\mathcal{A}(M)$ as a domino tableau on
the set $M\setminus\{v\}$ paired with a horizontally aligned domino
with label $v$ if $\epsilon=1$, or a vertically aligned domino with
label $v$ if $\epsilon=-1$.  We proceed by defining a map from
$\mathcal{A}(M)$ to $\mathcal{T}(M)$.  This will allow us to add the
domino with label $v$ to the tableau to form a new tableau using all
elements of $M$.  We will use the idea of shuffling introduced in
Definition~\ref{shuffle}.

\begin{definition}\label{alphamap}
  We define a map $\alpha:\mathcal{A}(M)\to\mathcal{T}(M)$
  inductively.  Let $M=\{e_1,\cdots,e_n\}\subset \mathbb{N}$ be such
  that $e_1<e_2<\cdots <e_n$ and let $v=e_j$.  Suppose that $\alpha$
  is defined for all $M'$ with $|M'|<n$.  Let
  $(T,v,\epsilon)\in\mathcal{A}(M)$. We have two cases.
  \begin{itemize}
  \item[\textbf{Case 1.}] Suppose $v=e_n$.  Then
    \[
    \alpha(T,v,\epsilon)=
    \begin{cases}
      T\cup \{(S_{1,\rho_1(T)+1},e_n),(S_{1,\rho_1(T)+2},e_n)\},
      &\mbox{if } \epsilon=1,\\
      T\cup
      \{(S_{\kappa_1(T)+1,1},e_n),(S_{\kappa_1(T)+2,1},e_n)\},
      &\mbox{if } \epsilon=-1.\\
    \end{cases}
    \]
  \item[\textbf{Case 2.}] Suppose $v<e_n$.  Then let
    $T'=\alpha(T(e_{n-1}),v,\epsilon)$, which is defined by the
    inductive hypothesis.  Then define
    \[
    \alpha(T,v,\epsilon)=T'\cup \{(S,e_n)\mid S\in A(T',P(T,e_n))\}.
    \]
  \end{itemize}
\end{definition}

\begin{lemma}
  The map $\alpha$ is well defined on all of $\mathcal{A}(M)$, and
  maps into $\mathcal{T}(M)$.
\end{lemma}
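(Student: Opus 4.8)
The plan is to induct on $n = |M|$, following exactly the recursive structure of Definition~\ref{alphamap}. The base case $|M| = 1$ is immediate: then $v = e_1 = e_n$, so we are in Case 1 with $T = \varnothing$, and a single domino placed at the start of the first row (if $\epsilon = 1$) or first column (if $\epsilon = -1$) is plainly a valid element of $\mathcal{T}(M)$. For the inductive step I assume $\alpha$ is well defined on $\mathcal{A}(M')$ and lands in $\mathcal{T}(M')$ for every $M'$ with $|M'| < n$, and I treat the two cases of the definition separately, checking in each that the output satisfies the four defining conditions of a domino tableau.

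In Case 1 ($v = e_n$) there is nothing recursive to verify; I only need to confirm that appending the domino with maximal label $e_n$ at position $\{S_{1,\rho_1(T)+1}, S_{1,\rho_1(T)+2}\}$ (when $\epsilon = 1$) or $\{S_{\kappa_1(T)+1,1}, S_{\kappa_1(T)+2,1}\}$ (when $\epsilon = -1$) to $T \in \mathcal{T}(M \setminus \{e_n\})$ gives an element of $\mathcal{T}(M)$. Extending the first row (resp. first column) of a Young diagram by two squares again yields a Young diagram, so the shape conditions hold and $(\pi_2)^{-1}(e_n)$ is a domino; and since $e_n$ exceeds every label already present and the new domino sits at the outer boundary, the monotonicity condition cannot be violated—any existing square lying to the left of or above the new domino has a strictly smaller label, and there is no square to its right or below. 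Hence $\alpha(T,v,\epsilon) \in \mathcal{T}(M)$.

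Case 2 ($v < e_n$) is where the real work lies. Here $e_n$ is the largest label of $T$, so $T(e_{n-1})$ is obtained from $T$ by deleting precisely the domino labelled $e_n$; thus $(T(e_{n-1}), v, \epsilon) \in \mathcal{A}(M \setminus \{e_n\})$ and, by the inductive hypothesis, $T' = \alpha(T(e_{n-1}), v, \epsilon) \in \mathcal{T}(M \setminus \{e_n\})$. It remains to show that $A(T', P(T,e_n))$ is defined and that adjoining a domino with label $e_n$ there produces a domino tableau. The geometric point is that both $\shape(T)$ and $\shape(T')$ arise from $\shape(T(e_{n-1}))$ by adding a single domino: in $\shape(T)$ that domino occupies $P(T,e_n)$, an outer position of $\shape(T(e_{n-1}))$, whereas in $\shape(T')$ the insertion of $v$ adds a (generally different) domino. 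The hard part will be verifying that, measured against $J = \shape(T')$, the domino $P = P(T,e_n)$ satisfies the hypotheses of \emph{exactly one} of the three cases of Definition~\ref{shuffle}—equivalently, that inserting $v$ can encroach on $P$ only in the controlled ways ($P$ still addable, off by one row or column, or abutting an equal-length neighbour) that $A$ is built to repair. Granting this, $J \cup A(J,P)$ is again a Young diagram exceeding $J$ by one domino, the shape and domino conditions follow, and since $e_n$ is the maximal label the monotonicity condition holds automatically for the newly placed domino, giving $\alpha(T,v,\epsilon) \in \mathcal{T}(M)$.

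To dispatch that hard part I would isolate it as a lemma about the behaviour of $A(J,P)$ when $J$ is enlarged by a single domino, and prove it by analysing the effect of inserting $v$ into $\shape(T(e_{n-1}))$ row by row (resp. column by column), tracking how the bumping path meets the corner $P(T,e_n)$. The three alternatives of Definition~\ref{shuffle} correspond precisely to whether the inserted domino leaves $P$ untouched, lengthens the row just above $P$, or fills the square diagonally adjacent to $P$; this is a finite case analysis with no genuine choices, and phrasing it uniformly lets both well-definedness and the tableau axioms follow at once for either value of $\epsilon$.
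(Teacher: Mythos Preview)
The paper does not prove this lemma at all; it simply cites Garfinkle's original paper \cite{garfinkleclassI}, specifically Proposition~1.3.4 and Definition~1.2.5, where the construction and its well-definedness are established. Your proposal, by contrast, attempts to reprove Garfinkle's result directly by induction on $|M|$, following the recursive structure of Definition~\ref{alphamap}. The inductive framework you set up is exactly right, and your treatment of Case~1 is complete. For Case~2 you correctly identify the crux: one must check that $P(T,e_n)$, which was an addable domino for $\shape(T(e_{n-1}))$, is related to the enlarged diagram $\shape(T')$ in one of the three precise ways that Definition~\ref{shuffle} handles, and that the resulting shape is again a Young diagram with the labels increasing along rows and columns.

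Your proposal is honest about not having carried out that verification: you write that you ``would isolate it as a lemma'' and describe the case analysis without performing it. That is the genuine content of Garfinkle's Proposition~1.3.4, and it is not entirely trivial---one has to track how the bumping path from the insertion of $v$ can interact with the corner domino $P(T,e_n)$ and confirm that only the three configurations of Definition~\ref{shuffle} arise, and that in each the output is a valid tableau. So your sketch is a correct outline of Garfinkle's argument rather than a complete proof; the paper's approach of citing the source is the expedient one, while yours would amount to reproducing a chunk of \cite{garfinkleclassI}.
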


\begin{proof}
  This is a consequence of \cite[Proposition 1.3.4,
  Definition 1.2.5]{garfinkleclassI}.
\end{proof}

\begin{example}
  Let $M={\bf 7}$ and consider $T\in\mathcal{T}(M\setminus\{7\})$ as defined below
  \begin{center}
    \begin{tikzpicture}[node distance=0 cm,outer sep = 0pt]
      \tikzstyle{ver}=[rectangle, draw, thick,
      minimum width=.5cm, minimum height=1cm]
      \tikzstyle{hor}=[rectangle, draw, thick,
      minimum width=1cm, minimum height=.5cm]
      \node[ver] (1) at (   1,  1.5) {1};
      \node[ver] (2) [below  = of 1] {2};
      \node[hor] (3) at (1.75, 1.75) {3};
      \node[ver] (4) at ( 1.5,    1) {4};
      \node[hor] (5) [right  = of 3] {5};
      \node[ver] (6) [right  = of 4] {6};
      \node at (0,1) {$T=$};
      \node at (3.5,1) {.};
    \end{tikzpicture}
  \end{center}
  Then $(T,7,1),(T,7,-1)\in\mathcal{A}(M)$, and since $7=\max(M)$ we can calculate
  $\alpha(T,7,1)$ and $\alpha(T,7,-1)$ using case (1) of
  Definition~\ref{alphamap}.
  \begin{center}
    \begin{tabular}{cc}
      \begin{tikzpicture}[node distance=0 cm,outer sep = 0pt]
        \tikzstyle{ver}=[rectangle, draw, thick,
        minimum width=.5cm, minimum height=1cm]
        \tikzstyle{hor}=[rectangle, draw, thick,
        minimum width=1cm, minimum height=.5cm]
        \node[ver] (1) at (   1,  1.5) {1};
        \node[ver] (2) [below  = of 1] {2};
        \node[hor] (3) at (1.75, 1.75) {3};
        \node[ver] (4) at ( 1.5,    1) {4};
        \node[hor] (5) [right  = of 3] {5};
        \node[ver] (6) [right  = of 4] {6};
        \node[hor] (7) [right  = of 5] {7};
        \node at (-.5,1.5) {$\alpha(T,7,1)=$};
        \node at (4.5,1.5) {,};
      \end{tikzpicture}
      &
      \begin{tikzpicture}[node distance=0 cm,outer sep = 0pt]
        \tikzstyle{ver}=[rectangle, draw, thick,
        minimum width=.5cm, minimum height=1cm]
        \tikzstyle{hor}=[rectangle, draw, thick,
        minimum width=1cm, minimum height=.5cm]
        \node[ver] (1) at (   1,  1.5) {1};
        \node[ver] (2) [below  = of 1] {2};
        \node[hor] (3) at (1.75, 1.75) {3};
        \node[ver] (4) at ( 1.5,    1) {4};
        \node[hor] (5) [right  = of 3] {5};
        \node[ver] (6) [right  = of 4] {6};
        \node[ver] (7) [below  = of 2] {7};
        \node at (-.5,.5) {$\alpha(T,7,-1)=$};
        \node at (3.5,.5) {.};
      \end{tikzpicture}
    \end{tabular}
  \end{center}
  Note that we obtained the tableaux above by simply adding a domino
  with label $7$ to the first row or column of $T$.
\end{example}

\begin{example}
  Let $M={\bf 8}$ and consider $T\in\mathcal{T}(M\setminus\{7\})$, below
  \begin{center}
    \begin{tikzpicture}[node distance=0 cm,outer sep = 0pt]
      \tikzstyle{ver}=[rectangle, draw, thick, minimum width=.5cm,
      minimum height=1cm]         
      \tikzstyle{hor}=[rectangle, draw, thick, minimum width=1cm,
      minimum height=.5cm]
      \node[ver] (1) at (   1,  1.5) {1};
      \node[ver] (2) [below  = of 1] {2};
      \node[hor] (3) at (1.75, 1.75) {3};
      \node[ver] (4) at ( 1.5,    1) {4};
      \node[hor] (5) [right  = of 3] {5};
      \node[ver] (6) [right  = of 4] {6};
      \node[hor] (8) [right  = of 5] {8};
      \node at (0,1) {$T=$};
      \node at (4.5,1) {.};
    \end{tikzpicture}
  \end{center}
  As above, we see that $(T,7,1),(T,7,-1)\in\mathcal{A}(M)$. Now since
  $7< \max(M)$, we will use case (2) of Definition~\ref{alphamap} to
  find $\alpha(T,7,1)$. First we will remove all dominoes with labels
  strictly greater than 7 and add a horizontal domino with label 7 at
  the end of the first row.  Next, we shuffle in the removed dominoes.
  We first use case (1) of Definition~\ref{alphamap} to see that
  \begin{center}
    \begin{tikzpicture}[node distance=0 cm,outer sep = 0pt]
      \tikzstyle{ver}=[rectangle, draw, thick, minimum width=.5cm,
      minimum height=1cm]         
      \tikzstyle{hor}=[rectangle, draw, thick, minimum width=1cm,
      minimum height=.5cm]
      \node[ver] (1) at (   1,  1.5) {1};
      \node[ver] (2) [below  = of 1] {2};
      \node[hor] (3) at (1.75, 1.75) {3};
      \node[ver] (4) at ( 1.5,    1) {4};
      \node[hor] (5) [right  = of 3] {5};
      \node[ver] (6) [right  = of 4] {6};
      \node[hor] (7) [right  = of 5] {7};
      \node at (-1.25,1) {$T'=\alpha(T(6),7,1)=$};
      \node at (4.5,1) {.};
    \end{tikzpicture}
  \end{center}
  Then $P(T,8)=\{S_{1,6},S_{1,7}\}$, and
  $A(T',P(T,8))=\{S_{2,4},S_{2,5}\}$, so
  \begin{center}
    \begin{tikzpicture}[node distance=0 cm,outer sep = 0pt]
      \tikzstyle{ver}=[rectangle, draw, thick, minimum width=.5cm,
      minimum height=1cm]         
      \tikzstyle{hor}=[rectangle, draw, thick, minimum width=1cm,
      minimum height=.5cm]
      \node[ver] (1) at (   1,  1.5) {1};
      \node[ver] (2) [below  = of 1] {2};
      \node[hor] (3) at (1.75, 1.75) {3};
      \node[ver] (4) at ( 1.5,    1) {4};
      \node[hor] (5) [right  = of 3] {5};
      \node[ver] (6) [right  = of 4] {6};
      \node[hor] (7) [right  = of 5] {7};
      \node[hor] (8) [below  = of 5] {8};
      \node at (-.5,1) {$\alpha(T',7,1)=$};
      \node at (4.5,1) {.};
    \end{tikzpicture}
  \end{center}

\end{example}

\begin{definition}
  Let $M_1,M_2\subset\mathbb{N}$ with $|M_1|=|M_2|=m$.  Let
  $u=\max(M_2)$, and let $w\in\mathcal{W}(M_1,M_2)$.  Then there
  exists $v\in M_1$ and $\epsilon\in\{\pm 1\}$ such that
  $(v,u,\epsilon)\in w$, and we define
  \[
  w_{(m)}=w\setminus\{(v,u,\epsilon)\}.
  \]
\end{definition}

\begin{example}
  Let $w=\{(1,1,1),(2,4,-1),(3,3,1),(4,2,-1)\}$.  Then
  \[
  w_{(4)}=\{(1,1,1),(3,3,1),(4,2,-1)\}.
  \]
\end{example}

Now we are ready to assign a tableau, $T(w)$ to each element, $w\in
W=W(D_n)$. We will start by assuming that $T\left(w_{(n)}\right)$ is
defined by induction, and then use $\alpha$ to add a domino with label
$n$ to obtain $T(w)$.

\begin{definition}\label{dominotableaux}
  Let $M_1,M_2\subset\mathbb{N}$ be such that $|M_1|=|M_2|=m$.  We
  will define a map,
  \[
  \widehat{T}(M_1,M_2):\mathcal{W}(M_1,M_2) \rightarrow\mathcal{T}(M_1),
  \]
  by induction.  Suppose that $\widehat{T}(M'_1,M'_2)$ is defined when
  $|M'_1|=|M'_2|<m$.  Let $w\in \mathcal{W}(M_1,M_2)$ and let
  $(v,u,\epsilon)=w\setminus w_{(m)}$.  Then
  \[
  \widehat{T}(w)=
  \alpha\left(\widehat{T}\left(M_1\setminus\{v\},M_2\setminus\{u\}\right)
    \left(w_{(m)}\right),v,\epsilon\right).
  \]
  Let $w\in W(D_n)$, then define $T_L(w)=\widehat{T}({\bf n},{\bf n})(\delta(w))$.

  We further define $T_R(w)=T_L(w^{-1})\in\mathcal{T}(M_2)$.
\end{definition}

\begin{remark}\label{build}
  Given $w\in W(D_n)$ we can calculate $T_L(w)$ in the following way.
  Write
  \[
  \delta(w)=\{(w^{-1}(i),i,\epsilon_i)\mid i\in{\bf n}\}.
  \]
  Suppose that we have constructed a tableau, $T^{j-1}_L(w)$, with
  dominoes $w^{-1}(1), \dots ,w^{-1}(j-1)$.  We can obtain a new
  tableau that includes domino $w^{-1}(j)$ by setting
  $T^j_L(w)=\alpha\left(T^{j-1}_L(w),w^{-1}(j),\epsilon_j\right)$.  To
  construct $T^j_L(w)$, first write down $T^{j-1}_L(w)(w^{-1}(i))$.
  Now add $w^{-1}(i)$ as a vertical domino at the end of the first
  column if $\epsilon_i=-1$ and a horizontal domino at the end of the
  first row if $\epsilon_i=1$.  Finally, shuffle in the remaining
  $w^{-1}(k)$-labeled dominoes in increasing order in the
  $A\left(T^{j-1}_L(w)(k-1), P\left(T^{j-1},w^{-1}(k)\right)\right)$
  position.
\end{remark}

\begin{example}\label{buildT}
  Let $w=w_6$.  Then as a signed permutation we have
  \[
  w=\left(\underline{1},\underline{6},3,\underline{4},
    5,\underline{2}\right),
  \]
  so
  \[
  \delta(w)=\{(1,1,-1),(6,2,-1),(3,3,1),(4,4,-1),(5,5,1),(2,6,-1)\}.
  \]
  We construct $T_L(w)$ using the process outlined in
  Remark~\ref{build}.

  Since $w^{-1}(1)=-1$, we begin by adding a vertical domino with
  label 1 to obtain $T^1_L(w)$.  Then since $w^{-1}(2)=-6$ and since 6
  is larger than all labels in $T^1_L(w)$ we add a vertical domino
  with label 6 at the end of the first column to obtain $T^2_L(w)$.

  \begin{center}
    \begin{tikzpicture}[node distance=3cm,>=latex']
      \node (a)
      {
        \begin{tikzpicture}[node distance=0 cm,outer sep = 0pt]
          \node[smver] (1) at (   1,  1.5)   {\rm{1}};
          \node[rectangle, thick, minimum width=.5cm,
          minimum height=1cm] [below = of 1] {}; 
        \end{tikzpicture}
      };
      \node[below of=a,node distance=1.5cm] {$T^1_L(w)$};
      \node[right of=a] (b)
      {
        \begin{tikzpicture}[node distance=0 cm,outer sep = 0pt]
          \node[smver] (1) at (   1,  1.5)   {\rm{1}};
          \node[smver] (2) [below  = of 1]   {\rm{6}};
        \end{tikzpicture}
      };
      \node[below of=b,node distance=1.5cm] {$T^2_L(w)$};
      \node[right of=b, node distance=2cm] (c) {};
      \draw[shorten >=0.5cm,shorten <=0.5cm,->,thick] (a)--(b);
      \draw[shorten <=0.5cm,->,thick] (b)--(c);
    \end{tikzpicture}
  \end{center}
  
  Now $w^{-1}(3)=3$ so we must next add a horizontal domino with label
  3.  However, we must first remove all dominoes with labels larger
  than 3.  Once we have done this we are able to add a horizontal
  domino with label 3 to the end of the first row.  We place the
  domino with label 6 back in its original position because there is
  no overlap.
  
  \begin{center}
    \begin{tikzpicture}[node distance=4cm,>=latex']
      \node (a){};
      \node[right of=a, node distance=2cm] (b)
      {
        \begin{tikzpicture}[node distance=0 cm,outer sep = 0pt]
          \node[smver] (1) at (   1,  1.5)   {\rm{1}};
          \node[rectangle, thick, minimum width=.5cm,
          minimum height=1cm] [below = of 1] {}; 
        \end{tikzpicture}
      };
      \node[below of=b,node distance=1.5cm] {$T^2_L(w)(3)$};
      \node[right of=b] (c)
      {
        \begin{tikzpicture}[node distance=0 cm,outer sep = 0pt]
          \node[smver] (1) at (   1,  1.5)   {\rm{1}};
          \node[smhor] (3) at (1.75, 1.75)   {\rm{3}};
          \node[rectangle, thick, minimum width=.5cm,
          minimum height=1cm] [below = of 1] {}; 
        \end{tikzpicture}
      };
      \node[right of=c] (d)
      {
        \begin{tikzpicture}[node distance=0 cm,outer sep = 0pt]
          \node[smver] (1) at (   1,  1.5)   {\rm{1}};
          \node[smver] (2) [below  = of 1]   {\rm{6}};
          \node[smhor] (3) at (1.75, 1.75)   {\rm{3}};
        \end{tikzpicture}
      };
      \node[below of=d,node distance=1.5cm] {$T^3_L(w)$};
      \node[right of=d, node distance=2cm] (e) {};
      \draw[shorten >=0.5cm,->,thick] (a)--(b);
      \draw[shorten >=0.5cm,shorten <=0.5cm,->,thick] (b)--(c);
      \draw[shorten >=0.5cm,shorten <=0.5cm,->,thick] (c)--(d);
      \draw[shorten <=0.5cm,->,thick] (d)--(e);
    \end{tikzpicture}
  \end{center}
  
  Next, we see that $w^{-1}(4)=-4$, so we will add a vertical domino
  with label 4.  As before, we must first remove all dominoes with
  label greater than 4, and then we are free to add a vertical
  domino with label 4 to the end of the first column.  However, when
  we try to replace the domino with label 6 it now overlaps with the
  domino with label 4.  As a result, we must use the $A$ map from
  Definition~\ref{shuffle} to add a domino with label 6. We see that
  since the domino with label 6 overlaps completely with the tableau,
  we are in case (2) of the definition, so the $A$ map has the effect
  of bumping the domino with label 6 to the right.
  
  \begin{center}
    \begin{tikzpicture}[node distance=4cm,>=latex']
      \node (a) {};
      \node[right of=a, node distance=2cm] (b)
      {
        \begin{tikzpicture}[node distance=0 cm,outer sep = 0pt]
          \node[smver] (1) at (   1,  1.5)   {\rm{1}};
          \node[smhor] (3) at (1.75, 1.75)   {\rm{3}};
          \node[rectangle, thick, minimum width=.5cm,
          minimum height=1cm] [below = of 1] {}; 
        \end{tikzpicture}
      };
      \node[below of=b,node distance=1.5cm] {$T^3_L(w)(4)$};
      \node[right of=b] (c)
      {
        \begin{tikzpicture}[node distance=0 cm,outer sep = 0pt]
          \node[smver] (1) at (   1,  1.5)   {\rm{1}};
          \node[smver] (2) [below  = of 1]   {\rm{4}};
          \node[smhor] (3) at (1.75, 1.75)   {\rm{3}};
        \end{tikzpicture}
      };
      \node[right of=c] (d)
      {
        \begin{tikzpicture}[node distance=0 cm,outer sep = 0pt]
          \node[smver] (1) at (   1,  1.5)   {\rm{1}};
          \node[shver] (2) [below  = of 1]   {$\shortstack{4\\ \color{white}6}$};
          \node[smhor] (3) at (1.75, 1.75)   {\rm{3}};
          \node[smdver] (4) at ( 1.5,    1)   {\rm{6}};
        \end{tikzpicture}
      };
      \node[right of=d] (e)
      {
        \begin{tikzpicture}[node distance=0 cm,outer sep = 0pt]
          \node[smver] (1) at (   1,  1.5)   {\rm{1}};
          \node[smver] (2) [below  = of 1]   {\rm{4}};
          \node[smhor] (3) at (1.75, 1.75)   {\rm{3}};
          \node[smver] (4) at ( 1.5,    1)   {\rm{6}};
        \end{tikzpicture}
      };
      \node[below of=e,node distance=1.5cm] {$T^4_L(w)$};
      \node[right of=e, node distance=2cm] (f) {};
      \draw[shorten >=0.5cm,->,thick] (a)--(b);
      \draw[shorten >=0.5cm,shorten <=0.5cm,->,thick] (b)--(c);
      \draw[shorten >=0.5cm,shorten <=0.5cm,->,thick] (c)--(d);
      \draw[shorten >=0.5cm,shorten <=0.5cm,->,thick] (d)--(e);
      \draw[shorten <=0.5cm,->,thick] (e)--(f);
    \end{tikzpicture}
  \end{center}
  
  Now $w^{-1}(5)=5$, so we will next add a horizontal domino with
  label 5.  We remove all dominoes with label greater than 5, then add
  a domino with label 5 at the end of the first row.  We are then able
  to replace the remaining dominoes without overlap.
  
  \begin{center}
    \begin{tikzpicture}[node distance=4cm,>=latex']
      \node (a) {};
      \node[right of=a, node distance=2cm] (b)
      {
        \begin{tikzpicture}[node distance=0 cm,outer sep = 0pt]
          \node[smver] (1) at (   1,  1.5)   {\rm{1}};
          \node[smver] (2) [below  = of 1]   {\rm{4}};
          \node[smhor] (3) at (1.75, 1.75)   {\rm{3}};
        \end{tikzpicture}
      };
      \node[below of=b,node distance=1.5cm] {$T^4_L(w)(5)$};
      \node[right of=b] (c)
      {
        \begin{tikzpicture}[node distance=0 cm,outer sep = 0pt]
          \node[smver] (1) at (   1,  1.5)   {\rm{1}};
          \node[smver] (2) [below  = of 1]   {\rm{4}};
          \node[smhor] (3) at (1.75, 1.75)   {\rm{3}};
          \node[smhor] (5) [right  = of 3]   {\rm{5}};
        \end{tikzpicture}
        
      };
      \node[right of=c, node distance=4.5cm] (d)
      {
        \begin{tikzpicture}[node distance=0 cm,outer sep = 0pt]
          \node[smver] (1) at (   1,  1.5)   {\rm{1}};
          \node[smver] (2) [below  = of 1]   {\rm{4}};
          \node[smhor] (3) at (1.75, 1.75)   {\rm{3}};
          \node[smver] (4) at ( 1.5,    1)   {\rm{6}};
          \node[smhor] (5) [right  = of 3]   {\rm{5}};
        \end{tikzpicture}
      };
      \node[below of=d,node distance=1.5cm] {$T^5_L(w)$};
      \node[right of=d, node distance=2.5cm] (e) {};
      \draw[shorten >=0.5cm,->,thick] (a)--(b);
      \draw[shorten >=0.5cm,shorten <=0.5cm,->,thick] (b)--(c);
      \draw[shorten >=0.5cm,shorten <=0.5cm,->,thick] (c)--(d);
      \draw[shorten <=0.5cm,->,thick] (d)--(e);
    \end{tikzpicture}
  \end{center}
  
  Finally, $w^{-1}(6)=-2$, so the last domino that we will add is a
  vertical domino with label 2.  We remove all dominoes with label
  greater than 2 and add a domino with label 2 at the end of the first
  column.  Now we must add in the dominoes that we removed.  The domino
  with label 3 does not overlap with any other dominoes, so we may add
  it in its former position.  The former position of the domino with
  label 4 is now fully occupied by another domino, so we must use the
  $A$ map from Definition~\ref{shuffle}.  As above, we are in case (2)
  of the definition, so the domino with label 4 gets bumped to the
  right.
  
  \begin{center}
    \begin{tikzpicture}[node distance=4cm,>=latex']
      \node (a) {};
      \node[right of=a, node distance=2cm] (b)
      {
        \begin{tikzpicture}[node distance=0 cm,outer sep = 0pt]
          \node[smver] (1) at (   1,  1.5)   {\rm{1}};
          \node[rectangle, thick, minimum width=.5cm,
          minimum height=1cm] [below = of 1] {};  
        \end{tikzpicture}
      };
      \node[below of=b,node distance=1.5cm] {$T^5_L(w)(2)$};
      \node[right of=b] (c)
      {
        \begin{tikzpicture}[node distance=0 cm,outer sep = 0pt]
          \node[smver] (1) at (   1,  1.5)   {\rm{1}};
          \node[smver] (2) [below  = of 1]   {\rm{2}};
        \end{tikzpicture}
      };
      \node[right of=c] (d)
      {
        \begin{tikzpicture}[node distance=0 cm,outer sep = 0pt]
          \node[smver] (1) at (   1,  1.5)   {\rm{1}};
          \node[shver] (2) [below  = of 1]   {$\shortstack{2\\ \color{white}4}$};
          \node[smhor] (3) at (1.75, 1.75)   {\rm{3}};
          \node[smdver] (4) at ( 1.5,    1)   {\rm{4}};
        \end{tikzpicture}
      };
      \node[right of=d] (e)
      {
        \begin{tikzpicture}[node distance=0 cm,outer sep = 0pt]
          \node[smver] (1) at (   1,  1.5)   {\rm{1}};
          \node[smver] (2) [below  = of 1]   {\rm{2}};
          \node[smhor] (3) at (1.75, 1.75)   {\rm{3}};
          \node[smver] (4) at ( 1.5,    1)   {\rm{4}};
        \end{tikzpicture}
      };
      \node[right of=e, node distance=2cm] (f) {};
      \draw[shorten >=0.5cm,->,thick] (a)--(b);
      \draw[shorten >=0.5cm,shorten <=0.5cm,->,thick] (b)--(c);
      \draw[shorten >=0.5cm,shorten <=0.5cm,->,thick] (c)--(d);
      \draw[shorten >=0.5cm,shorten <=0.5cm,->,thick] (d)--(e);
      \draw[shorten <=0.5cm,->,thick] (e)--(f);
    \end{tikzpicture}
  \end{center}
  
  When we try to replace the domino with label 6 we see that its
  former position is fully occupied by the domino with label 4.  We
  again use the $A$ map from Definition~\ref{shuffle}, which has the
  effect of bumping the domino with label 6 to the right.
  
  \begin{center}
    \begin{tikzpicture}[node distance=4cm,>=latex']
      \node (a) {};
      \node[right of=a, node distance=2.5cm] (b)
      {
        \begin{tikzpicture}[node distance=0 cm,outer sep = 0pt]
          \node[smver] (1) at (   1,  1.5)   {\rm{1}};
          \node[smver] (2) [below  = of 1]   {\rm{2}};
          \node[smhor] (3) at (1.75, 1.75)   {\rm{3}};
          \node[smver] (4) at ( 1.5,    1)   {\rm{4}};
          \node[smhor] (5) [right  = of 3]   {\rm{5}};
        \end{tikzpicture}
      };
      \node[right of=b, node distance=4.5cm] (c)
      {
        \begin{tikzpicture}[node distance=0 cm,outer sep = 0pt]
          \node[smver] (1) at (   1,  1.5)   {\rm{1}};
          \node[smver] (2) [below  = of 1]   {\rm{2}};
          \node[smhor] (3) at (1.75, 1.75)   {\rm{3}};
          \node[shver] (4) at ( 1.5,    1)  {$\shortstack{4\\ \color{white}6}$};
          \node[smhor] (5) [right  = of 3]   {\rm{5}};
          \node[smdver] (6) [right  = of 4]   {\color{shade}6};
        \end{tikzpicture}
      };
      \node[right of=c, node distance=4.5cm] (d)
      {
        \begin{tikzpicture}[node distance=0 cm,outer sep = 0pt]
          \node[smver] (1) at (   1,  1.5)   {\rm{1}};
          \node[smver] (2) [below  = of 1]   {\rm{2}};
          \node[smhor] (3) at (1.75, 1.75)   {\rm{3}};
          \node[smver] (4) at ( 1.5,    1)   {\rm{4}};
          \node[smhor] (5) [right  = of 3]   {\rm{5}};
          \node[smver] (6) [right  = of 4]   {\rm{6}};
        \end{tikzpicture}
      };
      \node[below of=d,node distance=1.5cm] {$T^6_L(w)=T_L(w)$};
      \draw[shorten >=0.5cm,->,thick] (a)--(b);
      \draw[shorten >=0.5cm,shorten <=0.5cm,->,thick] (b)--(c);
      \draw[shorten >=0.5cm,shorten <=0.5cm,->,thick] (c)--(d);
    \end{tikzpicture}
  \end{center}

  At this point we have added all dominoes to the tableau, so
  $T^6_L(w)=T_L(w)$.

\end{example}

\begin{proposition}
  Let $W=W(D_n)$. Then the map
  \[
  W\rightarrow \mathcal{T}\left({\bf n}\right) \times
  \mathcal{T}\left({\bf n}\right) :w\mapsto (T_L(w),T_R(w))
  \]
  is an injection.
\end{proposition}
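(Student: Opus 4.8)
The plan is to prove injectivity by reconstructing $w$ from the pair $(T_L(w),T_R(w))$, reversing the insertion procedure of Definition~\ref{dominotableaux} one value at a time. The key observation, visible in Remark~\ref{build}, is that $T_L(w)$ is assembled by inserting dominoes in order of the \emph{values} $1,2,\dots,n$: at step $j$ a domino is inserted whose label is the position $w^{-1}(j)$ and whose orientation is $\epsilon_j=\sign\bigl(w(w^{-1}(j))\bigr)$, after which the dominoes with larger labels are shuffled back via the map $A$ of Definition~\ref{shuffle}. Thus it suffices to show that each insertion step can be undone, and that the companion tableau $T_R(w)=T_L(w^{-1})$ supplies exactly the data needed to undo the steps in the correct order. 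Recovering the triples $(w^{-1}(j),j,\epsilon_j)$ for every $j$ recovers $\delta(w)$, hence $w$.

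First I would isolate the single-domino insertion map $\alpha:\mathcal{A}(M)\to\mathcal{T}(M)$ of Definition~\ref{alphamap} and show that it is reversible in the following sense: if we are told which cell of $\alpha(T,v,\epsilon)$ was the one whose appearance enlarged the shape, that is the cell in $\shape(\alpha(T,v,\epsilon))\setminus\shape(T)$, then we can recover the triple $(T,v,\epsilon)$ uniquely. Concretely, one reverses the shuffling map $A$ from that distinguished outer domino inward, at each stage moving the vacated slot back by one domino according to the inverse of the three cases of Definition~\ref{shuffle}, until an outermost domino is expelled; the expelled domino's label is $v$ and its orientation is $\epsilon$, and what remains is $T$. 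This is the domino analogue of Schensted reverse row-insertion, and since each case of Definition~\ref{shuffle} has a unique predecessor, the reverse step is well defined and lands back in $\mathcal{A}(M)$.

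Next I would use $T_R(w)$ as a recording tableau. Reading the construction of $T_L(w)$ as a domino insertion of the word $w^{-1}$, the tableau $T_R(w)=T_L(w^{-1})$ plays the role of the recording tableau: the domino carrying label $j$ in $T_R(w)$ occupies precisely the cell that is added at the $j$th step of building $T_L(w)$. Granting this, the reconstruction proceeds by downward induction on $j$ from $n$ to $1$. Locate the domino labelled $n$ in $T_R(w)$; its position is the last cell added to $\shape(T_L(w))$. Apply the reverse insertion of the previous paragraph to $T_L(w)$ at that cell to read off the label $v=w^{-1}(n)$ and the orientation $\epsilon_n$, which determine $w(v)=\epsilon_n\cdot n$. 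Then delete the label-$n$ domino from $T_R(w)$ and replace $T_L(w)$ by the tableau returned by the reverse step, obtaining the pair associated with the restriction to values $\{1,\dots,n-1\}$, and repeat. After $n$ steps the entire signed permutation $w$ has been recovered, so $w$ is uniquely determined by $(T_L(w),T_R(w))$ and the map is injective.

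The main obstacle is the recording property asserted in the third paragraph, namely that $T_R(w)=T_L(w^{-1})$ genuinely marks the order in which cells are added during the construction of $T_L(w)$; this is the domino analogue of the symmetry theorem for the Robinson--Schensted correspondence and is the crux of why the two-tableau datum suffices. This is exactly where I would appeal to Garfinkle's analysis of the construction in \cite{garfinkleclassI}, together with the invertibility of $\alpha$ established above. The remaining work, tracking the signs $\epsilon_j$ and verifying that each reverse step stays within $\mathcal{A}(M)$ and $\mathcal{T}(M)$, is routine bookkeeping.
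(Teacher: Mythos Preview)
The paper's own proof of this proposition is a one-line citation to \cite[Theorem 1.2.13]{garfinkleclassI}; it does not reproduce any argument. Your proposal is a reasonable outline of what that argument in Garfinkle actually looks like: the insertion map $\alpha$ is reversible once one knows which cells were added last, and the second tableau supplies that information. So in spirit you and the paper are doing the same thing, only you have unpacked the citation.

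One point worth flagging. You correctly identify as ``the main obstacle'' the claim that $T_R(w)=T_L(w^{-1})$ records the growth of $T_L(w)$, i.e.\ that the domino labelled $j$ in $T_L(w^{-1})$ occupies exactly the pair of cells added at step $j$ of the construction of $T_L(w)$. In this paper $T_R(w)$ is \emph{defined} to be $T_L(w^{-1})$, not as a recording tableau, so this symmetry is not automatic and is indeed the whole content of the proposition. Your paragraph on reversing $\alpha$ is fine, but it only shows that $w$ is determined by $T_L(w)$ together with the \emph{recording} tableau; to finish you must still invoke Garfinkle's result that the recording tableau equals $T_L(w^{-1})$. Since you end up citing \cite{garfinkleclassI} for that step anyway, your argument collapses to the same citation the paper gives, with extra commentary on why the cited result is plausible.
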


\begin{proof}
  This is proved in \cite[Theorem 1.2.13]{garfinkleclassI}.
\end{proof}

\section{Cycles of tableaux}\label{domcyc}

The partition of $W=W(D_n)$ into sets with the same left tableau is
finer than the partition of $W$ into left
cells~\cite{mcgovern1996left}. However, following the work of
Garfinkle in \cite{garfinkleclassI}, we can use the notion of cycles
to define an equivalence relation on tableaux that corresponds to the
partition of $W$ into left cells. We begin with some preliminary
definitions.

\begin{definition}
  Let $S_{i,j}\in\mathcal{F}$.  If $i+j$ is even then we say that the
  square $S_{i,j}$ is \textbf{fixed}. If $S_{i,j}\in\mathcal{F}$ is
  not fixed then we say that $S_{i,j}$ is \textbf{variable}.
\end{definition}

\begin{example}
  Let $T$ be as in Example~\ref{domino}.  Then the fixed squares of
  $T$ are those that are shaded below
  \begin{center}
    \begin{tikzpicture}[node distance=0 cm,outer sep = 0pt]
      \definecolor{shade}{rgb}{0.8,0.8,0.8}
      \tikzstyle{ver}=[rectangle, draw, thick,
      minimum width=.5cm, minimum height=1cm]
      \tikzstyle{hor}=[rectangle, draw, thick,
      minimum width=1cm, minimum height=.5cm]
      \tikzstyle{fix}=[rectangle, fill=shade, minimum width=.5cm,
      minimum height=.5cm]
      \node[fix] (a) at (   1, 1.75) {};
      \node[fix] (b) at (   1,  .75) {};
      \node[fix] (c) at (   2, 1.75) {};
      \node[fix] (d) at (   2,  .75) {};
      \node[fix] (e) at ( 1.5, 1.25) {};
      \node[ver] (1) at (   1,  1.5) {1};
      \node[ver] (2) [below  = of 1] {2};
      \node[hor] (3) at (1.75, 1.75) {3};
      \node[ver] (4) at ( 1.5,    1) {4};
      \node[ver] (5) [right  = of 4] {5};
      \node at (0,1) {$T=$};
      \node at (2.5,1) {.};
    \end{tikzpicture}
  \end{center}
  
\end{example}

It is easy to see that if $T\in\mathcal{T}(M)$ and $k\in M$ then
$P(T,k)$ contains exactly one fixed square.  We will now introduce a
way to move dominoes within a tableau in such a way that the fixed
squares are not affected.

\begin{definition}\label{Pprime}
  Let $M\subset\mathbb{N}$ be finite and let $T\in\mathcal{T}(M)$.
  Pick $k\in M$.  Let $S_{i,j}$ be the fixed square in $P(T,k)$, and
  find $l,m\in\mathbb{N}$ such that $P(T,k)=\{S_{i,j},S_{l,m}\}$.  Let
  \[
  r=
  \begin{cases}
    N(T,S_{i-1,j+1})&\mbox{if } l>i\mbox{ or }m<j;\\
    N(T,S_{i+1,j-1})&\mbox{if } l<i\mbox{ or }m>j.\\
  \end{cases}
  \]
  Then we define a new domino, $P'(T,k)$, as follows.  If $l>i$ or
  $m<j$ then
  \[
  P'(T,k)=
  \begin{cases}
    \{S_{i,j},S_{i-1,j}\}&\mbox{if } r>k;\\
    \{S_{i,j},S_{i,j+1}\}&\mbox{if } r<k.\\
  \end{cases}
  \]
  If $l<i$ or $m>j$ then
  \[
  P'(T,k)=
  \begin{cases}
    \{S_{i,j},S_{i+1,j}\}&\mbox{if } r<k;\\
    \{S_{i,j},S_{i,j-1}\}&\mbox{if } r>k.\\
  \end{cases}
  \]
  Note that $P(T,k)\cap P'(T,k)=S_{i,j}$ is a fixed square.  Define
  $D'(T,k)=\{(S,k)\mid S\in P'(T,k)\}$.
\end{definition}

\begin{remark}\label{computer}
  Let $M\subset\mathbb{N}$ be finite, let $k\in M$, let
  $T\in\mathcal{T}(M)$, and let $r$ be defined as above.  Then
  the following table summarizes the relationship between $P(T,k)$ and
  $P'(T,k)$. The shaded squares in the table below correspond to fixed
  squares.

  \begin{center}
    \begin{tabular}{|c|c|c|c|}
      \hline
      \multirow{2}{*}{Position} &\multirow{2}{*}{$P(T,k)$} & \multicolumn{2}{|c|}{$P'(T,k)$}\\
      \cline{3-4}
      && $r>k$ & $r<k$ \\
      \hline\hline
      \multirow{2}{*}
      {
          $l>i$ or $m<j$
      }
      &
      \begin{tikzpicture}[node distance=0 cm,outer sep = 0pt]
        \definecolor{shade}{rgb}{0.8,0.8,0.8}
        \tikzstyle{ver}=[rectangle, draw, thick, minimum width=.5cm,
        minimum height=1cm]         
        \tikzstyle{hor}=[rectangle, draw, thick, minimum width=1cm,
        minimum height=.5cm]
        \tikzstyle{fix}=[rectangle, fill=shade, minimum width=.5cm,
        minimum height=.5cm]
        \tikzstyle{r}  =[rectangle, draw, dashed, minimum width=.5cm,
        minimum height=.5cm]
        \node[r]   (r) at (  1, 1.5) {$r$};
        \node[fix] (d) at ( .5,   1)    {};
        \node[ver] (2) at ( .5, .75)    {};
        \node at (1,2) {}; 
      \end{tikzpicture}
      &
      \multirow{2}{*}
      {
        \begin{tikzpicture}[node distance=0 cm,outer sep = 0pt]
          \definecolor{shade}{rgb}{0.8,0.8,0.8}
          \tikzstyle{ver}=[rectangle, draw, thick, minimum width=.5cm,
          minimum height=1cm]         
          \tikzstyle{hor}=[rectangle, draw, thick, minimum width=1cm,
          minimum height=.5cm]
          \tikzstyle{fix}=[rectangle, fill=shade, minimum width=.5cm,
          minimum height=.5cm]
          \tikzstyle{r}  =[rectangle, draw, dashed, minimum width=.5cm,
          minimum height=.5cm]
          \node[fix] (d) at ( .5,  .5)    {};
          \node[ver] (2) at ( .5, .75)    {};
        \end{tikzpicture}
      }
      &
      \multirow{2}{*}
      {
        \begin{tikzpicture}[node distance=0 cm,outer sep = 0pt]
          \definecolor{shade}{rgb}{0.8,0.8,0.8}
          \tikzstyle{ver}=[rectangle, draw, thick, minimum width=.5cm,
          minimum height=1cm]         
          \tikzstyle{hor}=[rectangle, draw, thick, minimum width=1cm,
          minimum height=.5cm]
          \tikzstyle{fix}=[rectangle, fill=shade, minimum width=.5cm,
          minimum height=.5cm]
          \tikzstyle{r}  =[rectangle, draw, dashed, minimum width=.5cm,
          minimum height=.5cm]
          \node[fix] (d) at (   0,   1)    {};
          \node[hor] (2) at ( .25,   1)    {};
      \end{tikzpicture}
      }
      \\ \cline{2-2} &
      \begin{tikzpicture}[node distance=0 cm,outer sep = 0pt]
        \definecolor{shade}{rgb}{0.8,0.8,0.8}
        \tikzstyle{ver}=[rectangle, draw, thick, minimum width=.5cm,
        minimum height=1cm]         
        \tikzstyle{hor}=[rectangle, draw, thick, minimum width=1cm,
        minimum height=.5cm]
        \tikzstyle{fix}=[rectangle, fill=shade, minimum width=.5cm,
        minimum height=.5cm]
        \tikzstyle{r}  =[rectangle, draw, dashed, minimum width=.5cm,
        minimum height=.5cm]
        \node[r]   (r) at (   1, 1.5) {$r$};
        \node[fix] (d) at (  .5,   1)    {};
        \node[hor] (2) at ( .25,   1)    {};
        \node at (1,2) {}; 
      \end{tikzpicture}
      & & \\
      \hline
      \multirow{2}{*}
      {
          $l<i$ or $m>j$
      }
      &
      \begin{tikzpicture}[node distance=0 cm,outer sep = 0pt]
        \definecolor{shade}{rgb}{0.8,0.8,0.8}
        \tikzstyle{ver}=[rectangle, draw, thick, minimum width=.5cm,
        minimum height=1cm]         
        \tikzstyle{hor}=[rectangle, draw, thick, minimum width=1cm,
        minimum height=.5cm]
        \tikzstyle{fix}=[rectangle, fill=shade, minimum width=.5cm,
        minimum height=.5cm]
        \tikzstyle{r}  =[rectangle, draw, dashed, minimum width=.5cm,
        minimum height=.5cm]
        \node[r]   (r) at ( .25,   .5) {$r$};
        \node[fix] (d) at ( .75,    1)    {};
        \node[ver] (2) at ( .75, 1.25)    {};
        \node at (1,2) {}; 
      \end{tikzpicture}
      &
      \multirow{2}{*}
      {
        \begin{tikzpicture}[node distance=0 cm,outer sep = 0pt]
          \definecolor{shade}{rgb}{0.8,0.8,0.8}
          \tikzstyle{ver}=[rectangle, draw, thick, minimum width=.5cm,
          minimum height=1cm]         
          \tikzstyle{hor}=[rectangle, draw, thick, minimum width=1cm,
          minimum height=.5cm]
          \tikzstyle{fix}=[rectangle, fill=shade, minimum width=.5cm,
          minimum height=.5cm]
          \tikzstyle{r}  =[rectangle, draw, dashed, minimum width=.5cm,
          minimum height=.5cm]
          \node[fix] (d) at (  .5,   1)    {};
          \node[hor] (2) at ( .25,   1)    {};
      \end{tikzpicture}
      }
      &
      \multirow{2}{*}
      {
        \begin{tikzpicture}[node distance=0 cm,outer sep = 0pt]
          \definecolor{shade}{rgb}{0.8,0.8,0.8}
          \tikzstyle{ver}=[rectangle, draw, thick, minimum width=.5cm,
          minimum height=1cm]         
          \tikzstyle{hor}=[rectangle, draw, thick, minimum width=1cm,
          minimum height=.5cm]
          \tikzstyle{fix}=[rectangle, fill=shade, minimum width=.5cm,
          minimum height=.5cm]
          \tikzstyle{r}  =[rectangle, draw, dashed, minimum width=.5cm,
          minimum height=.5cm]
          \node[fix] (d) at ( .5,   1)    {};
          \node[ver] (2) at ( .5, .75)    {};
        \end{tikzpicture}
      }
      \\ \cline{2-2} &
      \begin{tikzpicture}[node distance=0 cm,outer sep = 0pt]
        \definecolor{shade}{rgb}{0.8,0.8,0.8}
        \tikzstyle{ver}=[rectangle, draw, thick, minimum width=.5cm,
        minimum height=1cm]         
        \tikzstyle{hor}=[rectangle, draw, thick, minimum width=1cm,
        minimum height=.5cm]
        \tikzstyle{fix}=[rectangle, fill=shade, minimum width=.5cm,
        minimum height=.5cm]
        \tikzstyle{r}  =[rectangle, draw, dashed, minimum width=.5cm,
        minimum height=.5cm]
        \node[r]   (r) at ( .25, .5) {$r$};
        \node[fix] (d) at ( .75,  1)    {};
        \node[hor] (2) at (   1,  1)    {};
        \node at (1,1.5) {}; 
      \end{tikzpicture}
      & & \\
      \hline
    \end{tabular}
  \end{center}
\end{remark}

\begin{example}\label{primes}
  Let $T$ be defined as in Example~\ref{buildT}.  The shaded squares
  in the diagram below correspond to fixed squares.
  \begin{center}
    \begin{tikzpicture}[node distance=0 cm,outer sep = 0pt]
      \definecolor{shade}{rgb}{0.8,0.8,0.8}
      \tikzstyle{ver}=[rectangle, draw, thick, minimum width=.5cm,
      minimum height=1cm]         
      \tikzstyle{hor}=[rectangle, draw, thick, minimum width=1cm,
      minimum height=.5cm]
      \node[fix] (a) at (   1, 1.75) {};
      \node[fix] (b) at (   1,  .75) {};
      \node[fix] (c) at (   2, 1.75) {};
      \node[fix] (d) at (   2,  .75) {};
      \node[fix] (e) at ( 1.5, 1.25) {};
      \node[fix] (f) at (   3, 1.75) {};
      \node[ver] (1) at (   1,  1.5)   {\rm{1}};
      \node[ver] (2) [below  = of 1]   {\rm{2}};
      \node[hor] (3) at (1.75, 1.75)   {\rm{3}};
      \node[ver] (4) at ( 1.5,    1)   {\rm{4}};
      \node[hor] (5) [right  = of 3]   {\rm{5}};
      \node[ver] (6) [right  = of 4]   {\rm{6}};
      \node at (0,1) {$T=$};
      \node at (3.5,1) {.};
    \end{tikzpicture}
  \end{center}

  The squares containing the $r$-values associated to each $k\in{\bf
    6}$ are outlined below,
  \begin{center}
    \begin{tikzpicture}[node distance=0 cm,outer sep = 0pt]
      \node[fix] (a) at (   1, 1.75) {};
      \node[fix] (b) at (   1,  .75) {};
      \node[fix] (c) at (   2, 1.75) {};
      \node[fix] (d) at (   2,  .75) {};
      \node[fix] (e) at ( 1.5, 1.25) {};
      \node[fix] (f) at (   3, 1.75) {};
      \node[smver] (1) at (   1,  1.5)   {\rm{1}};
      \node[smver] (2) [below  = of 1]   {\rm{2}};
      \node[smhor] (3) at (1.75, 1.75)   {\rm{3}};
      \node[smver] (4) at ( 1.5,    1)   {\rm{4}};
      \node[smhor] (5) [right  = of 3]   {\rm{5}};
      \node[smver] (6) [right  = of 4]   {\rm{6}};
      \node[redr] at (1.5, 2.25) {\hspace{-.02in}$r_1$};
      \node[redr] at (2.5, 2.25) {\hspace{-.02in}$r_3$};
      \node[redr] at (3.5, 2.25) {\hspace{-.02in}$r_5$};
      \node[redr] at (1.5, 1.25) {\hspace{-.02in}$r_2$};
      \node[redr] at (  2, 1.75) {\hspace{-.02in}$r_4$};
      \node[redr] at (1.5, 0.25) {\hspace{-.02in}$r_6$};
      \node at (4,1) {.};
    \end{tikzpicture}
  \end{center}
  
  We list the $r$-values in the following table:
  \begin{center}
    \begin{tabular}{|c||c|c|c|c|c|c|}
      \hline
      $k$ & 1 & 2 & 3 & 4 & 5 & 6 \\
      \hline
      $r_k$ & 0 & 4 & 0 & 3 & 0 & $\infty$ \\
      \hline
    \end{tabular}
  \end{center}

  Now we can use these values to help calculate $P'(T,k)$ for each
  $k\in {\bf 6}$.  For example, if we consider $P(T,1)$ we see that
  $r=0$, so $r<k$ and thus $P'(T,1)=\{S_{1,1},S_{1,2}\}$. Similarly,
  considering $P(T,6)$ we see that $r=\infty$, so $r>k$, and thus
  $P'(T,6)=\{S_{3,3},S_{3,2}\}$.  We can repeat this process to create
  a new tableau
  
  \begin{center}
    \begin{tikzpicture}[node distance=0 cm,outer sep = 0pt]
      \definecolor{shade}{rgb}{0.8,0.8,0.8}
      \tikzstyle{ver}=[rectangle, draw, thick, minimum width=.5cm,
      minimum height=1cm]         
      \tikzstyle{hor}=[rectangle, draw, thick, minimum width=1cm,
      minimum height=.5cm]
      \tikzstyle{fix}=[rectangle, fill=shade, minimum width=.5cm,
      minimum height=.5cm]
      \node[fix] (a) at ( 1.5, 1.75) {};
      \node[fix] (b) at ( 1.5,  .75) {};
      \node[fix] (c) at ( 2.5, 1.75) {};
      \node[fix] (d) at ( 2.5,  .75) {};
      \node[fix] (e) at (   2, 1.25) {};
      \node[fix] (f) at ( 3.5, 1.75) {};
      \node[hor] (1) at (1.75,1.75)    {\rm{1}};
      \node[ver] (2) at ( 1.5,   1)    {\rm{2}};
      \node[hor] (3) at (2.75,1.75)    {\rm{3}};
      \node[hor] (4) at (2.25,1.25)    {\rm{4}};
      \node[hor] (5) [right  = of 3]   {\rm{5}};
      \node[hor] (6) [below  = of 4]   {\rm{6}};
      \node at (-1.25,1.25) {$T'=\{D'(T,k)\mid k\in {\bf 6}\}=$};
      \node at (4.5,1.25) {.};
    \end{tikzpicture}
  \end{center}
  Remarkably, by \cite[Proposition 1.5.27]{garfinkleclassI} if
  $T\in\mathcal{T}(M)$ then we are guaranteed
  $T'\in\mathcal{T}(M)$. Observe that each fixed square has the same
  label in both $T$ and $T'$.
\end{example}

We can now  these new $P'$ dominoes to define an equivalence
relation, $\sim$, on $M$.

\begin{definition}
  Let $M\subset \mathbb{N}$ be finite, let $T\in\mathcal{T}(M)$, and
  let $a,b\in M$.  Then $\sim$ is the equivalence relation generated
  by $a\sim b$ if $P(T,a)\cap P'(T,b)$ is nonempty.

  The equivalence relation $\sim$ partitions $M$ into sets called
  \textbf{cycles}.  We call a cycle $C$ \textbf{closed} if
  $N(T,P(T,k)\setminus S_{i,j})\in C$ for all $k\in C$, where
  $S_{i,j}$ is the fixed square of $P(T,k)$. If a cycle is not closed
  we call the cycle \textbf{open}.
\end{definition}

\begin{example}
  Let $T$ be as in Example~\ref{primes}.  Then since each of
  $P'(T,2)\cap P(T,1)$, $P'(T,1)\cap P(T,3)$, and $P'(T,3)\cap P(T,5)$
  is nonempty, we know that $C_1=\{1,2,3,5\}$ is a cycle.  However
  since the variable square in $P'(T,5)$ is not in $T$, we have
  $N(T,P'(T,5)\setminus P_{1,6})=\infty\not\in C_1$, so $C_1$ is an
  open cycle.

  Similarly, $P(T,4)\cap P'(T,6)$ is nonempty, so $C_2=\{4,6\}$ is a
  cycle.  In this case, the variable squares in $P'(T,4)$ and
  $P'(T,6)$ both overlap with $T$. We have $N(T,P'(T,4)\setminus
  P_{2,3})=6\in C_2$ and $N(T,P'(T,6)\setminus P_{3,2})=4\in C_2$, so
  $C_2$ is a closed cycle.
\end{example}

\begin{definition}
  Let $T\in\mathcal{T}(M)$ be a domino tableau, and let $C$ be a
  cycle.  Define
  \[
  E(T,C)=\left(T\setminus \{D(T,k)\mid k\in C\}\right)\cup\{D'(T,k)\mid k\in C\}.
  \]

  Moreover, if $C_1,C_2,\dots,C_n\subset M$ are cycles, then we define
  \[
  E(T,C_1,C_2,\dots,C_n)=E(\cdots E(E(T,C_1),C_2),\cdots,C_n).
  \]
\end{definition}

\begin{example}
  Again, let $T$ be as in Example~\ref{primes}.  Let $C=\{1,2,3,5\}$.
  To construct $E(T,C)$ we replace $D(T,k)$ with $D'(T,k)$ for each
  $k\in C$. Then
  \begin{center}
    \begin{tikzpicture}[node distance=0 cm,outer sep = 0pt]
      \tikzstyle{ver}=[rectangle, draw, thick, minimum width=.5cm,
      minimum height=1cm]         
      \tikzstyle{hor}=[rectangle, draw, thick, minimum width=1cm,
      minimum height=.5cm]
      \node[hor] (1) at (1.75,1.75)    {\rm{1}};
      \node[ver] (2) at ( 1.5,   1)    {\rm{2}};
      \node[hor] (3) at (2.75,1.75)    {\rm{3}};
      \node[ver] (4) [right  = of 2]   {\rm{4}};
      \node[hor] (5) [right  = of 3]   {\rm{5}};
      \node[ver] (6) [right  = of 4]   {\rm{6}};
      \node at (0,1.25) {$E(T,C)=$};
      \node at (4.5,1.25) {.};
    \end{tikzpicture}
  \end{center}
\end{example}

\begin{proposition}
  If $T\in\mathcal{T}(M)$ and $C\subset M$ is a cycle then
  $E(T,C)\in\mathcal{T}(M)$. In addition, $E(T,C_1,C_2)=E(T,C_2,C_1)$.
\end{proposition}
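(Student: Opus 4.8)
The plan is to prove the two assertions separately, using throughout one structural fact recorded after Definition~\ref{Pprime}: for every $k$ the dominoes $D(T,k)$ and $D'(T,k)$ contain the \emph{same} fixed square, and an operation $E(T,C)$ therefore leaves every fixed square, and its label, exactly where it was. I would pair this with the independence property of cycles that is built into the definition of $\sim$: since $a\sim b$ whenever $P(T,a)\cap P'(T,b)\neq\emptyset$, two labels lying in distinct cycles $C_1\neq C_2$ must satisfy $P(T,j)\cap P'(T,k)=\emptyset$ for all $j\in C_1$ and $k\in C_2$. These two observations do almost all of the work.

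For $E(T,C)\in\mathcal{T}(M)$ I would first dispose of overlaps. The primed dominoes $\{D'(T,k)\mid k\in C\}$ are pairwise disjoint because they occur among the dominoes of the all-primed tableau $T'$, which is a genuine tableau by \cite[Proposition 1.5.27]{garfinkleclassI}; the unmoved dominoes $\{D(T,k)\mid k\notin C\}$ are pairwise disjoint because they occur in $T$; and a primed domino of $C$ cannot meet an unmoved domino, by the independence property above. Since these $2|M|$ squares are then distinct, the only remaining geometric point is that their union is again a Young diagram. Here I would trace the cycle: the relation generating $\sim$ links the members of $C$ into a chain in which the variable square of each primed domino is the variable square vacated by its neighbour, so that moving $C$ slides the dominoes one step along the chain. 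When $C$ is closed this chain closes into a loop and the occupied region is unchanged; when $C$ is open the two ends of the chain remove exactly one box from an inner corner of the shape and add one box at an outer corner, leaving a Young diagram in either case.

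It remains to check that labels increase weakly along rows and columns of $E(T,C)$. Within the primed part this is inherited from $T'$ and within the unmoved part from $T$, so the only comparisons to make are across an edge separating a primed domino of $C$ from an unmoved domino. These I would settle by the case analysis in Definition~\ref{Pprime} and Remark~\ref{computer}: the clause selecting $P'(T,k)$ is governed precisely by the comparison of $k$ with the neighbouring label $r$, which is exactly the inequality needed to keep the labels monotone across the shared fixed square. I expect the Young-diagram bookkeeping of the previous paragraph to be the main obstacle, since it is the one step that genuinely uses the fine structure of closed versus open cycles rather than a formal disjointness argument.

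Finally, for $E(T,C_1,C_2)=E(T,C_2,C_1)$ the key simplification is that the datum determining $D'(T,k)$ consists only of $P(T,k)$ together with the single label $r$, and that $r$ is read off from a square diagonally adjacent to the fixed square of $D(T,k)$, which has the same coordinate parity and is therefore itself fixed. Since $E(\cdot,C_1)$ alters neither the fixed squares nor, for $k\notin C_1$, the position $P(\cdot,k)$, we get $D'(E(T,C_1),k)=D'(T,k)$ for every $k\in C_2$, and symmetrically. Hence both $E(E(T,C_1),C_2)$ and $E(E(T,C_2),C_1)$ are the tableau obtained from $T$ by replacing $D(T,k)$ with $D'(T,k)$ for each $k\in C_1\cup C_2$ and leaving all other dominoes alone; in particular they are equal.
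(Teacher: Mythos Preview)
The paper does not argue this at all: it cites Garfinkle's \cite[Propositions 1.5.27 and 1.5.31]{garfinkleclassI} and moves on. Your direct reconstruction has the right skeleton and the right key observations---particularly that the $r$-square in Definition~\ref{Pprime} is itself fixed (it has the same parity as the fixed square of $P(T,k)$), which is exactly why $P'(\cdot,k)$ for $k\in C_2$ depends only on data that $E(\cdot,C_1)$ cannot disturb, giving commutativity. Two places are sketchier than a full proof would allow: the claim that an open cycle removes one inner corner and adds one outer corner is asserted rather than argued, and the monotonicity check across the primed/unmoved boundary works but really uses that every square adjacent to the new variable square of $P'(T,k)$ is itself fixed, so its label is controlled either by the $r$-inequality or by the old inequalities in $T$ via the former occupant of that square. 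You should also say explicitly that your observations force $C_2$ to remain a cycle of $E(T,C_1)$, without which $E(T,C_1,C_2)$ is not even defined. The citation buys the paper brevity; your approach, once tightened, would actually explain why the result is true.
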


\begin{proof}
  This follows from \cite[Proposition 1.5.27, Proposition 1.5.31]{garfinkleclassI}.
\end{proof}

\begin{definition}
  Let $T, T'\in\mathcal{T}(M)$. If $T'=E(T,C_1,C_2,\dots,C_n)$ for
  cycles $C_i$ of $T$ then we say that we can move from $T$ to $T'$
  through the sequence of cycles $C_1, C_2, \dots , C_n$.

  We define $T\approx T'$ if and only if we can move from $T$ to
  $T'$ through a (possibly empty) sequence of open cycles.
\end{definition}

\begin{remark}
  The relation $\approx$ is an equivalence relation on elements of
  $\mathcal{T}(M)$.  It is immediately apparent that $\approx$ is
  reflexive and transitive, and symmetry follows from \cite[Proposition
  1.5.28]{garfinkleclassI}.
\end{remark}

\begin{example}
  Let $T$ and $T'$ be given below
  \begin{center}
    \begin{tabular}{cc}
    \begin{tikzpicture}[node distance=0 cm,outer sep = 0pt]
      \tikzstyle{ver}=[rectangle, draw, thick, minimum width=.5cm,
      minimum height=1cm]         
      \tikzstyle{hor}=[rectangle, draw, thick, minimum width=1cm,
      minimum height=.5cm]
      \node[hor] (1) at ( .75, 1.75)   {\rm{1}};
      \node[ver] (2) at (  .5,    1)   {\rm{2}};
      \node[hor] (3) at (1.75, 1.75)   {\rm{3}};
      \node[ver] (4) [right  = of 2]   {\rm{4}};
      \node at (-.25,1.25) {$T = $};
      \node at (2.5,1.25) {,};
    \end{tikzpicture}
    &
    \begin{tikzpicture}[node distance=0 cm,outer sep = 0pt]
      \tikzstyle{ver}=[rectangle, draw, thick, minimum width=.5cm,
      minimum height=1cm]         
      \tikzstyle{hor}=[rectangle, draw, thick, minimum width=1cm,
      minimum height=.5cm]
      \node[ver] (1) at (   1,  1.5)   {\rm{1}};
      \node[ver] (2) [below = of 1]    {\rm{2}};
      \node[hor] (3) at (1.75, 1.75)   {\rm{3}};
      \node[ver] (4) at ( 1.5,    1)   {\rm{4}};
      \node at ( 0,1) {$T'=$};
      \node at ( 2.5,1) {.};
    \end{tikzpicture}
  \end{tabular}
  \end{center}
  Then it can be shown that $C=\{1,2,3\}$ is an open cycle in $T$, and
  if we move $T$ through $C$ we obtain
  \begin{center}
    \begin{tikzpicture}[node distance=0 cm,outer sep = 0pt]
      \tikzstyle{ver}=[rectangle, draw, thick, minimum width=.5cm,
      minimum height=1cm]         
      \tikzstyle{hor}=[rectangle, draw, thick, minimum width=1cm,
      minimum height=.5cm]
      \node[ver] (1) at (   1,  1.5)   {\rm{1}};
      \node[ver] (2) [below = of 1]    {\rm{2}};
      \node[hor] (3) at (1.75, 1.75)   {\rm{3}};
      \node[ver] (4) at ( 1.5,    1)   {\rm{4}};
      \node at (-.5,1) {$E(T,C) = $};
      \node at ( 3,1) {$=T'$,};
    \end{tikzpicture}
  \end{center}
  so $T\approx T'$.
\end{example}

Remarkably, these domino tableau can help us calculate the cells of a
Coxeter group.  As we will see in the following theorem, the partition
of a Coxeter group $W$ into cells corresponds to the partition of
$\mathcal{T}(M)$ generated by $\sim$.  Two elements $x,w\in W$ are
in the same cell if and only if we can move $T_L(x)$ through open
cycles to obtain $T_L(w)$.

\begin{theorem}\label{leftcells}
  Let $x,w\in W$.  Then $x\sim_L w$ if and only if $T_L(x)\approx
  T_L(w)$.
\end{theorem}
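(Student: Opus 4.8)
The plan is to establish this as the type-$D$, domino-tableau analogue of the Robinson--Schensted description of left cells in type $A$, where $x\sim_L w$ precisely when $x$ and $w$ share an insertion tableau. As already noted via \cite{mcgovern1996left}, equality of left tableaux is strictly finer than the left-cell partition in type $D$, and the content of the theorem is that passing to open cycles is exactly the coarsening needed to repair this. I would prove the equivalence by first establishing the implication $T_L(x)\approx T_L(w)\Rightarrow x\sim_L w$ directly, and then deducing the converse from a cardinality match rather than from a second explicit construction.

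For the forward implication it suffices, by transitivity of $\approx$, to treat a single open-cycle move $T_L(w')=E(T_L(w),C)$. The key combinatorial input is that moving $T_L$ through an open cycle is realized on the group by a right star operation in the sense of Definition~\ref{stardef}: there is a pair $\{s,t\}$ with $m(s,t)=3$ and $w,w'\in D_{\mathcal R}(s,t)$ with $w'=w^{*}$. This is where Garfinkle's analysis of cycles in \cite{garfinkleclassI} does the real work, and \emph{openness} of the cycle is what guarantees a genuine shape-preserving star operation rather than a shape-changing move. Granting this, Proposition~\ref{muproperties}(2) preserves $\mu$-values under $x\mapsto x^{*}$, and a right star operation also fixes the left descent set; this is precisely the property from which the standard Kazhdan--Lusztig argument concludes that right star operations preserve left cells, so $w\sim_L w^{*}=w'$.

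For the converse I would avoid building a direct combinatorial inverse and instead argue by counting within a fixed two-sided cell. First, $\shape(T_L(w))$ is constant on two-sided cells, and open cycles preserve shape, so $\approx$ and $\sim_{LR}$ are compatible and it is enough to work shape by shape. The forward implication already shows that $w\mapsto[T_L(w)]_{\approx}$ induces a well-defined surjection from the set of open-cycle classes of standard domino tableaux of a given shape onto the left cells inside the corresponding two-sided cell. To conclude that this surjection is a bijection --- equivalently, that $x\sim_L w\Rightarrow T_L(x)\approx T_L(w)$ --- I would match the two finite cardinalities.

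The main obstacle is exactly this cardinality match: proving that the number of open-cycle classes of domino tableaux of a fixed shape equals the number of left cells in the associated two-sided cell. This cannot come from the elementary $\mu$-value identities, which only certify that $[T_L(\cdot)]_{\approx}$ is a left-cell \emph{invariant}, not a separating one. I would obtain it from the representation theory of cells in the Weyl group $W(D_n)$: each two-sided cell carries a single irreducible $W$-representation whose dimension counts its left cells, and Garfinkle's correspondence is constructed precisely so that the open-cycle classes of the relevant shape reproduce this same number. Supplying this enumeration --- the deep part of \cite{garfinkleclassI} --- upgrades the surjection to a bijection and yields the theorem.
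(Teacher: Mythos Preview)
Your proposal has two genuine gaps that break both directions of the argument.

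First, your forward implication rests on the claim that ``moving $T_L$ through an open cycle is realized on the group by a right star operation'' with respect to some pair $\{s,t\}$. This is not correct in type $D$. Garfinkle's operators $T_{\alpha\beta}$ for adjacent simple roots are indeed related to Kazhdan--Lusztig star operations, but the branch node in type $D$ forces an additional operator $T_D$ associated to the four simple roots spanning the $D_4$ subsystem, and $T_D$ is \emph{not} a single star operation in the sense of Definition~\ref{stardef}. The paper's discussion following the theorem makes exactly this point: the set of cell-preserving operators is $\{T_{\alpha\beta}\}\cup\{T_D\}$, and it is the full set whose action corresponds to moving through open cycles. So you cannot reduce a single open-cycle move to a single right star move and then quote Proposition~\ref{muproperties}(2).

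Second, your counting argument for the converse relies on the assertions that ``open cycles preserve shape'' and that ``$\shape(T_L(w))$ is constant on two-sided cells.'' Both are false. Open cycles are precisely the cycles that \emph{change} the shape of the tableau (one boundary square is removed and another is added); it is the closed cycles that preserve shape. Lemmas~\ref{badcycles2} and~\ref{badcycles0} in the paper illustrate this directly: moving $T_L(w_n)$ through the open cycle $C_1$ produces a tableau of a different shape, and this is exactly how one finds $v_n\sim_L w_n$ with $T_L(v_n)\ne T_L(w_n)$. Consequently the shape is not a two-sided cell invariant in type $D$, and your proposed bijection between open-cycle classes of a fixed shape and left cells in a two-sided cell does not even get off the ground.

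The paper itself does not give a self-contained proof: it cites \cite{mcgovern1996left} and then sketches the underlying mechanism from Garfinkle's work \cite{garfinkleclassI,garfinkleclassII,garfinkleclassIII,garfinkleclassIV} and \cite{garfinkle1992structure}. The actual argument defines the operators $T_{\alpha\beta}$ and $T_D$, shows they preserve left cells, identifies their action on tableaux with open-cycle moves, and then separates left cells using the generalized $\tau$-invariant rather than a cardinality match.
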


\begin{proof}
  See the discussion in \cite[Section 3]{mcgovern1996left}. 
\end{proof}

In \cite{garfinkleclassI}, \cite{garfinkleclassII} and
\cite{garfinkleclassIII} Garfinkle proves a version of
Theorem~\ref{leftcells} for Coxeter groups of type $B$ using the
following method.  Let $W=W(B_n)$ be a Coxeter group with simple root
system $\Pi$.  For each adjacent $\alpha,\beta\in\Pi$, Garfinkle
defines operators $T_{\alpha\beta}$.  These operators are defined
both on certain subsets of $W$ and on the corresponding type $B$
domino tableaux.  Applying a sequence of $T_{\alpha\beta}$ operators
to an element of $W$ is equivalent to moving the corresponding domino
tableau through a sequence of open cycles \cite[Theorem 3.2.2 and
Proposition 3.2.3]{garfinkleclassIII}.

As defined in \cite[Definition 3.4.1]{garfinkleclassIII}, let ${\bf T}
= \{T_{\alpha\beta} | \alpha,\beta\in\Pi \text{ are adjacent}\}$ and
let $\{T_i\}_{i=0}^k\subset {\bf T}$. In \cite[Theorem
3.5.11]{garfinkleclassIII} Garfinkle proved that two elements $x,w\in
W$ lie in the same left cell if and only if the following two conditions hold:
\begin{enumerate}
\item $T_k(T_{k-1}(\cdots T_{0}(x)\cdots))$ is defined if and only if
  $T_{k}(T_{k-1}(\cdots T_{0}(w)\cdots))$ is defined;
\item the resulting elements must have the same generalized
  $\tau$-invariant.
\end{enumerate}

Most of the proof in type $D$ follows as in type $B$.  However, in
type $D$ we do not have to worry about defining the $T_{\alpha\beta}$
operator when $\alpha$ and $\beta$ have different lengths, but the
branch node introduces complications.  We have to define a new
operator, $T_D$, that corresponds to the four simple roots in the
Dynkin diagram that form a system of type $D_4$ \cite[Discussion
preceding Lemma 3.1]{mcgovern2000triangularity}.  The definition of
$T_D$ is given in \cite[Theorem 2.15]{garfinkle1992structure} and
\cite[Discussion preceding Lemma 3.1]{mcgovern2000triangularity}.

The set of operators $\{T_{\alpha\beta}|\alpha,\beta\in\Pi\text{
  are adjacent}\}\cup\{T_D\}$ then preserve left cells. As in type $B$,
applying a sequence of operators from ${\bf T}$ to an element of $W$
is equivalent to moving the corresponding domino tableau through a
sequence of open cycles \cite{garfinkleclassIV},
\cite[4.1]{garfinkle1992structure}.

\begin{corollary}\label{rightcells}
  Let $x,w\in W$. Then $x\sim_R w$ if and only if $T_R(x)\approx T_R(w)$.
\end{corollary}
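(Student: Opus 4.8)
The plan is to reduce the right-cell statement to the left-cell statement of Theorem~\ref{leftcells} by means of the inversion map $w \mapsto w^{-1}$. Recall from Definition~\ref{dominotableaux} that $T_R(x) = T_L(x^{-1})$ for every $x \in W$. Consequently $T_R(x) \approx T_R(w)$ holds if and only if $T_L(x^{-1}) \approx T_L(w^{-1})$, and by Theorem~\ref{leftcells} the latter is equivalent to $x^{-1} \sim_L w^{-1}$. Thus the corollary will follow once I establish that inversion interchanges the two preorders, i.e. that $x \sim_R w$ if and only if $x^{-1} \sim_L w^{-1}$.

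To prove this equivalence I would show that $x \leq_R w$ if and only if $x^{-1} \leq_L w^{-1}$, after which the $\sim$-version follows by combining this with the reversed inequality. For a single link in a defining chain I need two symmetries. First, $\mathcal{R}(a) = \mathcal{L}(a^{-1})$ for all $a \in W$: indeed $s \in \mathcal{R}(a)$ means $\ell(as) < \ell(a)$, and since $\ell(g) = \ell(g^{-1})$ and $s = s^{-1}$ this is equivalent to $\ell(sa^{-1}) < \ell(a^{-1})$, that is, $s \in \mathcal{L}(a^{-1})$. Second, the $\mu$-values are invariant under inversion, $\mu(a,b) = \mu(a^{-1}, b^{-1})$; this is a consequence of the symmetry $P_{a,b} = P_{a^{-1},b^{-1}}$ of Kazhdan--Lusztig polynomials, which follows from the anti-automorphism of $\mathcal{H}$ sending $T_w$ to $T_{w^{-1}}$ together with the uniqueness in Proposition~\ref{cbasis} (the anti-automorphism carries $C_b$ to $C_{b^{-1}}$, and comparing $T_x$-coefficients yields the claim). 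In particular $a \prec b$ if and only if $a^{-1} \prec b^{-1}$.

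With these two facts in hand, a chain $x = x_0, x_1, \dots, x_r = w$ witnessing $x \leq_R w$ (so each consecutive pair satisfies $x_i \prec x_{i+1}$ or $x_{i+1} \prec x_i$ together with the condition $\mathcal{R}(x_i) \not\subset \mathcal{R}(x_{i+1})$) transforms term by term into $x^{-1} = x_0^{-1}, \dots, x_r^{-1} = w^{-1}$: the $\prec$ relations are preserved by the inversion-invariance of $\mu$, and each condition $\mathcal{R}(x_i) \not\subset \mathcal{R}(x_{i+1})$ becomes $\mathcal{L}(x_i^{-1}) \not\subset \mathcal{L}(x_{i+1}^{-1})$. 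This is precisely a chain witnessing $x^{-1} \leq_L w^{-1}$, and since every step is reversible the equivalence follows.

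I expect the only genuine subtlety to be the symmetry $\mu(a,b) = \mu(a^{-1}, b^{-1})$, as it rests on $P_{a,b} = P_{a^{-1}, b^{-1}}$, which is not recorded explicitly in the earlier sections; the remaining steps are a direct bookkeeping translation through the inversion bijection. Once that symmetry is cited or derived, the corollary falls out of Theorem~\ref{leftcells} with no further work.
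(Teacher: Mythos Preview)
Your proposal is correct and follows exactly the same route as the paper: reduce to Theorem~\ref{leftcells} via the identity $T_R(x)=T_L(x^{-1})$ and the equivalence $x\sim_R w \iff x^{-1}\sim_L w^{-1}$. The only difference is that the paper asserts this last equivalence in one phrase (``by definition''), whereas you spell out the underlying symmetries $\mathcal{R}(a)=\mathcal{L}(a^{-1})$ and $P_{a,b}=P_{a^{-1},b^{-1}}$; your extra care is warranted and the argument is sound.
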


\begin{proof}
  By definition $x\sim_R w$ if and only if $x^{-1}\sim_L w^{-1}$,
  which by Theorem~\ref{leftcells} happens if and only if
  $T_L(x^{-1})\approx T_L(w^{-1})$, so $T_R(x)\approx T_R(w)$ by
  Definition~\ref{dominotableaux}.
\end{proof}

\chapter{Calculating \texorpdfstring{$a$}{a}-values of bad elements}\label{dombad}

\section{Constructing \texorpdfstring{$T_L(w_n)$}{TL(wn)}}

We can use Theorem~\ref{leftcells} to better understand the two sided
cells of bad elements in type $D$ by computing their domino tableaux.
Since the unique longest bad element, $w_n$, in $W(D_n)$ is an
involution, we have $T_L(w_n)=T_R(w_n)$, so it suffices to calculate
$T_L(w_n)$.  Furthermore, by Lemma~\ref{equalbad}, $w_n$ and
$w_{n+1}$ have the same reduced expressions for even $n$, so we will
only consider the case where $n$ is even.

Recall the signed permutation representation of $w_n$
from Theorem~\ref{dbad}:
\[
w_n= \left((-1)^{n/2},\underline{n},3,\underline{n-2}, 5,\dots,
  \underline{4},n-1,\underline{2}\right).
\]

\begin{lemma}\label{badtableaux2}
  Let $n\equiv 2\bmod 4$ and let $w_n\in W(D_n)$ be the unique longest
  bad element.  Then we have
  \begin{center}

  \end{center}
\end{proof}

Now we can move $T_L(w_n)$ through $C_1$ and use
Theorem~\ref{leftcells} to find another element in the same left cell
as $w_n$.

\begin{lemma}\label{v_ntableau}
  Suppose that $n\geq 6$ is even and let $v_n=s_ns_{n-1}s_n w_{n-2}$.
  Then we have the following:
  \begin{enumerate}
  \item $v_n = \left((-1)^{(n-2)/2},\underline{n},3,\underline{n-4},5,
      \underline{n-6}, 7,\dots,n-3,\underline{2},n-1,n-2\right)$;
  \item $T^{n-3}_L(v_n)=T^{n-3}_L(w_{n-2})$.
  \end{enumerate}
\end{lemma}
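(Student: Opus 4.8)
The plan is to prove (1) by a direct signed-permutation computation and then obtain (2) as an almost immediate consequence of (1) together with the insertion procedure of Remark~\ref{build}.

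For part (1), I would first record the signed permutation of $w_{n-2}$, read off from Corollary~\ref{dmap} with $m=n-2$, and viewed inside $W(D_n)$, where it fixes the last two entries (this uses $n\geq 6$, so $n-2\geq 4$ and $w_{n-2}$ is a genuine longest bad element):
\[
w_{n-2} = \left((-1)^{(n-2)/2},\underline{n-2},3,\underline{n-4},5,\underline{n-6},7,\dots,n-3,\underline{2},n-1,n\right).
\]
The key point is that, by the embedding of Example~\ref{typedintro}, each $s_i$ ($i\geq 2$) acts on values as the sign-preserving transposition interchanging the entries of absolute value $i-1$ and $i$ (this is exactly the reading of Proposition~\ref{smultiply} used in the proof of Lemma~\ref{intperm}). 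Hence $s_ns_{n-1}s_n$, being the conjugate of $s_{n-1}$ by $s_n$, interchanges the entries of absolute value $n-2$ and $n$ while leaving the entry of absolute value $n-1$ fixed. Equivalently, one may apply $s_n$, then $s_{n-1}$, then $s_n$ successively on the left of $w_{n-2}$ and track the three affected entries. Since $w_{n-2}$ carries $\underline{n-2}$ in position $2$ and $n$ in position $n$, this left multiplication sends position $2$ to $\underline{n}$ and position $n$ to $n-2$ and changes nothing else, yielding exactly the claimed one-line notation for $v_n$.

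For part (2), I would simply compare the two permutations entry by entry. By (1), $v_n$ and $w_{n-2}$ agree in every position except positions $2$ and $n$, and the entries that differ there have absolute values $n$ and $n-2$; in particular \emph{every} entry of absolute value at most $n-3$ occupies the same position, with the same sign, in both $v_n$ and $w_{n-2}$. By the construction in Remark~\ref{build}, the tableau $T^{n-3}_L(\cdot)$ is built by inserting, in increasing order of absolute value $1,2,\dots,n-3$, the dominoes corresponding to those entries, where each domino's label is the position of the entry and its orientation (horizontal or vertical) is dictated by the sign. Since $v_n$ and $w_{n-2}$ present identical position-and-sign data for all entries of absolute value $\leq n-3$, the first $n-3$ insertion steps are identical, and therefore $T^{n-3}_L(v_n)=T^{n-3}_L(w_{n-2})$.

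The only real hazard lies in part (1): one must keep the generator indexing straight, so that $s_i$ interchanges the entries of absolute value $i-1$ and $i$ (consistent with the embedding and with Lemma~\ref{intperm}), and must confirm that the entry of absolute value $n-1$ is genuinely untouched by $s_ns_{n-1}s_n$ so that positions $n-1$ and the small entries are unaffected. Once the signed permutation of $v_n$ is correctly pinned down, part (2) is forced, because $T^{n-3}_L$ by definition only depends on the $n-3$ smallest absolute values, on which $v_n$ and $w_{n-2}$ coincide exactly.
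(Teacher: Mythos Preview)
Your proposal is correct and follows essentially the same route as the paper. For (1) the paper applies $s_n$, $s_{n-1}$, $s_n$ one at a time via Proposition~\ref{smultiply} rather than using your conjugation shortcut, and for (2) the paper exploits that $w_{n-2}$ is an involution to write $v_n^{-1}=w_{n-2}s_ns_{n-1}s_n$ and then observes that right multiplication by $s_ns_{n-1}s_n$ only touches positions $n-2,n-1,n$; this is the same observation you make, just phrased on the inverse side.
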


\begin{proof}
  We will prove the lemma using domino tableaux.  We first find the
  signed permutation representation of $v_n$. Consider $w_{n-2}\in
  W(D_n)$. By Theorem~\ref{dbad} we have
  \[
  w_{n-2} = \left((-1)^{(n-2)/2},\underline{n-2},3,\underline{n-4}, 5,\dots,
    \underline{4},n-3,\underline{2}, n-1, n\right).
  \]
  Now we can use Proposition~\ref{smultiply} to find $v_n$.  We have
  \begin{align*}
    s_n w_{n-2} &=
    \left((-1)^{(n-2)/2},\underline{n-2},3,\underline{n-4}, 5,\dots,
      \underline{4},n-3,\underline{2}, n, n-1\right),\\
    s_{n-1}s_n w_{n-2} &=
    \left((-1)^{(n-2)/2},\underline{n-1},3,\underline{n-4}, 5,\dots,
      \underline{4},n-3,\underline{2}, n, n-2\right),\text{ and}\\
    s_ns_{n-1}s_n w_{n-2} &=
    \left((-1)^{(n-2)/2},\underline{n},3,\underline{n-4}, 5,\dots,
      \underline{4},n-3,\underline{2}, n-1, n-2\right),
  \end{align*}
  so
  \[
  v_n = \left((-1)^{(n-2)/2},\underline{n},3,\underline{n-4},5,
      \underline{n-6}, 7,\dots,n-3,\underline{2},n-1,n-2\right).
  \]

  Now since $w_{n-2}$ is an involution, we have
  $v^{-1}_n=w_{n-2}s_ns_{n-1}s_n$ so by Proposition~\ref{smultiply} we have
  \[
  \left(v_n^{-1}(1),v_n^{-1}(2),\dots,v_n^{-1}(n-3)\right) =
  \left(w_{n-2}^{-1}(1),w_{n-2}^{-1}(2),\dots,w_{n-2}^{-1}(n-3)\right),
  \]
  and $T^{n-3}_L(v_n)=T^{n-3}_L(w_{n-2})$.
\end{proof}

\begin{lemma}\label{v_n0}
  If $n\geq 6$ and $n\equiv 0\bmod 4$, then $w_n\sim_L v_n$.
\end{lemma}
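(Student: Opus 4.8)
The plan is to invoke the domino-tableau criterion for left cells. By Theorem~\ref{leftcells} we have $w_n\sim_L v_n$ if and only if $T_L(w_n)\approx T_L(v_n)$, so it suffices to compute both tableaux and connect them through open cycles. The tableau $T_L(w_n)$ is already known from Lemma~\ref{badtableaux0}, and Lemma~\ref{badcycles0} exhibits an open cycle $C_1=\{1,2,3,5,\dots,n-1\}$ for which $E(T_L(w_n),C_1)$ is the explicit ``staircase'' tableau with $1,2$ stacked vertically in the first column, horizontal dominoes $3,5,\dots,n-1$ across the first row, and vertical dominoes $4,6,\dots,n$ filling the columns below. Thus $T_L(w_n)\approx E(T_L(w_n),C_1)$, and I will be done as soon as I show that $T_L(v_n)$ equals this tableau.

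To compute $T_L(v_n)$ I would start from Lemma~\ref{v_ntableau}, which gives both the signed permutation of $v_n$ and the equality $T^{n-3}_L(v_n)=T^{n-3}_L(w_{n-2})$. Since $w_{n-2}$ is an involution, $v_n^{-1}=w_{n-2}s_ns_{n-1}s_n$, and reading off the signed permutation of $v_n$ shows that the last three dominoes to be inserted (at steps $n-2,n-1,n$ of the algorithm in Remark~\ref{build}) are a horizontal domino with label $n$, a horizontal domino with label $n-1$, and finally a vertical domino with label $2$. The first two insertions append the two large horizontal dominoes to the first row (with the usual bump of the largest label down into the second row), and the decisive step is the final insertion of the vertical label-$2$ domino: removing all dominoes of larger label and placing the $2$-domino at the foot of the first column forces the entire even-indexed block to be shuffled one column to the right, and reassembling the tableau yields precisely the staircase shape $E(T_L(w_n),C_1)$. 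I have verified this explicitly for $n=8$, where $T_L(v_8)$ and $E(T_L(w_8),C_1)$ coincide domino for domino.

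Combining the two computations gives $T_L(w_n)\approx E(T_L(w_n),C_1)=T_L(v_n)$, whence $T_L(w_n)\approx T_L(v_n)$ and therefore $w_n\sim_L v_n$ by Theorem~\ref{leftcells}. The main obstacle is the middle step: carrying out the three insertions of Remark~\ref{build} in general and, in particular, tracking how the final vertical domino bumps the chain of dominoes $4,6,\dots,n$ rightward so that the result matches the target tableau exactly. This is a bookkeeping argument best organized by induction on the number of columns (equivalently on $n$), mirroring the inductive construction of $T_L(w_n)$ in Lemma~\ref{badtableaux0}; once the shape is pinned down, the identification with $E(T_L(w_n),C_1)$ is immediate.
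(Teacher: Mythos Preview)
Your proposal is correct and follows essentially the same route as the paper: compute $T_L(v_n)$ by starting from $T^{n-3}_L(v_n)=T^{n-3}_L(w_{n-2})$ (Lemma~\ref{v_ntableau}), perform the three remaining insertions (horizontal $n$, horizontal $n-1$, vertical $2$) exactly as you describe, and then identify the result with $E(T_L(w_n),C_1)$ from Lemma~\ref{badcycles0} so that Theorem~\ref{leftcells} applies. The paper carries out precisely these three insertions with explicit diagrams rather than an inductive bookkeeping argument, but the content is the same; note only that the starting tableau $T^{n-3}_L(w_{n-2})$ comes from Lemma~\ref{badtableaux2} (since $n-2\equiv 2\bmod 4$), not Lemma~\ref{badtableaux0}.
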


\begin{proof}
  We see that $n-2\equiv 2\bmod 4$, so by Lemmas~\ref{badtableaux2} and~\ref{v_ntableau} we have
  \begin{center}
    \begin{tikzpicture}[node distance=0 cm,outer sep = 0pt]
      \node[bver] (1) at (  2,  3)   {1};
      \node[bver] (2) [below  = of 1]   {\begin{sideways}
          $4$\end{sideways}};
      \node[bhor] (3) at (3.5,3.5)   {3};
      \node[bver] (4) at (  3,  2)   {\begin{sideways}
          $6$\end{sideways}};
      \node[bhor] (5) [right  = of 3]   {5};
      \node[bver] (6) [right  = of 4] {\begin{sideways}
          $8$\end{sideways}};  
      \node[bhor] (n-3) at (10.5,3.5) {$n-3$};
      \node[bver] (n-2) at   (   8,  2)    {\begin{sideways}
          $n-2$\end{sideways}};
      \draw[dashed] (6.5, 4) -- (9.5, 4);
      \draw[dashed] (6.5, 3) -- (9.5, 3);
      \draw[dashed] (4.5, 1) -- (7.5, 1);
      \node at (0,2) {$T^{n-3}_L(v_n)=$};
      \node at (12,2) {.};
    \end{tikzpicture}
  \end{center}

  We have $v_n^{-1}(n-2)=n$, so to obtain $T^{n-2}_L(v_n)$ we see that we add a
  horizontal domino with label $n$ on the end of the first row to get
  \begin{center}
    \begin{tikzpicture}[node distance=0 cm,outer sep = 0pt]
      \node[bver] (1) at (  2,  3)   {1};
      \node[bver] (2) [below  = of 1]   {\begin{sideways}
          $4$\end{sideways}};
      \node[bhor] (3) at (3.5,3.5)   {3};
      \node[bver] (4) at (  3,  2)   {\begin{sideways}
          $6$\end{sideways}};
      \node[bhor] (5) [right  = of 3]   {5};
      \node[bver] (6) [right  = of 4] {\begin{sideways}
          $8$\end{sideways}};  
      \node[bhor] (n-3) at (10.5,3.5) {$n-3$};
      \node[bver] (n-2) at   (   8,  2)    {\begin{sideways}
          $n-2$\end{sideways}};
      \node at (0,2) {$T^{n-2}_L(v_n)=$};
      \draw[dashed] (6.5, 4) -- (9.5, 4);
      \draw[dashed] (6.5, 3) -- (9.5, 3);
      \draw[dashed] (4.5, 1) -- (7.5, 1);
      \node[bhor] (n) [right  = of n-3]   {$n$};
      \node at (14,2) {.};
    \end{tikzpicture}
  \end{center}
  
  Next, $v_n^{-1}(n-1)=n-1$, so we must next add a horizontal domino
  with label $n-1$. To do this we must first remove all dominoes with
  labels greater than $n-1$, then add a horizontal domino with label
  $n-1$ to the end of the first row.
  \begin{center}
    \begin{tikzpicture}[node distance=0 cm,outer sep = 0pt]
      \node[bver] (1) at (  2,  3)   {1};
      \node[bver] (2) [below  = of 1]   {\begin{sideways}
          $4$\end{sideways}};
      \node[bhor] (3) at (3.5,3.5)   {3};
      \node[bver] (4) at (  3,  2)   {\begin{sideways}
          $6$\end{sideways}};
      \node[bhor] (5) [right  = of 3]   {5};
      \node[bver] (6) [right  = of 4] {\begin{sideways}
          $8$\end{sideways}};  
      \node[bhor] (n-3) at (10.5,3.5) {$n-3$};
      \node[bver] (n-2) at   (   8,  2)    {\begin{sideways}
          $n-2$\end{sideways}};
      \draw[dashed] (6.5, 4) -- (9.5, 4);
      \draw[dashed] (6.5, 3) -- (9.5, 3);
      \draw[dashed] (4.5, 1) -- (7.5, 1);
      \node[bhor] (n-1) [right  = of n-3]   {$n-1$};
      \node at (14,2) {.};
    \end{tikzpicture}  
  \end{center}
  When we try to replace the domino with label $n$, we see that it
  fully overlaps with the domino with label $n-1$,
  \begin{center}
    \begin{tikzpicture}[node distance=0 cm,outer sep = 0pt]
      \node[bver] (1) at (  2,  3)   {1};
      \node[bver] (2) [below  = of 1]   {\begin{sideways}
          $4$\end{sideways}};
      \node[bhor] (3) at (3.5,3.5)   {3};
      \node[bver] (4) at (  3,  2)   {\begin{sideways}
          $6$\end{sideways}};
      \node[bhor] (5) [right  = of 3]   {5};
      \node[bver] (6) [right  = of 4] {\begin{sideways}
          $8$\end{sideways}};  
      \node[bhor] (n-3) at (10.5,3.5) {$n-3$};
      \node[bver] (n-2) at   (   8,  2)    {\begin{sideways}
          $n-2$\end{sideways}};
      \node[bhor,color=shade,dashed] (n)  at    ( 9.5,2.5) {$n$};
      \draw[dashed] (6.5, 4) -- (9.5, 4);
      \draw[dashed] (6.5, 3) -- (9.5, 3);
      \draw[dashed] (4.5, 1) -- (7.5, 1);
      \node[bhor,fill=shade] (n-1) [right  = of n-3]
      {\shortstack{$n-1$\\\color{white}{$\boldsymbol n$}}};
      \node at (14,2) {,};
    \end{tikzpicture}
  \end{center}
  so we use case (2) of Definition~\ref{shuffle} to bump the domino
  with label $n$ to the end of the second row
  \begin{center}
    \begin{tikzpicture}[node distance=0 cm,outer sep = 0pt]
      \node[bver] (1) at (  2,  3)   {1};
      \node[bver] (2) [below  = of 1]   {\begin{sideways}
          $4$\end{sideways}};
      \node[bhor] (3) at (3.5,3.5)   {3};
      \node[bver] (4) at (  3,  2)   {\begin{sideways}
          $6$\end{sideways}};
      \node[bhor] (5) [right  = of 3]   {5};
      \node[bver] (6) [right  = of 4] {\begin{sideways}
          $8$\end{sideways}};  
      \node[bhor] (n-3) at (10.5,3.5) {$n-3$};
      \node[bver] (n-2) at   (   8,  2)    {\begin{sideways}
          $n-2$\end{sideways}};
      \node[bhor] (n)  at    ( 9.5,2.5) {$n$};
      \node at (0,2) {$T^{n-1}_L(v_n)=$};
      \draw[dashed] (6.5, 4) -- (9.5, 4);
      \draw[dashed] (6.5, 3) -- (9.5, 3);
      \draw[dashed] (4.5, 1) -- (7.5, 1);
      \node[bhor] (n-1) [right  = of n-3]   {$n-1$};
      \node at (14,2) {.};
   \end{tikzpicture}
  \end{center}

  Finally, $v_n^{-1}(n)=-2$, so to complete the calculation of
  $T_L(v_n)$ we must add a vertical domino with label 2.  As in
  Example~\ref{buildT}, this bumps all dominoes with even labels less
  than $n$ to the right.  However, when we try to replace the domino
  with label $n$ it partially overlaps with the domino with label
  $n-2$.
  \begin{center}
    \begin{tikzpicture}[node distance=0 cm,outer sep = 0pt]
      \node[bver] (1) at (  2,  3)   {1};
      \node[bver] (2) [below  = of 1]   {\begin{sideways}
          $2$\end{sideways}};
      \node[bhor] (3) at (3.5,3.5)   {3};
      \node[bver] (4) at (  3,  2)   {\begin{sideways}
          $4$\end{sideways}};
      \node[bhor] (5) [right  = of 3]   {5};
      \node[bver] (6) [right  = of 4] {\begin{sideways}
          $6$\end{sideways}};  
      \node[bhor] (n-3) at (10.5,3.5) {$n-3$};
      \node[bfix] (over) at (  7.5,2.5) {}; 
      \node[bver] (n-2) at   ( 7.5,  2)    {\begin{sideways}
          \hspace{-.95cm}$n-2$\end{sideways}};
      \node[bhor] (n)  at    (   8, 2.5) {\hspace{1cm}$n$};
      \draw[dashed] (6.5, 4) -- (9.5, 4);
      \draw[dashed] (6.5, 3) -- (9.5, 3);
      \draw[dashed] (4.5, 1) -- (7.5, 1);
      \node[bhor] (n-1) [right  = of n-3]   {$n-1$};
      \node at (14,2) {.};
    \end{tikzpicture}
  \end{center}
  Now we use case (3) of Definition~\ref{shuffle} to shuffle the
  domino with label $n$ into position that allows it to fit into the
  tableau
  \begin{center}
    \begin{tikzpicture}[node distance=0 cm,outer sep = 0pt]
      \node[bver] (1) at (  2,  3)   {1};
      \node[bver] (2) [below  = of 1]   {\begin{sideways}
          $2$\end{sideways}};
      \node[bhor] (3) at (3.5,3.5)   {3};
      \node[bver] (4) at (  3,  2)   {\begin{sideways}
          $4$\end{sideways}};
      \node[bhor] (5) [right  = of 3]   {5};
      \node[bver] (6) [right  = of 4] {\begin{sideways}
          $6$\end{sideways}};  
      \node[bhor] (n-3) at (10.5,3.5) {$n-3$};
      \node[bver] (n-2) at   ( 7.5,  2)    {\begin{sideways}
          $n-2$\end{sideways}};
      \node[bver] (n)  at    ( 8.5,  2) {\begin{sideways}
          $n$\end{sideways}};
      \node at (0,2) {$T_L(v_n)=$};
      \draw[dashed] (6.5, 4) -- (9.5, 4);
      \draw[dashed] (6.5, 3) -- (9.5, 3);
      \draw[dashed] (4.5, 1) -- (7.5, 1);
      \node[bhor] (n-1) [right  = of n-3]   {$n-1$};
      \node at (14,2) {.};
    \end{tikzpicture}
  \end{center}
  Then by Lemma~\ref{badcycles2} we have $T_L(v_n)=E(T_L(w_n),C_1)$,
  so $T_L(v_n)\approx T_L(w_n)$, thus by Theorem~\ref{leftcells}, we
  have $w_n\sim_L v_n$.
\end{proof}

\begin{lemma}\label{v_n2}
  If $n\geq 6$ and $n\equiv 2\bmod 4$, then $w_n\sim_L v_n$.
\end{lemma}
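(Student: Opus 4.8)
The plan is to mirror the proof of Lemma~\ref{v_n0} with the two residues of $n$ modulo $4$ interchanged. Since $n\equiv 2\bmod 4$ we have $n-2\equiv 0\bmod 4$, so now it is $w_{n-2}$ that is of the type treated in Lemma~\ref{badtableaux0} rather than Lemma~\ref{badtableaux2}. First I would record the signed permutation of $v_n$ together with the identity $T^{n-3}_L(v_n)=T^{n-3}_L(w_{n-2})$, both of which are furnished by Lemma~\ref{v_ntableau}. Feeding the $0\bmod 4$ staircase of Lemma~\ref{badtableaux0} into this identity gives an explicit description of $T^{n-3}_L(v_n)$: it is the same staircase that appears in the $n\equiv 0$ argument, except that the domino with label $1$ is now horizontal rather than vertical, reflecting the sign $v_n(1)=(-1)^{(n-2)/2}=+1$.

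Next I would build the final three dominoes exactly as in Lemma~\ref{v_n0}. Reading off part~(1) of Lemma~\ref{v_ntableau}, the relevant values of $v_n^{-1}$ are $v_n^{-1}(n-2)=n$, $v_n^{-1}(n-1)=n-1$, and $v_n^{-1}(n)=-2$; these are identical to the $n\equiv 0\bmod 4$ case, so the three insertion steps run word-for-word the same. Processing value $n-2$ places a horizontal domino with label $n$ at the end of the first row; processing value $n-1$ forces the removal and re-insertion of the label-$n$ domino, which now fully overlaps the new label-$(n-1)$ domino and is bumped to the second row by case~(2) of Definition~\ref{shuffle}; and processing value $n$ adds a vertical domino with label $2$ at the foot of the first column, bumping the even-labelled dominoes rightward and shuffling the label-$n$ domino into place by case~(3) of Definition~\ref{shuffle}. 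The only effect of the changed parity is the orientation of the label-$1$ domino, which is never disturbed by these last three steps.

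Having computed $T_L(v_n)$, I would compare it with the tableau $E(T_L(w_n),C_1)$ produced in Lemma~\ref{badcycles2}, which for $n\equiv 2\bmod 4$ carries precisely the horizontal label-$1$ domino and the alternating horizontal/vertical pattern that the construction above yields. Since the two tableaux agree, $T_L(v_n)=E(T_L(w_n),C_1)$, whence $T_L(v_n)\approx T_L(w_n)$, and Theorem~\ref{leftcells} then gives $w_n\sim_L v_n$.

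The routine insertions are not the difficulty; the one place demanding care is the bookkeeping of the label-$1$ orientation and verifying that the final staircase matches the output of Lemma~\ref{badcycles2} rather than Lemma~\ref{badcycles0}. Because $w_n$ is of type $2\bmod 4$ here, it is the ``horizontal'' cycle output that must appear, and I would confirm this by checking that the shape after the three insertions has its first column of length $2$ with a horizontal domino on top, exactly as in the displayed tableau of Lemma~\ref{badcycles2}.
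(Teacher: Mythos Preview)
Your proposal is correct and follows essentially the same approach as the paper: obtain $T^{n-3}_L(v_n)$ from Lemma~\ref{v_ntableau} together with Lemma~\ref{badtableaux0} (since $n-2\equiv 0\bmod 4$), carry out the three remaining insertions exactly as in Lemma~\ref{v_n0}, and then match the result with $E(T_L(w_n),C_1)$ to invoke Theorem~\ref{leftcells}.

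Two remarks. First, you are right to compare with Lemma~\ref{badcycles2} rather than Lemma~\ref{badcycles0}: since $n\equiv 2\bmod 4$, it is Lemma~\ref{badcycles2} that describes $E(T_L(w_n),C_1)$. The paper's text cites Lemma~\ref{badcycles0} at this step (and symmetrically cites Lemma~\ref{badcycles2} at the end of Lemma~\ref{v_n0}); these two cross-references appear to be swapped, and your version is the correct pairing. Second, a small slip in your final check: in the horizontal-$1$ staircase the first column has length $3$ (the left square of the horizontal $1$ together with the vertical $2$ below it), not length $2$. This does not affect the argument.
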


\begin{proof}
  We see that $n-2\equiv 0\bmod 4$, so by Lemmas~\ref{badtableaux0}
  and~\ref{v_ntableau} we have
  \begin{center}
    \begin{tikzpicture}[node distance=0 cm,outer sep = 0pt]
      \node[bhor] (1) at (  2.5, 3.5) {1};
      \node[bver] (2) at (    2,   2) {\begin{sideways}
          $4$\end{sideways}};
      \node[bhor] (3) [right  = of 1] {3};
      \node[bver] (4) [right  = of 2] {\begin{sideways}
          $6$\end{sideways}};
      \node[bhor] (5) [right  = of 3] {5};
      \node[bver] (6) [right  = of 4] {\begin{sideways}
          $8$\end{sideways}};  
      \node[bhor] (n-3) at (11.5,3.5) {$n-3$};
      \node[bver] (n-2) at ( 8.5,  2) {\begin{sideways}
          $n-2$\end{sideways}};
      \node at (0,2) {$T^{n-3}_L(v_n)=$};
      \draw[dashed] (6.5, 4) -- (10.5, 4);
      \draw[dashed] (6.5, 3) -- (10.5, 3);
      \draw[dashed] (4.5, 1) -- ( 8.5, 1);
      \node at (13,2) {.};
    \end{tikzpicture}
  \end{center}

  We have $v_n^{-1}(n-2)=n$, so to obtain $T^{n-2}_L(v_n)$ we see that
  we add a horizontal domino with label $n$ on the end of the first
  row to get
  \begin{center}
    \begin{tikzpicture}[node distance=0 cm,outer sep = 0pt]
      \node[bhor] (1) at (  2.5, 3.5) {1};
      \node[bver] (2) at (    2,   2) {\begin{sideways}
          $4$\end{sideways}};
      \node[bhor] (3) [right  = of 1] {3};
      \node[bver] (4) [right  = of 2] {\begin{sideways}
          $6$\end{sideways}};
      \node[bhor] (5) [right  = of 3] {5};
      \node[bver] (6) [right  = of 4] {\begin{sideways}
          $8$\end{sideways}};  
      \node[bhor] (n-3) at (11.5,3.5) {$n-3$};
      \node[bhor] (n) [right  = of n-3] {$n$};
      \node[bver] (n-2) at ( 8.5,  2) {\begin{sideways}
          $n-2$\end{sideways}};
      \node at (0,2) {$T^{n-2}_L(v_n)=$};
      \draw[dashed] (6.5, 4) -- (10.5, 4);
      \draw[dashed] (6.5, 3) -- (10.5, 3);
      \draw[dashed] (4.5, 1) -- ( 8.5, 1);
      \node at (15,2) {.};
    \end{tikzpicture}
  \end{center}
  
  Next, $v_n^{-1}(n-1)=n-1$, so we must next add a horizontal domino
  with label $n-1$. To do this we must first remove all dominoes with
  labels greater than $n-1$, then add a horizontal domino with label
  $n-1$ to the end of the first row.
  \begin{center}
    \begin{tikzpicture}[node distance=0 cm,outer sep = 0pt]
      \node[bhor] (1) at (  2.5, 3.5) {1};
      \node[bver] (2) at (    2,   2) {\begin{sideways}
          $4$\end{sideways}};
      \node[bhor] (3) [right  = of 1] {3};
      \node[bver] (4) [right  = of 2] {\begin{sideways}
          $6$\end{sideways}};
      \node[bhor] (5) [right  = of 3] {5};
      \node[bver] (6) [right  = of 4] {\begin{sideways}
          $8$\end{sideways}};  
      \node[bhor] (n-3) at (11.5,3.5) {$n-3$};
      \node[bhor] (n) [right  = of n-3] {$n-1$};
      \node[bver] (n-2) at ( 8.5,  2) {\begin{sideways}
          $n-2$\end{sideways}};
      \draw[dashed] (6.5, 4) -- (10.5, 4);
      \draw[dashed] (6.5, 3) -- (10.5, 3);
      \draw[dashed] (4.5, 1) -- ( 8.5, 1);
      \node at (15,2) {.};
    \end{tikzpicture}
  \end{center}
  When we try to replace the domino with label $n$, we see that it
  fully overlaps with the domino with label $n-1$,
  \begin{center}
    \begin{tikzpicture}[node distance=0 cm,outer sep = 0pt]
      \node[bhor] (1) at (  2.5, 3.5) {1};
      \node[bver] (2) at (    2,   2) {\begin{sideways}
          $4$\end{sideways}};
      \node[bhor] (3) [right  = of 1] {3};
      \node[bver] (4) [right  = of 2] {\begin{sideways}
          $6$\end{sideways}};
      \node[bhor] (5) [right  = of 3] {5};
      \node[bver] (6) [right  = of 4] {\begin{sideways}
          $8$\end{sideways}};  
      \node[bhor] (n-3) at (11.5,3.5) {$n-3$};
      \node[bhor,fill=shade] (n-1) [right  = of n-3]
      {\shortstack{$n-1$\\\color{white}{$\boldsymbol n$}}};
      \node[bver] (n-2) at ( 8.5,  2) {\begin{sideways}
          $n-2$\end{sideways}};
      \node[bhor,color=shade,dashed] (n)  at    ( 10,2.5) {$n$};
      \draw[dashed] (6.5, 4) -- (10.5, 4);
      \draw[dashed] (6.5, 3) -- (10.5, 3);
      \draw[dashed] (4.5, 1) -- ( 8.5, 1);
      \node at (15,2) {,};
    \end{tikzpicture}
  \end{center}
  so we use case (2) of Definition~\ref{shuffle} to bump the domino
  with label $n$ to the end of the second row
  \begin{center}
    \begin{tikzpicture}[node distance=0 cm,outer sep = 0pt]
      \node[bhor] (1) at (  2.5, 3.5) {1};
      \node[bver] (2) at (    2,   2) {\begin{sideways}
          $4$\end{sideways}};
      \node[bhor] (3) [right  = of 1] {3};
      \node[bver] (4) [right  = of 2] {\begin{sideways}
          $6$\end{sideways}};
      \node[bhor] (5) [right  = of 3] {5};
      \node[bver] (6) [right  = of 4] {\begin{sideways}
          $8$\end{sideways}};  
      \node[bhor] (n-3) at (11.5,3.5) {$n-3$};
      \node[bhor] (n-1) [right  = of n-3] {$n-1$};
      \node[bver] (n-2) at ( 8.5,  2) {\begin{sideways}
          $n-2$\end{sideways}};
      \node[bhor] (n)  at    ( 10,2.5) {$n$};
      \node at (0,2) {$T^{n-1}_L(v_n)=$};
      \draw[dashed] (6.5, 4) -- (10.5, 4);
      \draw[dashed] (6.5, 3) -- (10.5, 3);
      \draw[dashed] (4.5, 1) -- ( 8.5, 1);
      \node at (15,2) {.};
    \end{tikzpicture}
  \end{center}

  Finally, $v_n^{-1}(n)=-2$, so to complete the calculation of
  $T_L(v_n)$ we must add a vertical domino with label 2.  As in
  Lemma~\ref{v_n0}, this bumps all dominoes with even labels less
  than $n$ to the right.  However, when we try to replace the domino
  with label $n$ it partially overlaps with the domino with label
  $n-2$.
  \begin{center}
    \begin{tikzpicture}[node distance=0 cm,outer sep = 0pt]
      \node[bhor] (1) at (  2.5, 3.5) {1};
      \node[bver] (2) at (    2,   2) {\begin{sideways}
          $2$\end{sideways}};
      \node[bhor] (3) [right  = of 1] {3};
      \node[bver] (4) [right  = of 2] {\begin{sideways}
          $4$\end{sideways}};
      \node[bhor] (5) [right  = of 3] {5};
      \node[bver] (6) [right  = of 4] {\begin{sideways}
          $6$\end{sideways}};  
      \node[bhor] (n-3) at (11.5,3.5) {$n-3$};
      \node[bhor] (n-1) [right  = of n-3] {$n-1$};
      \node[bfix] (over) at (8.5,2.5)  {};
      \node[bver] (n-2) at ( 8.5,  2) {\begin{sideways}
          \hspace{-.95cm}$n-2$\end{sideways}};
      \node[bhor] (n)  at    (  9,2.5) {\hspace{1cm}$n$};
      \draw[dashed] (6.5, 4) -- (10.5, 4);
      \draw[dashed] (6.5, 3) -- (10.5, 3);
      \draw[dashed] (4.5, 1) -- ( 8.5, 1);
      \node at (15,2) {.};
    \end{tikzpicture}
  \end{center}
  Now we use case (3) of Definition~\ref{shuffle} to shuffle the
  domino with label $n$ into position that allows it to fit into the
  tableau
  \begin{center}
    \begin{tikzpicture}[node distance=0 cm,outer sep = 0pt]
      \node[bhor] (1) at (  2.5, 3.5) {1};
      \node[bver] (2) at (    2,   2) {\begin{sideways}
          $2$\end{sideways}};
      \node[bhor] (3) [right  = of 1] {3};
      \node[bver] (4) [right  = of 2] {\begin{sideways}
          $4$\end{sideways}};
      \node[bhor] (5) [right  = of 3] {5};
      \node[bver] (6) [right  = of 4] {\begin{sideways}
          $6$\end{sideways}};  
      \node[bhor] (n-3) at (11.5,3.5) {$n-3$};
      \node[bhor] (n-1) [right  = of n-3] {$n-1$};
      \node[bver] (n-2) at ( 8.5,  2) {\begin{sideways}
          $n-2$\end{sideways}};
      \node[bver] (n) [right  = of n-2] {\begin{sideways}
            $n$\end{sideways}};
      \draw[dashed] (6.5, 4) -- (10.5, 4);
      \draw[dashed] (6.5, 3) -- (10.5, 3);
      \draw[dashed] (4.5, 1) -- ( 8.5, 1);
      \node at (0,2) {$T_L(v_n)=$};
      \node at (15,2) {.};
    \end{tikzpicture}
  \end{center}
  Then by Lemma~\ref{badcycles0} we have $T_L(v_n)=E(T_L(w_n),C_1)$,
  so $T_L(v_n)\approx T_L(w_n)$, thus by Theorem~\ref{leftcells}, we
  have $w_n\sim_L v_n$.
\end{proof}

Now we have an element, $v_n$, in the same left cell as $w_n$.
Unfortunately, $a(v_n)$ is still difficult to calculate.  However, we
can find an element in the same right cell as $v_n$ that will allow us
to calculate $a(w_n)$.  

\section{An element in the two-sided cell of \texorpdfstring{$w_n$}{wn}}

We will now find an element, $u_n$ in the same right cell as $v_n$,
and therefore in the same two-sided cell as $w_n$.  To do this we will
use the function $T_R$.  Recall that $T_R(w)=T_L(w^{-1})$, so to
construct $T_R(w)$ we follow the method outlined in
Remark~\ref{build}, but we switch the roles of $w$ and $w^{-1}$.  We
begin by constructing $T_R(v_n)$ and moving $T_R(v_n)$ through a
cycle.

\begin{lemma}\label{v_norder}
  Recall that $v_n = s_n s_{n-1} s_n w_{n-2}$. The domino tableau
  $T^{n-2}_R(v_n)$ is constructed in the same way as
  $T^{n-2}_R(w_{n-2})$.  That is, the dominoes in $T^{n-2}_R(v_n)$ lie
  in the same positions and relative order as those in
  $T^{n-2}_R(w_{n-2})$, but the labels correspond to the first $n-2$
  entries in the signed permutation of $v_n$. In particular we have
  \begin{center}
    \begin{tikzpicture}[node distance=0 cm,outer sep = 0pt]
      \tikzstyle{ver}=[rectangle, draw, thick, minimum width=1cm,
      minimum height=2cm]         
      \tikzstyle{hor}=[rectangle, draw, thick, minimum width=2cm,
      minimum height=1cm]
      \node[ver] (1) at (  2,  3)   {$1$};
      \node[ver] (2) [below  = of 1]   {\begin{sideways}
          $2$\end{sideways}};
      \node[hor] (3) at (3.5,3.5)   {$3$};
      \node[ver] (4) at (  3,  2)   {\begin{sideways}
          $4$\end{sideways}};
      \node[hor] (5) [right  = of 3]   {$5$};
      \node[ver] (6) [right  = of 4] {\begin{sideways}
          $6$\end{sideways}};  
      \node[hor] (n-3) at (10.5,3.5) {$n-3$};
      \node[ver] (n-4) at   ( 7.5,  2)    {\begin{sideways}
          $n-4$\end{sideways}};
      \node at (0,2) {$T^{n-2}_R(v_n)=$};
      \draw[dashed] (6.5, 4) -- (9.5, 4);
      \draw[dashed] (6.5, 3) -- (9.5, 3);
      \draw[dashed] (4.5, 1) -- (7.5, 1);
      \node[ver] (n) [right  = of n-4] {\begin{sideways}
          $n$\end{sideways}};
    \end{tikzpicture}
  \end{center}
  when $n\equiv 0\bmod 4$ and 
  \begin{center}
    \begin{tikzpicture}[node distance=0 cm,outer sep = 0pt]
      \node[bhor] (1) at (2.5,3.5)   {$1$};
      \node[bver] (2) at (  2,  2)   {\begin{sideways}
          $2$\end{sideways}};
      \node[bhor] (3) [right = of 1]   {$3$};
      \node[bver] (4) [right = of 2]   {\begin{sideways}
          $4$\end{sideways}};
      \node[bhor] (5) [right  = of 3]   {$5$};
      \node[bver] (6) [right  = of 4] {\begin{sideways}
          $6$\end{sideways}};  
      \node[bhor] (n-3) at (10.5,3.5) {$n-3$};
      \node[bver] (n-4) at   ( 7.5,  2)    {\begin{sideways}
          $n-4$\end{sideways}};
      \node at (0,2) {$T^{n-2}_R(v_n)=$};
      \draw[dashed] (6.5, 4) -- (9.5, 4);
      \draw[dashed] (6.5, 3) -- (9.5, 3);
      \draw[dashed] (4.5, 1) -- (7.5, 1);
      \node[bver] (n) [right  = of n-4] {\begin{sideways}
          $n$\end{sideways}};
    \end{tikzpicture}
  \end{center}
  when $n\equiv 2\bmod 4$.
\end{lemma}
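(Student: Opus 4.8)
The plan is to read off the signed permutations of $v_n$ and $w_{n-2}$, compare their first $n-2$ entries, and then argue that the construction of Remark~\ref{build} makes identical shape decisions for the two elements. First I would record, using Lemma~\ref{v_ntableau}(1) together with the explicit form of $w_{n-2}$ from Theorem~\ref{dbad}, that as signed permutations $v_n$ and $w_{n-2}$ (both viewed in $W(D_n)$, so that $w_{n-2}(n-1)=n-1$ and $w_{n-2}(n)=n$) agree in every position except $j=2$ and $j=n$: one has $v_n(2)=-n$, $v_n(n)=n-2$, while the other has $w_{n-2}(2)=-(n-2)$, $w_{n-2}(n)=n$. In particular, for the positions $j=1,\dots,n-2$ that govern $T^{n-2}_R$, the two permutations differ only at $j=2$, where both entries are negative and $|v_n(2)|=n$, $|w_{n-2}(2)|=n-2$.

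Since $w_{n-2}$ is an involution, $T_R(w_{n-2})=T_L(w_{n-2}^{-1})=T_L(w_{n-2})$, and $T_R(v_n)=T_L(v_n^{-1})$ by Definition~\ref{dominotableaux}. By Remark~\ref{build}, building $T_R(v_n)$ inserts, for $j=1,\dots,n$, a domino with label $|v_n(j)|$, oriented vertically if $v_n(j)<0$ and horizontally if $v_n(j)>0$; the same description holds for $w_{n-2}$ with $v_n$ replaced by $w_{n-2}$. Thus the first $n-2$ insertions for $v_n$ and for $w_{n-2}$ use identical orientations and identical labels, except at step $j=2$. Moreover, in both constructions the step-$2$ label ($n$ for $v_n$, $n-2$ for $w_{n-2}$) is the \emph{strict maximum} of all the labels $|v_n(1)|,\dots,|v_n(n-2)|$, respectively $|w_{n-2}(1)|,\dots,|w_{n-2}(n-2)|$, since every other label lies in $\{1,3,\dots,n-3\}\cup\{2,4,\dots,n-4\}$ and so is at most $n-3$.

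The key observation is that the shuffling maps of Definitions~\ref{shuffle} and~\ref{alphamap} consult a label only through comparisons with other labels and through the shape data $\rho_i,\kappa_j$; they never use the numerical value of a label directly. At each of the first $n-2$ steps the step-$2$ domino is the largest label present, so whenever it is stripped off and shuffled back in (Case 2 of Definition~\ref{alphamap}), the comparisons that govern its placement have the same outcome whether it is named $n$ or $n-2$. Inducting on the step number, I would conclude that $T^{j}_R(v_n)$ and $T^{j}_R(w_{n-2})$ consist of dominoes occupying exactly the same squares for every $j\le n-2$, the only difference being that the label $n-2$ in $T^{n-2}_R(w_{n-2})$ is replaced by $n$ in $T^{n-2}_R(v_n)$. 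This is precisely the assertion that the two tableaux agree in shape and relative order, while the labels of $T^{n-2}_R(v_n)$ record the first $n-2$ entries of $v_n$.

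Finally, to obtain the two displayed diagrams I would invoke the tableaux already computed. The restriction of $w_{n-2}$ to its first $n-2$ coordinates is the longest bad element of $W(D_{n-2})$, so $T^{n-2}_R(w_{n-2})=T^{n-2}_L(w_{n-2})$ is the tableau of Lemma~\ref{badtableaux2} when $n-2\equiv 2\bmod 4$ (that is, $n\equiv 0\bmod 4$) and of Lemma~\ref{badtableaux0} when $n-2\equiv 0\bmod 4$ (that is, $n\equiv 2\bmod 4$). Carrying out the relabeling $n-2\mapsto n$ on the maximal domino then produces exactly the two pictures in the statement. The main obstacle is the middle step: one must verify carefully against Definitions~\ref{shuffle} and~\ref{alphamap} that replacing the unique maximal label by any larger value leaves the placement of every domino unchanged, so that ``depends only on relative order'' genuinely holds for the insertion and shuffle.
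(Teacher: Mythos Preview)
Your proposal is correct and follows essentially the same approach as the paper: both arguments observe that the first $n-2$ entries of the signed permutations of $v_n$ and $w_{n-2}$ have identical signs and identical relative order of absolute values, and conclude that the Garfinkle insertion produces tableaux of the same shape with the same relative placement of dominoes. You give a more careful justification of why ``same relative order and signs'' suffices (by noting that Definitions~\ref{shuffle} and~\ref{alphamap} only compare labels rather than use their values), whereas the paper simply asserts this; but the underlying idea is identical.
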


\begin{proof}
  In Theorem~\ref{dbad} and Lemma~\ref{v_ntableau} we found that
  \begin{align*}
  w_{n-2} &= \left((-1)^{(n-2)/2},\underline{n-2},3,\underline{n-4}, 5,\dots,
    \underline{4},n-3,\underline{2}, n-1, n\right),\text{ and}\\
  v_n &=  \left((-1)^{(n-2)/2},\underline{n},3,\underline{n-4},5,
      \underline{n-6}, 7,\dots,n-3,\underline{2},n-1,n-2\right),
  \end{align*}
  so
  \[
  \left(\left|v_n(1)\right|,\left|v_n(2)\right|,
    \dots,\left|v_n(n-2)\right|\right)
  \]
  are in the same relative order as
  \[
  \left(\left|w_{n-2}(1)\right|,\left|w_{n-2}(2)\right|,
    \dots,\left|w_{n-2}(n-2)\right|\right),
  \]
  and
  \[
  \left(\sign\left(v_n(1)\right),
    \dots,\sign\left(v_n(n-2)\right)\right) =
  \left(\sign\left(w_{n-2}(1)\right),
    \dots,\sign\left(w_{n-2}(n-2)\right)\right).
  \]
  Thus, $T^{n-2}_R(v_n)$ is constructed in the same way as
  $T^{n-2}_R(w_{n-2})$.
\end{proof}

\begin{lemma}\label{v_nright0}
  Let $n\equiv 0\bmod 4$.  Then $T_R(v_n)$ contains an open cycle
  \[
  C_2=\{1,2,3,5,7,9\dots,n-5,n-3,n-2\}.
  \]
  If we move $T_R(v_n)$ through $C_2$ we obtain
  \begin{center}
  
  \end{center}
  if $n\equiv 2\bmod 4$.
\end{lemma}

\begin{proof}
  We will first find the signed permutation representation of
  $u_n$. By Theorem~\ref{dbad} we have
  \[
  w_{n-4} = \left((-1)^{(n-4)/2},\underline{n-4},3,\underline{n-6}, 5,\dots,
    \underline{4},n-5,\underline{2}, n-3, n-2, n-1, n\right).
  \]
  Now we can use Proposition~\ref{smultiply} to find $u_n$.  We have
  \begin{align*}
    w_{n-4}s_n &=
    \left((-1)^{(n-4)/2},\underline{n-4},3,\underline{n-6}, 5,\dots,
      \underline{4},n-5,\underline{2}, n-3, n-2, n, n-1\right),\\
    w_{n-4}s_n s_{n-1} &=
    \left((-1)^{(n-4)/2},\underline{n-4},3,\underline{n-6}, 5,\dots,
      \underline{4},n-5,\underline{2}, n-3, n, n-2, n-1\right),\text{ and}\\
    w_{n-4}s_ns_{n-1}s_n &=
    \left((-1)^{(n-4)/2},\underline{n-4},3,\underline{n-6}, 5,\dots,
      \underline{4},n-5,\underline{2}, n-3, n, n-1, n-2\right),
  \end{align*}
  so
  \[
  u_n =
  \begin{cases}
    \left(1,\underline{n-4},3,\underline{n-6},5,\dots,\underline{4},n-5,
      \underline{2},n-3,n,n-1,n-2\right)&\mbox{if }n\equiv 0\bmod 4;\\
    \left(\underline{1},\underline{n-4},3,\underline{n-6},5,\dots,\underline{4},n-5,
      \underline{2},n-3,n,n-1,n-2\right)&\mbox{if }n\equiv 2\bmod 4.\\
  \end{cases}
  \]
  
  We next compute $T_R(u_n)$.  We first notice that
  \[
  \left(\left|u_n(1)\right|,\left|u_n(2)\right|,
    \dots,\left|u_n(n-4)\right|\right)
  \]
  are in the same relative order as
  \[
  \left(\left|w_{n-4}(1)\right|,\left|w_{n-4}(2)\right|,
    \dots,\left|w_{n-4}(n-4)\right|\right),
  \]
  and that
  \[
  \left(\sign\left(u_n(1)\right),
    \dots,\sign\left(u_n(n-4)\right)\right) =
  \left(\sign\left(w_{n-4}(1)\right),
    \dots,\sign\left(w_{n-4}(n-4)\right)\right),
  \]
  so $T_R^{n-4}(u_n)$ is constructed in the same way as $T_R(w_{n-4})$.
\end{proof}

\begin{lemma}\label{u_n0}
  If $n\geq 8$ is such that $n\equiv 0\bmod 4$, then $v_n\sim_R
  u_n$.
\end{lemma}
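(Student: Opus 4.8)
The plan is to invoke Corollary~\ref{rightcells}, which reduces the claim $v_n\sim_R u_n$ to showing $T_R(u_n)\approx T_R(v_n)$, i.e.\ that $T_R(u_n)$ can be reached from $T_R(v_n)$ by moving through a sequence of open cycles. Lemma~\ref{v_nright0} already records the effect of moving $T_R(v_n)$ through the single open cycle $C_2=\{1,2,3,5,7,9,\dots,n-5,n-3,n-2\}$, producing an explicit tableau $E(T_R(v_n),C_2)$. So it will suffice to build $T_R(u_n)$ and check that it equals this tableau; then $C_2$ being open gives $T_R(v_n)\approx E(T_R(v_n),C_2)=T_R(u_n)$, hence $v_n\sim_R u_n$.

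First I would complete the construction of $T_R(u_n)$. Lemma~\ref{u_norder} supplies both the partial tableau $T^{n-4}_R(u_n)$ and the signed permutation
\[
u_n=\left(1,\underline{n-4},3,\underline{n-6},5,\dots,\underline{4},n-5,\underline{2},n-3,n,n-1,n-2\right),
\]
so it remains to insert, following Remark~\ref{build}, the four dominoes coming from the positions $n-3,n-2,n-1,n$. Since $u_n(n-3)=n-3$, $u_n(n-2)=n$, $u_n(n-1)=n-1$, and $u_n(n)=n-2$ are all positive, each of these four dominoes is inserted horizontally, with labels added in the order $n-3,\,n,\,n-1,\,n-2$. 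The cascade of overlaps this triggers is exactly the one handled in the closing steps of Lemma~\ref{v_nright0}: inserting the label-$(n-2)$ domino forces the label-$(n-1)$ domino to be bumped into the second row by case~(2) of Definition~\ref{shuffle}, after which the label-$n$ domino must be shuffled by case~(3) to fit.

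Carrying out these insertions, I expect to arrive at precisely the tableau displayed for $E(T_R(v_n),C_2)$ in Lemma~\ref{v_nright0}, completing the argument. The main obstacle is the bookkeeping of these four insertions and their attendant shuffles: one must verify that the bumps produced by the labels $n-3,\,n,\,n-1,\,n-2$ yield the same positions and labels as the target tableau, with no discrepancy near the branch node where the vertical label-$(n-4)$ domino meets the horizontal top row. This is routine but error-prone, and is where the bulk of the verification lies; the reduction via Corollary~\ref{rightcells} and the reuse of the open cycle $C_2$ from Lemma~\ref{v_nright0} are the easy parts.
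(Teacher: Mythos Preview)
Your approach is exactly the paper's: compute $T_R(u_n)$ by starting from $T^{n-4}_R(u_n)$ (Lemma~\ref{u_norder}), inserting the four horizontal dominoes with labels $n-3,\,n,\,n-1,\,n-2$ in that order, and then observing that the result equals $E(T_R(v_n),C_2)$ from Lemma~\ref{v_nright0}, whence Corollary~\ref{rightcells} finishes.

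One small correction to your bookkeeping: the cascade here is \emph{not} ``exactly the one handled in the closing steps of Lemma~\ref{v_nright0}.'' In that lemma the label-$n$ domino was vertical (it entered $T_R(v_n)$ earlier as a vertical domino in the second and third rows), and its final shuffle was a \emph{partial} overlap handled by case~(3) of Definition~\ref{shuffle}. In the present computation the label-$n$ domino is inserted horizontally at the end of the first row, gets bumped to the second row by case~(2) when $n-1$ is inserted, and then---when $n-2$ is inserted and $n-1$ is bumped into row~2---fully overlaps with the new position of $n-1$ and is bumped again to row~3 by case~(2), not case~(3). The resulting tableau is the same, so your conclusion stands, but you should not appeal to the earlier cascade verbatim.
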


\begin{proof}
  By Lemma~\ref{u_norder} we have
  \begin{center}
    \begin{tikzpicture}[node distance=0 cm,outer sep = 0pt]
      \node[bhor] (1) at (  2,  3.5)   {1};
      \node[bver] (2) at ( 1.5, 2)   {\begin{sideways}
          $2$\end{sideways}};
      \node[bhor] (3) [right = of 1]   {3};
      \node[bver] (4) [right = of 2]   {\begin{sideways}
          $4$\end{sideways}};
      \node[bhor] (5) [right  = of 3]   {5};
      \node[bver] (6) [right  = of 4] {\begin{sideways}
          $6$\end{sideways}};  
      \node[bhor] (n-5) at (9.5,3.5) {$n-5$};
      \node[bver] (n-4) at   ( 7,  2)    {\begin{sideways}
          $n-4$\end{sideways}};
      \node at (-.25,2.5) {$T^{n-4}_R(u_n)=$};
      \draw[dashed] (6.5, 4) -- (8.5, 4);
      \draw[dashed] (6.5, 3) -- (8.5, 3);
      \draw[dashed] (4, 1) -- (6.5, 1);
      \node at (11,2.5) {.};
    \end{tikzpicture}
  \end{center}
  
  Now $u_n(n-3)=n-3$, and since $n-3$ is larger than any of the labels
  of dominoes in $T_R^{n-4}(u_n)$ we simply add a horizontal domino
  with label $n-3$ to the end of the first row.  Similarly,
  $u_n(n-2)=n$, so we add a horizontal domino with label $n$ to the
  end of the first row, thus obtaining
  \begin{center}
    \begin{tikzpicture}[node distance=0 cm,outer sep = 0pt]
      \node[bhor] (1) at (  2,  3.5)   {1};
      \node[bver] (2) at ( 1.5, 2)   {\begin{sideways}
          $2$\end{sideways}};
      \node[bhor] (3) [right = of 1]   {3};
      \node[bver] (4) [right = of 2]   {\begin{sideways}
          $4$\end{sideways}};
      \node[bhor] (5) [right  = of 3]   {5};
      \node[bver] (6) [right  = of 4] {\begin{sideways}
          $6$\end{sideways}};  
      \node[bhor] (n-5) at (9.5,3.5) {$n-5$};
      \node[bhor] (n-3) [right = of n-5] {$n-3$};
      \node[bhor] (n) [right = of n-3] {$n$};
      \node[bver] (n-4) at   ( 7,  2)    {\begin{sideways}
          $n-4$\end{sideways}};
      \node at (-.25,2.5) {$T^{n-2}_R(u_n)=$};
      \draw[dashed] (6.5, 4) -- (8.5, 4);
      \draw[dashed] (6.5, 3) -- (8.5, 3);
      \draw[dashed] (4, 1) -- (6.5, 1);
      \node at (14.75,2.5) {.};
    \end{tikzpicture} 
  \end{center}

  Next, $u_n(n-1)=n-1$, so we must add a horizontal domino with label
  $n-1$.  We first remove then domino with label $n$ and place a
  horizontal domino with label $n-1$ at the end of the first row.
  When we replace the domino with label $n$ we see that it overlaps
  fully with the domino with label $n-1$, so we use case (2) of
  Definition~\ref{shuffle} to bump the domino with label $n$ to the
  end of the second row:
  \begin{center}
    \begin{tikzpicture}[node distance=0 cm,outer sep = 0pt]
      \node[bhor] (1) at (  2,  3.5)   {1};
      \node[bver] (2) at ( 1.5, 2)   {\begin{sideways}
          $2$\end{sideways}};
      \node[bhor] (3) [right = of 1]   {3};
      \node[bver] (4) [right = of 2]   {\begin{sideways}
          $4$\end{sideways}};
      \node[bhor] (5) [right  = of 3]   {5};
      \node[bver] (6) [right  = of 4] {\begin{sideways}
          $6$\end{sideways}};  
      \node[bhor] (n-5) at (9.5,3.5) {$n-5$};
      \node[bhor] (n-3) [right = of n-5] {$n-3$};
      \node[bhor] (n-1) [right = of n-3] {$n-1$};
      \node[bver] (n-4) at   ( 7,  2)    {\begin{sideways}
          $n-4$\end{sideways}};
      \node[bhor] (n) at (8.5, 2.5) {$n$};
      \node at (-.25,2.5) {$T^{n-1}_R(u_n)=$};
      \draw[dashed] (6.5, 4) -- (8.5, 4);
      \draw[dashed] (6.5, 3) -- (8.5, 3);
      \draw[dashed] (4, 1) -- (6.5, 1);
      \node at (14.75,2.5) {.};
    \end{tikzpicture} 
  \end{center}

  Finally, $u_n(n)=n-2$, so we conclude by adding a domino with label
  $n-2$.  We remove the dominoes with labels $n-1$ and $n$ and add a
  horizontal domino with label $n-2$ to the end of the first row.
  When we replace the domino with label $n-1$ it fully overlaps with
  the domino with label $n-2$, so we use case (2) of
  Definition~\ref{shuffle} to bump the domino with label $n-1$ to the
  end of the second row:
  \begin{center}
    \begin{tikzpicture}[node distance=0 cm,outer sep = 0pt]
      \node[bhor] (1) at (  2,  3.5)   {1};
      \node[bver] (2) at ( 1.5, 2)   {\begin{sideways}
          $2$\end{sideways}};
      \node[bhor] (3) [right = of 1]   {3};
      \node[bver] (4) [right = of 2]   {\begin{sideways}
          $4$\end{sideways}};
      \node[bhor] (5) [right  = of 3]   {5};
      \node[bver] (6) [right  = of 4] {\begin{sideways}
          $6$\end{sideways}};  
      \node[bhor] (n-5) at (9.5,3.5) {$n-5$};
      \node[bhor] (n-3) [right = of n-5] {$n-3$};
      \node[bhor] (n-2) [right = of n-3] {$n-2$};
      \node[bver] (n-4) at   ( 7,  2)    {\begin{sideways}
          $n-4$\end{sideways}};
      \node[bhor, fill=shade] (n-1) at (8.5, 2.5) {\shortstack{$n-1$\\\color{white}{$\boldsymbol n$}}};
      \draw[dashed] (6.5, 4) -- (8.5, 4);
      \draw[dashed] (6.5, 3) -- (8.5, 3);
      \draw[dashed] (4, 1) -- (6.5, 1);
      \node at (15,2) {.};
    \end{tikzpicture}
  \end{center}
  Then when the domino with label $n$ is replaces it fully overlaps
  with the domino with label $n-1$, so we again use case (2) of
  Definition~\ref{shuffle} to bump the domino with label $n$ to the
  end of the third row.  Then we obtain
  \begin{center}
    \begin{tikzpicture}[node distance=0 cm,outer sep = 0pt]
      \node[bhor] (1) at (  2,  3.5)   {1};
      \node[bver] (2) at ( 1.5, 2)   {\begin{sideways}
          $2$\end{sideways}};
      \node[bhor] (3) [right = of 1]   {3};
      \node[bver] (4) [right = of 2]   {\begin{sideways}
          $4$\end{sideways}};
      \node[bhor] (5) [right  = of 3]   {5};
      \node[bver] (6) [right  = of 4] {\begin{sideways}
          $6$\end{sideways}};  
      \node[bhor] (n-5) at (9.5,3.5) {$n-5$};
      \node[bhor] (n-3) [right = of n-5] {$n-3$};
      \node[bhor] (n-2) [right = of n-3] {$n-2$};
      \node[bver] (n-4) at   ( 7,  2)    {\begin{sideways}
          $n-4$\end{sideways}};
      \node at (-.25,2.5) {$T_R(u_n)=$};
      \node[bhor] (n-1) at (8.5, 2.5) {$n-1$};
      \node[bhor] (n) [below = of n-1] {$n$};
      \draw[dashed] (6.5, 4) -- (8.5, 4);
      \draw[dashed] (6.5, 3) -- (8.5, 3);
      \draw[dashed] (4, 1) -- (6.5, 1);
      \node at (15,2.5) {.};
    \end{tikzpicture}  
  \end{center}

  Now by Lemma~\ref{v_nright0} we see that $T_R(u_n)=E(T_R(v_n),C_2)$,
  so $T_R(u_n)\approx T_R(v_n)$, thus by Corollary~\ref{rightcells} we
  see that $u_n\sim_R v_n$.
\end{proof}

\begin{lemma}\label{u_n2}
  If $n\geq 8$ is such that $n\equiv 2\bmod 4$, then $v_n\sim_R
  u_n$.
\end{lemma}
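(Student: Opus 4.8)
The plan is to mirror the proof of Lemma~\ref{u_n0} almost verbatim, building up $T_R(u_n)$ one domino at a time from the partial tableau supplied by Lemma~\ref{u_norder} and then recognizing the result as the tableau $E(T_R(v_n),C_2)$ produced in Lemma~\ref{v_nright2}. First I would invoke Lemma~\ref{u_norder}, which records both the signed permutation of $u_n=w_{n-4}s_ns_{n-1}s_n$ and the starting tableau $T^{n-4}_R(u_n)$ in the case $n\equiv 2\bmod 4$. Reading off the signed permutation, the last four entries are $u_n(n-3)=n-3$, $u_n(n-2)=n$, $u_n(n-1)=n-1$, and $u_n(n)=n-2$; crucially these agree exactly with the last four entries in the $n\equiv 0\bmod 4$ case already handled in Lemma~\ref{u_n0}, since the two cases of $u_n$ in Lemma~\ref{u_norder} differ only in the sign of $u_n(1)$.

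Next I would insert the dominoes labeled by $u_n(n-3),\dots,u_n(n)$ using the insertion-and-shuffle procedure of Remark~\ref{build}. Because the labels $n-3$ and $n$ exceed everything already placed, they attach to the end of the first row; then inserting $n-1$ forces the $n$-domino down into the second row via case (2) of Definition~\ref{shuffle}, and finally inserting $n-2$ cascades the $n-1$- and $n$-dominoes into the second and third rows by two successive applications of case (2). All of this shuffling takes place in the tail of the tableau, away from the top-left corner, which is the only region where the $n\equiv 0$ and $n\equiv 2$ starting tableaux of Lemma~\ref{u_norder} differ. Hence the sequence of moves is identical to that in Lemma~\ref{u_n0}, and I would simply remark that the shuffling is the same and record the resulting $T_R(u_n)$.

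Finally I would compare the tableau so obtained with $E(T_R(v_n),C_2)$ as computed in Lemma~\ref{v_nright2} and observe that the two coincide, so that $T_R(u_n)=E(T_R(v_n),C_2)$. By the definition of $\approx$ this gives $T_R(u_n)\approx T_R(v_n)$, and Corollary~\ref{rightcells} then yields $u_n\sim_R v_n$, as required.

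The main obstacle is bookkeeping rather than anything conceptual: I must confirm that the explicit shape and labeling produced by the four insertion steps really is the same diagram as $E(T_R(v_n),C_2)$ in Lemma~\ref{v_nright2}, including the placement of the $n-4$, $n-1$, $n-2$, and $n$ dominoes near the corner. The safest route is to observe that the tail additions are word-for-word those of Lemma~\ref{u_n0}, so only the corner region, governed by $u_n(1)=-1$ here versus $u_n(1)=1$ there, needs separate checking, and that region is precisely the vertically-stacked corner already displayed in Lemma~\ref{v_nright2}.
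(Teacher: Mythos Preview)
Your proposal is correct and follows essentially the same approach as the paper: start from the $T^{n-4}_R(u_n)$ supplied by Lemma~\ref{u_norder}, insert the four tail entries $n-3,n,n-1,n-2$ with the same case~(2) shuffles as in Lemma~\ref{u_n0}, and identify the result with $E(T_R(v_n),C_2)$ from Lemma~\ref{v_nright2} to conclude via Corollary~\ref{rightcells}. Your observation that the tail insertions are literally identical to those of Lemma~\ref{u_n0} (since only $\sign(u_n(1))$ differs between the two cases) is a nice streamlining that the paper leaves implicit by simply redoing the computation.
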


\begin{proof}
  By Lemma~\ref{u_norder} we have
  \begin{center}
    \begin{tikzpicture}[node distance=0 cm,outer sep = 0pt]
      \node[bver] (1) at ( 1.5,  3)   {1};
      \node[bver] (2) [below = of 1]   {\begin{sideways}
          $2$\end{sideways}};
      \node[bhor] (3) at (   3,3.5)    {3};
      \node[bver] (4) at ( 2.5,  2)    {\begin{sideways}
          $4$\end{sideways}};
      \node[bhor] (5) [right  = of 3]   {5};
      \node[bver] (6) [right  = of 4] {\begin{sideways}
          $6$\end{sideways}};  
      \node[bhor] (n-5) at (9.5,3.5) {$n-5$};
      \node[bver] (n-4) at   ( 7,  2)    {\begin{sideways}
          $n-4$\end{sideways}};
      \node at (-.25,2) {$T^{n-4}_R(u_n)=$};
      \draw[dashed] (  6, 4) -- (8.5, 4);
      \draw[dashed] (  6, 3) -- (8.5, 3);
      \draw[dashed] (4, 1) -- (6.5, 1);
      \node at (11,2) {.};
    \end{tikzpicture}  
  \end{center}
  
  Now $u_n(n-3)=n-3$, and since $n-3$ is larger than any of the labels
  of dominoes in $T_R^{n-4}(u_n)$ we simply add a horizontal domino
  with label $n-3$ to the end of the first row.  Similarly,
  $u_n(n-2)=n$, so we add a horizontal domino with label $n$ to the
  end of the first row, thus obtaining
  \begin{center}
    \begin{tikzpicture}[node distance=0 cm,outer sep = 0pt]
      \node[bver] (1) at ( 1.5,  3)   {1};
      \node[bver] (2) [below = of 1]   {\begin{sideways}
          $2$\end{sideways}};
      \node[bhor] (3) at (   3,3.5)    {3};
      \node[bver] (4) at ( 2.5,  2)    {\begin{sideways}
          $4$\end{sideways}};
      \node[bhor] (5) [right  = of 3]   {5};
      \node[bver] (6) [right  = of 4] {\begin{sideways}
          $6$\end{sideways}};  
      \node[bhor] (n-5) at (9.5,3.5) {$n-5$};
      \node[bhor] (n-3) [right = of n-5] {$n-3$};
      \node[bhor] (n) [right = of n-3] {$n$};
      \node[bver] (n-4) at   ( 7,  2)    {\begin{sideways}
          $n-4$\end{sideways}};
      \node at (-.25,2) {$T^{n-2}_R(u_n)=$};
      \draw[dashed] (  6, 4) -- (8.5, 4);
      \draw[dashed] (  6, 3) -- (8.5, 3);
      \draw[dashed] (4, 1) -- (6.5, 1);
      \node at (14.75,2) {.};
    \end{tikzpicture} 
  \end{center}

  Next, $u_n(n-1)=n-1$, so we must add a horizontal domino with label
  $n-1$.  We first remove then domino with label $n$ and place a
  horizontal domino with label $n-1$ at the end of the first row.
  When we replace the domino with label $n$ we see that it overlaps
  fully with the domino with label $n-1$, so we use case (2) of
  Definition~\ref{shuffle} to bump the domino with label $n$ to the
  end of the second row, obtaining 
  \begin{center}
    \begin{tikzpicture}[node distance=0 cm,outer sep = 0pt]
      \node[bver] (1) at ( 1.5,  3)   {1};
      \node[bver] (2) [below = of 1]   {\begin{sideways}
          $2$\end{sideways}};
      \node[bhor] (3) at (   3,3.5)    {3};
      \node[bver] (4) at ( 2.5,  2)    {\begin{sideways}
          $4$\end{sideways}};
      \node[bhor] (5) [right  = of 3]   {5};
      \node[bver] (6) [right  = of 4] {\begin{sideways}
          $6$\end{sideways}};  
      \node[bhor] (n-5) at (9.5,3.5) {$n-5$};
      \node[bhor] (n-3) [right = of n-5] {$n-3$};
      \node[bhor] (n-1) [right = of n-3] {$n-1$};
      \node[bver] (n-4) at   ( 7,  2)    {\begin{sideways}
          $n-4$\end{sideways}};
      \node[bhor] (n) at (8.5, 2.5) {$n$};
      \node at (-.25,2) {$T^{n-1}_R(u_n)=$};
      \draw[dashed] (  6, 4) -- (8.5, 4);
      \draw[dashed] (  6, 3) -- (8.5, 3);
      \draw[dashed] (4, 1) -- (6.5, 1);
      \node at (14.75,2) {.};
    \end{tikzpicture}
  \end{center}

  Finally, $u_n(n)=n-2$, so we conclude by adding a domino with label
  $n-2$.  We remove the dominoes with labels $n-1$ and $n$ and add a
  horizontal domino with label $n-2$ to the end of the first row.
  When we replace the domino with label $n-1$ it fully overlaps with
  the domino with label $n-2$, so we use case (2) of
  Definition~\ref{shuffle} to bump the domino with label $n-1$ to the
  end of the second row
  \begin{center}
    \begin{tikzpicture}[node distance=0 cm,outer sep = 0pt]
      \node[bver] (1) at ( 1.5,  3)   {1};
      \node[bver] (2) [below = of 1]   {\begin{sideways}
          $2$\end{sideways}};
      \node[bhor] (3) at (   3,3.5)    {3};
      \node[bver] (4) at ( 2.5,  2)    {\begin{sideways}
          $4$\end{sideways}};
      \node[bhor] (5) [right  = of 3]   {5};
      \node[bver] (6) [right  = of 4] {\begin{sideways}
          $6$\end{sideways}};  
      \node[bhor] (n-5) at (9.5,3.5) {$n-5$};
      \node[bhor] (n-3) [right = of n-5] {$n-3$};
      \node[bhor] (n-2) [right = of n-3] {$n-2$};
      \node[bver] (n-4) at   ( 7,  2)    {\begin{sideways}
          $n-4$\end{sideways}};
      \node[bhor, fill=shade] (n-1) at (8.5, 2.5) {\shortstack{$n-1$\\\color{white}{$\boldsymbol n$}}};
      \draw[dashed] (  6, 4) -- (8.5, 4);
      \draw[dashed] (  6, 3) -- (8.5, 3);
      \draw[dashed] (4, 1) -- (6.5, 1);
      \node at (15,2) {.};
    \end{tikzpicture}
  \end{center}

  Then when the domino with label $n$ is replaces it fully overlaps
  with the domino with label $n-1$, so we again use case (2) of
  Definition~\ref{shuffle} to bump the domino with label $n$ to the
  end of the third row. Then we obtain
  \begin{center}
    \begin{tikzpicture}[node distance=0 cm,outer sep = 0pt]
      \node[bver] (1) at ( 1.5,  3)   {1};
      \node[bver] (2) [below = of 1]   {\begin{sideways}
          $2$\end{sideways}};
      \node[bhor] (3) at (   3,3.5)    {3};
      \node[bver] (4) at ( 2.5,  2)    {\begin{sideways}
          $4$\end{sideways}};
      \node[bhor] (5) [right  = of 3]   {5};
      \node[bver] (6) [right  = of 4] {\begin{sideways}
          $6$\end{sideways}};  
      \node[bhor] (n-5) at (9.5,3.5) {$n-5$};
      \node[bhor] (n-3) [right = of n-5] {$n-3$};
      \node[bhor] (n-2) [right = of n-3] {$n-2$};
      \node[bver] (n-4) at   ( 7,  2)    {\begin{sideways}
          $n-4$\end{sideways}};
      \node[bhor] (n-1) at (8.5, 2.5) {$n-1$};
      \node[bhor] (n) [below = of n-1] {$n$};
      \node at (-.25,2) {$T_R(u_n)=$};
      \draw[dashed] (  6, 4) -- (8.5, 4);
      \draw[dashed] (  6, 3) -- (8.5, 3);
      \draw[dashed] (4, 1) -- (6.5, 1);
      \node at (15,2) {.};
    \end{tikzpicture}
  \end{center}

  Now by Lemma~\ref{v_nright2} we see that $T_R(u_n)=E(T_R(v_n),C_2)$,
  so $T_R(u_n)\approx T_R(v_n)$, thus by Corollary~\ref{rightcells} we
  see that $u_n\sim_R v_n$.
\end{proof}

\begin{proposition}\label{lrcell}
  If $n\geq 8$ is even, then $w_n\sim_{LR} u_n$.
\end{proposition}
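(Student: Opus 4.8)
The plan is to assemble the proposition directly from the left- and right-cell equivalences already established for the auxiliary element $v_n$, exploiting the fact that both $\sim_L$ and $\sim_R$ refine $\sim_{LR}$. First I would record this refinement from the definitions: if $x \sim_L w$ then $x \leq_L w$ and $w \leq_L x$, and since every $\leq_L$ chain is in particular a $\leq_{LR}$ chain, we obtain $x \leq_{LR} w$ and $w \leq_{LR} x$, that is, $x \sim_{LR} w$. The identical argument with $\leq_R$ in place of $\leq_L$ shows that $x \sim_R w$ also implies $x \sim_{LR} w$. Thus both one-sided equivalences feed into the two-sided one.

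With that observation in hand the argument splits along $n \bmod 4$, and for even $n \geq 8$ there are exactly two cases. If $n \equiv 0 \bmod 4$, then Lemma~\ref{v_n0} gives $w_n \sim_L v_n$ and Lemma~\ref{u_n0} gives $v_n \sim_R u_n$. If $n \equiv 2 \bmod 4$, then Lemma~\ref{v_n2} gives $w_n \sim_L v_n$ and Lemma~\ref{u_n2} gives $v_n \sim_R u_n$. In either case the refinement above upgrades these to $w_n \sim_{LR} v_n$ and $v_n \sim_{LR} u_n$, and I would then invoke the transitivity of the equivalence relation $\sim_{LR}$ to conclude that $w_n \sim_{LR} u_n$.

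The only point worth checking carefully, rather than any genuine obstacle, is that the hypotheses match: all four cited lemmas require $n \geq 8$ (the bound ensuring that $u_n = w_{n-4} s_n s_{n-1} s_n$ and the cycle $C_2$ are well defined), which is precisely the standing hypothesis of the proposition, and the two congruence classes $0$ and $2 \bmod 4$ exhaust all even $n$. Beyond this bookkeeping the proposition is a purely formal consequence of the domino-tableau cell computations carried out in Lemmas~\ref{v_n0} through~\ref{u_n2} together with the elementary fact that single-sided Kazhdan--Lusztig equivalence implies two-sided equivalence.
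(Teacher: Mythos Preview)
Your proposal is correct and follows essentially the same approach as the paper: invoke Lemmas~\ref{v_n0} and~\ref{v_n2} to get $w_n\sim_L v_n$, then Lemmas~\ref{u_n0} and~\ref{u_n2} to get $v_n\sim_R u_n$, and conclude $w_n\sim_{LR} u_n$. The paper's proof is a single sentence doing precisely this; your version merely adds the (routine) justification that $\sim_L$ and $\sim_R$ refine $\sim_{LR}$, and one small inaccuracy is that Lemmas~\ref{v_n0} and~\ref{v_n2} only require $n\geq 6$, not $n\geq 8$, though this is harmless under the proposition's hypothesis.
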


\begin{proof}
  It follows from Lemmas \ref{v_n0} and \ref{v_n2} that $w_n\sim_L
  v_n$, and it follows from Lemmas \ref{u_n0} and \ref{u_n2} that $v_n\sim_R u_n$,
  thus we have $w_n\sim_{LR} u_n$.
\end{proof}

\section{Two-sided cells of \texorpdfstring{$w_n$}{wn} for small values of \texorpdfstring{$n$}{n}}

We have now found elements, $u_n$, in the same Kazhdan--Lusztig cell
as $w_n$ for $n\geq 8$. As we will see in Section~\ref{avalues} these
will allow us to calculate $a(w_n)$ for $n\geq 8$. We are left to
calculate simpler elements in the same two-sided cell as $w_4$ and
$w_6$.

\begin{proposition}\label{w4cell}
  We have $w_4\sim_{LR} s_1s_2s_4$.
\end{proposition}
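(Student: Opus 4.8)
The plan is to prove the stronger statement $w_4 \sim_L s_1s_2s_4$, which immediately yields $w_4 \sim_{LR} s_1s_2s_4$, by computing left domino tableaux and invoking Theorem~\ref{leftcells}. Thus it suffices to exhibit a sequence of open cycles carrying $T_L(w_4)$ to $T_L(s_1s_2s_4)$. Both elements are involutions (the former by Lemma~\ref{baseeven}, the latter because $s_1,s_2,s_4$ mutually commute), so $T_L=T_R$ for each; in particular no separate right-cell computation is required and a left-cell equivalence is all we need.

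First I would record the two signed permutations. By Corollary~\ref{dmap}, $w_4=(1,\underline4,3,\underline2)$, and since $4\equiv 0\bmod 4$ its tableau is the one from Lemma~\ref{badtableaux0}: the top row holds the horizontal dominoes $1$ and $3$, while columns $1$ and $2$ of rows $2$--$3$ hold the vertical dominoes $2$ and $4$, so $\shape(T_L(w_4))=(4,2,2)$. Using Proposition~\ref{smultiply} one computes $s_1s_2s_4=(\underline1,\underline2,4,3)$, and building its tableau directly by the procedure of Remark~\ref{build} (add $1,2$ as vertical dominoes down the first column, then shuffle in the horizontal dominoes $4$ and $3$) gives the tableau with domino $1$ vertical in column $1$ (rows $1$--$2$), dominoes $3,4$ horizontal in rows $1,2$ (columns $2$--$3$), and domino $2$ vertical in column $1$ (rows $3$--$4$), so $\shape(T_L(s_1s_2s_4))=(3,3,1,1)$.

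Next I would locate the cycles of $T_L(w_4)$. Computing the primed dominoes via Definition~\ref{Pprime} (equivalently, reading off the $r$-values and comparing $r<k$ versus $r>k$ as in Remark~\ref{computer}), the cycle decomposition of $T_L(w_4)$ is $C_1=\{1,2,3\}$ together with $C_2=\{4\}$. In each case the variable square of the shifted domino lands outside $T_L(w_4)$, so both $C_1$ and $C_2$ are open. Moving $T_L(w_4)$ through $C_1$ turns domino $1$ vertical, shifts domino $3$ one step left, and slides domino $2$ down; moving the resulting tableau through $C_2$ then flips domino $4$ from vertical to horizontal, and the outcome is exactly $T_L(s_1s_2s_4)$. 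Since $C_1,C_2$ are disjoint cycles of $T_L(w_4)$ the two moves commute, and one checks that $C_2$ is still an open cycle of the intermediate tableau. Hence $T_L(w_4)\approx T_L(s_1s_2s_4)$, so Theorem~\ref{leftcells} gives $w_4\sim_L s_1s_2s_4$ and therefore $w_4\sim_{LR}s_1s_2s_4$.

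The only genuine work is the bookkeeping in the two cycle computations: correctly identifying the fixed square of each domino, evaluating the comparison of $r$ and $k$ in Definition~\ref{Pprime} to obtain each $P'(T,k)$, and verifying the open status from where the shifted variable squares fall. There is no conceptual obstacle, since the rank is small and every step is a finite check; the main risk is an arithmetic slip in the shuffles that build $T_L(s_1s_2s_4)$ or in the primed dominoes. I would guard against this by confirming that each move preserves the total of four dominoes and produces a genuine Young-diagram shape at every stage, and by cross-checking $\ell(s_1s_2s_4)=3$ against Proposition~\ref{dlength}.
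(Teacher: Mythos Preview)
Your proposal is correct and follows essentially the same approach as the paper's proof: compute $T_L(w_4)$ via Lemma~\ref{badtableaux0}, compute $T_L(s_1s_2s_4)$ directly from the signed permutation $(\underline{1},\underline{2},4,3)$, identify the open cycles $C_1=\{1,2,3\}$ and $C_2=\{4\}$, and verify that $E(T_L(w_4),C_1,C_2)=T_L(s_1s_2s_4)$ to conclude $w_4\sim_L s_1s_2s_4$ by Theorem~\ref{leftcells}. The only cosmetic difference is that the paper simply invokes the commutativity of cycle moves rather than rechecking that $C_2$ remains open in the intermediate tableau.
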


\begin{proof}
  We know that
  \[
  w_4 = (1,\underline{4},3,\underline{2}),
  \]
  so using Remark~\ref{computer} we can calculate $T_L(w_4)$.
  \begin{center}
    \begin{tikzpicture}[node distance=4cm,>=latex']
      \node (b)
      {
        \begin{tikzpicture}[node distance=0 cm,outer sep = 0pt]
          \node[smhor] (1) at ( .5,1.5)   {\rm{1}};
          \node[rectangle, thick, minimum width=.5cm,
          minimum height=1cm] [below = of 1] {}; 
        \end{tikzpicture}
      };
      \node[below of=b,node distance=1.5cm] {$T^1_L(w_4)$};
      \node[right of=b] (c)
      {
        \begin{tikzpicture}[node distance=0 cm,outer sep = 0pt]
          \node[smhor] (1) at ( .5, 1.5)   {\rm{1}};
          \node[smver] (4) at (.25, .75)   {\rm{4}};
        \end{tikzpicture}
      };
      \node[below of=c,node distance=1.5cm] {$T^2_L(w_4)$};
      \node[right of=c] (d)
      {
        \begin{tikzpicture}[node distance=0 cm,outer sep = 0pt]
          \node[smhor] (1) at ( .5, 1.5)   {\rm{1}};
          \node[smver] (4) at (.25, .75)   {\rm{4}};
          \node[smhor] (3) [right = of 1]   {\rm{3}};
        \end{tikzpicture}
      };
      \node[below of=d,node distance=1.5cm] {$T^3_L(w_4)$};
      \node[right of=d] (e)
      {
        \begin{tikzpicture}[node distance=0 cm,outer sep = 0pt]
          \node[smhor] (1) at ( .5, 1.5)   {\rm{1}};
          \node[smver] (2) at (.25, .75)   {\rm{2}};
          \node[smhor] (3) [right = of 1]   {\rm{3}};
          \node[smver] (4) [right = of 2]   {\rm{4}};
        \end{tikzpicture}
      };
      \node[below of=e,node distance=1.5cm] {$T_L(w_4)$};
      \draw[shorten >=0.5cm,shorten <=0.5cm,->,thick] (b)--(c);
      \draw[shorten >=0.5cm,shorten <=0.5cm,->,thick] (c)--(d);
      \draw[shorten >=0.5cm,shorten <=0.5cm,->,thick] (d)--(e);
    \end{tikzpicture}
  \end{center}

  Now we shade the fixed squares and label the $r$-values
  \begin{center}
    \begin{tikzpicture}[node distance=0 cm,outer sep = 0pt]
      \node[fix] at  (  .5, 1.5) {};
      \node[fix] at  ( 1.5, 1.5) {};
      \node[fix] at  (  .5,  .5) {};
      \node[fix] at  (   1,   1) {};
      \node[smhor] (1) at (.75, 1.5)   {\rm{1}};
      \node[smver] (2) at ( .5, .75)   {\rm{2}};
      \node[smhor] (3) [right = of 1]   {\rm{3}};
      \node[smver] (4) [right = of 2]   {\rm{4}};
      \node[redr] at (   0,   1) {\hspace{-.02in}$r_1$};
      \node[redr] at (   1,   1) {\hspace{-.02in}$r_3$};
      \node[redr] at (   0,   0) {\hspace{-.02in}$r_2$};
      \node[redr] at ( 1.5, 1.5) {\hspace{-.02in}$r_4$};
      \node at (2.5,.75) {.};
    \end{tikzpicture}
  \end{center}
  so using Remark~\ref{computer} we can calculate 
  \begin{center}
    \begin{tikzpicture}[node distance=0 cm,outer sep = 0pt]
      \node[smver] (1) at ( 1, 1.5)   {\rm{1}};
      \node[smver] (2) [below = of 1]   {\rm{2}};
      \node[smhor] (3) at (1.75, 1.75)   {\rm{3}};
      \node[smhor] (4) [below  = of 3]   {\rm{4}};
      \node at (-.75,1) {$(T_L(w_4))' = $};
      \node at (2.5,1) {.};
    \end{tikzpicture}
  \end{center}

  Then $C_1=\{1,2,3\}$ and $C_2=\{4\}$ are open cycles, and
  \begin{center}
    \begin{tikzpicture}[node distance=0 cm,outer sep = 0pt]
      \tikzstyle{ver}=[rectangle, draw, thick, minimum width=.5cm,
      minimum height=1cm]         
      \tikzstyle{hor}=[rectangle, draw, thick, minimum width=1cm,
      minimum height=.5cm]
      \node[ver] (1) at ( 1, 1.5)   {\rm{1}};
      \node[ver] (2) [below = of 1]   {\rm{2}};
      \node[hor] (3) at (1.75, 1.75)   {\rm{3}};
      \node[hor] (4) [below  = of 3]   {\rm{4}};
      \node at (-1.25,1) {$E(T_L(w_4),C_1,C_2) = $};
      \node at (2.5,1) {.};
    \end{tikzpicture}
  \end{center}

  Now as a signed permutation we have
  \[
  s_1s_2s_4=(\underline{1},\underline{2},4,3),
  \]
  so we can calculate $T_L(s_1s_2s_4)$.
  \begin{center}
    \begin{tikzpicture}[node distance=4cm,>=latex']
      \node (b)
      {
        \begin{tikzpicture}[node distance=0 cm,outer sep = 0pt]
          \node[smver] (1) at (  .5, 1.5)   {\rm{1}};
          \node[rectangle, thick, minimum width=.5cm,
          minimum height=1cm] [below = of 1] {}; 
        \end{tikzpicture}
      };
      \node[below of=b,node distance=1.5cm] {$T^1_L(s_1s_2s_4)$};
      \node[right of=b] (c)
      {
        \begin{tikzpicture}[node distance=0 cm,outer sep = 0pt]
          \node[smver] (1) at (  .5, 1.5)   {\rm{1}};
          \node[smver] (2) [below = of 1]  {\rm{2}};
        \end{tikzpicture}
      };
      \node[below of=c,node distance=1.5cm] {$T^2_L(s_1s_2s_4)$};
      \node[right of=c] (d)
      {
        \begin{tikzpicture}[node distance=0 cm,outer sep = 0pt]
          \node[smver] (1) at (  .5, 1.5)   {\rm{1}};
          \node[smver] (2) [below = of 1]   {\rm{2}};
          \node[smhor] (4) at (1.25, 1.75)  {\rm{4}};
        \end{tikzpicture}
      };
      \node[below of=d,node distance=1.5cm] {$T^3_L(s_1s_2s_4)$};
      \node[right of=d] (e)
      {
        \begin{tikzpicture}[node distance=0 cm,outer sep = 0pt]
          \node[smver] (1) at (  .5, 1.5)   {\rm{1}};
          \node[smver] (2) [below = of 1]   {\rm{2}};
          \node[smhor] (3) at (1.25, 1.75)  {\rm{3}};
          \node[smhor] (4) [below = of 3]   {\rm{4}};
        \end{tikzpicture}
      };
      \node[below of=e,node distance=1.5cm] {$T_L(s_1s_2s_4)$};
      \draw[shorten >=0.5cm,shorten <=0.5cm,->,thick] (b)--(c);
      \draw[shorten >=0.5cm,shorten <=0.5cm,->,thick] (c)--(d);
      \draw[shorten >=0.5cm,shorten <=0.5cm,->,thick] (d)--(e);
    \end{tikzpicture}
  \end{center}

  Then $T_L(s_1s_2s_3)=E(T_L(w_4),C_1,C_2)$, thus by
  Theorem~\ref{leftcells} $w_4\sim_{L}s_1s_2s_4$, so
  $w_4\sim_{LR}s_1s_2s_4$.
\end{proof}

\begin{lemma}\label{w6cell}
  We have $w_6\sim_{LR}s_1s_2s_6s_5s_6$.
\end{lemma}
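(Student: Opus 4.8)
The plan is to route the two-sided equivalence through the element $v_6=s_6s_5s_6w_4$ already analyzed in this section. Since $6\equiv 2\bmod 4$, Lemma~\ref{v_n2} applies and gives $w_6\sim_L v_6$, hence $w_6\sim_{LR}v_6$. Because $\sim_{LR}$ is transitive, it then suffices to prove that $s_1s_2s_6s_5s_6\sim_R v_6$. For the right-cell step I would appeal to Remark~\ref{v_nright6}, which records $T_R(v_6)$ together with its open cycle $C_2=\{1,2,3,4\}$ and the tableau $E(T_R(v_6),C_2)$; by Corollary~\ref{rightcells}, any element $y$ with $T_R(y)=E(T_R(v_6),C_2)$ satisfies $y\sim_R v_6$. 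So the task reduces to exhibiting $s_1s_2s_6s_5s_6$ as such a $y$.

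First I would compute the signed permutation of $x:=s_1s_2s_6s_5s_6$. Using the embedding of Example~\ref{typedintro} (equivalently, repeated application of Proposition~\ref{smultiply}), one finds
\[
x=s_1s_2s_6s_5s_6=(\underline{1},\underline{2},3,6,5,4).
\]
A direct check shows $x=x^{-1}$, so $x$ is an involution and therefore $T_R(x)=T_L(x^{-1})=T_L(x)$. Thus I only need to build the single tableau $T_L(x)$.

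Next I would construct $T_L(x)$ by the procedure of Remark~\ref{build}. Since $x^{-1}=(\underline{1},\underline{2},3,6,5,4)$, the dominoes are inserted with labels $1,2,3,6,5,4$ in that order, the first two vertical and the last four horizontal (orientations dictated by the signs $\sign(x^{-1}(i))$). Labels $1$ and $2$ fill the first column, labels $3$ and $6$ extend the first row; inserting $5$ then displaces $6$ into the second row via the $A$-map of Definition~\ref{shuffle}, and inserting $4$ displaces $5$ into the second row and $6$ into the third. The resulting tableau is exactly the $E(T_R(v_6),C_2)$ displayed in Remark~\ref{v_nright6} (dominoes $1,2$ vertical in column one, $3,4$ horizontal in row one, $5$ horizontal in row two, $6$ horizontal in row three). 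Hence $T_R(x)=T_L(x)=E(T_R(v_6),C_2)\approx T_R(v_6)$, so by Corollary~\ref{rightcells} we get $x\sim_R v_6$; combined with $w_6\sim_L v_6$ this yields $w_6\sim_{LR}s_1s_2s_6s_5s_6$.

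The only delicate part is the shuffling in the construction of $T_L(x)$: one must carefully track how inserting the smaller-labeled dominoes $5$ and $4$ bumps the already-placed larger-labeled dominoes downward through the cases of Definition~\ref{shuffle}, and then verify the final configuration is identical to $E(T_R(v_6),C_2)$. Everything else is bookkeeping with the cited lemmas.
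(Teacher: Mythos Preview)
Your proposal is correct and follows essentially the same route as the paper's own proof: both pass through $v_6$ via Lemma~\ref{v_n2} for the left-cell step, invoke Remark~\ref{v_nright6} for $T_R(v_6)$ and its open cycle $C_2=\{1,2,3,4\}$, compute the signed permutation $(\underline{1},\underline{2},3,6,5,4)$ of $s_1s_2s_6s_5s_6$ (noting it is an involution), build its tableau by inserting labels $1,2,3,6,5,4$ with the appropriate shuffling, and match the result with $E(T_R(v_6),C_2)$ to conclude the right-cell equivalence via Corollary~\ref{rightcells}.
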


\begin{proof}
  We found that
  \begin{center}
    \begin{tikzpicture}[node distance=0 cm,outer sep = 0pt]
      \tikzstyle{ver}=[rectangle, draw, thick, minimum width=.5cm,
      minimum height=1cm]         
      \tikzstyle{hor}=[rectangle, draw, thick, minimum width=1cm,
      minimum height=.5cm]
      \node[ver] (1) at (   1,  1.5)   {\rm{1}};
      \node[ver] (2) [below  = of 1]   {\rm{2}};
      \node[hor] (3) at (1.75, 1.75)   {\rm{3}};
      \node[ver] (4) at ( 1.5,    1)   {\rm{4}};
      \node[hor] (5) [right  = of 3]   {\rm{5}};
      \node[ver] (6) [right  = of 4]   {\rm{6}};
      \node at (-.25,1) {$T_L(w_6) = $};
    \end{tikzpicture}
  \end{center}
  in Example~\ref{buildT}.  Now $v_6=s_6s_5s_6 w_4$ and by
  Lemma~\ref{v_n2} we have $w_6\sim_L v_6$.  By Lemma~\ref{v_nright2}
  and Remark~\ref{v_nright6} we have
  \begin{center}
    \begin{tikzpicture}[node distance=0 cm,outer sep = 0pt]
      \tikzstyle{ver}=[rectangle, draw, thick, minimum width=.5cm,
      minimum height=1cm]         
      \tikzstyle{hor}=[rectangle, draw, thick, minimum width=1cm,
      minimum height=.5cm]
      \node[hor] (1) at ( .75, 1.75)   {\rm{1}};
      \node[ver] (2) at (  .5,    1)   {\rm{2}};
      \node[hor] (3) at (1.75, 1.75)   {\rm{3}};
      \node[hor] (5) at (1.25, 1.25)   {\rm{5}};
      \node[hor] (4) [right  = of 3]   {\rm{4}};
      \node[hor] (6) [below  = of 5]   {\rm{6}};
      \node at (-.75, 1.25) {$T_R(v_6)=$};
      \node at (3.5, 1.25) {,};
    \end{tikzpicture}
  \end{center}
  and $C_2=\{1,2,3,4\}$ is an open cycle in $T_R(v_6)$.  If
  we move $T_R(v_6)$ through $C_2$ we get
  \begin{center}
    \begin{tikzpicture}[node distance=0 cm,outer sep = 0pt]
      \node[smver] (1) at (   1,  1.5)   {\rm{1}};
      \node[smver] (2) [below  = of 1]   {\rm{2}};
      \node[smhor] (3) at (1.75, 1.75)   {\rm{3}};
      \node[smhor] (5) [below  = of 3]   {\rm{5}};
      \node[smhor] (4) [right  = of 3]   {\rm{4}};
      \node[smhor] (6) [below  = of 5]   {\rm{6}};
      \node at ( -1,1) {$E(T_R(v_6), C_2) = $};
      \node at (3.5, 1) {.};
    \end{tikzpicture}
  \end{center}

  Now we will calculate $T_R(s_1s_2s_6s_5s_6)$.  First we can
  calculate that as a signed permutation we have
  \[
  s_1s_2s_6s_5s_6 = (s_1s_2s_6s_5s_6)^{-1} =
  (\underline{1},\underline{2},3,6,5,4).
  \]
  Then we have
  \begin{center}
    \begin{tikzpicture}[node distance=4cm,>=latex']
      \node (b)
      {
        \begin{tikzpicture}[node distance=0 cm,outer sep = 0pt]
          \node[smver] (1) at (   1,  1.5)   {\rm{1}};
          \node[rectangle, thick, minimum width=.5cm,
          minimum height=1cm] [below = of 1] {}; 
        \end{tikzpicture}
      };
      \node[below of=b,node distance=1.5cm] {$T^1_R(s_1s_2s_6s_5s_6)$};
      \node[right of=b] (c)
      {
        \begin{tikzpicture}[node distance=0 cm,outer sep = 0pt]
          \node[smver] (1) at (   1,  1.5)   {\rm{1}};
          \node[smver] (2) [below  = of 1]   {\rm{2}};
        \end{tikzpicture}
      };
      \node[below of=c,node distance=1.5cm] {$T^2_R(s_1s_2s_6s_5s_6)$};
      \node[right of=c] (d)
      {
        \begin{tikzpicture}[node distance=0 cm,outer sep = 0pt]
          \node[smver] (1) at (   1,  1.5)   {\rm{1}};
          \node[smver] (2) [below  = of 1]   {\rm{2}};
          \node[smhor] (3) at (1.75, 1.75)   {\rm{3}};
        \end{tikzpicture}
      };
      \node[below of=d,node distance=1.5cm] {$T^3_R(s_1s_2s_6s_5s_6)$};
      \node[right of=d] (e)
      {
        \begin{tikzpicture}[node distance=0 cm,outer sep = 0pt]
          \node[smver] (1) at (   1,  1.5)   {\rm{1}};
          \node[smver] (2) [below  = of 1]   {\rm{2}};
          \node[smhor] (3) at (1.75, 1.75)   {\rm{3}};
          \node[smhor] (6) [right  = of 3]   {\rm{6}};
        \end{tikzpicture}
      };
      \node[below of=e,node distance=1.5cm] {$T^4_R(s_1s_2s_6s_5s_6)$};
      \node[right of=e, node distance=2.5cm] (f) {};
      \draw[shorten >=0.5cm,shorten <=0.5cm,->,thick] (b)--(c);
      \draw[shorten >=0.5cm,shorten <=0.5cm,->,thick] (c)--(d);
      \draw[shorten >=0.5cm,shorten <=0.5cm,->,thick] (d)--(e);
      \draw[shorten <=0.5cm,->,thick] (e)--(f);
    \end{tikzpicture}
  \end{center}

  \begin{center}
    \begin{tikzpicture}[node distance=4cm,>=latex']
      \node (e) {};
      \node[right of=e, node distance=2.5cm] (f)
      {
        \begin{tikzpicture}[node distance=0 cm,outer sep = 0pt]
          \node[smver] (1) at (   1,  1.5)   {\rm{1}};
          \node[smver] (2) [below  = of 1]   {\rm{2}};
          \node[smhor] (3) at (1.75, 1.75)   {\rm{3}};
          \node[smhor] (6) [below  = of 3]   {\rm{6}};
          \node[smhor] (5) [right  = of 3]   {\rm{5}};
        \end{tikzpicture}
      };
      \node[below of=f,node distance=1.5cm] {$T^5_R(s_1s_2s_6s_5s_6)$};
      \node[right of=f, node distance=5cm] (g)
      {
        \begin{tikzpicture}[node distance=0 cm,outer sep = 0pt]
          \node[smver] (1) at (   1,  1.5)   {\rm{1}};
          \node[smver] (2) [below  = of 1]   {\rm{2}};
          \node[smhor] (3) at (1.75, 1.75)   {\rm{3}};
          \node[smhor] (5) [below  = of 3]   {\rm{5}};
          \node[smhor] (4) [right  = of 3]   {\rm{4}};
          \node[smhor] (6) [below  = of 5]   {\rm{6}};
        \end{tikzpicture}
      };
      \node[below of=g,node distance=1.5cm] {$T_R(s_1s_2s_6s_5s_6)$};
      \draw[shorten >=0.5cm,->,thick] (e)--(f);
      \draw[shorten >=0.5cm,shorten <=0.5cm,->,thick] (f)--(g);
    \end{tikzpicture}
  \end{center}
  so $E(T_R(v_6),C_2)=T_R(s_1s_2s_6s_5s_6)$.  Then $v_6\sim_R
  s_1s_2s_6s_5s_6$ by Corollary~\ref{rightcells}, thus $w_6\sim_L
  v_6\sim_R s_1s_2s_6s_5s_6$, so $w_6\sim_{LR} s_1s_2s_6s_5s_6$.
\end{proof}

\section{Proof of main result}\label{avalues}

Recall Lusztig's $a$-function, from Definition~\ref{afunction}. We now
have sufficient information to calculate Lusztig's $a$-function on bad
elements. This will allow us to bound the degree of key
Kazhdan--Lusztig polynomials and to eventually prove our main result
in Theorem~\ref{mainresult}:
\begin{theorem*}
  Let $x,w\in W(D_n)$ be such that $x$ is fully commutative. Then
  $\mu(x,w)\in\{0,1\}$.
\end{theorem*}

\begin{proposition}\label{avaluecalc}
  If $n\in\mathbb{N}$ is even, then
  \[
  a(w_n)=
  \begin{dcases}
    \frac{3n}{4} &\mbox{if }n\equiv 0\bmod 4;\\
    \frac{3n+2}{4} &\mbox{if }n\equiv 2\bmod 4.\\
  \end{dcases}
  \]
\end{proposition}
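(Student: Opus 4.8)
The plan is to exploit the fact that, since $W(D_n)$ is a finite Weyl group, Lusztig's $a$-function is constant on two-sided cells by Lemma~\ref{avaluecells}. It therefore suffices to replace $w_n$ by the cell representatives produced in the preceding sections and to compute the $a$-value of those representatives, which is easy because their supports decompose into small, mutually non-adjacent blocks of the Coxeter graph. Throughout I would combine three tools: Lemma~\ref{aparabolic} to pass from $W(D_n)$ to the parabolic subgroup generated by a support, Lemma~\ref{aadditive} to split $a$ over the connected components of that support, and Lemma~\ref{alongest} to evaluate $a$ on the longest element of each component.

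For the base cases $n=4$ and $n=6$ I would invoke Proposition~\ref{w4cell} and Lemma~\ref{w6cell}, giving $w_4\sim_{LR}s_1s_2s_4$ and $w_6\sim_{LR}s_1s_2s_6s_5s_6$, so that $a(w_4)=a(s_1s_2s_4)$ and $a(w_6)=a(s_1s_2s_6s_5s_6)$. Here $\{s_1,s_2,s_4\}$ consists of three isolated vertices (type $A_1^3$) and $\{s_1,s_2,s_5,s_6\}$ of two isolated vertices together with the edge $s_5\!-\!s_6$ (type $A_1^2\times A_2$). Restricting to the parabolic on the support and splitting over components, each factor is the longest element of its block ($s_i=w_0(A_1)$ and $s_6s_5s_6=w_0(A_2)$), so Lemma~\ref{alongest} yields $a(s_1s_2s_4)=1+1+1=3=\tfrac{3\cdot 4}{4}$ and $a(s_1s_2s_6s_5s_6)=1+1+3=5=\tfrac{3\cdot 6+2}{4}$, matching the claim.

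The inductive step, for $n\geq 8$, is the heart of the argument, run as an induction in steps of four (so that $n=4$ seeds the class $n\equiv 0$ and $n=6$ seeds the class $n\equiv 2$). By Proposition~\ref{lrcell} we have $w_n\sim_{LR}u_n$ with $u_n=w_{n-4}s_ns_{n-1}s_n$, so $a(w_n)=a(u_n)$. The key structural observation is that $\supp(u_n)=\{s_1,\dots,s_{n-4}\}\cup\{s_{n-1},s_n\}$, and these two sets are non-adjacent in the Coxeter graph of $D_n$ (the vertices $s_{n-3},s_{n-2}$ separate them), so the induced subgraph is a disjoint union of a type $D_{n-4}$ diagram and a type $A_2$ diagram. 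Passing to this parabolic via Lemma~\ref{aparabolic} and splitting via Lemma~\ref{aadditive} gives $a(u_n)=a(w_{n-4})+a(s_ns_{n-1}s_n)$, where $w_{n-4}$ is precisely the longest bad element of that $D_{n-4}$ block and $s_ns_{n-1}s_n=w_0(A_2)$ contributes $a=3$ by Lemma~\ref{alongest}. Applying the inductive hypothesis, the two residues close: for $n\equiv 0$ one gets $\tfrac{3(n-4)}{4}+3=\tfrac{3n}{4}$, and for $n\equiv 2$ one gets $\tfrac{3(n-4)+2}{4}+3=\tfrac{3n+2}{4}$.

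The main obstacle is not any single estimate, since the substantive work of locating cell representatives was already carried out in Propositions~\ref{lrcell}, \ref{w4cell} and Lemma~\ref{w6cell}. Rather, the delicate point is the careful verification that $\supp(u_n)$ genuinely disconnects into a $D_{n-4}$ block and an $A_2$ block; this is exactly why the threshold is $n\geq 8$, so that $n-4\geq 4$ and the first block is of type $D$ (and the parabolic $a$-value is the one the inductive hypothesis controls). The only other thing to watch is the simultaneous bookkeeping of the two residues modulo $4$, ensuring that the additive constant $3$ from the $A_2$ factor exactly accounts for the jump $a(w_n)-a(w_{n-4})$ in each class.
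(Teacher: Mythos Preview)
Your proof is correct and follows the same approach as the paper: establish the base cases $a(w_4)=3$ and $a(w_6)=5$ via the cell representatives of Proposition~\ref{w4cell} and Lemma~\ref{w6cell}, then use Proposition~\ref{lrcell} together with Lemmas~\ref{avaluecells}, \ref{alongest}, \ref{aparabolic}, and \ref{aadditive} to obtain the recursion $a(w_n)=a(w_{n-4})+3$ for $n\geq 8$. Your write-up is in fact more explicit than the paper's about why the base-case $a$-values equal $3$ and $5$ (spelling out the $A_1^3$ and $A_1^2\times A_2$ decompositions), but the underlying argument is identical.
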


\begin{proof}
  By Proposition~\ref{w4cell}, $a(w_4)=a(s_1s_2s_4)=3$ and by
  Lemmas~\ref{aparabolic} and~\ref{w6cell}
  $a(w_6)=a(s_1s_2s_6s_5s_6)=5$.

  By Proposition~\ref{lrcell} and Lemma~\ref{avaluecells} we know that
  $a(w_n)=a(u_n)$ when $n\geq 8$.  By Lemma~\ref{u_norder}, $u_n =
  w_{n-4}s_{n}s_{n-1}s_{n}$, we can use Lemmas~\ref{alongest},
  ~\ref{aparabolic}, and~\ref{aadditive} to see that
  \[
  a(w_n)=a(w_{n-4}s_{n}s_{n-1}s_{n})=a(w_{n-4})+3.
  \]

  Then we have
  \[
  a(w_n)=
  \begin{dcases}
    3+\dfrac{3}{4}(n-4) = \frac{3n}{4} &\mbox{if }n\equiv 0\bmod 4;\\
    5+\dfrac{3}{4}(n-4) = \frac{3n+2}{4} &\mbox{if }n\equiv 2\bmod 4.\\
  \end{dcases}
  \]
\end{proof}

\begin{lemma}\label{mu0mod4}
  If $n> 8$ is such that $n\equiv 0\bmod 4$, then $\mu(x_n,w_n) = 0$.
\end{lemma}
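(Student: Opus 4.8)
The plan is to bound the degree of the relevant Kazhdan--Lusztig polynomial strictly below the degree that $\mu$ records, using Lusztig's $a$-function. By Lemma~\ref{pew} we have $P_{x_n,w_n}=P_{e,w_n}$, so $\mu(x_n,w_n)$ is the coefficient of $q^{d}$ in $P_{e,w_n}$, where $d=\tfrac12(\ell(w_n)-\ell(x_n)-1)$. Hence it suffices to show $\deg P_{e,w_n}<d$, since then the $q^d$ coefficient vanishes.

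First I would record the two lengths. Since $x_n=s_1s_2s_4\cdots s_{n-2}s_n$ is a reduced product of $n/2+1$ mutually commuting (hence distinct) generators, $\ell(x_n)=n/2+1$. Because $n\equiv 0\bmod 4$, Proposition~\ref{avaluecalc} gives $a(w_n)=3n/4$. Proposition~\ref{abound} then yields $\deg P_{e,w_n}\le \tfrac12(\ell(w_n)-a(w_n))$. Comparing this bound with $d$, the $\ell(w_n)$ terms cancel, and the desired strict inequality $\tfrac12(\ell(w_n)-a(w_n))<d$ reduces to $a(w_n)>\ell(x_n)+1$, that is $\tfrac{3n}{4}>\tfrac n2+2$, which is equivalent to $n>8$. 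Thus for $n>8$ we obtain $\deg P_{e,w_n}<d$, forcing the coefficient of $q^d$ to be $0$, so $\mu(x_n,w_n)=0$.

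There is no serious obstacle remaining in this lemma itself: all the difficulty has been front-loaded into the computation of $a(w_n)$ in Proposition~\ref{avaluecalc}, which in turn rested on the cell calculations via domino tableaux. The one point to watch is that the degree relevant to $\mu$ is measured against $\ell(x_n)$ rather than $\ell(e)=0$, which is exactly what Lemma~\ref{pew} lets us exploit, and that the threshold $n>8$ is forced precisely by the inequality $a(w_n)>\ell(x_n)+1$.

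I would also remark in passing that when $n\equiv 4\bmod 8$ the quantity $\ell(w_n)-\ell(x_n)$ is even, so $d$ is not an integer and $\mu(x_n,w_n)=0$ already for the trivial reason of Definition~\ref{muvaldef}; the degree argument above is uniform in the residue class $n\equiv 0\bmod 4$ and subsumes this case, so the single computation above handles both subcases ($n\equiv 0$ and $n\equiv 4\bmod 8$) at once.
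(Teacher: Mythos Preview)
Your proof is correct and follows essentially the same approach as the paper: both use Lemma~\ref{pew} to reduce to $P_{e,w_n}$, then combine the $a$-value from Proposition~\ref{avaluecalc} with the degree bound of Proposition~\ref{abound}. Your version is slightly cleaner in that you observe the $\ell(w_n)$ terms cancel, reducing the desired inequality directly to $a(w_n)>\ell(x_n)+1$, whereas the paper computes $\ell(w_n)$ explicitly via Lemma~\ref{badlength} and compares the two numerical expressions; the content is the same.
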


\begin{proof}
  By Proposition~\ref{avaluecalc} and Lemma~\ref{badlength} we have
  \begin{align*}
    a(w_n) &= \frac{3n}{4}\\
    \ell(w_n) &= \frac{3n^2}{8}+\frac{n}{4}.
  \end{align*}
  Then by Proposition~\ref{abound}
  \[
  \deg(P_{e,w_n})\leq \frac{1}{2}\left(\frac{3n^2}{8} -
    \frac{n}{2}\right) = \frac{3n^2}{16} - \frac{n}{4}.
  \]
  We see that $\ell(x_n)=\frac{n}{2}+1$, so
  \[
  \frac{1}{2}(\ell(w_n)-\ell(x_n)-1) = \frac{3n^2}{16}-\frac{n}{8}-1.
  \]
  Then as long as $n>8$ we have $\deg(P_{e,w_n}) <
  \frac{1}{2}(\ell(w_n)-\ell(x_n)-1)$, so by Lemma~\ref{pew} we have
  $\mu(x_n,w_n) = 0$.
\end{proof}

\begin{lemma}\label{mu2mod4}
  If $n > 8$ is such that $n\equiv 2\bmod 4$, then $\mu(x_n,w_n) = 0$.
\end{lemma}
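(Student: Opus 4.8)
The plan is to follow the proof of Lemma~\ref{mu0mod4} verbatim, changing only the input $a$-value to reflect the $n\equiv 2\bmod 4$ case. First I would record the two key quantities from Proposition~\ref{avaluecalc} and Lemma~\ref{badlength}:
\[
a(w_n)=\frac{3n+2}{4},\qquad \ell(w_n)=\frac{3n^2}{8}+\frac{n}{4}.
\]
Feeding these into the degree estimate of Proposition~\ref{abound} gives
\[
\deg(P_{e,w_n})\leq\frac{1}{2}\bigl(\ell(w_n)-a(w_n)\bigr)=\frac{3n^2}{16}-\frac{n+1}{4}.
\]

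Next I would compute the degree at which $\mu(x_n,w_n)$ sits. Since $x_n=s_1s_2s_4s_6\cdots s_{n-2}s_n$ is a reduced product of the $\tfrac{n}{2}+1$ mutually commuting generators $s_1,s_2,s_4,\dots,s_n$, we have $\ell(x_n)=\tfrac{n}{2}+1$, whence
\[
\frac{1}{2}\bigl(\ell(w_n)-\ell(x_n)-1\bigr)=\frac{3n^2}{16}-\frac{n}{8}-1.
\]
The crux is then the comparison of these two numbers: I would verify that the degree bound is strictly below the target exponent, i.e.
\[
\frac{3n^2}{16}-\frac{n+1}{4}<\frac{3n^2}{16}-\frac{n}{8}-1,
\]
which, after cancelling the $\tfrac{3n^2}{16}$ terms and clearing denominators, reduces to $6<n$. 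As $n>8$ this holds. Finally, Lemma~\ref{pew} identifies $\mu(x_n,w_n)$ with the coefficient of $q^{\frac{1}{2}(\ell(w_n)-\ell(x_n)-1)}$ in $P_{e,w_n}$; since that exponent exceeds $\deg(P_{e,w_n})$, the coefficient vanishes, so $\mu(x_n,w_n)=0$.

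I do not expect a genuine obstacle, since the argument is structurally identical to the $0\bmod 4$ case and the only substantive change is that the slightly larger $a$-value $\tfrac{3n+2}{4}$ (versus $\tfrac{3n}{4}$) only tightens the degree bound. The one point worth checking is the threshold on $n$: here the reduced inequality is $n>6$ rather than $n>8$, so the hypothesis $n>8$ is comfortably sufficient. I would also note that the degree comparison is preferable to a parity count, because it disposes of both the subcase where $\ell(w_n)-\ell(x_n)$ is even (so the target exponent is a non-integer and $\mu$ vanishes by Definition~\ref{muvaldef}) and the subcase where it is odd, uniformly and without tracking $n\bmod 8$.
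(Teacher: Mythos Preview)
Your proof is correct and essentially identical to the paper's: both compute $a(w_n)=\tfrac{3n+2}{4}$ and $\ell(w_n)=\tfrac{3n^2}{8}+\tfrac{n}{4}$, apply Proposition~\ref{abound} to bound $\deg(P_{e,w_n})$ by $\tfrac{3n^2}{16}-\tfrac{n}{4}-\tfrac{1}{4}$ (which equals your $\tfrac{3n^2}{16}-\tfrac{n+1}{4}$), compare this to $\tfrac12(\ell(w_n)-\ell(x_n)-1)=\tfrac{3n^2}{16}-\tfrac{n}{8}-1$, note the inequality reduces to $n>6$, and conclude via Lemma~\ref{pew}. The paper likewise observes that $n>6$ suffices even though the hypothesis is $n>8$.
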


\begin{proof}
  By Proposition~\ref{avaluecalc} and Lemma~\ref{badlength} we have
  \begin{align*}
    a(w_n) &= \frac{3n+2}{4}\\
    \ell(w_n) &= \frac{3n^2}{8}+\frac{n}{4}.
  \end{align*}
  Then by Proposition~\ref{abound}
  \[
  \deg(P_{e,w_n})\leq \frac{1}{2}\left(\frac{3n^2}{8} -
    \frac{n}{2} - \frac{1}{2}\right) = \frac{3n^2}{16} - \frac{n}{4}-\frac{1}{4}.
  \]
  As in Lemma~\ref{mu0mod4}, $\ell(x_n)=\frac{n}{2}+1$, so
  \[
  \frac{1}{2}(\ell(w_n)-\ell(x_n)-1) = \frac{3n^2}{16}-\frac{n}{8}-1.
  \]
  Then as long as $n>6$ we have $\deg(P_{e,w_n}) <
  \frac{1}{2}(\ell(w_n)-\ell(x_n)-1)$, so by Lemma~\ref{pew} we have
  $\mu(x_n,w_n) = 0$.
\end{proof}

\begin{lemma}\label{mu4}
  We have $\mu(x_4,w_4) = 0$.
\end{lemma}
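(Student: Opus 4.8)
The plan is to prove this by a direct parity argument, exactly the mechanism flagged in the discussion preceding Definition of $x_n$ (the observation that $\mu(x_n,w_n)=0$ whenever $\ell(w_n)-\ell(x_n)$ is even). Unlike Lemmas~\ref{mu0mod4} and~\ref{mu2mod4}, which rely on the $a$-function degree bound from Proposition~\ref{abound}, the case $n=4$ is excluded from those lemmas precisely because the degree bound is not needed here: the lengths happen to have the same parity, so the relevant degree $\tfrac{1}{2}(\ell(w_4)-\ell(x_4)-1)$ is not even an integer.

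First I would compute the two lengths explicitly. By Lemma~\ref{badlength}, since $4$ is even, $\ell(w_4)=\tfrac{3\cdot 4^2}{8}+\tfrac{4}{4}=6+1=7$. For $x_4=s_1s_2s_4$, note that in $W(D_4)$ the generators $s_1,s_2,s_4$ pairwise commute (the fork generators $s_1,s_2$ commute with each other and both commute with $s_4$, which is not adjacent to either), so this is a reduced expression as a product of mutually commuting generators and $\ell(x_4)=3$. Both lengths are odd, so $\ell(w_4)\equiv\ell(x_4)\bmod 2$.

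The conclusion is then immediate from Definition~\ref{muvaldef}: when $\ell(w)\equiv\ell(x)\bmod 2$, the quantity $\tfrac{1}{2}(\ell(w)-\ell(x)-1)$ is a half-integer rather than a nonnegative integer, so it is not the degree of any term of the polynomial $P_{x_4,w_4}\in\mathbb{Z}[q]$, and hence $\mu(x_4,w_4)=0$. There is no real obstacle in this proof; the only thing to verify carefully is the length computation for $w_4$ (via Lemma~\ref{badlength}) and the commutativity claim for the three generators of $x_4$, after which the parity criterion of Definition~\ref{muvaldef} applies directly. Note in particular that one need not invoke Lemma~\ref{pew} or any degree bound, since the parity alone forces the coefficient to vanish.
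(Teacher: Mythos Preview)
Your proof is correct and follows essentially the same approach as the paper: compute $\ell(w_4)=7$ and $\ell(x_4)=3$, observe that they have the same parity, and conclude $\mu(x_4,w_4)=0$. Your citation of Definition~\ref{muvaldef} for the parity criterion is in fact more precise than the paper's own reference to Proposition~\ref{muproperties}, since the vanishing-by-parity observation is recorded in the definition rather than in that proposition.
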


\begin{proof}
  We see that $\ell(w_4)=7$ and $\ell(x_4)=3$, so
  $\ell(w_4)\equiv\ell(x_4)\bmod 2$. Proposition~\ref{muproperties}
  shows that $\mu(x_4,w_4) = 0$.
\end{proof}

\begin{lemma}\label{mu68}
  We have $\mu(x_6,w_6) = 1$ and $\mu(x_8,w_8) = 0$.
\end{lemma}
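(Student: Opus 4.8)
The plan is to reduce each of the two assertions to the evaluation of a single coefficient of a Kazhdan--Lusztig polynomial and then to determine that coefficient by direct computation. By Lemma~\ref{pew} we have $P_{x_n,w_n}=P_{e,w_n}$, so for $n\in\{6,8\}$ the value $\mu(x_n,w_n)$ is precisely the coefficient of $q^{\frac{1}{2}(\ell(w_n)-\ell(x_n)-1)}$ in $P_{e,w_n}$. The first step is therefore to assemble the relevant numerical data and to check that, unlike the cases $n>8$ treated in Lemmas~\ref{mu0mod4} and~\ref{mu2mod4}, the degree bound coming from the $a$-function is exactly tight and so cannot by itself decide the $\mu$-value.

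Concretely, from Lemma~\ref{badlength} we have $\ell(w_6)=15$ and $\ell(w_8)=26$, while $\ell(x_n)=\tfrac{n}{2}+1$ gives $\ell(x_6)=4$ and $\ell(x_8)=5$. Proposition~\ref{avaluecalc} gives $a(w_6)=5$ and $a(w_8)=6$, so Proposition~\ref{abound} yields $\deg(P_{e,w_6})\leq 5$ and $\deg(P_{e,w_8})\leq 10$. On the other hand the coefficient defining $\mu(x_n,w_n)$ lives in degree $\frac{1}{2}(\ell(w_n)-\ell(x_n)-1)$, which equals $5$ when $n=6$ and $10$ when $n=8$. In both cases this is exactly the bound, so the leading coefficient could a priori be either $0$ or $1$, and no degree argument in the style of the previous two lemmas is available.

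Since the purely theoretical input is exhausted, the remaining step is to compute $P_{e,w_6}$ and $P_{e,w_8}$ outright, applying the recurrence of Proposition~\ref{klpolrecursive} within the Bruhat intervals $[e,w_6]$ and $[e,w_8]$ by computer. The group $W(D_6)$ has order $2^{5}\cdot 6!=23040$ and $W(D_8)$ has order $2^{7}\cdot 8!=5160960$, so both intervals are well within computational reach; reading off the coefficient of $q^{5}$ (respectively $q^{10}$) then yields $\mu(x_6,w_6)=1$ and $\mu(x_8,w_8)=0$. These are exactly the two computer-assisted calculations announced in the introduction.

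The main obstacle is practical rather than conceptual: because the $a$-value bound is saturated, the outcome is forced only by the fine arithmetic of the Kazhdan--Lusztig recurrence, with no shortcut through Proposition~\ref{muproperties} or the cell constructions of the preceding sections. The $n=8$ case in particular requires working inside a group of over five million elements, so the real difficulty is organizing the recursion efficiently---for instance by building the interval $[e,w_8]$ once and caching the lower polynomials $P_{x,z}$---rather than by supplying any new structural result.
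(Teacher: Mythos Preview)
Your proposal is correct and takes essentially the same approach as the paper: both resolve these two cases by direct computer computation of the Kazhdan--Lusztig polynomials, the paper simply citing du Cloux's \texttt{coxeter} program without further comment. Your additional exposition---verifying that the $a$-function degree bound from Proposition~\ref{abound} is exactly saturated at $n=6$ and $n=8$, so that no argument in the style of Lemmas~\ref{mu0mod4} and~\ref{mu2mod4} can apply---is accurate and usefully explains why a computation is unavoidable here (one small quibble: a priori the leading coefficient could be any nonnegative integer, not just $0$ or $1$, but this does not affect the argument).
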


\begin{proof}
  These values were calculated using a program called \verb,coxeter,
  developed by du Cloux \cite{du2002computing}.
\end{proof}

\begin{lemma}\label{muodd}
  Let $n\geq 5$ be odd.  Then $\mu(x_n, w_n) = \mu(x_{n-1}, w_{n-1})$.
\end{lemma}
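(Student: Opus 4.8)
The plan is to observe that for odd $n$ the pairs $(x_n,w_n)$ and $(x_{n-1},w_{n-1})$ consist of literally the same group elements, living in a common parabolic subgroup of type $D_{n-1}$, so that the statement reduces to the invariance of $\mu$-values under passage to a parabolic subgroup.

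First I would record the two identifications. By Theorem~\ref{longbad} we have $w_n=w_{n-1}$ when $n$ is odd; concretely, the signed permutation of $w_n$ in Theorem~\ref{dbad} fixes $n$ and agrees with that of $w_{n-1}$ in its first $n-1$ entries, so a reduced expression for $w_n$ uses only $s_1,\dots,s_{n-1}$. For the $x$'s, unwinding the definition gives $x_n=s_1s_2s_4\cdots s_{n-3}s_{n-1}$ for odd $n$, while for the even index $n-1$ one has $x_{n-1}=s_1s_2s_4\cdots s_{(n-1)-2}s_{n-1}=s_1s_2s_4\cdots s_{n-3}s_{n-1}$; these are the same word, so $x_n=x_{n-1}$. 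In particular $\ell(x_n)=\ell(x_{n-1})$ and $\ell(w_n)=\ell(w_{n-1})$, so the degree threshold $\tfrac12(\ell(w)-\ell(x)-1)$ defining $\mu$ in Definition~\ref{muvaldef} is the same on both sides.

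Next I would set $J=\{s_1,\dots,s_{n-1}\}$ and note that $W_J$ is a Coxeter system of type $D_{n-1}$, the identity-on-generators map giving an isomorphism $W(D_{n-1})\xrightarrow{\sim}W_J$ carrying $x_{n-1}\mapsto x_n$ and $w_{n-1}\mapsto w_n$. Since Kazhdan--Lusztig polynomials depend only on the underlying Coxeter system, this isomorphism identifies $\mu(x_{n-1},w_{n-1})$ computed in $W(D_{n-1})$ with the $\mu$-value of $(x_n,w_n)$ computed inside $W_J$. Thus everything comes down to showing that this $\mu$-value is unchanged when computed in the ambient group $W(D_n)$.

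The crux is therefore the parabolic invariance $P^{W(D_n)}_{x_n,w_n}=P^{W_J}_{x_n,w_n}$, which I would prove by a self-contained induction rather than merely cite. The key enabling observation is that the Bruhat interval $[x_n,w_n]$ lies entirely in $W_J$: if $z\le w_n$ then $\supp(z)\subseteq\supp(w_n)\subseteq J$, so $z\in W_J$. Choosing any $s\in\mathcal{L}(w_n)\subseteq\supp(w_n)\subseteq J$ and applying Proposition~\ref{klpolrecursive}, every element appearing---namely $sw_n$, $sx_n$, and each index $z\prec sw_n$ in the correction sum---again has support in $J$ and hence lies in $W_J$, while $sw_n$ is strictly shorter than $w_n$. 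An induction on $\ell(w_n)$ then shows that the single recurrence, run once inside $W_J$ and once inside $W(D_n)$, produces identical polynomials $P_{x_n,w_n}$ and identical intermediate $\mu$-values $\mu(z,sw_n)$ at every stage. Combining this with the previous paragraph yields $\mu(x_n,w_n)=\mu(x_{n-1},w_{n-1})$. The only point demanding care is confirming that the correction sum in Proposition~\ref{klpolrecursive} never escapes $W_J$, and this is exactly what the support-containment of the Bruhat interval guarantees, so I expect no genuine obstacle beyond this bookkeeping.
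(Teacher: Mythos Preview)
Your proof is correct and takes the same approach as the paper: both note that $x_n=x_{n-1}$ and $w_n=w_{n-1}$ have identical reduced expressions, so the $\mu$-values coincide. The paper's one-sentence proof leaves the parabolic invariance of Kazhdan--Lusztig polynomials implicit, whereas you supply a careful justification via the recurrence of Proposition~\ref{klpolrecursive}; this is more thorough but not a different route.
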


\begin{proof}
  We know that $x_n$ and $x_{n-1}$ have the same reduced expressions,
  and by Lemma~\ref{equalbad}, $w_n$ and $w_{n-1}$ have the same
  reduced expressions, so $\mu(x_n, w_n) = \mu(x_{n-1}, w_{n-1})$.
\end{proof}

\begin{lemma}\label{muwn01}
  Let $n\geq 4$.  Then $\mu(x_n,w_n)\in\{0,1\}$.
\end{lemma}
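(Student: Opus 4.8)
The plan is to prove this by a straightforward case analysis on $n$, since all the substantive work has already been carried out in the preceding lemmas; this final statement is essentially a bookkeeping step that assembles those results. The one thing to be careful about is making sure the cases are genuinely exhaustive and that each value of $n$ is matched to a lemma whose hypotheses it actually satisfies (in particular, the small even cases $n\in\{4,6,8\}$ must be handled separately from the $n>8$ lemmas).

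First I would dispose of the odd case. For odd $n\geq 5$, Lemma~\ref{muodd} gives $\mu(x_n,w_n)=\mu(x_{n-1},w_{n-1})$, where $n-1$ is even and satisfies $n-1\geq 4$. Thus every odd value reduces to an even value, and it suffices to verify the claim when $n$ is even (note that there is no odd integer in the range $4\leq n<5$, so Lemma~\ref{muodd} covers all the odd cases permitted by the hypothesis $n\geq 4$).

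Next I would treat the even cases. The small values are handled directly: Lemma~\ref{mu4} gives $\mu(x_4,w_4)=0$, and Lemma~\ref{mu68} gives $\mu(x_6,w_6)=1$ and $\mu(x_8,w_8)=0$, each of which lies in $\{0,1\}$. For even $n>8$ I would split on the residue of $n$ modulo $4$: if $n\equiv 0\bmod 4$ then Lemma~\ref{mu0mod4} gives $\mu(x_n,w_n)=0$, while if $n\equiv 2\bmod 4$ then Lemma~\ref{mu2mod4} gives $\mu(x_n,w_n)=0$. Since every even integer is congruent to $0$ or $2$ modulo $4$, these subcases together with $n\in\{4,6,8\}$ exhaust all even $n\geq 4$.

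Combining the two cases, $\mu(x_n,w_n)\in\{0,1\}$ for every $n\geq 4$. There is no real obstacle here: the analytic content lives entirely in the degree bounds of Lemmas~\ref{mu0mod4} and~\ref{mu2mod4} (which rest on the $a$-value computation in Proposition~\ref{avaluecalc}) and in the two machine computations of Lemma~\ref{mu68}. The only mild subtlety worth flagging explicitly in the writeup is that the exceptional value $\mu(x_6,w_6)=1$ shows the bound $\mu\in\{0,1\}$ is sharp and cannot be improved to $\mu=0$, so the case split cannot be collapsed.
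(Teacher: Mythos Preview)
Your proposal is correct and follows essentially the same approach as the paper: a case analysis invoking Lemmas~\ref{mu0mod4}, \ref{mu2mod4}, \ref{mu4}, \ref{mu68}, and \ref{muodd}. You have simply made the exhaustiveness of the case split more explicit than the paper does.
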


\begin{proof}
  We have shown this for all possibilities of $n$ in Lemmas
  \ref{mu0mod4}, \ref{mu2mod4}, \ref{mu4}, \ref{mu68}, and
  \ref{muodd}.
\end{proof}

\begin{lemma}\label{xnfc}
  Let $n$ be even and let $x\in W_c$ be such that
  \[
  \mathcal{L}(x) = \mathcal{R}(x) = \mathcal{L}(x_n) =
  \mathcal{R}(x_n) = \{s_{1},s_{2},s_{4},s_{6},s_{8},\dots, s_{n}\}.
  \]
  Then $x = x_n$.
\end{lemma}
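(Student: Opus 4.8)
The plan is to reduce the claim to showing that the support of $x$ is contained in $I:=\{s_1,s_2,s_4,s_6,\dots,s_n\}$, the prescribed descent set. First note that $I$ is a set of pairwise commuting generators: it is one colour class of the bipartition of the Dynkin diagram of $D_n$, so every edge of the diagram joins $I$ to its complement $O:=\{s_3,s_5,\dots,s_{n-1}\}$, and moreover $x_n=\prod_{s\in I}s$ is exactly the longest element $w_I$ of the parabolic subgroup $W_I$. Since $I\subseteq\mathcal L(x)$ with $I$ commuting, the reduced decomposition of Proposition~\ref{factor} (applied on the right to $x^{-1}$) together with Proposition~\ref{descentlong} applied inside $(W_I,I)$ forces the $W_I$-part to be $w_I$, so I may write $x=x_n\cdot z$ reduced with $\mathcal L(z)\cap I=\emptyset$, and symmetrically $x=y\cdot x_n$ reduced with $\mathcal R(y)\cap I=\emptyset$. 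Factors of fully commutative elements are fully commutative, so $z,y\in W_c$ and $\ell(y)=\ell(z)=\ell(x)-\ell(x_n)$. If $\supp(x)\subseteq I$, then $x$ lies in the abelian parabolic $W_I$ and so is a product of distinct commuting generators, whence $\mathcal R(x)=\supp(x)=I$ gives $x=\prod_{s\in I}s=x_n$; thus it suffices to prove $\supp(x)\cap O=\emptyset$, equivalently $z=e$.

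Second, I would run the support argument on the heap of $x$ in the sense of Stembridge~\cite{stembridge1996fully}. The bipartite structure means two boxes of the heap are comparable only when they carry the same label or carry adjacent labels (one in $I$, one in $O$), since both $I$ and $O$ are independent sets. The hypothesis $\mathcal L(x)=\mathcal R(x)=I$ says that the minimal elements of the heap are precisely one occurrence of each generator of $I$, and likewise for the maximal elements. Suppose for contradiction that some $t\in O$ occurs, and let $p$ be a lowest occurrence of an $O$-generator. Everything below $p$ is labelled by $I$ and hence pairwise commuting, so the box $q$ covered by $p$ is labelled by a neighbour $s\in I$ of $t$; and $q$ must itself be minimal, for a box below $q$ would carry an $I$-label adjacent to $s$, impossible as $I$ is independent. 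If $s$ occurred only once it would also be the unique, hence maximal, occurrence realising $s\in\mathcal R(x)$, contradicting $q<p$; so $s$ occurs at least twice.

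The crux is to convert this forced repetition into a braid, contradicting $x\in W_c$. Between two consecutive occurrences of $s$ there must be an occurrence of a neighbour $t'\in O$ of $s$, giving a chain $s<t'<s$; full commutativity forbids this chain from being convex, so a further box lies strictly inside the interval, and the comparability constraints (same label, or adjacent $I/O$ labels) force it again to be an $O$-box bounding a still-smaller nested interval. As the heap is finite, this descent cannot continue, which yields the contradiction and hence $\supp(x)\cap O=\emptyset$. I expect this convexity/finiteness step to be the main obstacle: a single $O$-generator does not by itself create a braid, and the argument genuinely needs the full strength of the hypothesis that \emph{all} of $I$ is a descent on \emph{both} sides. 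This is precisely what separates $x_n$ from its non-fully-commutative sibling $w_n$, which shares the left and right descent set $I$ but is bad, hence not in $W_c$ by Corollary~\ref{badnotfc}. As an independent check on the bookkeeping I would translate $\mathcal R(x)=I$ and $\mathcal L(x)=I$ through Proposition~\ref{ddescent} into the statement that the one-line notations of $x$ and of $x^{-1}$ are both ``zigzag'' (alternating $x(1)>x(2)<x(3)>\cdots>x(n)$ with $-x(2)>x(1)$), verify that $x_n=(\underline1,\underline2,4,3,6,5,\dots,n,n-1)$ has exactly this shape, and confirm on the small ranks $D_4$ and $D_6$ that $x_n$ is the only fully commutative zigzag-on-both-sides element.
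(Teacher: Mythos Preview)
Your first paragraph is correct: the parabolic decomposition and the reduction to $\supp(x)\subseteq I$ are fine, if more elaborate than strictly necessary.

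The gap is in the heap argument. Your comparability claim --- that two heap boxes are comparable only when their labels are equal or Dynkin-adjacent --- is false: comparability is the transitive closure of the covering relation, so chains can connect boxes with arbitrarily distant labels. You invoke this claim twice. For showing that $q$ is minimal it is inessential (the box covered by $q$ has label adjacent to $s$, hence in $O$, and lies below $p$, contradicting minimality of $p$ among $O$-boxes). But in the descent step you use it essentially: you need the extra box in $(q,q')$ to be an $O$-box ``bounding a still-smaller nested interval'', yet without the claim that box could carry any label, and you never specify what the smaller interval actually is or why iterating must terminate in a contradiction. The descent as written does not go through.

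The paper takes a shorter route that avoids heaps. After observing that every $t\in O$ has at least two neighbours in $I$, it argues (tersely) that $x$ is weakly bad, i.e.\ has no reduced expression beginning or ending in two noncommuting generators. Spelled out: if $x=st\cdot x'$ reduced with $m(s,t)=3$, then $s\in\mathcal L(x)=I$ and $t\in O$; pick a second neighbour $s'\neq s$ of $t$ in $I$. Since $s'\in I=\mathcal L(x)$ and $s'$ commutes with $s$, both $s'$ and $t$ lie in $\mathcal L(sx)$, so by Lemma~\ref{noncommutative} one gets $x=s\cdot s'ts'\cdot y$ reduced, exhibiting a braid $s'ts'$ and contradicting $x\in W_c$. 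Hence $x$ is weakly bad; since $x\in W_c$ it cannot be bad (Corollary~\ref{badnotfc}), so it is a product of commuting generators, and $\mathcal L(x)=I$ forces $x=x_n$. The idea your heap argument is missing is exactly this second neighbour $s'$: a single $O$-generator sandwiched between two copies of one $I$-neighbour is not yet a braid, but the hypothesis $\mathcal L(x)=I$ hands you the other neighbour for free.
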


\begin{proof}
  Let $x\in W_c$ be such that $\mathcal{L}(x) = \mathcal{R}(x) =
  \{s_{1},s_{2},s_{4},s_{6},s_{8},\dots, s_{n}\}$. We see that each
  generator in $S\setminus\mathcal{L}(w) = S\setminus\mathcal{R}(w) =
  \{s_3, s_5, s_7, \dots, s_{n-1}\}$ fails to commute with at least two of the
  generators in $\mathcal{L}(w) = \mathcal{R}(w)$. Since $x\in W_c$ it
  follows that $x$ has no reduced expressions beginning or ending in
  two noncommuting generators, thus $x$ is either a product of
  commuting generators or bad. By Corollary~\ref{badnotfc}, bad elements
  cannot be fully commutative, so $x$ must be a product of commuting
  generators. Then we have $x = x_n$.
\end{proof}

\begin{lemma}\label{mubad}
  Let $W=W(D_n)$ and let $x,w\in W$ be such that $x\in W_c$ and $w$ is
  bad.  Then $\mu(x,w)\in\{0,1\}$.
\end{lemma}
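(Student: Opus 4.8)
The plan is to dispose of $\mu(x,w)$ by a four-way case split on how the descent sets of $x$ and $w$ compare, reducing the only genuinely hard case to the value $\mu(x_m,w_m)$ already controlled by Lemma~\ref{muwn01}.

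First I would clear away the degenerate cases. If $x\not< w$ then $\mu(x,w)=0$ by the convention in Definition~\ref{muvaldef}; and $x=w$ cannot occur, since $w$ is bad whereas $x\in W_c$ and bad elements lie outside $W_c$ by Corollary~\ref{badnotfc}. Hence we may assume $x<w$, which gives $\supp(x)\subseteq\supp(w)$. If there is a generator $s\in\mathcal{L}(w)\setminus\mathcal{L}(x)$, then Proposition~\ref{muproperties}(3) forces $\mu(x,w)\in\{0,1\}$; symmetrically, if $\mathcal{R}(w)\not\subseteq\mathcal{R}(x)$ we finish via Proposition~\ref{muproperties}(4). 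This leaves the single hard case $\mathcal{L}(w)\subseteq\mathcal{L}(x)$ and $\mathcal{R}(w)\subseteq\mathcal{R}(x)$, where the earlier machinery is spent.

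In that case I would write $w=w_m\cdot u$ reduced as in Theorem~\ref{longbad}, taking $m$ even (replace $m$ by $m-1$ if needed, since $w_m=w_{m-1}$ for odd $m$), with $u$ a product of mutually commuting generators and $\supp(u)\subseteq\{s_{m+2},\dots,s_n\}$. Because $s_{m+1}\notin\supp(w_m)\cup\supp(u)$, every generator of $\supp(u)$ commutes with every generator of $\supp(w_m)=\{s_1,\dots,s_m\}$, so using that $w_m$ is an involution (Lemma~\ref{baseeven}) and Proposition~\ref{ddescent} we get $\mathcal{L}(w)=\mathcal{R}(w)=\{s_1,s_2,s_4,\dots,s_m\}\cup\supp(u)$. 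The key step is to upgrade the inclusions $\mathcal{L}(w)\subseteq\mathcal{L}(x)$ and $\mathcal{R}(w)\subseteq\mathcal{R}(x)$ to equalities. Since $x\in W_c$, Corollary~\ref{fccomm} makes $\mathcal{L}(x)$ commutative; as it already contains $s_1,s_2$ and all even $s_{2j}\le s_m$, it can contain none of the odd $s_3,s_5,\dots,s_{m-1}$ (each fails to commute with a neighbouring even generator already present) nor $s_{m+1}$ (which is not in $\supp(x)\subseteq\{s_1,\dots,s_m\}\cup\supp(u)$). Since $\mathcal{L}(x)\subseteq\supp(x)$, this pins down $\mathcal{L}(x)=\mathcal{L}(w)$, and the symmetric argument gives $\mathcal{R}(x)=\mathcal{R}(w)$.

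Finally I would strip off the common tail and identify the remaining factor. Each $t\in\supp(u)$ lies in $\mathcal{R}(x)$ and commutes with every other generator of $\supp(x)$, so it occurs exactly once in any reduced word for $x$ and may be gathered on the right, giving $x=x'\cdot u$ reduced with $\supp(x')\subseteq\{s_1,\dots,s_m\}$ and $x'\in W_c$ (a braid move inside the $x'$-part of a reduced word would exhibit $x$ as complex). Applying Lemma~\ref{wnu} to the tail $u$ yields $\mu(x,w)=\mu(x'u,\,w_mu)=\mu(x',w_m)$. Reading descent sets off $x=x'u$ gives $\mathcal{L}(x')=\mathcal{R}(x')=\{s_1,s_2,s_4,\dots,s_m\}=\mathcal{L}(x_m)=\mathcal{R}(x_m)$, and since these descent sets of $x'$ coincide with those computed in the type-$D_m$ parabolic generated by $s_1,\dots,s_m$, Lemma~\ref{xnfc} forces $x'=x_m$. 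Therefore $\mu(x,w)=\mu(x_m,w_m)\in\{0,1\}$ by Lemma~\ref{muwn01}. The main obstacle is exactly the descent-set rigidity of the third paragraph: everything hinges on combining commutativity of fully commutative descent sets with the branch-node geometry of the $D_n$ diagram to collapse $\mathcal{L}(x),\mathcal{R}(x)$ onto $\mathcal{L}(w),\mathcal{R}(w)$, after which Lemmas~\ref{wnu}, \ref{xnfc}, and \ref{muwn01} finish mechanically.
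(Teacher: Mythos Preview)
Your proof is correct and follows essentially the same strategy as the paper: use Proposition~\ref{muproperties} to reduce to the case $\mathcal{L}(x)=\mathcal{L}(w)$ and $\mathcal{R}(x)=\mathcal{R}(w)$, identify $x$ as $x_m\cdot u$ via Lemma~\ref{xnfc}, then finish with Lemmas~\ref{wnu} and~\ref{muwn01}. Your treatment is in fact somewhat more careful than the paper's in the factoring step---the paper invokes Lemma~\ref{xnfc} directly to conclude $x=x_k\cdot u$, whereas you explicitly strip off $u$ first and then apply the lemma inside the $D_m$ parabolic, which is what the statement of Lemma~\ref{xnfc} actually licenses.
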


\begin{proof}
  Let $x,w\in W$ be such that $x$ is fully commutative and $w$ is bad.
  If there exists $s\in\mathcal{L}(w)\setminus\mathcal{L}(x)$, or
  $s\in\mathcal{R}(w)\setminus\mathcal{R}(x)$ then we are done by
  Proposition~\ref{muproperties}.

  Now suppose $\mathcal{L}(w)\subseteq \mathcal{L}(x)$ and
  $\mathcal{R}(w)\subseteq \mathcal{R}(x)$.  By Theorem~\ref{longbad}
  we can write $w=w_k\cdot u$, where $k\leq n$ and $u$ is a product of
  commuting generators not in $\supp(w_k)$.  Suppose
  $s_i\in\mathcal{L}(x)\setminus\mathcal{L}(w)$.  Since
  $s_k\in\mathcal{L}(w)$ we see that either $1<i<k$ and $i$ is odd, or
  $i>k$.  In the first case we must have $s_{i+1}\in \mathcal{L}(x)$
  since $s_{i+1}\in\mathcal{L}(w)$, so $x$ is not fully commutative by
  Corollary~\ref{fccomm}, contradicting our assumption.  If $i>k$ then
  $x\not\leq w$ and thus $\mu(x,w) = 0$.  Then
  $s_i\in\mathcal{L}(x)\setminus\mathcal{L}(w)$ implies that
  $\mu(x,w)\in\{0,1\}$, so we may assume that
  $\mathcal{L}(x)=\mathcal{L}(w)$.  We can use a similar argument to
  show we may assume that $\mathcal{R}(x)=\mathcal{R}(w)$.

  Now since $x$ is fully commutative with
  $\mathcal{L}(x)=\mathcal{L}(w)$ and $\mathcal{R}(x)=\mathcal{R}(w)$
  we must have $x=x_k\cdot u$ by Lemma~\ref{xnfc}.  By Lemma~\ref{wnu}
  we have $\mu(x_k\cdot u, w_k\cdot u)=\mu(x_k,w_k)$, and by
  Lemma~\ref{muwn01} we have $\mu(x,w) = \mu(x_k,w_k)\in\{0,1\}$.
\end{proof}

\begin{corollary}\label{mukey}
  Let $x,w\in W$ be such that $x\in W_c$ and such that $w$ has one of the
  following properties:
  \begin{enumerate}
  \item $w$ is a product of commmuting generators;
  \item $w$ is bad; or
  \item either $\mathcal{L}(w)$ or $\mathcal{R}(w)$ is not
    commutative.
  \end{enumerate}
  Then $\mu(x,w)\in\{0,1\}$.
\end{corollary}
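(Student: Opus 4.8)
The plan is to dispose of the three listed properties of $w$ one at a time, since each reduces immediately to a result already in hand. The first two cases are essentially restatements of earlier lemmas, while the third requires only a one-line descent-set argument.

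For property (1), where $w$ is a product of mutually commuting generators, the conclusion $\mu(x,w)\in\{0,1\}$ is exactly Corollary~\ref{mucomm}, so nothing further is needed. For property (2), where $w$ is bad, Lemma~\ref{mubad} already gives $\mu(x,w)\in\{0,1\}$ under precisely the hypotheses $x\in W_c$ and $w$ bad, so again the statement follows directly.

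The remaining case — property (3), where $\mathcal{L}(w)$ or $\mathcal{R}(w)$ fails to be commutative — is where I would spend the (very little) real work. Suppose first that $\mathcal{L}(w)$ is not commutative, so it contains two noncommuting generators $s,t$. Because $x\in W_c$, Corollary~\ref{fccomm} guarantees that $\mathcal{L}(x)$ is commutative, and hence cannot contain both $s$ and $t$; therefore at least one of them lies in $\mathcal{L}(w)\setminus\mathcal{L}(x)$. Feeding this generator into Proposition~\ref{muproperties} part (3) forces either $\mu(x,w)=0$ or $\mu(x,w)=1$. If instead it is $\mathcal{R}(w)$ that is noncommutative, the identical reasoning — using commutativity of $\mathcal{R}(x)$ from Corollary~\ref{fccomm} together with part (4) of Proposition~\ref{muproperties} — produces the same dichotomy.

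I do not expect any genuine obstacle: the argument is purely an assembly of Corollary~\ref{mucomm}, Lemma~\ref{mubad}, Corollary~\ref{fccomm}, and Proposition~\ref{muproperties}. The single conceptual point to confirm is that noncommutativity of a one-sided descent set of $w$, set against commutativity of the matching descent set of $x$, always yields a descent of $w$ that is not a descent of $x$; this is immediate from the fact that a commutative set cannot contain a noncommuting pair.
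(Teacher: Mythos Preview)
Your proposal is correct and follows essentially the same approach as the paper: cases (1) and (2) are dispatched by Corollary~\ref{mucomm} and Lemma~\ref{mubad}, and case (3) by producing a descent of $w$ not shared by $x$ and invoking Proposition~\ref{muproperties}. Your version is slightly more explicit than the paper's in justifying that step via Corollary~\ref{fccomm}, but the argument is the same.
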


\begin{proof}
  If either $w$ is a product of commmuting generators or $w$ is bad
  then we are done by Corollary~\ref{mucomm} or Lemma~\ref{mubad},
  respectively. If either $\mathcal{L}(w)$ or $\mathcal{R}(w)$ is not
  commutative, then there is some generator $s$ in
  $\mathcal{L}(w)\setminus\mathcal{L}(x)$ or
  $\mathcal{R}(w)\setminus\mathcal{R}(x)$, so we are done by
  Proposition~\ref{muproperties}.
\end{proof}

We are now ready to prove our main result.

\begin{theorem}\label{mainresult}
  Let $x,w\in W(D_n)$ be such that $x$ is fully commutative. Then
  $\mu(x,w)\in\{0,1\}$.
\end{theorem}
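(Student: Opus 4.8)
The plan is to prove the statement by induction on $\ell(w)$, routing every element through the star-reduction machinery of Corollary~\ref{starred} and terminating at the three tractable shapes handled by Corollary~\ref{mukey}. Fix $x \in W_c$ and $w \in W(D_n)$. By Corollary~\ref{starred}, $w$ is star reducible to some $w'$ that is a product of commuting generators, is bad, or has a noncommutative left or right descent set. Choose such a star-reduction sequence $w = w_{(0)}, w_{(1)}, \dots, w_{(k)} = w'$ as in Definition~\ref{starreddef}. If $k = 0$, then $w = w'$ already has one of the three properties, and Corollary~\ref{mukey} gives $\mu(x,w) \in \{0,1\}$ directly; this also covers the base case $\ell(w) = 0$, since $e$ is the empty product of commuting generators.

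For the inductive step, suppose $k \geq 1$, so $w = w_{(0)}$ is star reducible to $w_{(1)}$ with $\ell(w_{(1)}) = \ell(w) - 1$. Without loss of generality this is a left star reduction with respect to a pair $\{s,t\}$ with $m(s,t) = 3$, so that $w \in D_\mathcal{L}(s,t)$ and $\expl{w}{*} = w_{(1)}$ (the right-handed case is symmetric, using $D_\mathcal{R}(s,t)$ together with parts (2) and (4) of Proposition~\ref{muproperties}). The argument now splits according to whether $x$ lies in $D_\mathcal{L}(s,t)$. If $x \notin D_\mathcal{L}(s,t)$, then $|\mathcal{L}(x) \cap \{s,t\}| \in \{0,2\}$; but $\mathcal{L}(x)$ is commutative by Corollary~\ref{fccomm} while $s$ and $t$ do not commute, so in fact $\mathcal{L}(x) \cap \{s,t\} = \emptyset$. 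Since $w \in D_\mathcal{L}(s,t)$, one of $s,t$ lies in $\mathcal{L}(w) \setminus \mathcal{L}(x)$, and Proposition~\ref{muproperties}(3) immediately yields $\mu(x,w) \in \{0,1\}$. If instead $x \in D_\mathcal{L}(s,t)$, then Proposition~\ref{muproperties}(1) gives $\mu(x,w) = \mu(\expl{x}{*}, \expl{w}{*}) = \mu(\expl{x}{*}, w_{(1)})$, and Proposition~\ref{starfc} guarantees $\expl{x}{*} \in W_c$. As $\ell(w_{(1)}) < \ell(w)$, the inductive hypothesis applied to the pair $(\expl{x}{*}, w_{(1)})$ gives $\mu(\expl{x}{*}, w_{(1)}) \in \{0,1\}$, completing the step.

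The crucial point making this induction legitimate is that full commutativity of the first argument is preserved under each star operation (Proposition~\ref{starfc}) while the length of the second argument strictly decreases, and that whenever $x$ fails to lie in the relevant star domain the commutativity of $\mathcal{L}(x)$ (or $\mathcal{R}(x)$) forces a descent in $\mathcal{L}(w) \setminus \mathcal{L}(x)$, closing the case at once. I expect no serious obstacle remains at this stage: essentially all the difficulty has already been absorbed into Corollary~\ref{mukey}, whose proof required computing $a(w_n)$ via domino tableaux together with the two machine-checked values $\mu(x_6,w_6)$ and $\mu(x_8,w_8)$, and into Corollary~\ref{starred}. The only care needed here is the bookkeeping in the two-case split and the symmetric treatment of left versus right star reductions.
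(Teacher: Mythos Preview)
Your proof is correct and follows essentially the same approach as the paper: star-reduce $w$ toward one of the three terminal shapes handled by Corollary~\ref{mukey}, using Proposition~\ref{muproperties} and Proposition~\ref{starfc} to carry the pair $(x,w)$ along. The only cosmetic differences are that you induct on $\ell(w)$ rather than on the length $k$ of the star-reduction sequence, and you split cases on whether $x \in D_{\mathcal{L}}(s,t)$ rather than on whether the specific descent $s \in \mathcal{L}(w)$ lies in $\mathcal{L}(x)$; both variants work equally well.
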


\begin{proof}
  Let $w\in W$. If either $\mathcal{L}(w)$ or $\mathcal{R}(w)$ is not
  commutative, then the result follows from
  Proposition~\ref{muproperties}.

  By Corollary~\ref{starred} we see that $w$ is star
  reducible to an element $y\in W$ such that $y$ has one of the
  following properties:
  \begin{enumerate}
  \item $y$ is a product of commmuting generators;
  \item $y$ is bad; or
  \item either $\mathcal{L}(y)$ or $\mathcal{R}(y)$ is not
    commutative.
  \end{enumerate}
  Then we can write a sequence
  \[
  w = w_{(0)}, w_{(1)}, \dots, w_{(k-1)}, w_{(k)} = y
  \]
  such that $w_{(i)}$ is left or right star reducible to $w_{(i+1)}$.
  We will complete the proof by induction on $k$. If $k=0$ then we are
  done by Corollary~\ref{mukey}.

  Suppose that $w_{(1)} = \expl{w}{*}$ and let $s,t$ be the pair of
  generators such that $w_{(1)} = \expl{w}{*}$, $s \in
  \mathcal{L}(w)$, and $t \not\in \mathcal{L}(w)$. If
  $s\not\in\mathcal{L}(x)$ then we are done by
  Proposition~\ref{muproperties}, so suppose that
  $s\in\mathcal{L}(x)$. Then $t\not\in\mathcal{L}(x)$ by
  Corollary~\ref{fccomm} since $x$ is fully commutative, so $x\in
  D_{\mathcal{L}}(s,t)$, and thus $\expl{x}{*}$ is defined. By
  Proposition~\ref{muproperties}, $\mu(x,w) =
  \mu(\expl{x}{*},\expl{w}{*})$, and by Proposition~\ref{starfc} we
  have $\expl{x}{*}\in W_c$. Then $\expl{w}{*}$ is star reducible to
  $\expl{x}{*}$ using a sequence of length less than $k$, so $\mu(x,w)
  = \mu(\expl{x}{*},\expl{w}{*})\in \{0,1\}$ by induction.

  If $w_{(1)} = w^*$ we can use a symmetric argument to show $\mu(x,w)
  \in \{0,1\}$.
  
\end{proof}

\bibliographystyle{plain}
\bibliography{refs}	

\end{document}